\tikzset{stretch/.initial=1}
\newcommand\drawloop[4][]%
\def\@tocline#1#2#3#4#5#6#7{\relax
  \ifnum #1>\c@tocdepth % then omit
  \else
    \par \addpenalty\@secpenalty\addvspace{#2}%
    \begingroup \hyphenpenalty\@M
    \@ifempty{#4}{%
      \@tempdima\csname r@tocindent\number#1\endcsname\relax
    }{%
      \@tempdima#4\relax
    }%
    \parindent\z@ \leftskip#3\relax \advance\leftskip\@tempdima\relax
    \rightskip\@pnumwidth plus4em \parfillskip-\@pnumwidth
    #5\leavevmode\hskip-\@tempdima
      \ifcase #1
       \or\or \hskip 1em \or \hskip 2em \else \hskip 3em \fi%
      #6\nobreak\relax
    \dotfill\hbox to\@pnumwidth{\@tocpagenum{#7}}\par
    \nobreak
    \endgroup
  \fi}
\newtheorem{theorem}{Theorem}[section]
\newtheorem{lemma}[theorem]{Lemma}
\newtheorem{corollary}[theorem]{Corollary}
\newtheorem{proposition}[theorem]{Proposition}
\theoremstyle{definition}
\newtheorem{defn}[theorem]{Definition}
\newtheorem{remark}[theorem]{Remark}
\newtheorem{example}[theorem]{Example}
\newcommand{\mc}{\mathcal}
\newcommand{\mf}{\mathbf}
\newcommand{\mb}{\mathbb}
\newcommand{\wh}{\widehat}
\newcommand{\wt}{\widetilde}
\newcommand{\ud}{\,\mathrm{d}}
\newcommand{\id}{\mathrm{id}}
\DeclareMathOperator{\aut}{Aut}
\DeclareMathOperator{\ab}{Z}
\DeclareMathOperator{\Bo}{B}
\DeclareMathOperator{\tran}{\Theta}
\DeclareMathOperator{\poly}{poly}
\DeclareMathOperator{\Supp}{Supp}
\DeclareMathOperator{\q}{c}
\DeclareMathOperator{\ns}{X}
\DeclareMathOperator{\nss}{Y}
\DeclareMathOperator{\co}{\circ\hspace{-0.02 cm}}
\DeclareMathOperator{\cu}{C}
\DeclareMathOperator{\cor}{Cor}
\DeclareMathOperator{\Aff}{Aff}
\DeclareMathOperator{\abph}{\mc{U}}
\newcommand*{\sbr}[1]{\scalebox{0.8}{$(#1)$}}
\newcommand*{\db}[1]{\llbracket #1\rrbracket}
\DeclareMathOperator{\tnss}{\tilde{\nss}}
\begin{document}

\title[On $\mb{F}_2^\omega$-affine-exchangeable probability measures]{On $\mb{F}_2^\omega$-affine-exchangeable probability measures}

\author{Pablo Candela}
\address{Universidad Aut\'onoma de Madrid and ICMAT\\ Ciudad Universitaria de Cantoblanco\\ Madrid 28049\\ Spain}
\email{pablo.candela@uam.es}

\author{Diego Gonz\'alez-S\'anchez}
\address{MTA Alfr\'ed R\'enyi Institute of Mathematics\\ 
Re\'altanoda utca 13-15\\
Budapest, Hungary, H-1053}
\email{diegogs@renyi.hu}

\author{Bal\'azs Szegedy}
\address{MTA Alfr\'ed R\'enyi Institute of Mathematics\\ 
Re\'altanoda utca 13-15\\
Budapest, Hungary, H-1053}
\email{szegedyb@gmail.com}

\begin{abstract} 
For any standard Borel space $\Bo$, let $\mc{P}(\Bo)$ denote the space of Borel probability measures on $\Bo$. 
In relation to a difficult problem of Aldous in exchangeability theory, and in connection with  arithmetic combinatorics, Austin raised the question of describing the structure of \emph{affine-exchangeable} probability measures on product spaces indexed by the vector space $\mb{F}_2^\omega$, i.e., the measures in $\mc{P}(\Bo^{\mb{F}_2^\omega})$ that are invariant under the coordinate permutations on $\Bo^{\mb{F}_2^\omega}$ induced by all affine automorphisms of $\mb{F}_2^{\omega}$. We answer this question by describing the extreme points of the space of such affine-exchangeable measures. We prove that there is a single structure underlying every such measure, namely, a random infinite-dimensional cube (sampled using Haar measure adapted to a specific filtration) on a group that is a countable power of the 2-adic integers. Indeed, every extreme affine-exchangeable measure in $\mc{P}(\Bo^{\mb{F}_2^\omega})$ is obtained from a $\mc{P}(\Bo)$-valued function on this group, by a vertex-wise composition with this random cube. The consequences of this result include a description of the convex set of affine-exchangeable measures in $\mc{P}(\Bo^{\mb{F}_2^\omega})$ equipped with the vague topology (when $\Bo$ is a compact metric space), showing that this convex set is a Bauer simplex. We also obtain a correspondence between affine-exchangeability and limits of convergent sequences of (compact-metric-space valued) functions on vector spaces $\mb{F}_2^n$ as $n\to\infty$. Via this correspondence, we establish the above-mentioned group as a general limit domain valid for any such sequence.
\end{abstract}

\maketitle

\section{Introduction}

\noindent Let $\Bo$ be a standard Borel space, with $\sigma$-algebra $\mc{B}$. Let us denote the standard Borel space of probability measures on $\mc{B}$ by $\mc{P}(\Bo,\mc{B})$, or by $\mc{P}(\Bo)$ when the $\sigma$-algebra is clear \cite[p.\ 113]{Ke}. For a countably infinite set  $T$, we denote by $(\Bo^T,\mc{B}^{\otimes T})$ the corresponding (standard Borel) product measurable space.

In the study of exchangeability in probability theory, a central problem consists in describing the structure of measures in $\mc{P}(\Bo^T,\mc{B}^{\otimes T})$ that are invariant under prescribed symmetries, these symmetries being usually the coordinate permutations on $\Bo^T$ induced by a group $\Gamma$ of permutations of $T$. Denoting the set of such measures by $\textrm{Pr}^{\Gamma}(\Bo^T)$ (following the notation in \cite{Austin2}), the aim of such descriptions is to represent any such measure as an image (typically a \emph{mixture}; see \cite[(2.3)]{Aldous}) of simpler measures in $\textrm{Pr}^{\Gamma}(\Bo^T)$. 

The most classical result in this topic is de Finetti's theorem \cite{dF}, which describes $\textrm{Pr}^{\Gamma}(\Bo^T)$ for $\Gamma$ the group of all finitely-supported permutations of $T$. More precisely, in this classical context $\mu$ is said to be an exchangeable measure on $B^T$ if $\mu\co \theta_\gamma^{-1} = \mu$ for every transformation  $\theta_\gamma:B^T\to B^T$ of the form $(v_t)_{t\in T}\mapsto (v_{\gamma(t)})_{t\in T}$, where $\gamma:T\to T$ is a permutation that fixes all but finitely many elements of $T$. De Finetti's theorem states that any such measure $\mu$ is a mixture of product measures $\prod_{t\in T}\lambda$, $\lambda\in \mc{P}(\Bo)$. 

Different notions of exchangeability, involving other groups $\Gamma$, lead to various analogues or extensions of de Finetti's theorem. Principal examples of such extensions include the results of Aldous \cite{A2,A3}, Hoover \cite{H1,H2}, Kallenberg \cite{K} and Kingman \cite{Kingman}. 

Exchangeability theory has various connections beyond probability theory, with combinatorics and ergodic theory among other areas; see \cite{Austin} for a comprehensive survey, bringing to light in particular the relations with the topic of graph limits; see also \cite{D&J,Fra2}. 

This paper concerns notions of exchangeability where $T$ is the infinite discrete cube, which we denote by $\db{\mb{N}}$, i.e.\ the subset of $\{0,1\}^{\mb{N}}$ consisting of those sequences with only finitely many non-zero coordinates. We often identify $\db{\mb{N}}$ as a set with the vector space $\mb{F}_2^\omega:=\bigoplus_{i\in \mb{N}} \mb{F}_2$. This direction originates in a problem posed in the 1980s by Aldous \cite[\S 16]{Aldous}, which asked for a structural description of $\textrm{Pr}^{\Gamma}(\Bo^{\db{\mb{N}}})$ for $\Gamma=\aut(\db{\mb{N}})$, the group of isometries\footnote{This group is isomorphic to $S_\infty\ltimes \mb{Z}_2^\omega$, where $S_\infty$ denotes the group of finitely-supported permutations of $\mb{N}$. Throughout this paper, for any $m\in\mb{N}$ we denote by $\mb{Z}_m$ the abelian group $\mb{Z}/m\mb{Z}$.}  (or discrete-cube automorphisms) of $\db{\mb{N}}$. As explained in \cite[\S 16]{Aldous}, this problem of Aldous did not seem to yield to previous mainstream  methods in this area (see for instance \cite[Example (16.20)]{Aldous}). More than two decades later, Austin shed light on the difficulty of the problem by showing in \cite{Austin2} that the space of $\aut(\db{\mb{N}})$-exchangeable measures on $\Bo^{\db{\mb{N}}}$ (or \emph{cube-exchangeable} measures, as they are called in \cite{Austin2}) has a markedly less tractable structure than those corresponding to other, more classical, exchangeability notions. More precisely, Austin showed that for any (non-trivial) compact metric space $\Bo$, the convex set $\textrm{Pr}^{\aut(\db{\mb{N}})}(\Bo^{\db{\mb{N}}})$ equipped with the \emph{vague} (a.k.a.\ \emph{weak}) topology is a so-called \emph{Poulsen simplex} (meaning that its extreme points form a dense subset), whereas for other more classical exchangeability notions, the convex set $\textrm{Pr}^{\Gamma}(\Bo^T)$ with the vague topology has a more stable structure, in that it is a \emph{Bauer simplex} (meaning that its extreme points form a closed subset). This motivated the study of other natural notions of exchangeability involving the cube $\db{\mb{N}}$, especially notions that are stronger than $\aut(\db{\mb{N}})$-exchangeability and are thus more likely to have simpler descriptions. In particular, motivated by connections with arithmetic combinatorics, in \cite[\S 5.3]{Austin2} Austin highlighted as an interesting object of study the notion of what we will call \emph{affine-exchangeable} measures on $\Bo^{\db{\mb{N}}}$, whose corresponding group of symmetries $\Gamma$ is the group of (invertible) affine transformations on the cube $\db{\mb{N}}$ (identified with $\mb{F}_2^\omega$). Let us denote this group by $\textrm{Aff}(\mb{F}_2^\omega)$ (following \cite{Austin2}), and note that $\textrm{Aff}(\mb{F}_2^\omega)\cong \textrm{GL}(\mb{F}_2^\omega)\ltimes \mb{Z}_2^\omega$. 

The main results of this paper are, firstly, a solution of the description problem for affine-exchangeability, in the form of a representation theorem for affine-exchangeable measures (Theorem \ref{thm:main} below); secondly, an application concerning limit objects for certain convergent sequences of functions, in the spirit of the recent limit theories for graphs and hypergraphs \cite{BCLSV, L12, LS06, LS07}. To introduce this application, let us begin by mentioning that in \cite{S} the third named author proved an analogous result for limits of functions defined on abelian groups, with a notion of convergence involving linear forms of (true) complexity 1 (in the sense of \cite{GW}). In this paper we consider limits of sequences of functions $(f_n:\mb{F}_2^n\to \Bo)_{n\in \mb{N}}$ where $\Bo$ is a compact metric space and where the notion of convergence involves systems of linear forms of arbitrary finite complexity. Variants of this notion have been considered previously (e.g.\ for Boolean functions in  \cite{HHH}); we shall define it in detail in the sequel (in Section \ref{sec:exch-limits}) but let us briefly outline the notion here. Given a sequence $(f_n:\mb{F}_2^n\to \Bo)_{n\in \mb{N}}$ for $\Bo$ a compact metric space, we can identify any given system of finitely many linear forms over $\mb{F}_2$ as a set $\mc{L}\subset \mb{F}_2^k\times \{0^{\mb{N}\setminus[k]}\}\subset \mb{F}_2^\omega$ for some $k\in\mb{N}$ (where $0^{\mb{N}\setminus[k]}$ denotes the constant 0 sequence with coordinates indexed by $\mb{N}\setminus[k]$). Then, denoting by $\mu_{\cu^k(\mb{F}_2^n)}$ the Haar probability measure on the group of standard $k$-dimensional cubes in $\mb{F}_2^n$, and noting that any such cube can be viewed as an affine-linear map $A:\mb{F}_2^k\to\mb{F}_2^n$, we can define the measure $\mu_{\mc{L},f_n}$ on $\Bo^{\mc{L}}$ as the pushforward of $\mu_{\cu^k(\mb{F}_2^n)}$ under the map $A\mapsto (f_n\co A(L))_{L\in \mc{L}}$ (see \eqref{eq:samplemeas}). We then say that the sequence $(f_n)_{n\in\mb{N}}$ converges if for every finite $\mc{L}\subset \mb{F}_2^\omega$ the measures $\mu_{\mc{L},f_n}$ converge vaguely as $n\to\infty$. Our second main result (Theorem \ref{thm:mainlim} below) establishes that every such sequence $(f_n)$ converges to a limit object closely related to our representation theorem for affine-exchangeable measures.

Before discussing these results in more detail, let us give some basic examples of affine-exchangeable measures.

\begin{example}\label{ex:1} 
Let $\Bo$ be a standard Borel space and fix any function $m:\mb{Z}_2\to \mc{P}(\Bo)$. Let  $G$ denote the closed subgroup of the compact abelian group $\mb{Z}_2^{\db{\mb{N}}}$ consisting of what we can view as infinite-dimensional cubes. More precisely, the elements of $G$ are the functions $\q:\db{\mb{N}}\to \mb{Z}_2$ of the form $\q(v)=x_0+\sum_{i=1}^{\infty} v\sbr{i} h_i$ where $x_0\in\mb{Z}_2$, $h_i\in \mb{Z}_2$ for all $i\in \mb{N}$ and $v\in \db{\mb{N}}$. Note that, since every $v\in \db{\mb{N}}$ has at most finitely many non-zero coordinates, the  last sum above is always well-defined. Letting $\mu_G$ denote the Haar probability measure on $G$, we can then define a  measure $\mu\in \mc{P}(\Bo^{\db{\mb{N}}})$ by the formula
\begin{equation}\label{eq:basicex1}
\mu =\int_G \prod_{v\in \db{\mb{N}}} m(\q(v)) \ud\mu_G(\q),
\end{equation}
where the product here denotes a countable product of measures. Thus, for any cylinder set $S\subset \Bo^{\db{\mb{N}}}$, i.e.\ a set of the form $S=\prod_{v\in \db{\mb{N}}} A_v\subset B^{\db{\mb{N}}}$ with $A_v\not=\Bo$ only for $v$ in some \emph{finite} set $V\subset \db{\mb{N}}$ (recall that these cylinder sets generate the product $\sigma$-algebra $\mc{B}^{\otimes T}$), we have $\mu(S)= \int_G  \prod_{v\in V}m(\q(v))(A_v)\ud\mu_G(\q)$. It is readily seen that $\mu$ is an affine-exchangeable measure, using the fact that for every $\gamma\in \textrm{Aff}(\mb{F}_2^\omega)$ the transformation $G\to G$, $\q\mapsto \q\co\gamma$ preserves the measure $\mu_G$. Note also that, since the map sending $(x_0,h_1,h_2,\ldots)$ to $\q(v)=x_0+\sum_{i=1}^{\infty} v\sbr{i} h_i$ is an isomorphism $\mb{Z}_2^{\mb{N}}\to G$, we can rewrite formula \eqref{eq:basicex1} as follows (where $\mu_{\mb{Z}_2^{\mb{N}}}$ is the Haar measure on $\mb{Z}_2^{\mb{N}}$):
\begin{equation}\label{eq:basicex1-2}
\mu =\int_{\mb{Z}_2^{\mb{N}}} \prod_{v\in \db{\mb{N}}} m(x_0+v\sbr{1}h_1+v\sbr{2}h_2+\cdots)\ud\mu_{\mb{Z}_2^{\mb{N}}}(x_0,h_1,h_2,\ldots).
\end{equation}
\end{example}
\noindent As mentioned above, the elements $\q\in G$ can be viewed as infinite-dimensional combinatorial cubes on $\mb{Z}_2$ (the integral in \eqref{eq:basicex1} or \eqref{eq:basicex1-2} could thereby be viewed as an infinite-dimensional, measure-valued, Gowers inner-product of the function $m$ with itself). By generalizing this underlying notion of infinite-dimensional cubes, we soon obtain a large family of   examples of affine-exchangeable measures. In particular, we have the following construction where the group $\mb{Z}_2$ is replaced by more general filtered abelian groups.

\begin{example}\label{ex:2} 
Let $Z$ be a compact abelian group, and let  $Z_{\bullet}=(Z_{(j)})_{j\ge 0}$ be a filtration on $Z$, i.e., a sequence of closed subgroups of $Z$ with $Z_{(j)}\supseteq Z_{(j+1)}$ for all $j\ge 0$ and $Z_{(0)}=Z_{(1)}=Z$. We can then generalize the group $G$ from Example \ref{ex:1} to obtain the group of infinite-dimensional cubes \emph{adapted to the filtration} $Z_\bullet$. This group, which we shall denote by $\cu^{\omega}(Z_\bullet)$, is the closed subgroup of $Z^{\db{\mb{N}}}$ generated by elements $f\in Z^{\db{\mb{N}}}$ of the form $f(v_1,v_2,\ldots) = av_{j_1}\cdots v_{j_s}$ where $s\in \mb{N}$,  $a\in Z_{\sbr{s}}$ and $j_1,\ldots,j_s\in \mb{N}$. In standard terminology related to cubic structures, this construction of the group $\cu^{\omega}(Z_\bullet)$ essentially follows that of the \emph{Host--Kra cubes} associated with a filtered group, except that here the construction is carried out in a setting of countably infinite dimension (we refer to Part 2 of the book \cite{HKbook} for more background on cubic structures, and to \cite[\S 2.2.1]{Cand:Notes1} for a treatment of Host--Kra cubes in a nilspace theoretic context as used in this paper). Let $\mu_{\cu^{\omega}(Z_\bullet)}$ be the Haar measure on $\cu^{\omega}(Z_\bullet)$. Then for any standard Borel space $\Bo$ and Borel map $m:\ab\to \mc{P}(\Bo)$, we can define the following measure in $\mc{P}(\Bo^{\db{\mb{N}}})$ similarly to \eqref{eq:basicex1}:
\begin{equation}\label{eq:basicex2}
\mu =\int_{\cu^{\omega}(Z_\bullet)} \prod_{v\in \db{\mb{N}}} m(\q(v))\ud\mu_{\cu^{\omega}(Z_\bullet)}(\q).
\end{equation}
It can be checked that $\mu$ is affine-exchangeable provided that the filtration $Z_\bullet$ satisfies a simple property that we call \emph{2-homogeneity}, which states that for every $i\in \mb{N}$ we have $2g\in Z_{(i+1)}$ for every $g\in Z_{(i)}$ (see Corollary \ref{cor:2-hom-aff-exch}).
\end{example}
\noindent More examples can be obtained by further generalizing the underlying cubic structures; we shall see such constructions in detail in the sequel (e.g.\ in Section \ref{sec:prelims}). In fact, affine-exchangeability is a special case of a more general notion of exchangeability involving $\db{\mb{N}}$, called \emph{cubic exchangeability}, which was introduced in \cite{CScouplings} (and is itself strictly stronger than the notion of Aldous from \cite[\S 16]{Aldous}). We shall now briefly discuss this connection, and the main results concerning cubic exchangeability from \cite{CScouplings}, as they provide useful ingredients and motivations for the main results in this paper. 

A measure on $\Bo^{\db{\mb{N}}}$ is \emph{cubic-exchangeable} if for each integer $k\geq 0$, for any pair of injective discrete-cube morphisms\footnote{A map $\phi:\{0,1\}^k\to\db{\mb{N}}$ is a \emph{morphism} if it extends to an affine homomorphism $\mb{Z}^k\to \bigoplus_{i\in \mb{N}}\mb{Z}$; see \cite[Definition 6.1]{CScouplings}. If $\phi$ is injective then its image can be viewed as a $k$-dimensional affine subcube of $\db{\mb{N}}$.}  $\phi_1,\phi_2:\{0,1\}^k\to \db{\mb{N}}$ , the two images of the measure under the coordinate projections to $\phi_1(\{0,1\}^k)$ and $\phi_2(\{0,1\}^k)$ are equal; see \cite[Definition 6.3]{CScouplings}.\footnote{As such, cubic exchangeability does not involve invariance under a \emph{group} $\Gamma$ like other exchangeability notions, although it can be formulated in terms of invariance under a \emph{semigroup}.} The main result in \cite{CScouplings} concerning cubic exchangeability is a representation theorem for cubic-exchangeable measures on $\Bo^{\db{\mb{N}}}$. This theorem describes the extreme points in the space of such measures in terms of a specific construction which involves \emph{compact nilspaces}; see \cite[Theorem 6.7]{CScouplings}. Nilspaces are structures in which combinatorial cubes can be defined in great generality, so their connection with exchangeability notions involving $\db{\mb{N}}$ is natural. In particular, nilmanifolds (central examples of compact nilspaces) were already related to certain notions of exchangeability in the work of Frantzikinakis in \cite{Fra2}. Originating in work of Host and Kra (notably \cite{HK, HKparas}), nilspaces were introduced in \cite{CamSzeg}, and by now there is a fair amount of literature on these objects \cite{Cand:Notes1, Cand:Notes2, GMV1, GMV2, GMV3}. We shall recall some basic nilspace theory in this paper, but we defer this to Section \ref{sec:prelims}. On the other hand, the construction of cubic-exchangeable measures involving compact nilspaces is more recent and plays a key role in this paper's main results, so let us recall the construction here.
\begin{defn}\label{def:extreme-aff-ex}[Nilspace construction of extreme cubic-exchangeable measures]\\
 Let $(\Bo,\mc{B})$ be a standard Borel space, let $\ns$ be a compact nilspace and let $m:\ns\to \mc{P}(\Bo)$ be a Borel map. Let $\cu^\omega(\ns)$ denote the space of infinite-dimensional cubes $\db{\mb{N}}\to \ns$, equipped\footnote{The set $\cu^\omega(\ns)$ is a natural  generalization for compact nilspaces of the group $\cu^\omega(Z_\bullet)$ from Example \ref{ex:2}. The related notions, in particular the Haar measure on $\cu^\omega(\ns)$, will be explained in Subsection \ref{subsec:CinfHaar}.} with its Haar probability measure $\mu_{\cu^\omega(\ns)}$. Then we define the associated cubic-exchangeable measure $\zeta_{\ns,m}$ by the following formula (where the product denotes a countable product of measures):
\begin{equation}\label{eq:zeta}
\zeta_{\ns,m} = \int_{\cu^\omega(\ns)} \prod_{v\in \db{\mb{N}}} m(\q(v))\; \ud\mu_{\cu^\omega(\ns)}(\q).
\end{equation}
\end{defn}
\noindent In \cite{CScouplings}, this construction was explained from the more \emph{statistical} point of view on exchangeability, in terms of joint distributions of sequences of random variables. The present paper adopts a more \emph{ergodic-theoretic} viewpoint on exchangeability, working  directly with $\Gamma$-invariant measures on product spaces (see e.g.\ \cite{Austin2}); this viewpoint motivates formula \eqref{eq:zeta}, expressing the construction directly as a measure on $\Bo^{\db{\mb{N}}}$ (both viewpoints are useful).

Note that the basic example in \eqref{eq:basicex1} is a special case of \eqref{eq:zeta}, indeed the measure $\mu$ in \eqref{eq:basicex1} is $\zeta_{\ns,m}$ where the underlying nilspace $\ns$ consists of the group $\mb{Z}_2$ (with the standard abelian cube structure determined by the lower-central series on $\mb{Z}_2$), and where $\cu^\omega(\ns)$ is the group $G$ in that example. Similarly, the measure in \eqref{eq:basicex2} is $\zeta_{\ns,m}$ where $\ns$ is the so-called \emph{group nilspace} associated with the filtered group $(Z,Z_\bullet)$, i.e.\ the nilspace consisting of $Z$ equipped with the Host--Kra cubes determined by the filtration $Z_\bullet$ (see e.g.\ \cite[\S 2.2.1]{Cand:Notes1}).

To summarize, from previous works we have three exchangeability notions involving the cube  $\db{\mb{N}}$, which are related as follows:\footnote{A more detailed explanation of these inclusions is given in Lemma \ref{lem:clarifinclu} below.
}\vspace{0.2cm}
\begin{equation}\label{eq:inclu}
\textrm{Pr}^{\textrm{Aff}(\mb{F}_2^\omega)}(\Bo^{\db{\mb{N}}})  
\; \subset \; \{\textrm{cubic-exchangeable measures in }\mc{P}(\Bo^{\db{\mb{N}}})\} 
\;  \subset \; \textrm{Pr}^{\aut(\db{\mb{N}})}(\Bo^{\db{\mb{N}}}).
\end{equation}
Moreover, while $\textrm{Pr}^{\aut(\db{\mb{N}})}(\Bo^{\db{\mb{N}}})$ has no representation akin to the classical ones (as shown in \cite{Austin2}), by contrast cubic exchangeability does admit such a representation, which tells us that the extreme cubic-exchangeable measures are exactly the measures of the form $\zeta_{\ns,m}$ (namely  \cite[Theorem 1.3 or Theorem 6.7]{CScouplings}).

An observation leading to the main results of this paper is that the exactness of the above description of cubic exchangeability fails for the stronger notion of affine exchangeability. More precisely, for a measure of the form $\zeta_{\ns,m}$ to be affine-exchangeable, the underlying nilspace $\ns$ must have a more specific structure than in the general cubic-exchangeable setting. These more specific structures are the so-called \emph{2-homogeneous nilspaces}, introduced in \cite{CGSS} (and recalled in more detail below). As a consequence, when we replace cubic exchangeability with affine exchangeability, the representation theorem from \cite{CScouplings} can be significantly refined, leading to one of the main results of this paper, Theorem \ref{thm:main} below. In particular, the potentially varying compact nilspaces $\ns$ in the representation in \cite[Theorem 6.7]{CScouplings} can all be replaced here by a \emph{single} compact nilspace, defined using the group of $2$-adic integers. To formalize this, we use the following notation. 

Throughout most of the paper (except in Section \ref{sec:p-hom}), we shall denote the group of 2-adic integers by $\mf{Z}$ (instead of $\mf{Z}_2$)  to avoid an overload of sub-indices in the subsequent notations. For any integer $\ell\ge 1$ we denote by $\mf{Z}_{\bullet,\ell}$ the filtration on $\mf{Z}$ with $i$-th term $\mf{Z}_{(i)} = \mf{Z}$ if $i=0,\ldots,\ell$ and $\mf{Z}_{(i)}=2^{i-\ell}\mf{Z}=\{2^{i-\ell}x:x\in \mf{Z}\}$ for $i>\ell$.
\begin{defn}\label{def:H}
We denote by $\mc{H}$ the group nilspace consisting of the compact abelian group\footnote{By a slight abuse of notation we shall often denote this group also by $\mc{H}$.}   $\prod_{\ell=1}^{\infty} \mf{Z}^{\mb{N}}$ equipped with the cube structure determined by the product filtration $\mc{H}_{\bullet}=\prod_{\ell=1}^{\infty} \mf{Z}_{\bullet,\ell}^{\mb{N}}$, that is, the filtration with $i$-th term $\mc{H}_{(i)}=\mc{H}$ for $i=0,1$ and $\mc{H}_{(i)}= (2^{i-1} \mf{Z}^{\mb{N}}) \times (2^{i-2} \mf{Z}^{\mb{N}}) \times \cdots$ for $i\geq 2$.
\end{defn}

\noindent Recall that a measure $\mu$ on  a $\sigma$-algebra $\mc{A}$ is said to be \emph{concentrated} on a set $S\in \mc{A}$ if for every $B\in \mc{A}$ we have $\mu(B)=\mu(B\cap S)$. We can now state our main result.

\begin{theorem}\label{thm:main}
Let $\Bo$ be a standard Borel space and let $\mu\in\textup{Pr}^{\textup{Aff}(\mb{F}_2^\omega)}(\Bo^{\db{\mb{N}}})$. Then there is a Borel probability measure $\kappa$ on $\mc{P}(\Bo^{\db{\mb{N}}})$, which is concentrated on the set\footnote{The set of such measures is shown to be Borel in Lemma \ref{lem:indepclosed}.} of measures $\{\nu\in \mc{P}(\Bo^{\db{\mb{N}}}) ~|~ \nu =\zeta_{\mc{H},m} \textup{ for some Borel map } m:\mc{H}\to\mc{P}(\Bo)\}$, such that $\mu=\int_ {\mc{P}(\Bo^{\db{\mb{N}}})} \nu\ud\kappa$.
\end{theorem}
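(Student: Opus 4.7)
The plan is to reduce the statement to the representation theorem for cubic exchangeability in \cite[Theorem 6.7]{CScouplings} and then \emph{upgrade} that decomposition using two ingredients afforded by the stronger $\mr{Aff}(\mb{F}_2^\omega)$-symmetry: first, that the symmetry forces the underlying nilspace in each piece of the decomposition to be 2-homogeneous; and second, that $\mc{H}$ is \emph{universal} among 2-homogeneous compact nilspaces in a sense strong enough to allow replacing any such nilspace by $\mc{H}$ without changing the associated measure.

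By the inclusion \eqref{eq:inclu}, $\mu$ is cubic-exchangeable, so \cite[Theorem 6.7]{CScouplings} yields a Borel probability measure $\kappa'$ on $\mc{P}(\Bo^{\db{\mb{N}}})$, concentrated on measures of the form $\zeta_{\ns,m}$ for compact nilspaces $\ns$ and Borel $m:\ns\to\mc{P}(\Bo)$, with $\mu=\int\nu\ud\kappa'(\nu)$. To promote this to a decomposition into affine-exchangeable pieces, I would fix a countable dense subgroup $\Gamma_0\le\mr{Aff}(\mb{F}_2^\omega)$. Each coordinate-permutation $\theta_\gamma$ with $\gamma\in\Gamma_0$ preserves the set of extreme cubic-exchangeable measures, and combining the invariance $\mu=\mu\co\theta_\gamma^{-1}$ with the uniqueness of the extreme cubic decomposition (it being a Choquet simplex) shows that $\kappa'$ is $\Gamma_0$-invariant under $\nu\mapsto\nu\co\theta_\gamma^{-1}$; an extremality argument then yields that for $\kappa'$-a.e.\ $\nu$ the measure $\nu$ is itself $\Gamma_0$-invariant, hence $\mr{Aff}(\mb{F}_2^\omega)$-invariant by a continuity (density) argument on the action. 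Invoking the criterion (generalizing Corollary \ref{cor:2-hom-aff-exch}) for $\zeta_{\ns,m}$ to be affine-exchangeable then forces $\ns$ to be 2-homogeneous for $\kappa'$-a.e.\ $\nu$.

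The main technical obstacle is then the universality of $\mc{H}$: I would aim to prove that every 2-homogeneous compact nilspace $\ns$ admits a continuous surjective nilspace morphism $\pi:\mc{H}\to\ns$ pushing the Haar measure on $\cu^\omega(\mc{H})$ forward to the Haar measure on $\cu^\omega(\ns)$. This should draw on the structure theory developed in \cite{CGSS}: every 2-homogeneous compact nilspace is built, tower-wise, from 2-homogeneous compact abelian groups attached at each step of its filtration, and every such compact abelian group of step $\ell$ is a continuous quotient of an appropriate separable power of $\mf{Z}$ equipped with the filtration $\mf{Z}_{\bullet,\ell}$. The product description $\mc{H}=\prod_{\ell\ge 1}\mf{Z}^{\mb{N}}$ with filtration $\prod_\ell \mf{Z}^{\mb{N}}_{\bullet,\ell}$ is designed precisely to accommodate all degrees $\ell$ and all separable dimensions simultaneously, and the surjectivity at the level of filtered nilspaces is what guarantees the compatibility of cube Haar measures.

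To conclude, given $\pi$ as above for the $\ns$ associated with a given $\nu=\zeta_{\ns,m}$, setting $m':=m\co\pi:\mc{H}\to\mc{P}(\Bo)$ yields $\zeta_{\ns,m}=\zeta_{\mc{H},m'}$. Carrying out the choice of $\pi$ (and hence of $m'$) in a Borel-measurable way in $\nu$---via a standard measurable selection applied on the Polish space parametrizing 2-homogeneous compact nilspaces together with their surjections from $\mc{H}$---then lets us push $\kappa'$ forward to a Borel probability measure $\kappa$ on $\mc{P}(\Bo^{\db{\mb{N}}})$, concentrated on $\{\zeta_{\mc{H},m}:m:\mc{H}\to\mc{P}(\Bo)\ \text{Borel}\}$, satisfying $\mu=\int\nu\ud\kappa(\nu)$. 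The hardest step, as indicated, is the universality together with its measurable-selection refinement; once that is in hand, the other steps should follow from standard simplex and invariance arguments.
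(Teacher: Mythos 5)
Your overall architecture matches the paper's: decompose $\mu$ as a mixture of extreme cubic-exchangeable measures $\zeta_{\ns,m}$, argue that the extra affine symmetry forces each $\ns$ to be $2$-homogeneous, and then use a universality property of $\mc{H}$ (this is exactly the paper's Theorem \ref{thm:universal-covering}, proved via the structure theory of \cite{CGSS-p-hom}) to replace every $\ns$ by $\mc{H}$. However, there is a genuine gap at the pivotal step. You claim that since $\kappa'$ is the unique barycentric representation of $\mu$ and is therefore invariant under the maps $\nu\mapsto\nu\co\theta_\gamma^{-1}$ for $\gamma\in\Gamma_0$, ``an extremality argument'' yields that $\kappa'$-a.e.\ $\nu$ is itself $\Gamma_0$-invariant. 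Invariance of the representing measure does \emph{not} imply a.e.\ invariance of the components: in the $1$-simplex $\mc{P}(\{1,2\})$ with $T$ the affine homeomorphism induced by swapping the two points, the barycenter $(\tfrac12,\tfrac12)$ is $T$-fixed and its representing measure $\tfrac12(\delta_{\delta_1}+\delta_{\delta_2})$ is $T$-invariant, yet neither extreme component is $T$-fixed. This matters here precisely because (unlike in de Finetti's setting) the extreme cubic-exchangeable measures are \emph{not} all affine-exchangeable — e.g.\ $\zeta_{\mc{D}_1(\mb{Z}_3),m}$ is extreme cubic-exchangeable but not affine-exchangeable — so one cannot rule out a priori that $\kappa'$ spreads mass over a nontrivial orbit of non-affine-exchangeable extreme points whose average is affine-exchangeable. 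The paper closes this gap not by an abstract simplex argument but by reworking the construction of the decomposition itself: it first realizes $\mu$ as a factor of a weak cubic coupling that carries the stronger \emph{$2$-homogeneity} property (invariance under all injective affine maps $\mb{F}_2^m\to\mb{F}_2^n$, an adaptation of \cite[Proposition 6.10]{CScouplings}), and then shows, by running the proof of \cite[Proposition 6.13]{CScouplings} with injective affine maps in place of injective cube morphisms, that the resulting mixture has $2$-homogeneous (hence affine-exchangeable) components almost surely. Your proof needs this concrete argument, or some substitute for it; as written the reduction to $2$-homogeneous nilspaces is not established.

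Two smaller points. First, the map $\mc{H}\to\ns$ you need must be a \emph{fibration} (equivalently, by the paper's Theorem \ref{thm:cube-sur-implies-fib}, a cube-surjective morphism), not merely a continuous surjective morphism: it is fiber-surjectivity that guarantees $\varphi^{\db{\mb{N}}}$ pushes $\mu_{\cu^\omega(\mc{H})}$ to $\mu_{\cu^\omega(\ns)}$ and hence $\zeta_{\ns,m}=\zeta_{\mc{H},m\co\varphi}$. Second, the measurable-selection machinery at the end is unnecessary: one does not need to choose $\varphi_\nu$ measurably in $\nu$, because one never modifies $\kappa'$ — one merely observes that each measure $\nu$ in its support is \emph{equal} to some $\zeta_{\mc{H},m}$, so $\kappa'$ is already concentrated on the (Borel, by Lemma \ref{lem:indepclosed}) set of such measures.
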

\noindent In other words, every affine-exchangeable measure is a mixture of measures $\zeta_{\mc{H},m}$. 

There are several equivalent ways to describe these measures $\zeta_{\mc{H},m}$, apart from formula \eqref{eq:zeta}. Let us give here a particularly simple description that does not involve any nilspace theory: we have $\zeta_{\mc{H},m}=\int_G \prod_{v\in \db{\mb{N}}} m(\q(v))\ud\mu_G(\q)$ where $G$ is a closed subgroup of $\mc{H}^{\mb{F}_2^\omega}$ determined by a countable set of linear equations. More precisely, if for any map $f:\mb{F}_2^k\to\mc{H}$ (for any $k\in \mb{N}$) we denote by $\sigma_k(f)$ the alternating sum $\sum_{v\in \mb{F}_2^k} (-1)^{v\sbr{1}+\cdots+v\sbr{k}}f(v)$, then $G$ is the compact abelian group consisting of all maps $\q: \mb{F}_2^\omega\to \mc{H}$ such that for every $k$ and every injective affine homomorphism $\phi:\mb{F}_2^k\to\mb{F}_2^\omega$ we have $\sigma_k(\q\co\phi)=0\!\mod \mc{H}_{(k)}$. This description of $G$ using linear equations (modulo subgroups) is equivalent to the construction in \eqref{eq:basicex2} (in particular $G=\cu^\omega(\mc{H}_\bullet)$), by basic results on Host--Kra cubes (see e.g.\ \cite[\S 2.2.3]{Cand:Notes1}).

Theorem \ref{thm:main} yields a description of the geometry of the set $\textup{Pr}^{\textup{Aff}(\mb{F}_2^\omega)}(\Bo^{\db{\mb{N}}})$ for any standard Borel space $\Bo$. Indeed, as detailed in Section \ref{sec:geom}, the extreme points of this convex set are exactly the measures of the form $\zeta_{\mc{H},m}$. Moreover, the following holds.
\begin{theorem}\label{thm:BauerProp}
For any compact metric space $\Bo$, the convex set $\textup{Pr}^{\textup{Aff}(\mb{F}_2^\omega)}(\Bo^{\db{\mb{N}}})$ equipped with the vague topology is a Bauer simplex.
\end{theorem}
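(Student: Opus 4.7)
Let $K := \textup{Pr}^{\textup{Aff}(\mb{F}_2^\omega)}(\Bo^{\db{\mb{N}}})$. Since $\Bo$ is compact metric, $\Bo^{\db{\mb{N}}}$ is compact metrizable, and so is $\mc{P}(\Bo^{\db{\mb{N}}})$ with the vague topology; the subset $K$ is closed in it (being the intersection of the closed invariance conditions over all $\gamma \in \textup{Aff}(\mb{F}_2^\omega)$), hence compact, convex, and metrizable. To show that $K$ is a Bauer simplex I plan to apply the standard characterization: a compact convex metrizable set is a Bauer simplex if and only if its extreme-point set $\mc{E}$ is closed and the barycenter map $\beta : \mc{P}(\mc{E}) \to K$, $\kappa \mapsto \int \nu \ud\kappa(\nu)$, is a bijection. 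By the identification carried out in Section \ref{sec:geom}, the extreme points are exactly the measures $\zeta_{\mc{H},m}$, and the surjectivity of $\beta$ is precisely the content of Theorem \ref{thm:main}. So it remains to establish closedness of $\mc{E}$ and injectivity of $\beta$.

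For closedness, suppose $\zeta_{\mc{H},m_n} \to \nu$ vaguely. I would encode each Borel map $m : \mc{H} \to \mc{P}(\Bo)$, up to $\mu_{\mc{H}}$-almost-everywhere equality, as a probability measure on $\mc{H} \times \Bo$ with first marginal $\mu_{\mc{H}}$; this encoding takes values in a vaguely compact metrizable set, so after passing to a subsequence $m_n \to m$ in this topology. From formula \eqref{eq:zeta}, for any finite $V \subset \db{\mb{N}}$ and continuous $\phi$ on $\Bo^V$, the integral $\int \phi \ud(\pi_V)_* \zeta_{\mc{H},m}$ equals an integral against the fixed Haar measure $\mu_{\cu^\omega(\mc{H})}$ of quantities that depend continuously on $m$ in the above topology; this yields continuity of $m \mapsto \zeta_{\mc{H},m}$ and hence $\nu = \zeta_{\mc{H},m} \in \mc{E}$.

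For injectivity of $\beta$, I would test with cylinder functionals. If $\beta(\kappa_1) = \beta(\kappa_2)$, then for every finite $V \subset \db{\mb{N}}$ and continuous $\phi$ on $\Bo^V$, the function $\Phi_{V,\phi}(\zeta) := \int \phi \ud(\pi_V)_* \zeta$ on $\mc{E}$ integrates equally against $\kappa_1$ and $\kappa_2$. Varying $V$ and $\phi$ over a countable dense family, these continuous functions on $\mc{E}$ separate the points of $\mc{E}$ and generate its Borel $\sigma$-algebra, forcing $\kappa_1 = \kappa_2$.

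The main obstacle I anticipate is the closedness of $\mc{E}$. A priori, the extreme-point set of an invariant-measure space need not be closed; indeed, the analogous set is dense in the weaker setting of $\aut(\db{\mb{N}})$-exchangeability (the Poulsen case of \cite{Austin2}). What rules this out here is the rigidity of the specific nilspace $\mc{H}$ with its $2$-homogeneous filtration, but exploiting this requires carefully identifying the right compact parameter space for the maps $m$ (possibly modulo those nilspace automorphisms of $\mc{H}$ that preserve $\mu_{\mc{H}}$) and verifying that $m \mapsto \zeta_{\mc{H},m}$ descends to a continuous map whose image is closed. Any successful execution of this step will rely on the detailed structural features of $\mc{H}$ developed in the earlier sections of the paper.
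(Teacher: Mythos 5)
Your reduction to ``the extreme points form a closed set'' is the right target, and your identification of the extreme points with the measures $\zeta_{\mc{H},m}$ matches Proposition \ref{prop:zeta-iff-extreme}. But the step you flag as the main obstacle is in fact where the argument breaks. The map $m\mapsto\zeta_{\mc{H},m}$ is \emph{not} continuous for the topology you propose (vague convergence of the encoding measures on $\mc{H}\times\Bo$ with first marginal $\mu_{\mc{H}}$). The integrand in \eqref{eq:zeta} is a product of the values $m(\q(v))$ over vertices $v$ that are strongly correlated under $\mu_{\cu^\omega(\mc{H})}$ (the cube group imposes linear constraints among the $\q(v)$), so $\int\phi\,\ud(\pi_V)_*\zeta_{\mc{H},m}$ is a Gowers-type multilinear expression in $m$, and such expressions are not continuous under weak convergence of $m$. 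Concretely, if $\varphi_n:\mc{H}\to\mc{H}$ are cube-surjective morphisms then $\zeta_{\mc{H},m\co\varphi_n}=\zeta_{\mc{H},m}$ for all $n$ by Proposition \ref{prop:mes-pre-inf-cube}, while the encodings of $m\co\varphi_n$ can converge weakly to the encoding of a map $\bar m$ with $\zeta_{\mc{H},\bar m}\neq\zeta_{\mc{H},m}$ (obtained by averaging $m$ along the fibres); your argument would then give the wrong limit. This loss of control under weak limits of the parameter $m$ is essentially the same phenomenon that produces the Poulsen simplex in the $\aut(\db{\mb{N}})$ setting, so it cannot be finessed by a generic compactness argument. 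The paper avoids parameterizing the extreme points altogether: by Proposition \ref{prop:zeta-iff-extreme} extremality is \emph{equivalent} to the independence property, which is a condition expressed by integrals of products of functions depending on independent faces, and Lemma \ref{lem:indepclosed} shows directly (by approximating by continuous functions and passing to the limit in the vague topology) that this condition is closed. That is the missing idea.

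Two further points. First, your injectivity argument for the barycenter map does not work: the functionals $\Phi_{V,\phi}(\zeta)=\int\phi\,\ud(\pi_V)_*\zeta$ are affine in $\zeta$, so $\int\Phi_{V,\phi}\,\ud\kappa_1=\int\Phi_{V,\phi}\,\ud\kappa_2$ is exactly the hypothesis that the barycenters agree, and a point-separating family of functions determines a measure only if it is (or generates, after taking products) a uniformly dense subalgebra --- products $\Phi_{V_1,\phi_1}\Phi_{V_2,\phi_2}$ are not of this form, so the argument is circular. Second, injectivity is not actually needed here: the convention in the paper (and the standard fact that $\textup{Pr}^{\Gamma}$ of a compact metric space is always a Choquet simplex, via ergodic decomposition) reduces the Bauer property to closedness of the extreme-point set alone, which is why the paper's proof is the two-line combination of Proposition \ref{prop:zeta-iff-extreme} and Lemma \ref{lem:indepclosed}.
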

\noindent This \emph{Bauer property} was identified in \cite[\S 3.1]{Austin2} to be a common feature of other mainstream exchangeability notions. Theorem \ref{thm:BauerProp} shows that affine exchangeability also follows this pattern.

As mentioned above, Theorem \ref{thm:main} has applications concerning limit objects for a notion of convergence for sequences of functions $(f_n:\mb{F}_2^n\to \Bo)_{n\in \mb{N}}$ for a fixed compact metric space $\Bo$. This notion is analogous to well-known and much studied notions of convergence for sequences of graphs and hypergraphs. These applications rely on a general connection between representation theorems for exchangeable measures in probability theory, and various limit theories in combinatorics; for more background we refer to \cite[\S 2.3]{Austin}, which discusses this connection in the setting of exchangeable random hypergraph colorings. In the present paper, we take the opportunity to develop this connection in the setting of affine exchangeability. To explain this, let us say that a 2-homogeneous compact nilspace $\ns$ \emph{represents affine exchangeability} if it can replace $\mc{H}$ in Theorem \ref{thm:main}. Given a compact metric space $\Bo$, let us also say that a compact 2-homogeneous nilspace $\ns$ is a \emph{limit domain} for convergent sequences $(f_n:\mb{F}_2^n\to \Bo)_{n\in \mb{N}}$, if for any such sequence there exists a measurable function $m:\ns\to \mc{P}(\Bo)$ such that for any finite $\mc{L}\subset \mb{F}_2^\omega$ the measures $\mu_{\mc{L},f_n}$ converge vaguely to\footnote{ $p_{\mc{L}}:\Bo^{\db{\mb{N}}}\to \Bo^{\mc{L}}$ is the projection to the coordinates indexed by $\mc{L}\subset \mb{F}_2^\omega$ (identifying $\mb{F}_2^\omega$ with $\db{\mb{N}}$).} $\zeta_{\ns,m}\co p_{\mc{L}}^{-1}$. In Section \ref{sec:exch-limits}, we prove the following result, which connects these concepts.
\begin{theorem}\label{thm:repre-with-2-hom}
Let $\ns$ be a compact profinite-step\footnote{Meaning that $\ns$ is the inverse limit of compact nilspaces of finite step; see Subsection \ref{subsec:profin}.} 2-homogeneous nilspace. The following statements are equivalent.
\begin{enumerate}[leftmargin=0.8cm]
\item $\ns$ represents affine-exchangeability.
\item For every compact 2-homogeneous profinite-step nilspace $\nss$ there is a \textup{(}continuous\textup{)} fibration $\varphi:\ns\to \nss$.
\item $\ns$ is a limit domain for convergent sequences $(f_n:\mb{F}_2^n\to\Bo)_{n\in \mb{N}}$, for every compact metric space $\Bo$.
\end{enumerate}
\end{theorem}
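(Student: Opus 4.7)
The plan is to prove the three implications (ii)$\Rightarrow$(i), (i)$\Rightarrow$(ii), and (i)$\Leftrightarrow$(iii), with Theorem~\ref{thm:main} as the main ingredient throughout; all three nilspaces involved are $2$-homogeneous and profinite-step, so the relevant ``compatibility'' properties of the Haar measures on $\cu^\omega(\ns)$ (recalled in Subsection~\ref{subsec:CinfHaar}) are available.

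For (ii)$\Rightarrow$(i) I would apply hypothesis (ii) with $\nss=\mc{H}$ to obtain a continuous fibration $\varphi:\ns\to\mc{H}$. A standard property of nilspace fibrations is that the induced map $\varphi_\ast:\cu^\omega(\ns)\to \cu^\omega(\mc{H})$, $\q\mapsto \varphi\co\q$, is continuous and surjective and pushes the Haar probability measure $\mu_{\cu^\omega(\ns)}$ forward to $\mu_{\cu^\omega(\mc{H})}$. Substituting $\q\co \varphi$ for $\q$ in \eqref{eq:zeta} then gives $\zeta_{\mc{H},m}=\zeta_{\ns,m\co\varphi}$ for every Borel $m:\mc{H}\to\mc{P}(\Bo)$. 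Combining this identity with the mixture representation from Theorem~\ref{thm:main} realizes any $\mu\in\textrm{Pr}^{\textrm{Aff}(\mb{F}_2^\omega)}(\Bo^{\db{\mb{N}}})$ as a mixture of measures of the form $\zeta_{\ns,m'}$, so $\ns$ represents affine-exchangeability.

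For (i)$\Rightarrow$(ii) I fix a compact $2$-homogeneous profinite-step nilspace $\nss$ and consider the measure $\nu:=\zeta_{\nss,\delta_{(\cdot)}}\in\mc{P}(\nss^{\db{\mb{N}}})$, namely \eqref{eq:zeta} with $\Bo=\nss$ and $m(x)=\delta_x$. By Corollary~\ref{cor:2-hom-aff-exch}, $\nu$ is affine-exchangeable, and its support is contained in $\cu^\omega(\nss)\subset\nss^{\db{\mb{N}}}$. An extremality check (the Dirac factors prevent any nontrivial convex decomposition) allows me to upgrade the mixture representation from (i) to a \emph{single} identity $\nu=\zeta_{\ns,m'}$ for some Borel $m':\ns\to\mc{P}(\nss)$. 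The support constraint forces the product measure $\prod_{v\in\db{\mb{N}}}m'(\q(v))$ to be concentrated on $\cu^\omega(\nss)$ for $\mu_{\cu^\omega(\ns)}$-a.e.\ $\q$; since a product measure concentrated on such a ``thin'' closed subset must have Dirac factors, I obtain $m'(x)=\delta_{\varphi(x)}$ for a Borel map $\varphi:\ns\to\nss$. The conditions $\varphi\co\q\in\cu^\omega(\nss)$ almost surely and the equality of marginals on cube spaces identify $\varphi$ as a cube-preserving measurable map that pushes the Haar measure on $\cu^\omega(\ns)$ to that on $\cu^\omega(\nss)$, and the profinite-step hypothesis lets me pass to a continuous representative, yielding a continuous nilspace fibration.

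For (i)$\Leftrightarrow$(iii) I would first observe that for any convergent sequence $(f_n:\mb{F}_2^n\to\Bo)$ the vague limits $\mu_\mc{L}=\lim_n \mu_{\mc{L},f_n}$ over finite $\mc{L}\subset\mb{F}_2^\omega$ form a consistent family, so the Kolmogorov extension theorem produces a Borel measure $\mu\in\mc{P}(\Bo^{\db{\mb{N}}})$ with $\mu\co p_\mc{L}^{-1}=\mu_\mc{L}$; the $\textrm{Aff}(\mb{F}_2^n)$-invariance of $\mu_{\cu^k(\mb{F}_2^n)}$ passes to the limit, so $\mu\in \textrm{Pr}^{\textrm{Aff}(\mb{F}_2^\omega)}(\Bo^{\db{\mb{N}}})$. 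For (i)$\Rightarrow$(iii), an ergodic-type argument combined with Theorem~\ref{thm:BauerProp} shows that such a sequential limit $\mu$ is actually extreme, hence of the form $\zeta_{\ns,m}$ by (i). Conversely, for (iii)$\Rightarrow$(i), a density argument inside the simplex realizes every extreme affine-exchangeable measure as a limit of empirical measures coming from functions $f_n:\mb{F}_2^n\to\Bo$, so (iii) forces the extreme points to have the form $\zeta_{\ns,m}$, whence the mixture representation for general $\mu$.

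The main obstacle will be the fibration-extraction step in (i)$\Rightarrow$(ii): going from an almost-everywhere defined Borel $\varphi:\ns\to\nss$, obtained from the Dirac decomposition of $m'$, to a genuine continuous cube-preserving, Haar-projecting map. This is precisely where the $2$-homogeneity and profinite-step hypotheses on both $\ns$ and $\nss$ enter decisively, allowing me to upgrade the measurable cube-preservation and Haar-pushforward conditions to those of a bona fide nilspace fibration.
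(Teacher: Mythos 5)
Your overall architecture matches the paper's: (ii)$\Rightarrow$(i) via the identity $\zeta_{\mc{H},m}=\zeta_{\ns,m\co\varphi}$ together with Theorem \ref{thm:main}; (i)$\Rightarrow$(ii) by testing against the deterministic measure $\mu_{\cu^\omega(\nss)}=\zeta_{\nss,\delta_{(\cdot)}}$, extracting a Dirac-valued $m'$ and hence a Borel cube-Haar-preserving map, then upgrading it to a continuous fibration; and a representation/limit-domain correspondence for (iii). Two steps, however, are genuinely incomplete.

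First, in (i)$\Rightarrow$(ii) you work with an arbitrary target $\nss$. The almost-everywhere-to-continuous upgrade (the paper's Theorem \ref{thm:cube-surjective-app}) is proved only when the target is a \emph{group} nilspace: its key stability input (Lemma \ref{lem:l1-small-implies-constant}) controls $d_1(\psi,0)$ for a \emph{difference} of morphisms, which requires the target to carry a translation-invariant abelian group structure. For a general $2$-homogeneous profinite-step $\nss$ this step is not available as stated, and ``the profinite-step hypothesis lets me pass to a continuous representative'' is precisely the point that needs proof. The paper sidesteps this by establishing (ii) only for the single target $\nss=\mc{H}$ (a group nilspace) and then reaching every other $2$-homogeneous profinite-step target by composing with the fibrations $\mc{H}\to\nss$ supplied by Theorem \ref{thm:universal-covering}; your argument should be rerouted the same way.

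Second, in (i)$\Rightarrow$(iii) the crux is showing that the limit measure $\mu$ assembled from the vague limits $\mu_{\mc{L}}=\lim_n\mu_{\mc{L},f_n}$ has the \emph{independence property} (equivalently, is extreme), so that (i) applies to it. Your appeal to ``an ergodic-type argument combined with Theorem \ref{thm:BauerProp}'' does not deliver this: the Bauer property says that a vague limit of \emph{extreme affine-exchangeable measures on $\Bo^{\db{\mb{N}}}$} is extreme, but the approximants $\mu_{\mc{L},f_n}$ are finite-dimensional sampling distributions, not measures in $\textup{Pr}^{\textup{Aff}(\mb{F}_2^\omega)}(\Bo^{\db{\mb{N}}})$, so there is nothing for the Bauer property to act on. The paper instead proves independence directly in the proof of Theorem \ref{thm:corresp}, by a separation-of-variables computation: given independent finite faces $\mc{L}_1,\mc{L}_2$, one normalises them by an element of $\Aff(\mb{F}_2^k)$, splits the average over $\cu^k(\mb{F}_2^n)$ into disjoint blocks of variables, and eliminates the shared base point by a change of variables. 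Some such computation is unavoidable and is absent from your sketch. The converse realisation of an extreme $\mu$ as a limit of sampling measures of restrictions of a $\mu$-generic point is the pointwise ergodic theorem for the amenable group $\bigcup_n\Aff(\mb{F}_2^n)$, together with a counting estimate comparing the injective affine maps $p_k(\Gamma_n)$ to all of $\cu^k(\mb{F}_2^n)$; your ``density argument inside the simplex'' gestures at this but should be made explicit.
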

\noindent Combining this with Theorem \ref{thm:main}, we deduce immediately the following result.
\begin{theorem}\label{thm:mainlim}
For any compact metric space $\Bo$, the nilspace $\mc{H}$ is a limit domain for convergent sequences of functions $(f_n:\mb{F}_2^n\to\Bo)_{n\in\mb{N}}$. 
\end{theorem}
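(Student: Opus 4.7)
The plan is to recognize that Theorem \ref{thm:mainlim} follows immediately by combining the two main structural results already in place, namely Theorem \ref{thm:main} and Theorem \ref{thm:repre-with-2-hom}. Concretely, I would show that $\mc{H}$ satisfies the hypotheses of Theorem \ref{thm:repre-with-2-hom}, that Theorem \ref{thm:main} is exactly the statement that $\mc{H}$ represents affine-exchangeability (condition (i) of that theorem), and then read off that $\mc{H}$ is a limit domain (condition (iii)) for every compact metric target $\Bo$.

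First I would verify that $\mc{H}$ is a compact profinite-step 2-homogeneous nilspace. Compactness is clear since $\mc{H}$ is a countable product of copies of the compact group $\mf{Z}$. Profinite-step follows from the fact that $\mc{H}$ is the inverse limit of the finite-step nilspace quotients $\prod_{\ell=1}^{k}\mf{Z}^{\mb{N}}$ endowed with the product filtration $\prod_{\ell=1}^{k}\mf{Z}_{\bullet,\ell}^{\mb{N}}$, each of step $k$. For 2-homogeneity, I would check componentwise: in the $\ell$-th factor with filtration $\mf{Z}_{\bullet,\ell}$, the $i$-th term is $2^{\max(0,i-\ell)}\mf{Z}$, so multiplication by $2$ sends it into $2^{\max(0,i-\ell)+1}\mf{Z}\subseteq 2^{\max(0,i+1-\ell)}\mf{Z}$, which is exactly the $(i+1)$-th term of $\mf{Z}_{\bullet,\ell}$. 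Taking products over $\ell\in\mb{N}$ preserves this inclusion, so $2\,\mc{H}_{(i)}\subseteq \mc{H}_{(i+1)}$ for every $i$, which is the 2-homogeneity condition.

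With these properties in hand, Theorem \ref{thm:main} says precisely that every $\mu\in \textup{Pr}^{\textup{Aff}(\mb{F}_2^\omega)}(\Bo^{\db{\mb{N}}})$ is a mixture of measures of the form $\zeta_{\mc{H},m}$, i.e.\ that $\mc{H}$ \emph{represents affine-exchangeability} in the sense defined just above Theorem \ref{thm:repre-with-2-hom}. This is condition (i). By the equivalence (i) $\Rightarrow$ (iii) in Theorem \ref{thm:repre-with-2-hom}, it follows that $\mc{H}$ is a limit domain for convergent sequences $(f_n:\mb{F}_2^n\to\Bo)_{n\in\mb{N}}$, for every compact metric space $\Bo$, which is the statement of Theorem \ref{thm:mainlim}.

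There is no real obstacle at this step since the substantive content has been placed in the two cited theorems; the only care needed is to confirm that $\mc{H}$ meets the hypotheses of Theorem \ref{thm:repre-with-2-hom} (compactness, profinite-step, 2-homogeneity), as carried out above. All the genuine difficulty — namely establishing Theorem \ref{thm:main} (which isolates $\mc{H}$ as the universal nilspace for affine exchangeability) and the equivalence of Theorem \ref{thm:repre-with-2-hom} (which translates that universality into the limit-domain property) — has been handled separately, so the present statement is a formal combination.
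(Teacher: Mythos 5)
Your proposal is correct and follows essentially the same route as the paper, which deduces Theorem \ref{thm:mainlim} immediately from Theorem \ref{thm:main} (giving condition (i) of Theorem \ref{thm:repre-with-2-hom} for $\mc{H}$) together with the implication (i)$\Rightarrow$(iii) of Theorem \ref{thm:repre-with-2-hom}; your explicit verification that $\mc{H}$ is compact, profinite-step and 2-homogeneous matches the facts recorded in Section \ref{sec:p-hom}.
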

\noindent The group nilspace $\mc{H}$, involved in Theorems \ref{thm:main} and \ref{thm:mainlim}, may at first seem quite complicated, and one may wonder whether a simpler object could still yield such representation results. However, statement $(ii)$ in Theorem \ref{thm:repre-with-2-hom} constitutes a strong requirement for a nilspace to be able to represent affine exchangeability, putting constraints on how simple such a nilspace can be. In Section \ref{sec:repvsnils} we use this fact to rule out certain other natural candidates. In particular, we were interested in clarifying the relation between our results concerning limit domains and the results in \cite{HHH}. The latter paper studies a convergence notion for sequences of \emph{Boolean} functions $(f_n:\mb{F}_p^n\to \{0,1\})_{n\in \mb{N}}$, for any prime $p$, and for $p=2$ that notion is equivalent to the one studied in this paper (we detail this equivalence in Appendix \ref{app:correspondence}). However, in Subsection \ref{subsec:restrictions-on-lim-dom} we prove that the group $\mb{G}_\infty$ used to define the \emph{limit objects} in \cite{HHH} cannot be used as a limit domain in the sense of statement $(iii)$ in Theorem \ref{thm:repre-with-2-hom}.

The paper has the following outline. In Section \ref{sec:prelims} we give some background on nilspaces and set up some measure-theoretic tools needed in the sequel, especially the machinery involving the infinite-dimensional cube set $\cu^\omega(\ns)$ on a compact nilspace $\ns$, and the associated construction and basic properties of the measures $\zeta_{\ns,m}$. We take the opportunity, in Subsection \ref{subsec:basicex}, to give further basic examples illustrating affine-exchangeability, which also help to motivate the subsequent material. In Section \ref{sec:2-homCC}, we begin proving Theorem \ref{thm:main}, by first refining the representation theorem \cite[Theorem 1.3]{CScouplings} in light of more recent results on higher-order Fourier analysis in characteristic $p$ from \cite{CGSS-p-hom}. This relies especially on the results concerning \emph{$p$-homogeneous nilspaces}. More precisely, we prove in Section \ref{sec:2-homCC} that the additional strength of affine exchangeability (compared to cubic exchangeability) implies that the nilspaces obtained by applying \cite[Theorem 6.7]{CScouplings} to an affine-exchangeable measure must be \emph{2-homogeneous} nilspaces. This enables us to apply the structure theorem for 2-homogeneous nilspaces obtained in \cite{CGSS-p-hom}, which tells us that any finite 2-homogeneous nilspace is the image, under some nilspace fibration, of a certain finite filtered abelian 2-group. The next part of the argument, carried out in Section \ref{sec:p-hom},  involves setting up adequate inverse systems of such filtered 2-groups, in order to prove that for any compact 2-homogeneous nilspace $\ns$ there is a fibration $\mc{H}\to\ns$. These ingredients are then combined in Section \ref{sec:mainproof} to complete the proof of Theorem \ref{thm:main}. Section \ref{sec:geom} is devoted to proving Theorem \ref{thm:BauerProp}. In Section \ref{sec:exch-limits} we prove a version of Theorem \ref{thm:repre-with-2-hom} focusing on the relation between limit domains and affine-exchangeability; see Theorem \ref{thm:corresp}. This is then used  in Section \ref{sec:repvsnils} to complete the proof of Theorem \ref{thm:repre-with-2-hom}, and thus obtain also Theorem \ref{thm:mainlim}. In Subsection \ref{subsec:restrictions-on-lim-dom} we study the aforementioned relation with \cite{HHH} concerning alternative limit domains.

Finally, let us mention the possibility of extending the methods in this paper to any prime $p>2$. Combined with corresponding extensions of prior work (notably the results from \cite[\S 6]{CScouplings}) this would yield representation theorems for analogues of affine exchangeability for $p>2$. We do not pursue this extension in this paper; see Remark \ref{rem:extensionto-p}. Another  natural direction would be to restrict the true complexity of the linear forms involved in the definition of convergence of sequences $(f_n:\mb{F}_2^n\to \Bo)_{n\in \mb{N}}$, i.e., by requiring vague convergence of $\mu_{\mc{L},f_n}$ only for systems of linear forms $\mc{L}$ of true complexity \emph{at most} some prescribed finite bound. It can be seen that $\mc{H}$ is a valid limit domain also to describe the limits of such sequences. Moreover, with more work it is possible to use a simpler version of $\mc{H}$ in that case. We outline how this can be done in Remark \ref{rem:finite-complexity-limit}.

\smallskip

\noindent \textbf{Acknowledgements.} 
We thank Hamed Hatami, Pooya Hatami, and Bryna Kra for valuable comments on this paper. All authors received funding from Spain’s MICINN project PID2020-113350GB-I00. The second-named author received funding from project Momentum (Lend\"ulet) 30003 of the Hungarian Government. The research was also supported partially by the NKFIH ``Élvonal'' KKP 133921 grant and partially by the Hungarian Ministry of Innovation and Technology NRDI Office within the framework
of the Artificial Intelligence National Laboratory Program.

\section{Nilspace and measure-theoretic preliminaries}\label{sec:prelims}

\noindent In this section we provide a brief introduction to nilspaces and related measure-theoretic aspects needed for the sequel. In particular we introduce the space of infinite-dimensional cubes $\cu^{\omega}(\ns)$ on a compact nilspace $\ns$, and define what we shall call the \emph{Haar measure} on this space; this underpins the construction of the measures $\zeta_{\ns,m}$. Finally, we give more background and examples on cubic and affine exchangeability.

\subsection{Brief introduction to nilspaces}\hfill\\
For any integer $n\ge 1$ we denote by $[n]$ the set $\{1,2,\ldots,n\}$. We define $\db{n}:=\{0,1\}^n$,  and define $\db{0}:=\{0\}$. We denote by $0^n$ (resp.\ $1^n$) the element of $\db{n}$ with all entries equal to 0 (resp.\ 1). Following \cite[Definition 1.0.1]{Cand:Notes2}, by \emph{compact space} we shall mean by default a compact, second-countable, Hausdorff topological space. 

A map $\phi:\db{n}\to\db{m}$ is a \emph{discrete-cube morphism} \cite{CamSzeg},\cite[Definition 1.1.1]{Cand:Notes1} if for every $j\in[m]$ the $j$-th coordinate of $\phi(v)$ is either a constant function of $v$ (equal to $0$ or $1$) or is equal to $v\sbr{i_j}$ or $1-v\sbr{i_j}$ for some $i_j\in [n]$. For $n\le m$ we say that a discrete-cube morphism  $\phi:\db{n}\to\db{m}$ is a \emph{face map} if $\phi$ is injective and exactly $m-n$ coordinates of $\phi(v)$ remain constant as $v$ varies through $\db{n}$.
\begin{defn}[Nilspaces \cite{CamSzeg}] 
A \emph{nilspace} is a set $\ns$ together with a collection of sets $\cu^n(\ns)\subset \ns^{\db{n}}$, $n\in\mb{N}$, such that the following axioms are satisfied.
\begin{enumerate}[leftmargin=0.7cm]
    \item (Composition)\; For any discrete-cube morphism $\phi:\db{n}\to\db{m}$ and any $\q\in \cu^{m}(\ns)$, we have $\q\co\phi\in\cu^n(\ns)$.
    \item (Ergodicity)\; $\cu^1(\ns)=\ns^{\db{1}}$.
    \item (Corner completion)\; For any $n$ let $\q': \db{n}\setminus\{1^n\} \to  \ns$ be such that for all face maps $\phi:\db{n-1}\to\db{n}$ with $1^n\notin \phi(\db{n-1})$ we have $\q'\co\phi\in\cu^{n-1}(\ns)$. Then there exists $\q\in\cu^n(\ns)$ such that $\q(v)=\q'(v)$ for all $v\in\db{n}\setminus\{1^n\}$.
\end{enumerate}
\noindent The elements of $\cu^n(\ns)$ are called the $n$-\emph{cubes} on $\ns$. The maps $\q': \db{n}\setminus\{1^n\} \to  \ns$ satisfying the assumption in the completion axiom are called the $n$\emph{-corners} on $\ns$, and a cube $\q$ satisfying the conclusion of this axiom is a \emph{completion} of $\q'$. We denote the set of all $n$-corners on $\ns$ by $\cor^n(\ns)$. We say that $\ns$ is a \emph{$k$-step} nilspace if each $\q\in\cor^{k+1}(\ns)$ has a \emph{unique} completion.

If $\ns$ is endowed with a topology making it a compact space, and  for every $n\in\mb{N}$ the cube set $\cu^n(\ns)$ is closed relative to the product topology on $\ns^{\db{n}}$, then we say that $\ns$ is a \emph{compact nilspace}.
\end{defn}
\noindent Next we recall the definition of morphisms in the nilspace category, and the special type of morphisms called fibrations (or fiber-surjective morphisms), which are nilspace analogues of surjective homomorphisms between abelian groups.
\begin{defn}[Morphisms and fibrations] 
Let $\ns,\nss$ be nilspaces. A map $\varphi:\ns\to\nss$ is a morphism if $\varphi\co\q\in\cu^n(\nss)$ for every $\q\in\cu^n(\ns)$. We denote the set of such morphisms by $\hom(\ns,\nss)$. We say that $\varphi\in \hom(\ns,\nss)$ is a \emph{fibration} if for every $\q'\in\cor^n(\ns)$, for every completion $\q\in \cu^n(\nss)$ of the corner $\varphi\co\q'$, there exists a completion $\tilde{\q}\in\cu^n(\ns)$ of $\q'$ such that  $\varphi\co\tilde{\q}=\q$. If $\ns$ and $\nss$ are compact nilspaces then morphisms in $\hom(\ns,\nss)$ are required to be also continuous maps.
\end{defn}
The next concept is crucial for the analysis of the structure of nilspaces.
\begin{defn}[Characteristic nilspace factors]\label{def:char-factors}
Let $\ns$ be a nilspace. For every $k\ge 0$ and $x,y\in\ns$ we write $x\sim_k y$ if there exist $\q_1,\q_2\in\cu^{k+1}(\ns)$ such that $\q_1(v)=\q_2(v)$ for all $v\not=1^{k+1}$, $\q_1(1^{k+1})=x$, and $\q_2(1^{k+1})=y$. We define the $k$-th \emph{characteristic factor} $\ns_k:=\ns/\sim_k$ and denote by $\pi_k$ the canonical projection $\ns\to \ns/\sim_k$. Then $\ns_k$ equipped with the cube sets $\cu^n(\ns_k):=\{\pi_k\co\q:\q\in \cu^n(\ns)\}$ is a $k$-step nilspace. If $\ns$ is a compact nilspace then $\ns_k$ is also a compact nilspace and the map $\pi_k$ is continuous and open.
\end{defn}
\noindent For the proofs of the claims in this definition, see \cite[\S 3.2]{Cand:Notes1} and \cite[\S 2.1]{Cand:Notes2}. These characteristic nilspace factors enable us to view nilspaces as iterated abelian bundles, in the sense of the following construction (see \cite[Definition 3.2.17]{Cand:Notes1}).
\begin{defn}[Abelian bundle] 
Let $\ab$ be an abelian group and $S$ and $B$ be sets. We say that $B$ is a \emph{$\ab$-bundle over} $S$ if there exists an action $\alpha:B\times \ab\to B$, $(b,z)\mapsto b+z$ and a (projection) map $\pi:B\to S$ such that the following holds.
\begin{enumerate}
    \item The action of $\ab$ is free, i.e., for any $b\in B$ we have $\{z\in \ab:b+z=b\}=\{0_{\ab}\}$.
    \item The map $s\mapsto\pi^{-1}(s)$ is a bijection between $S$ and the set of orbits of $\ab$ in $B$.
\end{enumerate}
\noindent For any integer $k\ge 0$, we say that $B$ is a \emph{$k$-fold abelian bundle} if there exists a sequence of sets $B_0,\ldots,B_k$ with $B_k=B$, and abelian groups $\ab_1,\ldots,\ab_k$, such that $B_0$ is a singleton and $B_{i}$ is a $\ab_i$-bundle over $B_{i-1}$ for all $i\in [k]$. Denoting by $\pi_{i-1,i}$ the projection $B_i\to B_{i-1}$, for each $i\in [k]$, we then denote by $\pi_{i,j}$ the projection $\pi_{i,i+1}\co\cdots\co \pi_{j-1,j}:B_j\to B_i$ for any $i\leq j$ in $[k]$, and we define $\pi_i:=\pi_{i,k}$. We call the bundles $B_i$ the \emph{factors} of the bundle $B$.

If $\ab$ is a compact abelian group and $B$, $S$ and $\ab$ are compact spaces, we say that $B$ is a \emph{compact $\ab$-bundle} over $S$ if in addition to the previous assumptions we have that $\alpha$ is continuous and $U\subset S$ is open if and only if $\pi^{-1}(U)\subset B$ is open. We say that $B$ is a \emph{compact $k$-fold abelian bundle} if for every $i\in[k]$ the factor $B_i$ is a compact $\ab_i$-abelian bundle over $B_{i-1}$.
\end{defn}
\begin{lemma}[Compact nilspaces as abelian bundles]
Let $\ns$ be a $k$-step compact nilspace. Then $\ns$ along with the maps $\pi_{i,j}:\ns_j\to\ns_i$ for $i\le j$ is a $k$-fold compact abelian bundle. The compact abelian groups $\ab_1,\ldots,\ab_k$ such that $\ns_i$ is a $\ab_i$-bundle over $\ns_{i-1}$ for each $i\in[k]$ are called the \emph{structure groups} of $\ns$.
\end{lemma}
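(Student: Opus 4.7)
The plan is to proceed by induction on the step $k$. The base case $k=0$ is trivial: a $0$-step nilspace is forced to be a singleton (applying the uniqueness of completions of $1$-corners together with the ergodicity axiom), and a singleton is by convention a $0$-fold abelian bundle. For $k=1$, axioms (ii) and (iii) force $\ns$ to be a torsor over a compact abelian group $\ab_1$, with the group operations continuous thanks to the closedness of $\cu^2(\ns)\subset \ns^{\db{2}}$, so $\ns\to\{*\}$ is a compact $\ab_1$-bundle. For the inductive step, I would apply the inductive hypothesis to the $(k-1)$-step compact nilspace $\ns_{k-1}$, turning it into a $(k-1)$-fold compact abelian bundle with structure groups $\ab_1,\ldots,\ab_{k-1}$ and projections $\pi_{i-1,i}$ for $i<k$. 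It then suffices to exhibit a compact abelian group $\ab_k$ together with a continuous free action $\ns\times \ab_k\to\ns$ whose orbits are exactly the fibers of $\pi_{k-1,k}:\ns\to\ns_{k-1}$, and such that $\pi_{k-1,k}$ satisfies the quotient condition in the bundle definition.

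To construct $\ab_k$ and its action, I would consider the set $P$ of pairs $(\q_1,\q_2)\in\cu^k(\ns)^2$ with $\q_1(v)=\q_2(v)$ for every $v\in\db{k}\setminus\{1^k\}$, and quotient $P$ by the equivalence relation identifying pairs that give the ``same vertical difference at $1^k$''; this relation can be made precise by juxtaposing two such pairs along a common face of a $(k+1)$-cube and demanding that the resulting corner be completable. The corner completion axiom (iii) produces a well-defined binary operation on the quotient, and the uniqueness of $(k+1)$-cube completions (from $\ns$ being $k$-step) forces this operation to be a commutative group law by standard symmetrization arguments on pairs of orthogonal faces. The induced action on $\ns$, sending a point $x$ to the top vertex of a completion of a corner with prescribed vertical difference starting from $x$, preserves $\pi_{k-1,k}$-fibers by definition of $\sim_{k-1}$, is transitive on each fiber (by the definition of $\sim_{k-1}$ again), and is free because $\sim_k$ is equality on $\ns$ by the $k$-step hypothesis.

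The main obstacle will be the topological part: checking that $\ab_k$ is compact, that the action is jointly continuous, and that $\pi_{k-1,k}$ is open (this last being the nontrivial half of the quotient condition). Fixing a base point $x_0\in\ns$, one identifies $\ab_k$ bijectively with the fiber $\pi_{k-1,k}^{-1}(\pi_{k-1,k}(x_0))$, which is a closed, hence compact, subset of $\ns$; this equips $\ab_k$ with a compact topology compatible with its group structure, the compatibility being derived from the closedness of $\cu^k(\ns)\subset \ns^{\db{k}}$ together with uniqueness of top-vertex completions via a closed-graph argument that produces a continuous selection of completed cubes. The openness of $\pi_{k-1,k}$ is the most delicate point; it is obtained by using the corner completion axiom to lift neighbourhoods from $\ns_{k-1}$ to $\ns$ in a uniform way, combined with a compactness argument on $\cu^{k+1}(\ns)$ showing that small perturbations of a point in the base can be matched by small perturbations of any chosen lift.

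Throughout, I would follow the approach of \cite{CamSzeg} for the abstract algebraic construction of $\ab_k$ and the bundle maps, and the topological refinements carried out in the compact setting in \cite[\S 3.2]{Cand:Notes1} and \cite[\S 2.1]{Cand:Notes2}, to which I would defer for the verification of the technical continuity and openness statements rather than reproving them in detail.
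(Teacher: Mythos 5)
Your proposal is correct and, in substance, matches the paper: the paper's entire proof of this lemma is a citation to \cite[\S 3.2.3]{Cand:Notes1} and \cite[\S 2.1]{Cand:Notes2}, and your sketch reproduces the standard construction from precisely those sources (the last structure group via pairs of $k$-cubes agreeing off $1^k$, freeness from the $k$-step hypothesis, and the topological verifications deferred to the compact-nilspace references). No gaps; you have simply unpacked what the paper leaves to the literature.
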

\begin{proof} 
See \cite[Section 3.2.3]{Cand:Notes1} and \cite[Section 2.1]{Cand:Notes2}.
\end{proof}
\noindent Note that each cube-set $\cu^n(\ns)$ can also be seen as a $k$-fold compact abelian bundle. To detail this we recall from \cite[Definition 2.2.30]{Cand:Notes1} that for any abelian group $\ab$ and $k\in \mb{N}$, the $k$-step nilspace $\mc{D}_k(\ab)$ is $\ab$ together with the cube sets
\[
\cu^n(\mc{D}_k(\ab))=\{\q:\db{n}\to\ab ~| \textrm{ for every face map }\phi:\db{k+1}\to\db{n}, \sigma_{k+1}(\q\co\phi)=0\},
\] 
where recall that for any $f:\db{k}\to\mb{C}$ we define  $\sigma_k(f)=\sum_{v\in \db{k}} (-1)^{v\sbr{1}+\cdots+v\sbr{k}}f(v)$.
\begin{lemma} 
Let $\ns$ be a $k$-step compact nilspace with structure groups $\ab_1,\ldots,\ab_k$ and let $n\in \mb{N}$. Then $\cu^n(\ns)$ is a $k$-fold compact abelian bundle, with factors $\cu^n(\ns_i)$, factor maps $\pi_{i,j}^{\db{n}}:\cu^n(\ns_j)\to\cu^n(\ns_i)$, and structure groups $\cu^n(\mc{D}_1(\ab_1)),\ldots,\cu^n(\mc{D}_k(\ab_k))$.
\end{lemma}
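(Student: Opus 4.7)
The plan is to prove this by induction on the step $k$, using the fact that $\ns$ itself is built up as a $k$-fold abelian bundle via the characteristic factors $\ns_0,\ldots,\ns_k=\ns$ together with the structure groups $\ab_1,\ldots,\ab_k$. For the base case $k=0$, the nilspace $\ns$ is a single point, so $\cu^n(\ns)$ is also a single point and there is nothing to check. For the inductive step, assuming the claim for the $(k-1)$-step nilspace $\ns_{k-1}$, the task reduces to exhibiting $\cu^n(\ns_k)$ as a compact $\cu^n(\mc{D}_k(\ab_k))$-bundle over $\cu^n(\ns_{k-1})$ via the map $\pi_{k-1,k}^{\db{n}}$, after which concatenating with the inductive tower on $\cu^n(\ns_{k-1})$ gives the desired $k$-fold bundle structure.

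The first step is to define the action of $\cu^n(\mc{D}_k(\ab_k))$ on $\cu^n(\ns)$ pointwise: given $\q\in\cu^n(\ns)$ and $h\in\cu^n(\mc{D}_k(\ab_k))$, set $(\q+h)(v):=\q(v)+h(v)$, where on the right-hand side we use the free $\ab_k$-action on $\ns$ coming from the fact that $\ns=\ns_k$ is a $\ab_k$-bundle over $\ns_{k-1}$. I will then verify the four bundle axioms. The freeness and the continuity of the action are immediate from the freeness and continuity of the $\ab_k$-action on $\ns$ applied coordinatewise. The fact that $\q+h\in\cu^n(\ns)$, i.e., that adding a $\mc{D}_k(\ab_k)$-valued cube to a cube in $\ns$ yields a cube in $\ns$, is a standard property of the top structure group action on cubes, which I would cite from the relevant nilspace-theoretic references (e.g.\ \cite[\S 3.2.3]{Cand:Notes1}).

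The main technical point to verify is that the orbits of this action coincide with the fibers of $\pi_{k-1,k}^{\db{n}}:\cu^n(\ns_k)\to\cu^n(\ns_{k-1})$. One direction is clear: if $\q_2=\q_1+h$ then $\pi_{k-1,k}\co\q_2=\pi_{k-1,k}\co\q_1$, since $h$ is $\ab_k$-valued. For the converse, suppose $\q_1,\q_2\in\cu^n(\ns)$ satisfy $\pi_{k-1,k}\co\q_1=\pi_{k-1,k}\co\q_2$. Then, using that $\pi_{k-1,k}$ is a principal $\ab_k$-bundle projection, there is a well-defined vertex-wise difference $h(v):=\q_2(v)-\q_1(v)\in\ab_k$. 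One must show that $h\in\cu^n(\mc{D}_k(\ab_k))$, i.e.\ $\sigma_{k+1}(h\co\phi)=0$ for every face map $\phi:\db{k+1}\to\db{n}$. This is exactly the content of the characterization of the top structure group of a $k$-step nilspace as the abelian group of translations acting on $(k+1)$-cubes trivially under $\sigma_{k+1}$; again I would invoke \cite[\S 3.2.3]{Cand:Notes1} and the corner-completion uniqueness at step $k$.

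Finally, I need to check the topological part, namely that $U\subset\cu^n(\ns_{k-1})$ is open iff $(\pi_{k-1,k}^{\db{n}})^{-1}(U)\subset\cu^n(\ns)$ is open, and that $\cu^n(\mc{D}_k(\ab_k))$ is itself a compact abelian group. The latter holds because $\cu^n(\mc{D}_k(\ab_k))$ is a closed subgroup of the compact abelian group $\ab_k^{\db{n}}$ defined by the linear equations $\sigma_{k+1}(h\co\phi)=0$ over all face maps $\phi:\db{k+1}\to\db{n}$. The former follows from the continuity and openness of $\pi_{k-1,k}:\ns\to\ns_{k-1}$ (given by Definition \ref{def:char-factors}) applied vertex-wise, combined with the fact that $\cu^n(\ns)$ is closed in $\ns^{\db{n}}$ and $\cu^n(\ns_{k-1})$ is closed in $\ns_{k-1}^{\db{n}}$. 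Combining these steps with the inductive hypothesis applied to $\ns_{k-1}$ completes the proof, the main obstacle being the second-to-top paragraph's verification that the fiber-wise difference of two cubes is always a cube in $\mc{D}_k(\ab_k)$, since this is where the specific role of the Host--Kra-type cube description of the structure group enters.
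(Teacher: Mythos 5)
Your argument is correct and is essentially the standard one: the paper states this lemma without proof, deferring to the literature (it is in effect \cite[Proposition 2.1.9]{Cand:Notes2}, and the key step you isolate --- that the vertex-wise difference of two cubes agreeing modulo $\ab_k$ lies in $\cu^n(\mc{D}_k(\ab_k))$ --- is exactly \cite[Theorem 3.2.19]{Cand:Notes1}, which the paper itself invokes for the analogous point in the proof of Lemma \ref{lem:Cinfbund}). Your inductive reconstruction, including the treatment of the quotient topology via the fibration property of $\pi_{k-1,k}$, matches that cited argument.
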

\noindent Every iterated compact abelian bundle $B$ can be endowed with a Borel probability measure, called the \emph{Haar measure} on $B$, that generalizes the Haar measure on compact abelian groups. In particular, given a $k$-step compact nilspace $\ns$, the Haar measure on $\cu^n(\ns_i)$ can be defined as follows (for more details we refer to \cite[\S 2.2.2]{Cand:Notes2}).

\begin{proposition}
Let $\ns$ be a compact $k$-step nilspace. Then for any $n\ge 0$ and $i\in[k]$ there exists a unique Borel probability measure $\mu_{\cu^n(\ns_i)}$ on $\cu^n(\ns_i)$ with the following properties: we have $\mu_{\cu^n(\ns_i)}=\mu_{\cu^n(\ns)}\co (\pi_i^{\db{n}})^{-1}$ and $\mu_{\cu^n(\ns_i)}$ is invariant under the action of $\cu^n(\mc{D}_i(\ab_i(\ns)))$.
\end{proposition}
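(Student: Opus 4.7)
I would build the measures $\mu_{\cu^n(\ns_i)}$ by induction on $i\in\{0,1,\ldots,k\}$, exploiting the compact abelian bundle decomposition supplied by the previous lemma: $\cu^n(\ns_i)$ is a $G_i$-bundle over $\cu^n(\ns_{i-1})$, where $G_i:=\cu^n(\mc{D}_i(\ab_i))$ is a closed subgroup of $\ab_i^{\db{n}}$, hence a compact abelian group, which we equip with its Haar probability measure $\mu_{G_i}$. For the base case $i=0$, the factor $\ns_0$ is a singleton, so $\cu^n(\ns_0)$ is a single point and $\mu_{\cu^n(\ns_0)}$ must be the Dirac measure; both stated properties then hold trivially. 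Note also that each fibre of the factor map $\pi_{i-1,i}^{\db{n}}:\cu^n(\ns_i)\to\cu^n(\ns_{i-1})$ is a $G_i$-torsor.

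For the existence half of the inductive step, I would first produce some auxiliary Borel probability measure $\nu_i$ on $\cu^n(\ns_i)$ whose pushforward under $\pi_{i-1,i}^{\db{n}}$ equals $\mu_{\cu^n(\ns_{i-1})}$. This can be obtained from a Borel section of the bundle, which exists by Kuratowski--Ryll-Nardzewski applied to the closed-valued fibre multifunction on the compact Polish base (alternatively, one uses the inverse-limit description of compact nilspaces as limits of finite ones, on which cube sets are finite and the uniform distributions combine coherently). I would then average over the structure group and set
\[
\mu_{\cu^n(\ns_i)} := \int_{G_i} g_\ast \nu_i \ud\mu_{G_i}(g).
\]
Translation-invariance of $\mu_{G_i}$ makes this $G_i$-invariant, and since the $G_i$-action preserves fibres its pushforward under $\pi_{i-1,i}^{\db{n}}$ remains $\mu_{\cu^n(\ns_{i-1})}$. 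Setting $\mu_{\cu^n(\ns)}:=\mu_{\cu^n(\ns_k)}$, the identity $\mu_{\cu^n(\ns_i)}=\mu_{\cu^n(\ns)}\co(\pi_i^{\db{n}})^{-1}$ for $i<k$ then follows by transitivity of pushforwards along the tower $\cu^n(\ns)\to\cu^n(\ns_{k-1})\to\cdots\to\cu^n(\ns_i)$.

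For uniqueness, suppose $\mu$ is any probability measure on $\cu^n(\ns_i)$ satisfying both conditions. Disintegrating $\mu$ along $\pi_{i-1,i}^{\db{n}}$ yields, for $\mu_{\cu^n(\ns_{i-1})}$-almost every $q$, a conditional probability measure $\mu_q$ on the fibre $(\pi_{i-1,i}^{\db{n}})^{-1}(q)$. The $G_i$-invariance of $\mu$ descends to $G_i$-invariance of almost every $\mu_q$, and since each fibre is a $G_i$-torsor, uniqueness of Haar measure forces $\mu_q$ to be the Haar probability measure of the fibre. Therefore $\mu$ must coincide with $\mu_{\cu^n(\ns_i)}$, as constructed.

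The main obstacle is the preliminary step of producing the auxiliary measure $\nu_i$, equivalently of justifying disintegration in a form compatible with the bundle structure. For abstract compact bundles this requires some care, but for compact nilspaces the iterated abelian-bundle structure makes it tractable, by either of the routes indicated above (measurable selection on compact Polish spaces with closed fibres, or profinite approximation passing to an inverse limit); both approaches appear in the background material developed in \cite[\S 2.2.2]{Cand:Notes2}, which is the reference the authors invoke.
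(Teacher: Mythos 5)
Your argument is correct and is essentially the construction the paper relies on: the paper gives no proof of this proposition itself, deferring to \cite[Proposition 2.2.5]{Cand:Notes2} and \cite[Proposition 3.6]{CScouplings}, and the cited construction is precisely this induction up the compact abelian bundle tower, with each fibre of $\pi_{i-1,i}^{\db{n}}$ a torsor for $\cu^n(\mc{D}_i(\ab_i))$ and the measure obtained by averaging over the structure group (the reference handles the measurability bookkeeping via fibrewise Haar integration of continuous functions and the Riesz representation theorem rather than a Borel section, which is a cosmetic difference). Your uniqueness argument, disintegrating along the factor map and invoking uniqueness of Haar measure on each torsor fibre, likewise matches the standard one.
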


For the proof we refer to \cite[Proposition 2.2.5]{Cand:Notes2} and \cite[Proposition 3.6]{CScouplings}.

\begin{remark}\label{rem:ext-haar-x-db-n}
We can define a measure $\mu$ on $\ns^{\db{n}}$ concentrated on $\cu^n(\ns)$ by setting  $\mu(A):=\mu_{\cu^n(\ns)}(A\cap \cu^n(\ns))$ for any Borel set $A\subset \ns^{\db{n}}$. We shall sometimes abuse the notation by denoting $\mu$ also by $\mu_{\cu^n(\ns)}$ (i.e.\ viewing $\mu_{\cu^n(\ns)}$ as a measure on $\ns^{\db{n}}$).
\end{remark}
\noindent Finally, let us recall the definition of a specific class of nilspaces which will be used extensively in this paper. Here and throughout the sequel, the group $\mb{Z}_p^n$ will be identified with $[0,p-1]^n$ equipped with addition mod $p$, the usual way.
\begin{defn}[$p$-homogeneous nilspaces \cite{CGSS-p-hom}]\label{def:p-hom}
Let $\ns$ be a nilspace and let $p\in \mb{N}$ be a prime. We say that $\ns$ is a \emph{$p$-homogeneous nilspace} if for every positive integer $n$, for every $f\in \hom(\mc{D}_1(\mb{Z}^n),\ns)$ the restriction $f|_{[0,p-1]^n}$ is in $\hom(\mc{D}_1(\mb{Z}_p^n),\ns)$.
\end{defn}
\noindent We recall more background on these nilspaces in Section \ref{sec:p-hom}. For now let us mention that the most basic examples of $p$-homogeneous nilspaces are given by the elementary abelian $p$-groups $\mb{Z}_p^n$, and that more examples can be constructed easily (see \cite{CGSS-p-hom}).  These include the nilspace $\mc{H}$ introduced in Definition \ref{def:H}, which is $2$-homogeneous. This nilspace is also an example of a compact nilspace that is not of finite step, but rather of what we shall call \emph{profinite-step}. This is a useful type of infinite-step nilspaces, to which we now turn.

\subsection{Profinite-step nilspaces}\label{subsec:profin}\hfill\\
In the sequel we often have to deal with compact nilspaces that are not necessarily of finite step. However, we shall always be able to assume that these nilspaces have the following useful property.
\begin{defn}
We say that a compact nilspace $\ns$ is a \emph{profinite-step} nilspace if it is the inverse limit of compact nilspaces of finite step.
\end{defn}
\noindent See \cite[Section 2.7]{Cand:Notes2} for the definition of inverse limits of compact nilspaces; from this and Definition \ref{def:char-factors}, it is not hard to see that a compact nilspace is profinite-step if and only if it is the inverse limit of its characteristic factors.

Every finite-step compact nilspace is trivially profinite-step, but not all compact nilspaces are profinite-step. For instance, given a filtered group $(G,G_\bullet)$, the associated group nilspace is profinite-step if and only if the filtration $G_\bullet$ has the following property.
\begin{defn}\label{def:non-deg-filt}
We say that a filtration $G_\bullet=(G_{(i)})_{i\geq 0}$ on a group $G$ is \emph{non-degenerate} if for any pair of distinct elements $g, g'\in G$  there exists $i\in \mb{N}$ such that $g\neq g'\!\mod G_{(i)}$. Equivalently, the filtration $G_\bullet$ is non-degenerate if $\bigcap_{i=0}^{\infty} G_{\sbr{i}} = \{\id_G\}$.
\end{defn}
\noindent Every profinite-step nilspace can be endowed with a unique Borel probability measure, which we shall call its \emph{Haar measure}, such that for each $k\in \mb{N}$ the pushforward of this measure to the  $k$-th characteristic factor equals the Haar measure on this factor. In order to establish this rigorously, we need to extend the definition of $k$-fold compact abelian bundles, to define $\infty$-fold compact abelian bundles. Let us state the main result here and defer the technical (but relatively routine) proofs to Appendix \ref{app:cubic-coupling-infinite-step}.

\begin{proposition}[$n$-cubic Haar measures on compact profinite-step nilspaces]\label{prop:haar-mes}
Let $\ns$ be a compact profinite-step nilspace. Then for every $n\ge 0$ there exists a unique measure $\mu_{\cu^n(\ns)}$ on $\cu^n(\ns)$ such that for every $i\in\mb{N}$ we have $\mu_{\cu^n(\ns)} \co (\pi_i^{\db{n}})^{-1} = \mu_{\cu^n(\ns_i)}$. We say that $\mu_{\cu^n(\ns)}$ is the \emph{$n$-cubic Haar measure} on $\cu^n(\ns)$ for every $n\ge 0$.
\end{proposition}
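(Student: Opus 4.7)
The plan is to construct $\mu_{\cu^n(\ns)}$ as the projective (inverse) limit of the previously-constructed Haar measures $\mu_{\cu^n(\ns_i)}$ on the finite-step characteristic factors.

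First, I would verify two structural facts, which are essentially the reason the statement is natural. (a) Since $\ns$ is profinite-step, the canonical map $\ns \to \varprojlim_i \ns_i$ is a homeomorphism, and because cubes are defined coordinate-wise via the compatible morphisms $\pi_i$, the induced map $\cu^n(\ns)\to \varprojlim_i \cu^n(\ns_i)$ (with the bonding maps $\pi_{i,j}^{\db{n}}$) is also a homeomorphism of compact second-countable Hausdorff spaces. (b) The family $\{\mu_{\cu^n(\ns_i)}\}_{i\ge 0}$ is consistent, i.e.\ $\mu_{\cu^n(\ns_j)}\co (\pi_{i,j}^{\db{n}})^{-1}=\mu_{\cu^n(\ns_i)}$ for $i\le j$. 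This consistency is obtained by applying the uniqueness clause of the finite-step proposition to the pushforward: one checks that $\mu_{\cu^n(\ns_j)}\co (\pi_{i,j}^{\db{n}})^{-1}$ equals $\mu_{\cu^n(\ns_j)}\co(\pi_j^{\db{n}})^{-1}\co(\pi_i^{\db{n}})^{-1}$ at the level of $\ns$ (modulo the inverse limit identification), and is invariant under the action of $\cu^n(\mc{D}_i(\ab_i(\ns)))$, so it must coincide with $\mu_{\cu^n(\ns_i)}$.

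Next I would invoke the Bochner/Kolmogorov theorem for inverse systems of Radon probability measures on compact (second-countable) Hausdorff spaces with continuous bonding maps: given a consistent family of Borel probability measures on such a system, there exists a unique Borel probability measure on the inverse limit whose pushforward under each projection coincides with the prescribed measure. Applied to the system $(\cu^n(\ns_i), \pi_{i,j}^{\db{n}})$ with the measures from (b), this produces a Borel probability measure $\mu_{\cu^n(\ns)}$ on $\varprojlim_i \cu^n(\ns_i)\cong \cu^n(\ns)$ satisfying $\mu_{\cu^n(\ns)}\co (\pi_i^{\db{n}})^{-1}=\mu_{\cu^n(\ns_i)}$ for every $i$.

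Uniqueness is the easy direction: if $\mu'$ is another such measure, then $\mu'$ and $\mu_{\cu^n(\ns)}$ agree on every cylinder set $(\pi_i^{\db{n}})^{-1}(E)$ with $E$ Borel in $\cu^n(\ns_i)$; since these cylinders form an algebra generating the Borel $\sigma$-algebra on the inverse limit (by second countability and the fact that $\cu^n(\ns)$ is a closed subspace of $\ns^{\db{n}}$, with $\ns$ itself realized as the inverse limit), the two measures coincide by a standard $\pi$-$\lambda$ argument.

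The main technical point I expect to handle carefully is the consistency step (b), since it requires knowing that the Haar measure on the finer characteristic factor really does project to the Haar measure on the coarser one. This should be provable either by appealing to the invariance/uniqueness characterization of the Haar measure stated just before this proposition, or, more constructively, by noting that $\mu_{\cu^n(\ns_j)}$ can be built from $\mu_{\cu^n(\ns_i)}$ by integrating over the Haar measures of the structure groups $\ab_{i+1},\ldots,\ab_j$; either route is the substantive input, while everything else is the standard projective-limit machinery. The remaining details (measurability of $\pi_i^{\db{n}}$, closedness of $\cu^n(\ns)$ in $\ns^{\db{n}}$, etc.) are routine and addressed in the deferred Appendix \ref{app:cubic-coupling-infinite-step}.
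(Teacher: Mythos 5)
Your proposal is correct and follows essentially the same route as the paper: the paper's deferred Lemma \ref{lem:hom-spaces-infty-fold-ab-bundles} (applied with $P=\db{n}$, $S=\emptyset$) identifies $\cu^n(\ns)$ as the inverse limit of the bundles $\cu^n(\ns_i)$, and Lemma \ref{lem:haar-mes-infty-fold} is precisely the projective-limit extension you invoke, proved there by hand (semi-ring of cylinder sets, $\sigma$-additivity via regularity and compactness, Carath\'eodory) rather than by quoting the Bochner/Kolmogorov theorem, with the consistency step handled by citing that the bundle factor maps $\pi_{i,j}$ preserve the finite-step Haar measures.
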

\begin{proof} 
We combine Lemma \ref{lem:haar-mes-infty-fold} with Lemma \ref{lem:hom-spaces-infty-fold-ab-bundles}, applying the latter with $P=\db{n}$ and $S=\emptyset$.
\end{proof}
\noindent Finally, we need to establish that a fibration between profinite-step compact nilspaces always preserves the Haar measures.
\begin{lemma}\label{prop:char-inv-nil}
Let $\ns$, $\nss$ be compact profinite-step nilspaces, and let $\varphi:\ns\to \nss$ be a fibration. Then, for every $n\ge 0$, the map $\varphi^{\db{n}}:\cu^n(\ns)\to \cu^n(\nss)$, $\q\mapsto \varphi\co\q$ preserves the $n$-cubic Haar measures, i.e.\ $\mu_{\cu^n(\ns)} \co (\varphi^{\db{n}})^{-1} = \mu_{\cu^n(\nss)}$.
\end{lemma}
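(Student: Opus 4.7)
The plan is to reduce the statement to its known finite-step analogue via the characterization of the Haar measures $\mu_{\cu^n(\ns)}$, $\mu_{\cu^n(\nss)}$ in terms of their pushforwards to the characteristic factors given in Proposition \ref{prop:haar-mes}.

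First, I would show that the fibration $\varphi:\ns\to\nss$ descends to a fibration $\varphi_k:\ns_k\to\nss_k$ between the $k$-th characteristic factors for every $k\in\mb{N}$. This is essentially a formal consequence of the definition of $\sim_k$ and of fibrations: if $x\sim_k y$ in $\ns$ via cubes $\q_1,\q_2\in\cu^{k+1}(\ns)$, then $\varphi\co\q_1,\varphi\co\q_2\in\cu^{k+1}(\nss)$ witness $\varphi(x)\sim_k\varphi(y)$, so $\varphi_k:=\pi_k^{\nss}\co\varphi$ factors through $\pi_k^{\ns}$; the fibration property of $\varphi_k$ follows directly from that of $\varphi$ together with the fact that $\pi_k^{\ns}$ and $\pi_k^{\nss}$ are fibrations (see Definition \ref{def:char-factors}). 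In particular, for each $k$, $\varphi_k$ is a continuous fibration between the finite-step compact nilspaces $\ns_k$ and $\nss_k$.

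Next, I would invoke the known finite-step version of the statement: for any continuous fibration $\psi:\comp\to\comp'$ between finite-step compact nilspaces, one has $\mu_{\cu^n(\comp)}\co (\psi^{\db{n}})^{-1}=\mu_{\cu^n(\comp')}$. This is a standard consequence of the inductive construction of Haar measures on compact nilspaces as iterated abelian bundles (see \cite[\S 2.2.2]{Cand:Notes2}), since fibrations respect the bundle projections and are, level by level, surjective homomorphisms on the structure groups' actions. Applying this to $\varphi_k$, I obtain, for every $k\in\mb{N}$,
\begin{equation}\label{eq:finitelevel}
\mu_{\cu^n(\ns_k)}\co (\varphi_k^{\db{n}})^{-1} \;=\; \mu_{\cu^n(\nss_k)}.
\end{equation}

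Finally, I would combine \eqref{eq:finitelevel} with Proposition \ref{prop:haar-mes} to lift the result to the profinite-step setting. Since $\pi_k^{\nss}\co\varphi=\varphi_k\co \pi_k^{\ns}$, we get the commutation $(\pi_k^{\nss})^{\db{n}}\co \varphi^{\db{n}}=\varphi_k^{\db{n}}\co (\pi_k^{\ns})^{\db{n}}$ at the level of cube sets, and therefore
\[
\bigl(\mu_{\cu^n(\ns)}\co (\varphi^{\db{n}})^{-1}\bigr) \co \bigl((\pi_k^{\nss})^{\db{n}}\bigr)^{-1} \;=\; \mu_{\cu^n(\ns_k)}\co (\varphi_k^{\db{n}})^{-1} \;=\; \mu_{\cu^n(\nss_k)}.
\]
Since $\mu_{\cu^n(\nss)}$ is the unique measure on $\cu^n(\nss)$ whose pushforward to $\cu^n(\nss_k)$ is $\mu_{\cu^n(\nss_k)}$ for every $k$, by Proposition \ref{prop:haar-mes}, this forces $\mu_{\cu^n(\ns)}\co (\varphi^{\db{n}})^{-1}=\mu_{\cu^n(\nss)}$, as required.

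The main potential obstacle is the verification that $\varphi$ induces a continuous fibration $\varphi_k$ at each characteristic level (and that the induced maps are genuinely fibrations, not just morphisms); I expect this to be essentially formal but it is the step where one must use the structure of the profinite-step setting and the compatibility of fibrations with the canonical projections $\pi_k$. The finite-step version of the Haar-preservation property should be quotable from the existing nilspace literature, so after that reduction, the argument is a clean uniqueness argument at the level of inverse limits.
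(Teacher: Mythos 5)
Your proof is correct, but it is organized differently from the paper's. The paper's proof is a one-liner: it cites the fact that a fibration induces a \emph{totally-surjective bundle morphism} $\varphi^{\db{n}}:\cu^n(\ns)\to\cu^n(\nss)$ between the corresponding $\infty$-fold compact abelian bundles (\cite{GMV1}, \cite[Lemma 3.3.12]{Cand:Notes1}), and then invokes Lemma \ref{lem:mes-pre-1}, the general statement that such bundle morphisms preserve Haar measure. You instead stay at the level of the nilspaces themselves: you descend $\varphi$ to fibrations $\varphi_k:\ns_k\to\nss_k$ between the finite-step characteristic factors, quote the finite-step Haar-preservation result there, and conclude by the uniqueness clause of Proposition \ref{prop:haar-mes}. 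The two routes rest on the same underlying finite-level fact (ultimately \cite[Lemma 2.2.6]{Cand:Notes2}, since the bundle factors of $\cu^n(\ns)$ are precisely the cube sets $\cu^n(\ns_k)$), but yours bypasses the $\infty$-fold bundle-morphism formalism of Appendix \ref{app:cubic-coupling-infinite-step} entirely, replacing the Riesz/Stone--Weierstrass argument hidden inside Lemma \ref{lem:mes-pre-1} with the uniqueness characterization of the inverse-limit Haar measure; the price is that you must justify that $\varphi$ genuinely induces \emph{fibrations} (not merely morphisms) on the characteristic factors, which, as you note, is standard (\cite[\S 3.3.2]{Cand:Notes1}) and is in fact used elsewhere in the paper. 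One small point worth making explicit in your last step: the uniqueness argument applies because $\varphi^{\db{n}}$ maps $\cu^n(\ns)$ into $\cu^n(\nss)$ (as $\varphi$ is a morphism), so the pushforward $\mu_{\cu^n(\ns)}\co(\varphi^{\db{n}})^{-1}$ is indeed a Borel probability measure concentrated on $\cu^n(\nss)$, which is the class of measures over which Proposition \ref{prop:haar-mes} asserts uniqueness.
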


\begin{proof} 
A fibration between nilspaces induces a totally-surjective bundle morphism between the corresponding sets of cubes \cite{GMV1}, \cite[Lemma 3.3.12]{Cand:Notes1}, and it therefore preserves the Haar measures as claimed, by Lemma \ref{lem:mes-pre-1}.
\end{proof}

\subsection{The infinite-dimensional cube set \texorpdfstring{$\cu^\omega(\ns)$}{} and its Haar measure}\label{subsec:CinfHaar}\hfill\smallskip\\
\noindent As mentioned in the introduction, an important object in this paper is the set of infinite-dimensional cubes on a nilspace. Let us define this structure formally.
\begin{defn}\label{def:omega-cubes}
Let $\ns$ be a nilspace. For each $n\in \mb{N}$ let $\phi_n:\db{n}\to\db{\mb{N}}$,  $v\mapsto (v, 0^{\mb{N}\setminus[n]})=(v\sbr{1},\ldots,v\sbr{n},0,\ldots)$, and let $p_n:\ns^{\db{\mb{N}}}\to \ns^{\db{n}}$ be the projection induced by $\phi_n$, i.e.\ the map $(x\sbr{v})_{v\in\db{\mb{N}}}\mapsto (x\sbr{\phi_n(v)})_{v\in\db{n}}$. We then define 
\begin{equation}\label{eq:DefCinf}
\cu^\omega(\ns):=\bigcap_{n=1}^\infty p_n^{-1}(\cu^n(\ns)).
\end{equation}
If $\ns$ is a compact nilspace, then $\cu^\omega(\ns)$ is a compact subset of $\ns^{\db{\mb{N}}}$ in the product topology. (Note that $\cu^\omega(\ns)$ is non-empty, containing  in particular all constant maps $\db{\mb{N}}\to\ns$.)
\end{defn}
\noindent Recall that a map $\phi:\db{n}\to\db{\mb{N}}$ is a (discrete cube) morphism if it extends to an affine homomorphism $\mb{Z}^n\to \bigoplus_{i\in \mb{N}}\mb{Z}$ (see \cite[Definition 6.1]{CScouplings}). Another natural way to think of $\cu^\omega(\ns)$ is that its elements are precisely the functions $\q:\db{\mb{N}}\to\ns$ such that for every morphism $\phi:\db{n}\to\db{\mb{N}}$ (for every $n$) we have $\q\co\phi\in\cu^n(\ns)$.

\begin{example}\label{ex:omega-cubes-d-k} 
Let $\ab$ be an abelian group and let $k\in \mb{N}$. Then
\[
\cu^\omega(\mc{D}_k(\ab))= \Big\{\q:v\mapsto \sum_{i=0}^k\sum_{S=\{s_1,\ldots,s_i\}\in \binom{\mb{N}}{i}} z_{i,S}v\sbr{s_1}\cdots v\sbr{s_i} ~\Big|~ z_{i,S}\in\ab\Big\},
\]
where note that the sum over $S$ here is always well-defined since $v\in \db{\mb{N}}$ has only finitely many non-zero coordinates. The proof of this equality follows from applying \cite[Lemma 2.2.5]{Cand:Notes1} to $\q\co\phi_n$ for each $n\in \mb{N}$, and an induction using that the sets $D_n:=\db{n}\times \{0^{\mb{N}\setminus[n]} \}\subset\db{\mb{N}}$ form an increasing sequence with $\bigcup_{n=1}^\infty D_n = \db{\mb{N}}$. We omit the details.
\end{example}

\begin{defn}
Let $\ns$ be a nilspace. An \emph{$\omega$-corner} on $\ns$ (\emph{rooted} at some vertex $v\in \db{\mb{N}}$) is a map $\q':\db{\mb{N}}\setminus\{v\}\to \ns$ such that for every face map\footnote{A face map $\phi:\db{n}\to \db{\mb{N}}$ is an injective morphism such that for some $S\in \binom{\mb{N}}{n}$, the coordinate $\phi(v)\sbr{i}$ is a constant function of $v$ for every $i\not\in S$.} $\phi:\db{n}\to \db{\mb{N}}$ with $\phi(\db{n})\not\ni v$, we have $\q'\co \phi \in \cu^n(\ns)$. We denote the set of such $\omega$-corners rooted at $v$ by $\cor_v^\omega(\ns)$.
\end{defn}
\noindent A useful fact about $\cu^\omega(\ns)$ is that if $\ns$ is profinite-step then this infinite-dimensional cube set satisfies the following form of unique corner-completion.
\begin{lemma}\label{lem:uniqueomegacomp}
Let $\ns$ be a compact profinite-step nilspace. Then every $\omega$-corner on $\ns$ has a unique completion in $\cu^\omega(\ns)$.
\end{lemma}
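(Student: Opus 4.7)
The plan is to prove the lemma by reducing to the finite-dimensional corner-completion axiom on each characteristic factor $\ns_k$, exploiting that each $\ns_k$ is $k$-step (so has unique completions of sufficiently high-dimensional corners), and then passing to the inverse limit using the profinite-step assumption.

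First, I would fix an $\omega$-corner $\q':\db{\mb{N}}\setminus\{v\}\to\ns$ rooted at some $v\in\db{\mb{N}}$, and let $m$ be an integer such that the support of $v$ lies in $[m]$, so that $v\in D_n:=\phi_n(\db{n})$ for every $n\ge m$. For each such $n$, the restriction $\q'_n:=\q'\co\phi_n$ is an $n$-corner on $\ns$ missing the vertex $\phi_n^{-1}(v)$ (this uses the $\omega$-corner hypothesis for face maps $\phi:\db{n-1}\to\db{\mb{N}}$ factoring through $\phi_n$); a completion $\q_n\in\cu^n(\ns)$ exists by the nilspace corner axiom (applied after a discrete-cube automorphism sending the missing vertex to $1^n$). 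The difficulty is uniqueness of these completions and their consistency as $n$ grows, neither of which is granted in an arbitrary (possibly infinite-step) compact nilspace.

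To overcome this, I would fix any $k\in\mb{N}$ and pass to the $k$-step nilspace $\ns_k=\ns/{\sim_k}$ via $\pi_k$. The map $\pi_k\co\q'$ is an $\omega$-corner on $\ns_k$, and for every $n\ge\max(m,k+1)$ the corner $\pi_k\co\q'_n\in\cor^n(\ns_k)$ has a \emph{unique} completion $\q_n^{(k)}\in\cu^n(\ns_k)$, because in a $k$-step nilspace every $n$-corner with $n\ge k+1$ has a unique completion (the $(k+1)$-case is by definition, and the general case follows from filling in vertices one by one along a suitable enumeration; see \cite[\S 3.2]{Cand:Notes1}). The key consequence is compatibility: for $n>\max(m,k+1)$, the restriction $\q_n^{(k)}\co\phi_{n-1}\in\cu^{n-1}(\ns_k)$ is a completion of $\pi_k\co\q'_{n-1}$, so by uniqueness it equals $\q_{n-1}^{(k)}$. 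Hence the $\q_n^{(k)}$ assemble into a single map $\q_k:\db{\mb{N}}\to\ns_k$ with $\q_k\in\cu^\omega(\ns_k)$, extending $\pi_k\co\q'$. An analogous uniqueness argument (now across factors) shows $\pi_{k,k+1}\co\q_{k+1}=\q_k$ for every $k$.

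Finally, using that $\ns$ is profinite-step, i.e.\ $\ns=\varprojlim_k \ns_k$, the compatible system $(\q_k)_{k\in\mb{N}}$ assembles into a unique map $\q:\db{\mb{N}}\to\ns$ with $\pi_k\co\q=\q_k$ for all $k$, which extends $\q'$. To conclude $\q\in\cu^\omega(\ns)$, I need to know that a map $f:\db{n}\to\ns$ lies in $\cu^n(\ns)$ iff $\pi_k\co f\in\cu^n(\ns_k)$ for every $k$; this characterisation of cubes on an inverse limit is standard and recalled in \cite[Section 2.7]{Cand:Notes2}. For uniqueness of $\q$, if $\q^{(1)},\q^{(2)}\in\cu^\omega(\ns)$ both extend $\q'$, then for each $k$ and each $n\ge\max(m,k+1)$ the restrictions of $\pi_k\co\q^{(1)}$ and $\pi_k\co\q^{(2)}$ to $D_n$ are completions of the same $n$-corner in the $k$-step nilspace $\ns_k$, hence coincide; as $n$ varies this forces $\pi_k\co\q^{(1)}=\pi_k\co\q^{(2)}$ for every $k$, so $\q^{(1)}=\q^{(2)}$ by the inverse-limit property.

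The main obstacle is the bookkeeping to reduce corner-completion at an arbitrary vertex $v\in\db{\mb{N}}$ (with arbitrary finite support) to the standard completion axiom phrased at $1^n$, and to verify that the unique completions in the finite-step factors $\ns_k$ are both internally compatible (as $n\to\infty$) and compatible across $k$; once both compatibilities are established, the inverse-limit structure of profinite-step nilspaces finishes the argument.
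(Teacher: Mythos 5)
Your proof is correct, and your uniqueness argument is essentially the paper's: both reduce to unique completion of high-dimensional corners in the finite-step factors $\ns_k$ and then separate points using the profinite-step hypothesis. The existence half takes a genuinely different route. The paper first normalizes to $v=0^{\mb{N}}$, then for each $k$ lifts the unique completion of the $(k{+}1)$-corner $\pi_k\co\q'|_{\db{k+1}}$ through the fibration $\pi_k$ to obtain a nonempty compact set $C_k\subset\cu^{k+1}(\ns)$ of candidate completions, and extracts the completing point of $\ns$ from $\bigcap_k D_k$ by the finite intersection property. You instead build, for each $k$, a full infinite cube $\q_k\in\cu^{\omega}(\ns_k)$ extending $\pi_k\co\q'$ (using unique completion of $n$-corners, $n\ge k+1$, in the $k$-step nilspace $\ns_k$ to get consistency both in $n$ and across $k$), and then assemble these into an element of $\cu^{\omega}(\ns)$ via the inverse-limit description of cube sets. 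Your route avoids an explicit compactness argument, but only because that argument is absorbed into the cited fact that cubes on $\varprojlim\ns_k$ are exactly the compatible systems of cubes on the $\ns_k$ (this is how inverse limits of compact nilspaces are set up in \cite[\S 2.7]{Cand:Notes2}, and its proof is where the fibration property and compactness are used); it also dispenses with the paper's normalization of the root vertex at the cost of the bookkeeping you mention. One small point: for uniqueness of completion of an $n$-corner in a $k$-step nilspace with $n\ge k+1$, the cleanest justification is to restrict the two candidate completions to a $(k{+}1)$-dimensional face containing the missing vertex, rather than ``filling in vertices one by one''; this is the standard fact your citation covers, so it is not a gap, merely a phrasing to tighten.
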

\begin{proof}
Let $\q':\db{\mb{N}}\setminus\{v\}\to \ns$ be an $\omega$-corner. We need to prove the existence and uniqueness of some $\q\in \cu^{\omega}(\ns)$ such that $\q(w)=\q'(w)$ for every $w\neq v$. Composing with a suitable morphism $\phi:\db{\mb{N}}\to\db{\mb{N}}$ if necessary, we can assume without loss of generality that $v=0^{\mb{N}}$ (if the non-zero coordinates of $v$ are indexed by some finite $V\subset \mb{N}$, we can set $\phi(w):=w\sbr{i}$ if $i\notin V$ and $\phi(w):=1-w\sbr{i}$ otherwise).

Recall that $\ns = \varprojlim \ns_k$ where $\ns_k$ are the characteristic nilspace factors of $\ns$. For every $k$ and any $(k+1)$-corner $q$ on $\ns_k$ rooted at $0^{k+1}$, let $\overline{q}\in \cu^{k+1}(\ns_k)$ be its unique completion. Note that $\pi_k\co \q'|_{\db{k+1}}$ is a $(k+1)$-corner on $\ns_k$ rooted at $0^{k+1}$. For every $k$ let $C_k:=\{q'\in \cu^{k+1}(\ns): \pi_k\co q' = \overline{\pi_k\co \q'|_{\db{k+1}}}\}$. We claim that this is a non-empty compact set. The fact that $C_k\neq \emptyset$ follows from the fact that $\pi_k:\ns\to\ns_k$ is a fibration. To see that $C_k$ is compact, note that $C_k=\cu^{k+1}(\ns)\cap [(\pi_k^{\db{k+1}})^{-1}(\{\overline{\pi_k\co\q'|_{\db{k+1}}}\})]$. It follows that the set $D_k:=\{\q''(0^{k+1})\in \ns:\q''\in C_k\}$ is a non-empty compact set for every $k$.

The sets $D_k$ form a decreasing sequence relative to inclusion, so by compactness of $\ns$ (the finite intersection property) we have $\bigcap_{k=1}^{\infty}D_k\not=\emptyset$. Letting $x$ be any point in $\bigcap_{k=1}^{\infty}D_k$, we see by construction that $x$ completes the corner $\q'$.

To prove uniqueness, let $x,y\in \ns$ be two points completing $\q'$, and denote the corresponding completions in $\cu^\omega(\ns)$ by $\q_x$ and $\q_y$. For every $k\ge 1$ we have that $\pi_k\co\q_x|_{\db{k+1}}$ and $\pi_k\co \q_y|_{\db{k+1}}$ are cubes in $\cu^{k+1}(\ns_k)$ with equal values at every vertex $w\in \db{k+1}\setminus\{0^{k+1}\}$. By uniqueness of corner-completion  in $\ns_k$, we have $\pi_k(x)=\pi_k(y)$. As this holds for every $k\ge 1$ and $\ns$ is profinite-step, it follows that $x=y$.
\end{proof}

\begin{lemma}\label{lem:Cinfbund} 
Let $\ns$ be a compact profinite-step nilspace and for each $k\in\mb{N}$ let $\ab_k$ be the $k$-th structure group of $\ns$. Then $\cu^\omega(\ns)$ is an $\infty$-fold compact abelian bundle with structure groups $\cu^\omega(\mc{D}_k(\ab_k))$ and projections $\pi_{k,i}^{\db{\mb{N}}}:\cu^\omega(\ns_i)\to \cu^\omega(\ns_k)$ for $0\le k\le i$.
\end{lemma}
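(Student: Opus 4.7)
The plan is to reduce everything to the finite-$n$ statements about cube sets $\cu^n(\ns)$ being $k$-fold compact abelian bundles (already cited in the excerpt), and then pass to the inverse limit in $n$. Indeed, by the very definition \eqref{eq:DefCinf}, $\cu^\omega(\ns)$ is the inverse limit (inside $\ns^{\db{\mb{N}}}$) of the projective system $(\cu^n(\ns), p_n)$, and compatibly $\cu^\omega(\ns_i) = \varprojlim_n \cu^n(\ns_i)$. Thus the $i$-step nilspace structure of each $\ns_i$ furnishes, at every finite level $n$, the fact that $\cu^n(\ns_i)$ is a $\cu^n(\mc{D}_i(\ab_i))$-bundle over $\cu^n(\ns_{i-1})$, and the task is to promote these to $\cu^\omega$-level statements.

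First, I would show that each $\pi_i^{\db{\mb{N}}}: \cu^\omega(\ns) \to \cu^\omega(\ns_i)$ is a continuous surjection compatible with the projections $\pi_{k,i}^{\db{\mb{N}}}$ for $k\le i$. Continuity is immediate from the product topology. For surjectivity, given $\q \in \cu^\omega(\ns_i)$, the restrictions $p_n(\q)\in\cu^n(\ns_i)$ form a compatible system; since each $\pi_i$ is a fibration (hence induces a surjection $\pi_i^{\db{n}}:\cu^n(\ns)\to \cu^n(\ns_i)$ by \cite[Lemma 3.3.12]{Cand:Notes1}), each preimage $(\pi_i^{\db{n}})^{-1}(p_n(\q))$ is a nonempty compact set, and these form a projective system of nonempty compacta whose inverse limit is nonempty, yielding a lift $\tilde\q\in\cu^\omega(\ns)$.

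Next I would verify the bundle structure of $\pi_{k-1,k}^{\db{\mb{N}}}:\cu^\omega(\ns_k)\to \cu^\omega(\ns_{k-1})$. The compact abelian group $\cu^\omega(\mc{D}_k(\ab_k))$ acts on $\ns_k^{\db{\mb{N}}}$ by pointwise addition; this action preserves $\cu^\omega(\ns_k)$, because for every morphism $\phi_n:\db{n}\to\db{\mb{N}}$ the finite-level bundle structure tells us $\cu^n(\mc{D}_k(\ab_k))$ acts on $\cu^n(\ns_k)$. Freeness follows from pointwise freeness of the $\ab_k$-action on $\ns_k$. For the orbit-equals-fiber condition, given $\q_1,\q_2\in \cu^\omega(\ns_k)$ with $\pi_{k-1,k}\co \q_1 = \pi_{k-1,k}\co \q_2$, I would define $f:\db{\mb{N}}\to \ab_k$ pointwise by $f(v):=\q_2(v)-\q_1(v)$ (well-defined by pointwise freeness), and observe that for each $n$, $f\co\phi_n = \q_2\co\phi_n - \q_1\co\phi_n \in \cu^n(\mc{D}_k(\ab_k))$ by the finite-level bundle axioms; hence $f\in \cu^\omega(\mc{D}_k(\ab_k))$ and $\q_2 = \q_1+f$.

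The remaining topological requirement---that $U\subset \cu^\omega(\ns_{k-1})$ is open iff $(\pi_{k-1,k}^{\db{\mb{N}}})^{-1}(U)$ is open---follows because $\pi_{k-1,k}^{\db{\mb{N}}}$ is continuous and, as the quotient map by the action of the compact group $\cu^\omega(\mc{D}_k(\ab_k))$, also open. The main obstacle is combining the finite-level bundle structure on the $\cu^n(\ns_k)$ with the inverse-limit description to ensure that the pointwise-defined difference $f$ lies in $\cu^\omega(\mc{D}_k(\ab_k))$, and likewise that the lift in the surjectivity step is consistent; but both amount to standard compactness-of-inverse-limits arguments once the finite-$n$ bundle axioms are in hand, so this is essentially a routine (though careful) transfer.
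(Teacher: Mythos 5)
Your proposal is correct and follows essentially the same route as the paper's proof: verify the $\cu^\omega(\mc{D}_k(\ab_k))$-bundle axioms coordinate-wise (freeness from freeness of the $\ab_k$-action, the orbit-equals-fiber property by assembling the pointwise difference $f$ from its consistent finite restrictions $f\co\phi_n\in\cu^n(\mc{D}_k(\ab_k))$), establish openness of $\pi_{k-1,k}^{\db{\mb{N}}}$, and identify $\cu^\omega(\ns)$ with the inverse limit of the $\cu^\omega(\ns_k)$ via compactness. The only cosmetic difference is that the paper proves openness by checking images of basic product open sets using the finite-dimensional result, whereas you invoke the quotient-by-compact-group-action argument; both are standard and valid here.
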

\begin{proof} 
The first thing that we need to check is that $\cu^\omega(\ns_k)$ is a $\cu^\omega(\mc{D}_k(\ab_k))$-compact abelian bundle over $\cu^\omega(\ns_{k-1})$ for every $k\in \mb{N}$. Thus we need to prove Properties 1 and 2 of \cite[Definition 3.2.17]{Cand:Notes1}. To prove Property 1, take any $\q\in \cu^\omega(\ns_k)$ and $f\in \cu^\omega(\mc{D}_k(\ab_k))$. If $\q+f=\q$ then coordinate-wise we have $\q\sbr{v}+f\sbr{v}=\q\sbr{v}$ for any $v\in \db{\mb{N}}$. But the action of $\ab_k$ on $\ns_k$ is free by \cite[Theorem 3.2.19]{Cand:Notes1} so $f=0$. For Property 2, let $\q\in \cu^\omega(\ns_k)$. We need to show that if $\q'\in \cu^\omega(\ns_k)$ is any other element such that $\pi_{k-1,k}^{\db{\mb{N}}}\co \q = \pi_{k-1,k}^{\db{\mb{N}}}\co \q'$ then there exists $f\in \cu^\omega(\mc{D}_k(\ab_k))$ such that $\q=\q'+f$. Recall from Definition \ref{def:omega-cubes} the map $\phi_n:\db{n}\to\db{\mb{N}}$. Note that for any fixed $n\in\mb{N}$ we have $\pi_{k-1,k}\co\q\co\phi_n = \pi_{k-1,k}\co\q'\co\phi_n$ and thus, by \cite[Theorem 3.2.19]{Cand:Notes1}, we have $f_n:=\q\co\phi_n-\q'\co\phi_n\in \cu^n(\mc{D}_k(\ab_k))$. By definition the maps $(f_n)_{n\ge 1}$ are consistent (identifying $\db{1}\subset \db{2}\subset \cdots \subset \db{\mb{N}}$ according to $\phi_1,\phi_2,\ldots$) and thus if we define $f:\db{\mb{N}}\to \ab_k$ as $v\mapsto f_n\sbr{v}$ where $v\in \db{n}\times \{0^{\mb{N}\setminus[n]}\}$ (for $n$ large enough) we have $f\in \cu^\omega(\mc{D}_k(\ab_k))$ and $\q=\q'+f$. 

Next we need to check the conditions of \cite[Definition 2.1.6]{Cand:Notes2}. Recall that on $\cu^\omega(\ns_k)\subset \ns_k^{\db{\mb{N}}}$ we are using the product topology, and by \cite[Proposition 2.1.9]{Cand:Notes2}, the analogous conditions hold for $\cu^n(\ns_k)$ instead of $\cu^\omega(\ns_k)$. Then conditions $(i)$ and $(ii)$ of \cite[Definition 2.1.6]{Cand:Notes2} follow directly from the definitions of $\cu^\omega(\ns_k)$ and $\cu^\omega(\mc{D}_k(\ab_k))$. The action of $\cu^\omega(\mc{D}_k(\ab_k))$ on $\cu^\omega(\ns_k)$ is continuous as it is coordinate-wise continuous and we are using the product topology on $\cu^\omega(\ns_k)\subset \ns_k^{\db{\mb{N}}}$. Hence condition $(iii)$ follows. In order to prove $(iv)$ it suffices to check that $\pi_{k-1,k}^{\db{\mb{N}}}$ is open and continuous. The continuity follows from the continuity of $\pi_{k-1,k}$ and the definition of the product topology on $\cu^\omega(\ns_k)\subset \ns_k^{\db{\mb{N}}}$. To prove that it is an open map, note that it suffices to prove that images under $\pi_{k-1,k}^{\db{\mb{N}}}$ of subsets of the form $(\prod_{v\in \db{n}} U_v)\times \ns_k^{\db{\mb{N}}\setminus\db{n}}$ are open (where $U_v\subset \ns_k$ are open), as these product sets form a base of the product topology. But for these product sets the openness is already known from the theory for finite-dimensional cubes (see \cite[Lemma 2.1.10]{Cand:Notes2}).

Finally, we need to show that $\cu^\omega(\ns)=\varprojlim \cu^\omega(\ns_k)$ (as per Definitions \ref{def:infty-fold-ab-bundle} and \ref{def:infty-fold-compact-ab-bundle}). Since $\ns$ is profinite-step we have $\ns = \varprojlim \ns_k$ with limit maps (fibrations) $\pi_k:\ns\to\ns_k$. For any $\q\in \cu^\omega(\ns)$, for each integer $k\geq 0$ let $\q_k:=\pi_k\co \q:\db{\mb{N}}\to\ns_k$. We need to show that $\q_k\in \cu^\omega(\ns_k)$ and that $\pi_{k-1,k}\co \q_k=\q_{k-1}$, for each $k\in \mb{N}$. The latter equality follows from the fact that $\ns$ is the inverse limit of the $\ns_k$ (which gives the equality coordinate-wise). To see the former claim, note that for each $n\in \mb{N}$ we have $(\pi^{\db{\mb{N}}}_k(\q))\co \phi_n = \pi_k^{\db{n}}(\q\co\phi_n)$, and this last map is in $\cu^n(\ns_k)$ since $\q\co\phi_n\in \cu^n(\ns)$ and $\pi_k$ is a morphism, so $(\pi^{\db{\mb{N}}}_k(\q))\co \phi_n$ is an $n$-cube on $\ns_k$ for every $n$, which means that $\pi^{\db{\mb{N}}}_k(\q)\in  \cu^\omega(\ns_k)$ as required. This proves the inclusion of $\cu^\omega(\ns)$ in $\varprojlim \cu^\omega(\ns_k)$, and the opposite inclusion is similar.
\end{proof}
\begin{corollary}\label{cor:haar-mes-omega-cube}
Let $\ns$ be a profinite-step compact nilspace. Then there exists a unique Borel probability measure on $\cu^\omega(\ns)$, which we call the \emph{Haar measure} and denote by $\mu_{\cu^\omega(\ns)}$, determined by the property that for each integer $k\geq 0$ we have $\mu_{\cu^\omega(\ns)}\co(\pi_k^{\db{\mb{N}}})^{-1}=\mu_{\cu^\omega(\ns_k)}$.
\end{corollary}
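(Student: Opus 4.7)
The plan is to exploit the $\infty$-fold compact abelian bundle structure of $\cu^\omega(\ns)$ that was established in Lemma \ref{lem:Cinfbund}: its factors are precisely the sets $\cu^\omega(\ns_k)$ with projections $\pi_{k,i}^{\db{\mb{N}}}$, and its structure groups are the compact abelian groups $\cu^\omega(\mc{D}_k(\ab_k))$. Given this, the corollary should follow essentially as a direct application of the general existence-and-uniqueness of a Haar measure on any $\infty$-fold compact abelian bundle (Lemma \ref{lem:haar-mes-infty-fold} in Appendix \ref{app:cubic-coupling-infinite-step}, which is also what drives the proof of Proposition \ref{prop:haar-mes}).

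Concretely, I would first construct the measures on the factors inductively. The base case $\cu^\omega(\ns_0)$ is a singleton (since $\ns_0$ is), carrying the trivial probability measure. Assuming $\mu_{\cu^\omega(\ns_{k-1})}$ is already defined, one then defines $\mu_{\cu^\omega(\ns_k)}$ as the unique Borel probability measure on $\cu^\omega(\ns_k)$ that pushes forward to $\mu_{\cu^\omega(\ns_{k-1})}$ under $\pi_{k-1,k}^{\db{\mb{N}}}$ and is invariant under the coordinate-wise translation action of the compact abelian group $\cu^\omega(\mc{D}_k(\ab_k))$; this is the standard Haar lift along a compact abelian bundle projection. The resulting family $(\mu_{\cu^\omega(\ns_k)})_{k\ge 0}$ is compatible, by construction, with the projections $\pi_{k,k+1}^{\db{\mb{N}}}$. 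Since $\cu^\omega(\ns) = \varprojlim \cu^\omega(\ns_k)$ by Lemma \ref{lem:Cinfbund}, Kolmogorov's extension theorem (applied to the inverse system of compact second-countable spaces) then produces a unique Borel probability measure $\mu_{\cu^\omega(\ns)}$ on $\cu^\omega(\ns)$ whose pushforward under each $\pi_k^{\db{\mb{N}}}$ equals $\mu_{\cu^\omega(\ns_k)}$, as required.

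Uniqueness is then straightforward: any measure $\mu$ on $\cu^\omega(\ns)$ satisfying $\mu\co(\pi_k^{\db{\mb{N}}})^{-1}=\mu_{\cu^\omega(\ns_k)}$ for every $k\ge 0$ is determined on the algebra generated by the cylinder sets $(\pi_k^{\db{\mb{N}}})^{-1}(A)$ with $A\subset \cu^\omega(\ns_k)$ Borel, and this algebra generates the full Borel $\sigma$-algebra on $\cu^\omega(\ns)$ because $\ns$ is profinite-step (so the maps $\pi_k$ separate points of $\ns$, and therefore coordinate-wise on $\cu^\omega(\ns)\subset \ns^{\db{\mb{N}}}$). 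The only genuinely technical step is the appendix-level verification that the $\infty$-fold compact abelian bundle structure admits such an inductive/projective Haar measure construction; but this machinery is already developed in Lemma \ref{lem:haar-mes-infty-fold} precisely for this purpose, so no serious obstacle is expected, and the corollary reduces to a clean application of that general result to the specific bundle structure identified in Lemma \ref{lem:Cinfbund}.
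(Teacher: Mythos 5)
Your proposal is correct and follows essentially the same route as the paper, whose entire proof is the one-line combination of Lemma \ref{lem:Cinfbund} (identifying $\cu^\omega(\ns)$ as an $\infty$-fold compact abelian bundle with factors $\cu^\omega(\ns_k)$) with the general Haar-measure result of Lemma \ref{lem:haar-mes-infty-fold}. Your additional sketch of the inductive construction of the factor measures and the projective-limit/uniqueness argument merely unpacks what Lemma \ref{lem:haar-mes-infty-fold} already provides (the appendix proves it via Carath\'eodory's extension theorem on the semi-ring of cylinder sets, rather than Kolmogorov, but this is an immaterial difference).
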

\begin{proof}
This follows by combining Lemma \ref{lem:Cinfbund} with Lemma \ref{lem:haar-mes-infty-fold}.
\end{proof}
\noindent As in Remark \ref{rem:ext-haar-x-db-n}, let us mention similarly here that we shall sometimes abuse the notation and view $\mu_{\cu^\omega(\ns)}$ as a measure on $\ns^{\db{\mb{N}}}$ (concentrated on $\cu^\omega(\ns)$).
\begin{remark}\label{rem:co-omega-equals-cor-omega}
Note that if $\ns$ is a compact profinite-step nilspace, the set $\cor_v^\omega(\ns)$ of $\omega$-corners rooted at $v$  (for any fixed $v\in \db{\mb{N}}$) is also an $\infty$-fold compact abelian bundle and thus it has a Haar measure. Furthermore, by Lemma \ref{lem:uniqueomegacomp} the map $\psi:\cu^\omega(\ns)\to\cor_v^\omega(\ns)$, $\q\mapsto \q|_{\db{\mb{N}}\setminus\{v\}}$ is a bijective bicontinuous bundle-morphism, so $\psi$ and $\psi^{-1}$ both preserve the Haar measures.
\end{remark}
\begin{proposition}\label{prop:mes-pre-inf-cube}
Let $\ns$ and $\nss$ be compact profinite-step nilspaces. Let $\varphi:\nss\to \ns$ be a fibration. Then $\varphi^{\db{\mb{N}}}:\cu^\omega(\nss)\to \cu^\omega(\ns)$ preserves the Haar measures. 
\end{proposition}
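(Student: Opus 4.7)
My plan is to establish the equality of measures $\mu_{\cu^\omega(\nss)}\co (\varphi^{\db{\mb{N}}})^{-1} = \mu_{\cu^\omega(\ns)}$ by matching all finite-dimensional marginals. Since $\cu^\omega(\ns)$ sits inside the Polish space $\ns^{\db{\mb{N}}}$ and the cylinder sets $p_n^{-1}(C)$ for $C\subset \ns^{\db{n}}$ Borel generate the Borel $\sigma$-algebra, any Borel probability measure on $\cu^\omega(\ns)$ is uniquely determined by its pushforwards under the projections $p_n:\ns^{\db{\mb{N}}}\to \ns^{\db{n}}$ for all $n\in \mb{N}$ (a standard $\pi$-$\lambda$ argument).

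The key intermediate fact I need is that $(p_n)_* \mu_{\cu^\omega(\ns)} = \mu_{\cu^n(\ns)}$ for every compact profinite-step nilspace $\ns$ and every $n\ge 0$. This should follow directly from the construction of the Haar measure on the $\infty$-fold abelian bundle $\cu^\omega(\ns)$ provided in the appendix (Lemma \ref{lem:haar-mes-infty-fold} together with Lemma \ref{lem:hom-spaces-infty-fold-ab-bundles}): the same machinery that produces $\mu_{\cu^\omega(\ns)}$ yields $\mu_{\cu^n(\ns)}$ as the marginal obtained by restricting to the face $\phi_n(\db{n})\subset \db{\mb{N}}$, since $p_n$ is itself a bundle morphism compatible with the structure groups $\cu^\omega(\mc{D}_k(\ab_k))\to \cu^n(\mc{D}_k(\ab_k))$.

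Granting this, the main computation is short. The morphism $\varphi$ commutes with the projections $p_n$ in the obvious sense, namely $p_n\co \varphi^{\db{\mb{N}}} = \varphi^{\db{n}}\co p_n$ as maps $\ns^{\db{\mb{N}}}\to \nss^{\db{n}}$ (reading off coordinates in $\phi_n(\db{n})$ commutes with applying $\varphi$ coordinatewise). Therefore, for each $n$,
\begin{align*}
(p_n)_*\bigl((\varphi^{\db{\mb{N}}})_* \mu_{\cu^\omega(\nss)}\bigr)
&= (\varphi^{\db{n}})_*\bigl((p_n)_*\mu_{\cu^\omega(\nss)}\bigr) \\
&= (\varphi^{\db{n}})_* \mu_{\cu^n(\nss)}
= \mu_{\cu^n(\ns)}
= (p_n)_*\mu_{\cu^\omega(\ns)},
\end{align*}
where the third equality is Lemma \ref{prop:char-inv-nil} applied to $\varphi$ at dimension $n$, and the first and last equalities use the marginal property stated above. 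Uniqueness of measures from their finite-dimensional marginals then gives the desired equality of measures on $\cu^\omega(\ns)$.

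The only real obstacle is justifying cleanly the marginal identity $(p_n)_*\mu_{\cu^\omega(\ns)}=\mu_{\cu^n(\ns)}$; depending on how the appendix formalizes $\infty$-fold bundles, one may instead argue via the characteristic factor description, using that $(\pi_k^{\db{\mb{N}}})_*\mu_{\cu^\omega(\ns)}=\mu_{\cu^\omega(\ns_k)}$ (from Corollary \ref{cor:haar-mes-omega-cube}) and reducing to finite-step nilspaces where $\cu^\omega(\ns_k)$ is itself a $k$-fold bundle with explicit Haar measure. Once that identity is in hand, the rest is a formal diagram-chase plus the uniqueness lemma from measure theory.
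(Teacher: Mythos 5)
Your argument is correct, but it takes a different route from the paper's. The paper proves the statement by verifying directly that $\varphi^{\db{\mb{N}}}$ is a continuous totally-surjective bundle morphism between the $\infty$-fold compact abelian bundles $\cu^\omega(\nss)$ and $\cu^\omega(\ns)$ (checking conditions (i)--(ii) of Definition \ref{def:infty-fold-bundle-morphism} and the surjectivity of the structure homomorphisms $\alpha_i^{\db{\mb{N}}}$ via the explicit description in Example \ref{ex:omega-cubes-d-k}), and then invokes Lemma \ref{lem:mes-pre-1}. You instead reduce to the already-established finite-dimensional statement (Lemma \ref{prop:char-inv-nil}) by matching all marginals under the projections $p_n$, using a $\pi$--$\lambda$ uniqueness argument on cylinder sets. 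Your reduction is sound: the sets $D_n=\db{n}\times\{0^{\mb{N}\setminus[n]}\}$ exhaust $\db{\mb{N}}$, so the cylinder sets over them form a generating $\pi$-system, and the commutation $p_n\co\varphi^{\db{\mb{N}}}=\varphi^{\db{n}}\co p_n$ is immediate. You correctly identify that the one nontrivial ingredient is the marginal identity $(p_n)_*\mu_{\cu^\omega(\ns)}=\mu_{\cu^n(\ns)}$; this is not literally part of the construction in Corollary \ref{cor:haar-mes-omega-cube} (which only pins down the pushforwards under the characteristic-factor maps $\pi_k^{\db{\mb{N}}}$, not under $p_n$), but it does follow by exactly the mechanism you name: the restriction $p_n:\cu^\omega(\ns)\to\cu^n(\ns)$ is a continuous totally-surjective bundle morphism with structure homomorphisms $\cu^\omega(\mc{D}_i(\ab_i))\to\cu^n(\mc{D}_i(\ab_i))$, surjective by Example \ref{ex:omega-cubes-d-k}, so Lemma \ref{lem:mes-pre-1} applies. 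The trade-off is roughly even: both routes ultimately rest on a totally-surjective-bundle-morphism check, yours applied to $p_n$ and the paper's applied to $\varphi^{\db{\mb{N}}}$ itself; your version has the mild advantage of recycling Lemma \ref{prop:char-inv-nil} and the marginal identity, which the paper uses elsewhere anyway (e.g.\ in the discussion of \eqref{eq:basicex} and in the proof of Proposition \ref{prop:tau}). One cosmetic slip: the commutation relation should be stated as maps $\nss^{\db{\mb{N}}}\to\ns^{\db{n}}$, not $\ns^{\db{\mb{N}}}\to\nss^{\db{n}}$, since $\varphi:\nss\to\ns$.
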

\begin{proof} 
It suffices to check that $\varphi^{\db{\mb{N}}}$ is a totally-surjective bundle morphism between the corresponding $\infty$-fold compact abelian bundles. As in the proof of Lemma \ref{lem:Cinfbund}, the arguments are routine analogues of existing ones in the literature and we shall not detail them beyond the following outline. First one needs to check the conditions $(i)$ and $(ii)$ of Definition \ref{def:infty-fold-bundle-morphism}. Condition $(i)$ follows from the fact that given $\q,\q'\in\cu^\omega(\ns)$ such that $\pi_k^{\db{\mb{N}}}(\q)=\pi_k^{\db{\mb{N}}}(\q')$ we have that for each fixed $v\in \db{\mb{N}}$, $\pi_k(\q\sbr{v})=(\pi_k^{\db{\mb{N}}}(\q))\sbr{v}=(\pi_k^{\db{\mb{N}}}(\q'))\sbr{v}=\pi_k(\q'\sbr{v})$. Then as $\varphi$ is a bundle morphism by \cite[Proposition 3.3.2]{Cand:Notes1} we have $\pi_k(\varphi(\q\sbr{v}))=\pi_k(\varphi(\q'\sbr{v}))$. Hence $\pi_k^{\db{\mb{N}}}(\varphi^{\db{\mb{N}}}(\q))=\pi_k^{\db{\mb{N}}}(\varphi^{\db{\mb{N}}}(\q'))$. Condition $(ii)$ follows similarly.

Continuity follows from that of $\varphi$. To prove the surjectivity between the $i$-factors, let $f\in \cu^\omega(\mc{D}_i(\ab_i))$ where $\ab_i$ is the $i$-th structure group of $\nss$ and $\q\in\cu^\omega(\nss)$. It can be checked that $\varphi^{\db{\mb{N}}}(\q+f)=\varphi^{\db{\mb{N}}}(\q)+\alpha_i^{\db{\mb{N}}}(f)$ where $\alpha_i:\ab_i\to\ab_i'$ is the $i$-structure homomorphism of $\varphi$ and $\ab_i'$ the $i$-th structure group of $\ns$. It is readily seen using Example \ref{ex:omega-cubes-d-k} that the map $\alpha_i^{\db{\mb{N}}}:\cu^\omega(\mc{D}_i(\ab_i))\to \cu^\omega(\mc{D}_i(\ab'_i))$ is surjective.
\end{proof}
\noindent Let us record one last feature of the cube set $\cu^\omega(\ns)$ in this subsection. If $\ns$ is the group nilspace associated with a compact filtered abelian group $(\ab,\ab_\bullet=(\ab_{\sbr{k}})_{k\geq 0})$, then $\cu^\omega(\ns)$ is a compact abelian subgroup of $\ab^{\db{\mb{N}}}$. Indeed this follows from the definition \eqref{eq:DefCinf} and the standard fact that the Host--Kra cube-set $\cu^n(\ns)$ is a closed subgroup of $\ab^{\db{n}}$. We then have the following fact.
\begin{lemma}\label{lem:nsabHaarequiv}
Let $\ns$ be a compact profinite-step nilspace, and suppose that $\ns$ is the group nilspace associated with a filtered compact abelian group $(\ab,\ab_\bullet)$ \textup{(}thus the filtration $\ab_\bullet$ is non-degenerate\textup{)}. Then the Haar measure on $\cu^\omega(\ns)$ defined in Corollary \ref{cor:haar-mes-omega-cube} equals the Haar measure of $\cu^\omega(\ns)$ as a compact abelian group.
\end{lemma}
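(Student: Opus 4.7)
The plan is to exploit the uniqueness clause in Corollary \ref{cor:haar-mes-omega-cube}: the bundle Haar measure $\mu_{\cu^\omega(\ns)}$ is the unique Borel probability measure on $\cu^\omega(\ns)$ whose pushforward under each $\pi_k^{\db{\mb{N}}}$ equals $\mu_{\cu^\omega(\ns_k)}$. Hence, letting $\nu$ denote the Haar measure of $\cu^\omega(\ns)$ viewed as a compact abelian subgroup of $\ab^{\db{\mb{N}}}$, it suffices to verify that $\nu\co(\pi_k^{\db{\mb{N}}})^{-1}=\mu_{\cu^\omega(\ns_k)}$ for every $k\ge 0$.

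Each characteristic factor $\ns_k$ is itself the group nilspace of the filtered compact abelian group $(\ab/\ab_{(k+1)},\ab_\bullet/\ab_{(k+1)})$, and is $k$-step (hence profinite-step). Consequently $\cu^\omega(\ns_k)$ is a compact abelian subgroup of $(\ab/\ab_{(k+1)})^{\db{\mb{N}}}$, and $\pi_k^{\db{\mb{N}}}:\cu^\omega(\ns)\to\cu^\omega(\ns_k)$ is the restriction of the continuous surjective group homomorphism $\ab^{\db{\mb{N}}}\to(\ab/\ab_{(k+1)})^{\db{\mb{N}}}$ induced by the quotient map. Its surjectivity onto $\cu^\omega(\ns_k)$ follows from the inverse limit description in Lemma \ref{lem:Cinfbund}. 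Since a continuous surjective homomorphism between compact abelian groups always sends Haar measure to Haar measure, the pushforward $\nu\co(\pi_k^{\db{\mb{N}}})^{-1}$ equals the group Haar measure on $\cu^\omega(\ns_k)$.

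The problem thus reduces to showing, in the finite-step case, that the group Haar measure and the bundle Haar measure on $\cu^\omega(\ns_k)$ coincide. I would prove this by induction on the step $k$. The base case is trivial. For the inductive step, the short exact sequence of compact abelian groups
\[
0\to\cu^\omega(\mc{D}_{k+1}(\ab_{k+1}))\to\cu^\omega(\ns_{k+1})\xrightarrow{\pi_k^{\db{\mb{N}}}}\cu^\omega(\ns_k)\to 0,
\]
where $\ab_{k+1}=\ab_{(k+1)}/\ab_{(k+2)}$ is the $(k+1)$-th structure group, allows one to apply Weil's formula for Haar measure on a group extension. Both measures project (by the inductive hypothesis) to the common Haar measure on $\cu^\omega(\ns_k)$, and both have conditional fiber measures equal to translates of the Haar measure on $\cu^\omega(\mc{D}_{k+1}(\ab_{k+1}))$: for the group Haar measure this is Weil's formula, while for the bundle Haar measure it is built into the construction in Appendix \ref{app:cubic-coupling-infinite-step}. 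By disintegration the two measures agree.

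The main obstacle is the finite-step inductive step, where one must match the bundle structure with the group extension structure. Specifically, one needs to identify the group-theoretic kernel of $\pi_k^{\db{\mb{N}}}$ with the bundle structure group $\cu^\omega(\mc{D}_{k+1}(\ab_{k+1}))$ (a routine but careful check using the Host--Kra cube formulas and Example \ref{ex:omega-cubes-d-k}), and to confirm that the fiber conditional measures coincide in both the bundle and group descriptions. This bookkeeping, together with the trivial observation that on $\cu^\omega(\mc{D}_{k+1}(\ab_{k+1}))$ itself the group Haar measure matches the bundle Haar measure (both being product Haar measures on a countable power of $\ab_{k+1}$), completes the induction.
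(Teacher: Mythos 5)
Your argument is correct, but it runs in the opposite direction from the paper's. You characterize the \emph{bundle} Haar measure by its defining property (unique measure whose pushforward under each $\pi_k^{\db{\mb{N}}}$ is $\mu_{\cu^\omega(\ns_k)}$) and verify that the \emph{group} Haar measure has this property, which forces you into a finite-step induction: identify the kernel of $\pi_{k,k+1}^{\db{\mb{N}}}$ with the structure group $\cu^\omega(\mc{D}_{k+1}(\ab_{k+1}))$, and match the two disintegrations via Weil's formula. The paper instead characterizes the \emph{group} Haar measure by its defining property (unique translation-invariant probability measure) and verifies it for the \emph{bundle} Haar measure: for any $\q'\in\cu^\omega(\ns)$ the translation $\q\mapsto\q+\q'$ is a continuous bundle morphism whose structure homomorphisms are all the identity (hence totally surjective), so Lemma \ref{lem:mes-pre-1} gives invariance in one stroke, with no induction and no explicit disintegration. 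Your route is heavier — the kernel identification and the matching of conditional fiber measures are exactly the ``bookkeeping'' you flag, and they do need to be written out — but it has the mild advantage of making the fiberwise structure of both measures explicit, which is sometimes useful downstream. Both proofs are sound; if you keep yours, do spell out that the fiber of $\pi_{k,k+1}^{\db{\mb{N}}}$ over $0$ is precisely $\cu^\omega(\mc{D}_{k+1}(\ab_{k+1}))$ (this follows from Property 2 of the bundle structure in Lemma \ref{lem:Cinfbund} together with the fact that the structure group acts by vertex-wise addition in the ambient group $(\ab/\ab_{(k+2)})^{\db{\mb{N}}}$), since that is the one step where the group-theoretic and bundle-theoretic descriptions must genuinely be reconciled.
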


\begin{proof}
By definition and uniqueness of the (group) Haar measure on $\cu^\omega(\ns)$, it suffices to prove that the measure $\mu_{\cu^\omega(\ns)}$ given by Corollary \ref{cor:haar-mes-omega-cube} is invariant under the addition of any $\q'\in\cu^\omega(\ns)$.

By Lemma \ref{lem:mes-pre-1} it is enough to prove that for any $\q'\in \cu^\omega(\ns)$, the map $\varphi_{\q'}:\cu^\omega(\ns)\to\cu^\omega(\ns)$, $\q\mapsto \q+\q'$ is a totally surjective continuous bundle morphism. The continuity follows from continuity of addition in $\ab$. Recall that in the case of group nilspaces, each projection $\pi_k:\ns\to\ns_k$ equals the quotient map $\pi_k:\ab\to \ab/\ab_{\sbr{k+1}}$, and $\ns_k$ is the group nilspace associated with $(\ab/\ab_{\sbr{k+1}},(\ab_{\sbr{i}}/\ab_{\sbr{k+1}})_{i=0}^\infty)$ (this follows from the definition of the characteristic factors and the definition of the Host--Kra cubes). 

Clearly the map $\varphi_{\q'}$ is a bundle morphism, since $\pi_k$ is also a homomorphism of abelian groups. Hence we have to check that the structure homomorphism of $(\varphi_{\q'})_k:\cu^\omega(\ns_k)\to \cu^\omega(\ns_k)$, $\q\!\! \mod \ab_{\sbr{k+1}}\mapsto \q+\q'\!\!\mod \ab_{\sbr{k+1}}$ is surjective for every $k\ge 1$. The $k$-th structure group of $\ns$ can be proved to be $\ab_{\sbr{k}}/\ab_{\sbr{k+1}}$ (\cite[Corollary 3.2.16]{Cand:Notes1}). Given $f\in \cu^\omega(\mc{D}_k(\ab_{\sbr{k}}/\ab_{\sbr{k+1}}))$ we have that $(\varphi_{\q'})_k(\q+f\mod \ab_{\sbr{k+1}}) = \q+\q'+f\mod \ab_{\sbr{k+1}}= (\varphi_{\q'})_k(\q\mod \ab_{\sbr{k+1}})+f\mod \ab_{\sbr{k+1}}$. Thus the corresponding structure homomorphism is just the identity, which is surjective as required.
\end{proof}

\begin{remark} 
In the sequel we will frequently work with group nilspaces $\ns$ associated with filtered compact abelian groups $(\ab,\ab_\bullet)$. In these cases, when there is no risk of confusion, we will say that $\ab$ is a nilspace (when the filtration is understood from the context), and its set of $n$-cubes for $n\in \mb{N}\cup\{\omega\}$ will be denoted by $\cu^n(\ab)$ (or by $\cu^n(\ab_\bullet)$ if the filtration needs to be emphasized). By Lemma \ref{lem:nsabHaarequiv}, we can use $\mu_{\cu^\omega(\ab)}$ as the (unique) Haar measure, from either of the nilspace or group viewpoints, without ambiguity. Note also that using an argument very similar to the previous one, the (group) Haar measure of $\cu^n(\ns)$ equals the (nilspace) Haar measure of $\cu^n(\ns)$ for any $n\in \mb{N}$. 
\end{remark}

\subsection{More background on cubic and affine-exchangeability}\label{subsec:basicex}\hfill\smallskip\\
\noindent In this subsection we gather results providing more detailed background on affine and cubic exchangeability, elaborating on remarks and examples from the introduction. 

We begin by discussing in more detail the inclusions in \eqref{eq:inclu}.

\begin{lemma}\label{lem:clarifinclu}
For every standard Borel space $\Bo$, we have
\begin{equation}\label{eq:inclu2}
\textup{Pr}^{\textup{Aff}(\mb{F}_2^\omega)}(\Bo^{\db{\mb{N}}})  
\;\; \subset \;\; \{\textup{cubic-exchangeable measures on }\Bo^{\db{\mb{N}}}\} 
\;\;  \subset \;\; \textup{Pr}^{\aut(\db{\mb{N}})}(\Bo^{\db{\mb{N}}}).
\end{equation}
\end{lemma}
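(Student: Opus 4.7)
The plan is to handle the two inclusions separately, and in both cases the key tool is the composition of discrete-cube morphisms with a carefully chosen automorphism.

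For the first inclusion, given $\mu\in\textup{Pr}^{\textup{Aff}(\mb{F}_2^\omega)}(\Bo^{\db{\mb{N}}})$ and a pair of injective discrete-cube morphisms $\phi_1,\phi_2:\db{k}\to\db{\mb{N}}$, I will exhibit an affine automorphism $\gamma\in\textup{Aff}(\mb{F}_2^\omega)$ with $\phi_2=\gamma\co\phi_1$. Writing $\phi_i(v)=a_i+L_iv$ for injective $\mb{F}_2$-linear maps $L_i:\mb{F}_2^k\to\mb{F}_2^\omega$, this reduces to producing $G\in\textup{GL}(\mb{F}_2^\omega)$ with $GL_1=L_2$ (then $\gamma(x):=(a_2+Ga_1)+Gx$ does the job). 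Both images $L_1(\mb{F}_2^k)$ and $L_2(\mb{F}_2^k)$ are $k$-dimensional subspaces of the countably infinite-dimensional space $\mb{F}_2^\omega$, so I complete the respective image bases to Hamel bases of $\mb{F}_2^\omega$ -- whose complementary parts are each countably infinite -- and define $G$ on these bases accordingly. Letting $\tilde p_{\phi_i}:\Bo^{\db{\mb{N}}}\to\Bo^{\db{k}}$, $(x_w)_w\mapsto(x_{\phi_i(v)})_v$, a direct computation gives $\tilde p_{\phi_2}=\tilde p_{\phi_1}\co\theta_\gamma$, whence affine-invariance of $\mu$ yields $\mu\co\tilde p_{\phi_2}^{-1}=\mu\co\theta_\gamma^{-1}\co\tilde p_{\phi_1}^{-1}=\mu\co\tilde p_{\phi_1}^{-1}$, which is precisely the cubic-exchangeability condition.

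For the second inclusion, let $\mu$ be cubic-exchangeable and $\gamma\in\aut(\db{\mb{N}})$. Since $\gamma$ is itself an invertible discrete-cube morphism, for any finite $V\subset\db{\mb{N}}$ I choose $k$ large enough and an injective morphism $\phi_1:\db{k}\to\db{\mb{N}}$ whose image is a $k$-dimensional affine subcube containing $V$; then $\phi_2:=\gamma\co\phi_1$ is also an injective morphism $\db{k}\to\db{\mb{N}}$ (composition of morphisms being a morphism). Cubic-exchangeability gives $\mu\co\tilde p_{\phi_1}^{-1}=\mu\co\tilde p_{\phi_2}^{-1}$ as measures on $\Bo^{\db{k}}$. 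Evaluating both sides on the cylinder set of $\Bo^{\db{k}}$ prescribing $y_v\in A_{\phi_1(v)}$ for each $v$ with $\phi_1(v)\in V$, and unwinding via the substitution $u=\phi_1(v)$, yields
\[
\mu\bigl\{(x_w):x_u\in A_u\;\forall u\in V\bigr\}=\mu\bigl\{(x_w):x_{\gamma(u)}\in A_u\;\forall u\in V\bigr\},
\]
which is $\mu=\mu\co\theta_\gamma^{-1}$ on cylinder sets based on $V$. Since such cylinder sets (as $V$ ranges over finite subsets of $\db{\mb{N}}$) generate the product $\sigma$-algebra, $\mu$ is $\gamma$-invariant.

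The main obstacle is the linear-algebra step in the first inclusion: producing $G\in\textup{GL}(\mb{F}_2^\omega)$ by extending partial bases. While routine, this genuinely relies on the infinite-dimensionality of $\mb{F}_2^\omega$ (so that both complementary subspaces have the same countably infinite Hamel dimension and can be matched by a linear isomorphism). The rest amounts to careful bookkeeping with pushforward measures and parameterizations, together with the standard fact that cylinder sets generate the product $\sigma$-algebra.
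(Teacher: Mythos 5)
Your argument is correct. For the first inclusion you follow essentially the same route as the paper: both proofs produce an affine automorphism $\gamma\in\textup{Aff}(\mb{F}_2^\omega)$ with $\phi_2=\gamma\co\phi_1$ and then transport the pushforward identity through the induced coordinate permutation; you merely make the linear algebra explicit. One small caveat there: the paper later treats $\textup{Aff}(\mb{F}_2^\omega)$ as $\bigcup_{n}\textup{Aff}(\mb{F}_2^n)$, i.e.\ as the group of affine automorphisms acting on finitely many coordinates, and a $G$ obtained by matching two arbitrary Hamel bases of $\mb{F}_2^\omega$ need not be finitary. This is harmless: since $\phi_1(\db{k})$ and $\phi_2(\db{k})$ are finite sets of finitely supported sequences, both lie in $\mb{F}_2^n\times\{0^{\mb{N}\setminus[n]}\}$ for some $n$, so you can carry out the basis-matching inside $\mb{F}_2^n$ and extend $G$ by the identity on the remaining coordinates. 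For the second inclusion the paper does not argue at all but simply cites \cite[Remark 6.6]{CScouplings}; your direct proof (embed a given finite $V$ in the image of a face morphism $\phi_1$, note that $\gamma\co\phi_1$ is again an injective discrete-cube morphism because $\gamma\in\aut(\db{\mb{N}})$ is itself a morphism, apply cubic exchangeability, and compare cylinder sets, which form a generating $\pi$-system) is a valid, self-contained alternative and arguably more transparent. Finally, the paper's proof also records that both inclusions are strict, but since the statement only asserts the inclusions you were not required to address that.
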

\begin{proof}
To see the first inclusion, note that given any pair of injective morphisms $\phi_1,\phi_2:\db{k}\to\db{\mb{N}}$, their images define two subspaces of dimension $k$ (identifying $\db{\mb{N}}$ with $\mb{F}_2^{\omega}$). Thus,  there exists an affine map $\alpha\in \textup{Aff}(\mb{F}_2^\omega)\cong  \textrm{GL}(\mb{F}_2^\omega)\ltimes \mb{Z}_2^\omega$ such that $\phi_2 = \alpha\co \phi_1$.  
For $i=1,2$ let $p_{\phi_i}:\Bo^{\db{\mb{N}}}\to \Bo^{\db{k}}$ be the projection $(b_v)_{v\in\db{\mb{N}}}\mapsto (b_{\phi_i(v)})_{v\in\db{k}}$, and note that  $p_{\phi_2} = p_{\phi_1} \co {\alpha'}^{-1}$ where ${\alpha'}^{-1}:\Bo^{\db{\mb{N}}}\to \Bo^{\db{\mb{N}}}$, $(b_v)_{v\in\db{\mb{N}}}\mapsto (b_{\alpha^{-1}(v)})_{v\in \db{\mb{N}}}$. Now, given any $\mu\in \textup{Pr}^{\textup{Aff}(\mb{F}_2^\omega)}(\Bo^{\db{\mb{N}}})$, we have $\mu \co p_{\phi_2}^{-1}  = \mu \co \alpha'\co  p_{\phi_1}^{-1} = \mu \co  p_{\phi_1}^{-1}$, where the last equality here uses that $\mu$ is affine-exchangeable. Hence $\mu$ is cubic-exchangeable, which proves the desired inclusion.

To see that this first inclusion can be strict, it suffices to produce a cubic-exchangeable measure that is not affine-exchangeable. For example, if $\ns$ is the group nilspace associated with a compact filtered abelian group $(\ab,\ab_\bullet)$, and we set $\Bo=\ns$, then the measure $\mu_{\cu^\omega(\ns)}$ (seen as a measure on $\ns^{\db{\mb{N}}}$) is cubic-exchangeable, but by Corollary \ref{cor:2-hom-aff-exch}, in order for this measure to be affine-exchangeable, the nilspace $\ns$ must be 2-homogeneous. A simple example of a compact nilspace that is not 2-homogeneous is the nilspace $\mc{D}_1(\mb{Z}_3)$.

The second inclusion in \eqref{eq:inclu2} is proved in \cite[Remark 6.6]{CScouplings}. This inclusion is strict for instance whenever $\Bo$ is a compact metric space containing at least two elements. To see this, one can use the fact that, on one hand, the convex set of cubic-exchangeable measures on $\Bo^{\db{\mb{N}}}$ is a Bauer simplex relative to the vague topology, i.e.\ its extreme points form a vaguely closed set (this follows from results in \cite{CScouplings} and is recalled in detail in Section \ref{sec:geom} below), and on the other hand, it is proved in \cite[Theorem 4.2]{Austin2} that $\textup{Pr}^{\aut(\db{\mb{N}})}(\Bo^{\db{\mb{N}}})$ is a Poulsen simplex, so its set of extreme points need not be closed.
\end{proof}
\noindent Given a nilspace $\ns$, the next result gives a criterion involving $\cu^\omega(\ns)$ to decide whether $\ns$ is 2-homogeneous. Let $\phi:\db{\mb{N}}\to\mb{F}_2^\omega$ be the bijective map $v\mapsto v\mod 2$ (i.e.\ $\phi$ is just the natural identification of $\db{\mb{N}}$ as a set with $\mb{F}_2^\omega$). With this, we can view any function $\q:\db{\mb{N}}\to\ns$ in a simple way as a function  on $\mb{F}_2^\omega$ (just by considering $\q\co \phi^{-1}$), and vice versa. We shall sometimes switch between these two views of such a function $\q$, in a slight abuse of  notation. We then have the following result, which ensures in particular that the aforementioned abuse of notation will not be problematic when $\ns$ is 2-homogeneous.

\begin{lemma}\label{lem:2-hom-equals-c-omega-poly}
Let $\ns$ be a nilspace. Then $\cu^\omega(\ns)=\hom(\mc{D}_1(\mb{F}_2^\omega),\ns)$ if and only if $\ns$ is 2-homogeneous.
\end{lemma}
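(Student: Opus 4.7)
My plan is to address each direction separately, with the bulk of the work being in the converse ($\Leftarrow$) direction.

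\textbf{Easy inclusion $\hom(\mc{D}_1(\mb{F}_2^\omega),\ns)\subseteq\cu^\omega(\ns)$.} This holds for any nilspace $\ns$, independently of 2-homogeneity. Identifying $\db{\mb{N}}$ with $\mb{F}_2^\omega$ via reduction mod 2, every affine morphism $\phi:\db{n}\to\db{\mb{N}}$ (whose coordinates must each be constant $0$, constant $1$, $w_j$, or $1-w_j$, as these are the only integer-affine $\{0,1\}$-valued functions on $\db{n}$) becomes, after reduction mod 2, an $\mb{F}_2$-affine-linear map $\db{n}\to\mb{F}_2^\omega$, i.e., an element of $\cu^n(\mc{D}_1(\mb{F}_2^\omega))$. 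Hence any $\q\in\hom(\mc{D}_1(\mb{F}_2^\omega),\ns)$ satisfies $\q\co\phi\in\cu^n(\ns)$, so $\q\in\cu^\omega(\ns)$.

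\textbf{Forward direction ($\Rightarrow$).} Assume equality and let $f\in\hom(\mc{D}_1(\mb{Z}^n),\ns)$; I would show $f|_{\db{n}}\in\hom(\mc{D}_1(\mb{F}_2^n),\ns)$. Define $\q:\db{\mb{N}}\to\ns$ by $\q(v):=f(v_1,\ldots,v_n)$. For any affine morphism $\phi:\db{m}\to\db{\mb{N}}$, restriction to its first $n$ output coordinates gives an affine morphism $\phi|_{[n]}:\db{m}\to\db{n}\subset\mb{Z}^n$, which is an element of $\cu^m(\mc{D}_1(\mb{Z}^n))$; hence $\q\co\phi=f\co\phi|_{[n]}\in\cu^m(\ns)$, establishing $\q\in\cu^\omega(\ns)$. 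By the assumed equality, $\q\in\hom(\mc{D}_1(\mb{F}_2^\omega),\ns)$. Given any affine-linear $c':\db{k}\to\mb{F}_2^n$, pad by zeros to form $c:\db{k}\to\mb{F}_2^\omega$, still a cube in $\cu^k(\mc{D}_1(\mb{F}_2^\omega))$; then $(\q\co c)(w)=f(c'(w))$, so $f|_{\db{n}}\co c'\in\cu^k(\ns)$, proving $f|_{\db{n}}\in\hom(\mc{D}_1(\mb{F}_2^n),\ns)$.

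\textbf{Backward direction ($\Leftarrow$), setup.} Assuming $\ns$ is 2-homogeneous, take $\q\in\cu^\omega(\ns)$ and $c\in\cu^k(\mc{D}_1(\mb{F}_2^\omega))$; without loss of generality $c$ has image in $\mb{F}_2^n\times\{0\}^{\mb{N}\setminus[n]}$, corresponding to an affine-linear $c':\db{k}\to\mb{F}_2^n$. Define $\q_n:\mb{F}_2^n\to\ns$ by $\q_n(v):=\q(v,0^{\mb{N}\setminus[n]})$; the $\omega$-cube property applied to the face-inclusion morphism gives $\q_n\in\cu^n(\ns)$. Since $\q\co c=\q_n\co c'$, it suffices to show $\q_n\in\hom(\mc{D}_1(\mb{F}_2^n),\ns)$. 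My approach is to form the integer lift $\tilde\q_n:\mb{Z}^n\to\ns$, $\tilde\q_n(z):=\q_n(z\!\!\mod 2)$; if $\tilde\q_n\in\hom(\mc{D}_1(\mb{Z}^n),\ns)$, then 2-homogeneity yields $\q_n=\tilde\q_n|_{\db{n}}\in\hom(\mc{D}_1(\mb{F}_2^n),\ns)$, completing the proof.

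\textbf{Main obstacle.} The core difficulty is proving $\tilde\q_n\in\hom(\mc{D}_1(\mb{Z}^n),\ns)$: for every integer-affine $d:\db{m}\to\mb{Z}^n$, the composition $(\tilde\q_n\co d)(w)=\q(d(w)\!\!\mod 2,0^{\mb{N}\setminus[n]})$ must lie in $\cu^m(\ns)$. My strategy is a \emph{resolve-and-fold} argument: writing each coordinate of $d\!\!\mod 2:\db{m}\to\mb{F}_2^n$ as an $\mb{F}_2$-sum of some subset of the $w_j$'s (plus a constant), I would introduce auxiliary coordinates replicating each $w_j$ according to its multiplicity, assembling a standard affine morphism $\psi:\db{m}\to\db{\mb{N}}$ for which $\q\co\psi\in\cu^m(\ns)$ by the $\omega$-cube property. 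The desired cube $\q(d(w)\!\!\mod 2,0,\ldots)$ is then obtained from $\q\co\psi$ by iteratively identifying each group of replicated coordinates with its mod-2 sum, and 2-homogeneity is precisely the structural input needed to guarantee that each such elementary pair-identification preserves membership in $\cu^m(\ns)$. The technical delicacy lies in choosing a presentation of $d\!\!\mod 2$ reducing arbitrary $\mb{F}_2$-linear folds to a sequence of pair-identifications to which 2-homogeneity applies, and in organizing the induction so that all intermediate objects remain valid $m$-cubes on $\ns$.
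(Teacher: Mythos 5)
Your easy inclusion and your forward direction are both correct and in substance the same as the paper's (the paper phrases the forward direction by saying $\cu^n(\ns)$ is a projection of $\cu^\omega(\ns)$ and concluding $\cu^n(\ns)=\hom(\mc{D}_1(\mb{F}_2^n),\ns)$, but your direct verification amounts to the same computation). Your setup of the backward direction is also the right reduction: everything comes down to showing that $\q_n:=\q|_{\db{n}\times\{0^{\mb{N}\setminus[n]}\}}$, which lies in $\cu^n(\ns)$ by definition of $\cu^\omega(\ns)$, belongs to $\hom(\mc{D}_1(\mb{F}_2^n),\ns)$.

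That last step, however, contains a genuine gap, and the ``resolve-and-fold'' strategy as described cannot close it. The elementary fold you need --- that if $\q$ composed with every standard morphism $\db{m}\to\db{\mb{N}}$ is a cube, then so is $\q$ composed with a map one of whose output coordinates is $w_j+w_{j'}\bmod 2$ --- is, already for $n=1$, equivalent to the statement being proved; and the definition of $2$-homogeneity cannot be applied to it directly, because that definition takes as \emph{input} an element of $\hom(\mc{D}_1(\mb{Z}^N),\ns)$, which you do not yet possess at that stage. Your integer lift does not supply one: $\tilde\q_n=\q_n\circ(\bmod\,2)$ being in $\hom(\mc{D}_1(\mb{Z}^n),\ns)$ is literally equivalent to $\q_n\in\hom(\mc{D}_1(\mb{F}_2^n),\ns)$, since every $\mb{F}_2$-affine map $\db{m}\to\mb{F}_2^n$ lifts to a $\mb{Z}$-affine one, so the ``main obstacle'' is the original problem restated. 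The missing ingredient is the extension fact: every $\q_n\in\cu^n(\ns)$ is the restriction to $\{0,1\}^n$ of some $F\in\hom(\mc{D}_1(\mb{Z}^n),\ns)$ (a consequence of iterated corner completion; equivalently, the characterization of $p$-homogeneous nilspaces via $\cu^n(\ns)=\hom(\mc{D}_1(\mb{Z}_p^n),\ns)$ in \cite{CGSS-p-hom}). With that in hand, the definition of $2$-homogeneity applies to $F$ and yields $\q_n=F|_{\db{n}}\in\hom(\mc{D}_1(\mb{F}_2^n),\ns)$ in one step, with no folding needed --- this is exactly what the paper's one-line appeal to $2$-homogeneity is implicitly invoking. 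You must either prove or cite this extension lemma; without it, your proposed induction has no mechanism that actually consumes the hypothesis of $2$-homogeneity.
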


\begin{proof}
Suppose that $\ns$ is 2-homogeneous and take any $\q\in\cu^\omega(\ns)$. We have to prove  (via identifying $\db{\mb{N}}$ with $\mb{F}_2^\omega$) that $\q$ is an element of $\hom(\mc{D}_1(\mb{F}_2^\omega),\ns)$. Let $f\in \cu^k(\mc{D}_1(\mb{F}_2^\omega))$. Note that for $n=n(f)\in \mb{N}$ large enough we have $f(\db{k})\subset \mb{F}_2^n\times \{0^{\mb{N}\setminus[n]}\}$. In particular $\q\co f=\q|_{\db{n}\times 0^{\mb{N}\setminus[n]}}\co f$, where (abusing the notation) we can then consider that $f\in\cu^k(\mc{D}_1(\mb{F}_2^n))$, and that $\q|_{\db{n}\times 0^{\mb{N}\setminus[n]}}\in\cu^n(\ns)$ (by definition of $\cu^\omega(\ns)$). Then, since $\ns$ is 2-homogeneous, we have $\q|_{\db{n}\times 0^{\mb{N}\setminus[n]}}\in \hom(\mc{D}_1(\mb{F}_2^n),\ns)$, whence $\q\co f\in \cu^k(\ns)$. This proves that $\q\in \hom(\mc{D}_1(\mb{F}_2^\omega),\ns)$ as required.

For the opposite inclusion, let $\q'\in \hom(\mc{D}_1(\mb{F}_2^\omega),\ns)$, and note that any morphism $g:\db{k}\to \db{\mb{N}}$ can be viewed (via the above identification) as an element of $\cu^k(\mc{D}_1(\mb{F}_2^\omega))$ (for instance, by an explicit description that follows from the definition of $g$; see \cite[Remark 6.2]{CScouplings}). Thus, by the morphism property of $\q'$, we deduce that $\q'\co g\in \cu^k(\ns)$, so $\q'\in \cu^\omega(\ns)$ as required.

The converse is clear, since for any $n$ the cube set $\cu^n(\ns)$ is a projection of $\cu^\omega(\ns)$ and then the assumption $\cu^\omega(\ns)=\hom(\mc{D}_1(\mb{F}_2^\omega),\ns)$ implies (using the above identification) that $\cu^n(\ns)=\hom(\mc{D}_1(\mb{F}_2^n),\ns)$, so $\ns$ is 2-homogeneous. 
\end{proof}

\noindent Finally, let us elaborate on what is perhaps the simplest construction of cubic-exchangeable measures: we take any compact nilspace $\ns$, let $\mu_{\cu^\omega(\ns)}$ be the Haar measure on $\cu^\omega(\ns)$ constructed in Corollary \ref{cor:haar-mes-omega-cube}, and let $\mu$ be the Borel measure on $\ns^{\db{\mb{N}}}$ defined by 
\begin{equation}\label{eq:basicex}
\mu(A)=\mu_{\cu^\omega(\ns)}\big(A\cap \cu^\omega(\ns)\big), \textrm{ for any Borel set }A\subset \ns^{\db{\mb{N}}}.
\end{equation}
Note that this is a particularly simple instance of the construction in \eqref{eq:zeta}, indeed the  measure $\mu$ in \eqref{eq:basicex} is the special case of  \eqref{eq:zeta} where $\Bo=\ns$ and $m$ is the Borel function $\ns\to\mc{P}(\ns)$ sending each $x\in \ns$ to the Dirac measure $\delta_x$. 
The fact that this measure $\mu$ is cubic-exchangeable follows readily from the fact that, from construction of $\mu_{\cu^\omega(\ns)}$, we have that the pushforward of $\mu$ on any $n$-dimensional sub-cube of $\db{\mb{N}}$ (i.e.\ any image of an injective morphisms $\db{n}\to\db{\mb{N}}$) is always $\mu_{\cu^n(\ns)}$, for any $n$. It is then natural to wonder under what conditions this measure $\mu$ is not just cubic-exchangeable, but also \emph{affine-exchangeable}. The following last result of this section gives an answer which further motivates the use of 2-homogeneous nilspaces. 

\begin{lemma}\label{lem:aff-ech-equals-2-hom}
Let $\ns$ be a compact profinite-step nilspace. Then $\mu_{\cu^\omega(\ns)}$ \textup{(}viewed as a measure on $\ns^{\db{\mb{N}}}$\textup{)} is affine-exchangeable if and only if $\ns$ is 2-homogeneous.
\end{lemma}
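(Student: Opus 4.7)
The natural bridge for both directions is Lemma \ref{lem:2-hom-equals-c-omega-poly}, which identifies 2-homogeneity of $\ns$ with the equality $\cu^\omega(\ns)=\hom(\mc{D}_1(\mb{F}_2^\omega),\ns)$. Under this identification, precomposition $T_\gamma:\q\mapsto \q\co\gamma$ with any $\gamma\in \textup{Aff}(\mb{F}_2^\omega)$ is manifestly a homeomorphism of $\cu^\omega(\ns)$, because morphisms of degree-1 nilspaces are closed under composition; this handles the set-theoretic (support) side of affine exchangeability.

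For the ($\Leftarrow$) direction, assume $\ns$ is 2-homogeneous, so that $T_\gamma$ is well-defined on $\cu^\omega(\ns)$ for every $\gamma$. By Corollary \ref{cor:haar-mes-omega-cube} (uniqueness) and the equivariance $\pi_k^{\db{\mb{N}}}\co T_\gamma = T_\gamma^{(k)}\co \pi_k^{\db{\mb{N}}}$, it suffices to show that $T_\gamma^{(k)}$ preserves $\mu_{\cu^\omega(\ns_k)}$ for every $k\geq 0$. I would proceed by induction on $k$, with $k=0$ trivial. For the inductive step, Lemma \ref{lem:Cinfbund} presents $\cu^\omega(\ns_k)$ as a compact $\cu^\omega(\mc{D}_k(\ab_k))$-bundle over $\cu^\omega(\ns_{k-1})$, and I would verify that $T_\gamma^{(k)}$ is a totally-surjective bundle morphism lying over $T_\gamma^{(k-1)}$, so that Lemma \ref{lem:mes-pre-1} gives the desired measure preservation. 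The structure-group compatibility amounts to showing $f\co\gamma\in\cu^\omega(\mc{D}_k(\ab_k))$ for each $f\in \cu^\omega(\mc{D}_k(\ab_k))$, which is not obvious \emph{a priori} since $\gamma$ is $\mb{F}_2$-affine whereas $\cu^\omega(\mc{D}_k(\ab_k))$ is described by integer-polynomial maps; however, this follows from the bundle identification $f\co\gamma = T_\gamma^{(k)}(\q+f)-T_\gamma^{(k)}(\q)$, once one observes that both terms lie in $\cu^\omega(\ns_k)$ and project to the same element of $\cu^\omega(\ns_{k-1})$ by the inductive equivariance.

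For the ($\Rightarrow$) direction, assume $\mu_{\cu^\omega(\ns)}$ is affine-exchangeable. Since $\mu_{\cu^\omega(\ns)}$ has full support $\cu^\omega(\ns)$, invariance of the measure forces invariance of the support, so $\q\co\gamma\in\cu^\omega(\ns)$ for every $\q\in\cu^\omega(\ns)$ and $\gamma\in\textup{Aff}(\mb{F}_2^\omega)$. To derive 2-homogeneity via Lemma \ref{lem:2-hom-equals-c-omega-poly}, it suffices to show $\cu^\omega(\ns)\subseteq\hom(\mc{D}_1(\mb{F}_2^\omega),\ns)$, i.e., that $\q\co f\in\cu^k(\ns)$ for every $\q\in \cu^\omega(\ns)$ and every $\mb{F}_2$-affine map $f:\mb{F}_2^k\to \mb{F}_2^\omega$ (including non-injective ones, which support invariance alone does not obviously cover). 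The idea is to pad $f$ into an injective affine map by adjoining fresh coordinates: partition $\mb{N}=A\sqcup B$ with $|A|=|B|=\omega$, identify $\mb{F}_2^\omega\cong \mb{F}_2^A\oplus \mb{F}_2^B$, set $F(v):=(f(v),\iota_B(v))$ for some standard embedding $\iota_B:\mb{F}_2^k\hookrightarrow\mb{F}_2^B$, and define $\q'(x):=\q(x|_A)$. Then $F$ is injective affine and $\q'\co F = \q\co f$. One verifies $\q'\in\cu^\omega(\ns)$ by noting that for any discrete-cube morphism $\phi:\db{n}\to\db{\mb{N}}$, the projection $\pi_A\co\phi$ is again a discrete-cube morphism, so $\q'\co\phi=\q\co(\pi_A\co\phi)\in\cu^n(\ns)$. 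Extending $F$ to an affine automorphism $\gamma\in\textup{Aff}(\mb{F}_2^\omega)$ with $\gamma\co\phi_k=F$ (possible by choosing a linear isomorphism between countable-dimensional complements in $\mb{F}_2^\omega$), support invariance gives $\q'\co\gamma\in\cu^\omega(\ns)$, and hence $\q\co f = (\q'\co\gamma)\co\phi_k\in\cu^k(\ns)$.

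The main obstacle is the inductive measure-preservation argument in the ($\Leftarrow$) direction, and within it, the verification that precomposition by $\gamma$ respects the bundle structure on $\cu^\omega(\ns_k)$; the fiber-difference trick is what circumvents having to show (a priori) that each structure group $\mc{D}_k(\ab_k)$ is itself 2-homogeneous. The ($\Rightarrow)$ direction is conceptually cleaner, its only subtle point being the coordinate-padding reduction from arbitrary to injective affine maps, which bridges the gap between plain support invariance and the full morphism condition for $\hom(\mc{D}_1(\mb{F}_2^\omega),\ns)$.
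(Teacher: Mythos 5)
Your proof is correct and follows the same skeleton as the paper's: both directions pivot on Lemma \ref{lem:2-hom-equals-c-omega-poly}, the forward direction establishes that precomposition by $\gamma$ is a totally surjective continuous bundle morphism of $\cu^\omega(\ns)$ and invokes Lemma \ref{lem:mes-pre-1}, and the backward direction converts measure invariance into support invariance and then upgrades closure under injective affine maps to closure under all affine maps. Two local choices differ from the paper and are worth recording. First, where the paper dismisses the bundle-morphism verification as a ``straightforward computation,'' you supply the actual content: the identity $f\co\gamma = T_\gamma^{(k)}(\q+f)-T_\gamma^{(k)}(\q)$ together with the uniqueness clause in the bundle Property 2 of Lemma \ref{lem:Cinfbund} shows $f\co\gamma\in\cu^\omega(\mc{D}_k(\ab_k))$ without having to decide whether $\mc{D}_k(\ab_k)$ is itself $2$-homogeneous; this is a genuinely cleaner way to fill that gap (surjectivity of the structure maps then comes for free by applying the same identity to $\gamma^{-1}$). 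Second, for the reduction from arbitrary to injective affine maps, the paper factors $f=T_1^{-1}\co i\co p\co T_1\co f$ through coordinate inclusions and projections, whereas you pad $f$ with fresh coordinates and precompose $\q$ with a coordinate restriction; both work, and your verification that $\q'\in\cu^\omega(\ns)$ via $\pi_A\co\phi$ being a discrete-cube morphism is sound. The one place where you assert more than you justify is the claim that $\mu_{\cu^\omega(\ns)}$ has full support equal to $\cu^\omega(\ns)$: the paper instead pushes everything down to $\cu^n(\ns_k)$ for the finite-step factors $\ns_k$, where strict positivity is directly available from \cite[Proposition 2.2.11]{Cand:Notes2}, and runs the ``closed set of full measure contains the support'' argument there. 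Your statement is true, but it needs a sentence explaining why strict positivity survives the inverse-limit/infinite-product construction of $\mu_{\cu^\omega(\ns)}$ (every nonempty basic open subset of $\cu^\omega(\ns)$ is a $p_n$-preimage of a nonempty open subset of some $\cu^n(\ns_k)$, whose measure is positive); alternatively, just descend to the finite factors as the paper does.
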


\noindent  We defer the proof to  Appendix \ref{app:aff-exch-and-2-hom}.

There is a family of cubic-exchangeable measures which is more general than the construction in \eqref{eq:basicex} but still more specific than that in \eqref{eq:zeta}, and which will be very useful in the sequel. This family includes all the measures of the form $\zeta_{\ns,m}$ with the property that the Borel map $m:\ns\to\mc{P}(\Bo)$ takes values only among Dirac measures. We postpone the discussion on this family to Section \ref{sec:repvsnils}. 

\section{2-homogeneous cubic couplings}\label{sec:2-homCC}

\noindent We recall from \cite[Definition 2.18]{CScouplings} that given a probability space $\varOmega=(\Omega,\mc{A},\lambda)$ and a set $S$, a \emph{coupling} (or self-coupling) of $\varOmega$ indexed by $S$ is a measure $\mu$ on the product measurable space $(\Omega^S,\mc{A}^{\otimes S})$ such that for each $v\in S$ we have $\mu\co p_v^{-1}=\lambda$ (where $p_v$ is the projection to the $v$-th coordinate). Given an injection $\tau: R \to S$, we then denote by $\mu_\tau$ the self-coupling of $\varOmega$ indexed by $R$ obtained as follows: we take the pushforward $\mu\co p_{\tau(R)}^{-1}$, and we view it as a measure on $\Omega^R$ by identifying $R$ and $\tau(R)$ (see \cite[Definition 2.26]{CScouplings}).

In this section we shall apply results from \cite{CScouplings} concerning \emph{cubic couplings}. These  are measure-theoretic structures defined as follows (see \cite[Defintion 3.1]{CScouplings}).
\begin{defn}\label{def:cub-cou} 
A \emph{cubic coupling} on a probability space $\varOmega=(\Omega,\mc{A},\lambda)$ is a  sequence $\big(\mu^{\db{n}}\big)_{n\geq 0}$, where the $n$-th term $\mu^{\db{n}}$ is a self-coupling of $\varOmega$ indexed by $\db{n}$, and such that the following axioms hold for all $m,n\geq 0$:
\begin{enumerate}[leftmargin=0.7cm]
\item[1.] (Consistency)\; If $\phi:\db{m}\to\db{n}$ is an injective cube morphism, then $\mu^{\db{n}}_\phi=\mu^{\db{m}}$.
\item[2.] (Ergodicity)\; The measure $\mu^{\db{1}}$ is the independent coupling $\lambda \times \lambda$.
\item[3.] (Conditional independence)\; We have\footnote{For $R_1,R_2\subset S$, the notation $R_1~ \bot ~  R_2$ here denotes orthogonality of the subspaces $L^2(p_{R_i}^{-1}(\mc{A}^{\otimes R_i}))$, $i=1,2$ inside $L^2(\mc{A}^{\otimes S})$; see \cite[Definition 2.29]{CScouplings}.} $(\{0\}\times \db{n-1})~ \bot ~ (\db{n-1}\times \{0\})$ in $\mu^{\db{n}}$.
\end{enumerate}
\end{defn}
\noindent For the purposes of this paper we define the following class of cubic couplings.
\begin{defn}[2-homogeneous cubic coupling]\label{def:2-hom-cou} 
Let $(\mu^{\db{n}})_{n\ge 0}$ be a cubic coupling on a probability space $(\Omega,\mc{A},\lambda)$. We say that $(\mu^{\db{n}})_{n\ge 0}$ is a \emph{2-homogeneous cubic coupling} if for every injective  affine map $T:\mb{F}_2^m \to \mb{F}_2^n$, we have $\mu_T^{\db{n}} = \mu^{\db{m}}$.
\end{defn}
\noindent Recalling the concept of $p$-homogeneous nilspace (Definition \ref{def:p-hom}), we can now state the main result of this section, which tells us, roughly speaking, that every 2-homogeneous cubic coupling is essentially the sequence of cubic Haar measures on some 2-homogeneous compact nilspace.
\begin{theorem}\label{thm:2-hom-cubic} 
Let $(\mu^{\db{n}})_{n\ge 0}$ be a 2-homogeneous cubic coupling on a probability space $(\Omega,\mc{A},\lambda)$. Then there exists a 2-homogeneous compact nilspace $\ns$ and a measure-preserving map $\gamma:\Omega\to \ns$ such that for each $n\in \mb{N}$, the map $\gamma^{\db{n}}$ is measure-preserving $(\Omega^{\db{n}},\mu^{\db{n}})\to (\ns^{\db{n}},\mu_{\cu^n(\ns)})$.  Furthermore, for each $n$ the coupling $\mu^{\db{n}}$ is relatively independent over the factor generated by $\gamma^{\db{n}}$.
\end{theorem}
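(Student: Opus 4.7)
My plan is to reduce the theorem to the general representation theorem for cubic couplings from \cite{CScouplings}, and then use the additional 2-homogeneity hypothesis on the coupling to show that the nilspace produced by that theorem is in fact 2-homogeneous. The first step is therefore to apply the cubic coupling representation theorem from \cite{CScouplings} (the result underlying \cite[Theorem 6.7]{CScouplings}) to the given sequence $(\mu^{\db{n}})_{n\ge 0}$. This yields a compact nilspace $\ns$ and a measure-preserving map $\gamma:\Omega\to\ns$ with the property that $\mu^{\db{n}} \co (\gamma^{\db{n}})^{-1} = \mu_{\cu^n(\ns)}$ for every $n\in\mb{N}$, and this construction simultaneously provides the relative-independence assertion for each $\mu^{\db{n}}$. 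The only remaining task is then to show that $\ns$ may be chosen to be 2-homogeneous.

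Next, I would translate the 2-homogeneity axiom of the coupling into a structural statement about the cube sets $\cu^n(\ns)$. For any injective $\mb{F}_2$-affine map $T:\mb{F}_2^m \to \mb{F}_2^n$, the hypothesis $\mu^{\db{n}}_T = \mu^{\db{m}}$ combined with the Haar pushforward property gives
\[
\mu_{\cu^n(\ns)} \co (T^\ast)^{-1} = \mu_{\cu^m(\ns)},
\]
where $T^\ast:\ns^{\db{n}} \to \ns^{\db{m}}$ is the continuous map $\q \mapsto \q \co T$. Consequently the closed set $\{\q \in \cu^n(\ns) : \q \co T \in \cu^m(\ns)\}$ has full $\mu_{\cu^n(\ns)}$-measure. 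Since Haar measure on the iterated compact abelian bundle $\cu^n(\ns)$ has full support (a standard consequence of the bundle structure), this set must equal all of $\cu^n(\ns)$. Therefore $\q \co T \in \cu^m(\ns)$ for every $\q \in \cu^n(\ns)$ and every injective $\mb{F}_2$-affine map $T$.

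Finally, I would upgrade this invariance to 2-homogeneity of $\ns$ in the sense of Definition \ref{def:p-hom}. One route is to factor an arbitrary $\mb{F}_2$-affine map $\db{k}\to\mb{F}_2^n$ through injective ones and then invoke the standard cube-composition axiom to obtain the equality $\cu^n(\ns) = \hom(\mc{D}_1(\mb{F}_2^n),\ns)$ for every $n$, in the vein of Lemma \ref{lem:2-hom-equals-c-omega-poly}. A parallel route uses the structural characterization of 2-homogeneous nilspaces via their structure groups from \cite{CGSS-p-hom}: applying the injective $\mb{F}_2$-affine invariance to cubes that realize elements of each structure group should force every structure group of $\ns$ to be 2-torsion, which is equivalent to 2-homogeneity. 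The main obstacle I anticipate is precisely this last step, namely the clean passage from invariance under \emph{injective} $\mb{F}_2$-affine maps to the full 2-homogeneity condition; handling non-injective compositions (or equivalently verifying the 2-torsion of the structure groups) requires careful manipulation of the iterated abelian bundle structure on $\cu^n(\ns)$, and may also require replacing $\ns$ by a canonical minimal representative to ensure that the inclusion $\cu^n(\ns) \subseteq \hom(\mc{D}_1(\mb{F}_2^n),\ns)$ holds without any measure-zero ambiguity.
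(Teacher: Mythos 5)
Your proposal is correct and follows essentially the same route as the paper: apply the cubic-coupling representation theorem of \cite{CScouplings} to get $\ns$ and $\gamma$, deduce from $\mu^{\db{n}}_T=\mu^{\db{m}}$ that composition with any \emph{injective} affine map sends cubes to cubes (the paper phrases this via the support characterization of $\cu^n(\ns_k)$ as in \cite[Lemma 4.7]{CScouplings}, which is the same device as your full-support argument and removes the measure-zero ambiguity you worry about), and then handle general affine maps by factoring them as an injective affine map composed with a coordinate projection and an invertible affine map. The "main obstacle" you anticipate is resolved in the paper by exactly this factorization, so no further repair is needed.
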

\begin{proof}  
Recall from \cite[Definition 3.31 and Definition 3.40]{CScouplings} the definitions of $\gamma_k:\Omega\to \ns_k$ and $\gamma:\Omega\to \ns$ respectively. Theorem \ref{thm:2-hom-cubic} follows mainly from \cite[Theorem 4.2]{CScouplings}. Indeed the latter result tells us that there is a compact profinite-step nilspace $\ns$ and map $\gamma$ satisfying the conclusions of Theorem \ref{thm:2-hom-cubic}, and then it only remains to prove that $\ns$ is 2-homogeneous. In order to do this, by Lemma \ref{lem:2-hom-equals-c-omega-poly} we just need to prove that $\cu^\omega(\ns)=\hom(\mc{D}_1(\mb{F}_2^\omega),\ns)$. To prove this it suffices to show that for every integer $n\ge 0$ we have $\cu^n(\ns)=\hom(\mc{D}_1(\mb{F}_2^n),\ns)$. Moreover, since $\ns$ is of profinite-step, the latter is equivalent to proving that for any fixed $k\ge0$ and $n\ge 1$ we have $\cu^n(\ns_k)=\hom(\mc{D}_1(\mb{F}_2^n),\ns_k)$. Thus (by definition of $\hom(\mc{D}_1(\mb{F}_2^n),\ns_k)$) we need to check that given $\q\in \cu^n(\ns_k)$ and $T\in \cu^m(\mc{D}_1(\mb{F}_2^n))$ we have $\q\co T\in \cu^m(\ns_k)$ (identifying $\db{n}$ with $\mb{F}_2^n$ the usual way). But as $\mc{D}_1(\mb{F}_2^n)$ is 2-homogeneous, any $T\in \cu^m(\mc{D}_1(\mb{F}_2^n))$ is just an affine-linear map $T:\mb{F}_2^m\to \mb{F}_2^n$, so it suffices to check the above composition property for such $T$.

Furthermore, we claim that it suffices to verify the above composition property only for \emph{injective} affine  maps. Indeed, suppose this has been verified and let $T:\mb{F}_2^m\to \mb{F}_2^n$ be any affine map. Composing with a translation if needed (which is an injective affine transformation) we can assume that $T$ is a linear map. Let $f$ be an invertible linear map on $\mb{F}_2^m$ such that $\ker(T\co f) = \{0^{\ell}\}\times \mb{F}_2^{m-\ell}$ for some $\ell\ge 0$. Thus $\ker(T)=f(\{0^{\ell}\}\times \mb{F}_2^{m-\ell})$ and we define the subspace $V:=f(\mb{F}_2^{\ell}\times \{0^{m-\ell}\})$. Note that then $\mb{F}_2^m = \ker(T)\oplus V$. Now let us define $p:\mb{F}_2^m\to \mb{F}_2^{\ell}$, $p(v_1,\ldots,v_m):=(v_1,\ldots,v_{\ell})$ and $i:\mb{F}_2^{\ell}\to \mb{F}_2^m$, $i(v_1,\ldots,v_{\ell}):=(v_1,\ldots,v_{\ell},0^{m-\ell})$. It can be checked then that $T=T\co f\co i\co p\co f^{-1}$. Note that $\phi:=T\co f\co i$ is an injective linear map. Thus, if we have the composition property for such maps, then $\q\co T = \q\co \phi\co p\co f^{-1}$ where $\phi$ and $f^{-1}$ are affine injective maps and $p$ is a discrete-cube morphism, so the property will follow in general as required. 

Thus it suffices to prove the result for any such affine injective map $\phi$. Arguing as in the proof of \cite[Lemma 4.7]{CScouplings}, let $V:=\phi(\mb{F}_2^m)$ and $\psi:\Omega^{\mb{F}_2^m}\to \Omega^{V}$ be the bijection that relabels each coordinate $\omega_v$ as $\omega_{\phi(v)}$. Let $\xi:\ns_k^{\mb{F}_2^m}\to \ns_k^V$ be the corresponding bijection between the spaces $\ns_k^{\mb{F}_2^m}$ and $\ns_k^V$. By Definition \ref{def:2-hom-cou}, we have $\mu^{\db{m}} = \mu_{\phi}^{\db{{\ell}}}=\mu_V^{\db{{\ell}}} \co \psi$ and $\psi \co (\gamma_k^{\db{m}})^{-1} = (\gamma_k^V)^{-1} \co \xi$. Thus, the measure support $\Supp(\mu^{\db{m}} \co (\gamma_k^{\db{m}})^{-1})$ is equal to $\xi^{-1}(\Supp(\mu_V^{\db{{\ell}}} \co (\gamma_k^V)^{-1}))$. Now we have to check that for every $\q \in \Supp(\mu^{\db{{\ell}}} \co (\gamma_k^{\db{{\ell}}})^{-1})$ we have $\q\co \phi \in \Supp(\mu^{\db{m}} \co (\gamma_k^{\db{m}})^{-1})$. Given any open neighborhood $U$ of $p_V(\q)$ (where $p_V:\ns_k^{\db{{\ell}}}\to \ns_k^V$ is the usual projection), since $\mu^{\db{{\ell}}}_V \co (\gamma_k^V)^{-1}$ is the image of $\mu^{\db{{\ell}}} \co (\gamma_k^{\db{{\ell}}})^{-1}$ under $p_V$, and $p_V$ is continuous, we have $\mu_V^{\db{{\ell}}} \co (\gamma_k^V)^{-1}(U)>0$. Hence $p_V(\q)$ is in the support $\Supp(\mu^{\db{{\ell}}} \co (\gamma_k^V)^{-1})$. The result now follows by the definition of $\ns_k$ in terms of these measure supports, as in \cite[Lemma 4.7]{CScouplings}.
\end{proof}

\section{\texorpdfstring{$p$}{p}-homogeneous compact nilspaces as fibration images of \texorpdfstring{$\mc{H}_p$}{Hp}}\label{sec:p-hom}

\noindent Throughout this section we fix a prime $p\in \mb{N}$. The main results in this section concern $p$-homogeneous nilspaces, which were introduced in \cite{CGSS-p-hom} and recalled in Definition \ref{def:p-hom} above. Let us refer to \cite{CGSS-p-hom} for more background and motivation on these nilspaces. 

Our goal in this section is to prove that there exists a compact $p$-homogeneous nilspace, that we shall denote by $\mc{H}_p$, which is ``universal" among $p$-homogeneous nilspaces, in the sense that every other compact profinite-step $p$-homogeneous nilspace is an image of $\mc{H}_p$ under a fibration. 

To define $\mc{H}_p$, we shall use the basic $p$-homogeneous nilspaces from \cite[Definition 1.6]{CGSS-p-hom}, which are denoted by $\abph_{k,\ell}$ (or $\abph_{k,\ell}^{(p)}$ when we need to specify the prime $p$). Let us recall their definition here for convenience. For positive integers $k\geq \ell$, let $G$ be the cyclic group of order $p^{\lfloor \frac{k-{\ell}}{p-1}\rfloor+1}$, and let us equip this with  the filtration $G_\bullet=(G_{(i)})_{i=0}^{\infty}$ with $i$-th term $G_{(i)}=G$ for $i\in [0,\ell]$, and $G_{(i)}:=p^{\lfloor\frac{i-{\ell}-1}{p-1}\rfloor+1}G=\{p^{\lfloor\frac{i-{\ell}-1}{p-1}\rfloor+1}x:x\in G\}$ for $i>\ell$. The nilspace $\abph_{k,{\ell}}$ is then the group nilspace associated with the filtered group $(G,G_\bullet)$. We denote by $\mc{Q}_{p,k}$ the set of all finite products of nilspaces of the form $\abph^{(p)}_{k,{\ell}}$ for ${\ell}\in [k]$. 

Note that the $(k+1)$-step nilspace $\abph_{k+1,{\ell}}$ is an extension (in the nilspace sense; see \cite[\S 3.3.3]{Cand:Notes1}) of the $k$-step nilspace $\abph_{k,{\ell}}$. Indeed, the corresponding nilspace factor map $\abph_{k+1,{\ell}}\to \abph_{k,{\ell}}$, which we shall denote by $\pi_{k}^{({\ell})}$,  is just quotienting by the last nilspace structure group of $\abph_{k+1,{\ell}}$; this group is either the trivial group $\{0\}$ or is $\mb{Z}_p$, depending on whether $\lfloor \frac{k+1-{\ell}}{p-1}\rfloor -\lfloor \frac{k-{\ell}}{p-1}\rfloor$ is 0 or 1. This yields a natural way of extending any nilspace in $\mc{Q}_{p,k}$ to a nilspace in $\mc{Q}_{p,k+1}$, as follows.
\begin{defn} 
Let $\ns\in \mc{Q}_{p,k}$, thus for some integers $a_\ell\geq 0$ we have that $\ns$ is the product nilspace $\prod_{{\ell}=1}^k (\abph^{(p)}_{k,{\ell}})^{a_{\ell}}$. Then we define $\ns^+:=\prod_{{\ell}=1}^k (\abph^{(p)}_{k+1,{\ell}})^{a_{\ell}}\in \mc{Q}_{p,k+1}$.
\end{defn}
\noindent The nilspace factor map $\pi_k:\ns^+\to \ns$ is easily described as a product of the above maps $\pi_{k}^{(1)},\ldots,\pi_{k}^{(k)}$, by applying the adequate such map $\pi_{k}^{(\ell)}$ to each component $\abph^{(p)}_{k+1,{\ell}}$ of the product $\ns^+$.

Now, for any fixed ${\ell}\ge 1$, we define the compact profinite-step nilspace $\abph^{(p)}_{\infty,{\ell}}$ as the inverse limit of the nilspaces $\abph^{(p)}_{k,{\ell}}$:
\begin{equation}
\abph^{(p)}_{\infty,{\ell}}:=\varprojlim \abph^{(p)}_{k,{\ell}}.
\end{equation}
More precisely, the inverse system used here is $\{\pi_i^{({\ell})}\co \pi_{i+1}^{({\ell})}\co\cdots\co \pi_{j-1}^{({\ell})} ~|~ i,j\in\mb{N}, i\leq j\}$ (see \cite[\S 2.7]{Cand:Notes2}). We can rephrase this construction, and thus define $\mc{H}_p$, in terms of the $p$-adic integers as follows.
\begin{defn}\label{def:del-p-adic-objects} 
For any prime $p\in \mb{N}$, let $\mf{Z}_p$ denote the group of $p$-adic integers. For $\ell\in\mb{N}$ we denote by $\abph^{(p)}_{\infty,\ell}$ the compact group-nilspace associated with the filtered group  $\big(G, G_\bullet^{(\ell)}\big)$ where $G= \mf{Z}_p$ and the filtration $G_\bullet^{(\ell)}=(G_{(i)}^{(\ell)})_{i=0}^{\infty}$ consists of $G_{(i)}^{(\ell)}= \mf{Z}_p$ for $i\in[0,\ell]$ and $G_{(i)}^{(\ell)}=p^{\lfloor\frac{i-{\ell}-1}{p-1}\rfloor+1}\mf{Z}_p$ for $i>\ell$. We define $\mc{H}_p$ as the product nilspace $\prod_{\ell=1}^{\infty}(\abph^{(p)}_{\infty,\ell})^{\mb{N}}$.
\end{defn}
\noindent Thus $\mc{H}_p$ is the group nilspace associated with the compact abelian group $\prod_{\ell=1}^{\infty}\mf{Z}_p^{\mb{N}}\cong (\mf{Z}_p^{\mb{N}})^{\mb{N}}$, which we equip with the product filtration $\prod_{\ell=1}^\infty (G_\bullet^{(\ell)})^{\mb{N}}$. Note that this filtration is $p$-homogeneous, whence (using \cite[Theorem 1.4]{CGSS-p-hom}) we have that $\mc{H}_p$ is a $p$-homogeneous compact nilspace. It is readily seen that $\mc{H}_p$ is also profinite-step.

 Note that $\mc{H}_p$ is a natural generalization for $p>2$ of the nilspace $\mc{H}$ from Definition \ref{def:H} (indeed the latter nilspace is just $\mc{H}_2$).

We can now state the main result of this section, establishing the above-mentioned universality of $\mc{H}_p$.
\begin{theorem}\label{thm:universal-covering}
Let $\ns$ be a $p$-homogeneous compact profinite-step nilspace. Then there exists a fibration $\varphi:\mc{H}_p\to \ns$. 
\end{theorem}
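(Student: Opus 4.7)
The plan is to obtain $\varphi$ as the inverse limit of a compatible family of fibrations between finite-step factors. Since $\ns$ is profinite-step, $\ns=\varprojlim \ns_k$ where $\ns_k$ is the $k$-th characteristic factor; each $\ns_k$ is $p$-homogeneous, because the canonical fibration $\ns\to\ns_k$ lifts any $f\in\hom(\mc{D}_1(\mb{Z}^n),\ns_k)$ back to $\ns$, after which $p$-homogeneity of $\ns$ forces $f|_{[0,p-1]^n}\in\hom(\mc{D}_1(\mb{Z}_p^n),\ns_k)$ upon projecting down. A direct computation from Definition \ref{def:del-p-adic-objects} identifies the $k$-step factor $(\mc{H}_p)_k$ with the group nilspace associated with $\prod_{\ell=1}^{k}(\abph^{(p)}_{k,\ell})^{\mb{N}}$, since the components with $\ell>k$ collapse at step $k$ (there $G_{(k+1)}^{(\ell)}=\mf{Z}_p$).

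I would then induct on $k$ to construct fibrations $\varphi_k:(\mc{H}_p)_k\to\ns_k$ that descend compatibly under the characteristic factor maps. The base case $k=0$ is trivial. For the inductive step, given $\varphi_{k-1}$, the abelian extension $\ns_k\to\ns_{k-1}$ pulls back along $\varphi_{k-1}$ to an extension $\widetilde{\ns}$ of $(\mc{H}_p)_{k-1}$ by $\ab_k(\ns)$; it then suffices to construct a fibration $\psi_k:(\mc{H}_p)_k\to\widetilde{\ns}$ lying over the identity on $(\mc{H}_p)_{k-1}$, and to take $\varphi_k$ to be $\psi_k$ composed with the natural projection $\widetilde{\ns}\to\ns_k$. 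Such a $\psi_k$ is essentially controlled, up to a cocycle that can be absorbed via a section-choice argument, by a continuous surjective homomorphism $\alpha_k:\ab_k(\mc{H}_p)\to\ab_k(\ns)$ between the $k$-th structure groups that is compatible with the cube structures.

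The main obstacle is producing this homomorphism $\alpha_k$ together with its compatibility with cubes. Here $p$-homogeneity is decisive: by the structure theory of \cite{CGSS-p-hom}, $\ab_k(\ns)$ is a compact abelian $p$-group whose exponent divides $p^{\lfloor(k-1)/(p-1)\rfloor+1}$, which matches exactly the exponent appearing in the $k$-th structure group of $\mc{H}_p$, supplied by the $\ell\le k$ components of Definition \ref{def:del-p-adic-objects}. Second-countability of $\ns$ ensures that $\ab_k(\ns)$ has at most countably many topological generators, yielding the existence of such a continuous surjection as a map of topological abelian groups. To promote it to a genuine fibration of nilspaces, I would first realize a compact $p$-homogeneous $k$-step nilspace as an inverse limit of finite $p$-homogeneous $k$-step nilspaces (using that its structure groups are profinite abelian $p$-groups of bounded exponent), then apply the finite-case structure theorem of \cite{CGSS-p-hom} which expresses each finite quotient as a fibration image of an element of $\mc{Q}_{p,k}$, and finally use the freeness of the structure-group action on cubes (recorded in \cite[Thm.\ 3.2.19]{Cand:Notes1}) together with a compactness/diagonal argument to patch the fibrations coherently into the desired $\psi_k$.

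With the compatible tower $(\varphi_k)_{k\ge 0}$ in hand, the desired morphism is $\varphi:=\varprojlim\varphi_k:\mc{H}_p=\varprojlim(\mc{H}_p)_k\to\varprojlim\ns_k=\ns$. This $\varphi$ is a fibration because an inverse limit of fibrations between inverse systems of compact nilspaces is itself a fibration (one verifies the corner-lifting property layer by layer and uses compactness, in the spirit of the proof of Lemma \ref{lem:uniqueomegacomp}).
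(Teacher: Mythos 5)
Your reduction to a tower of finite-step factors, your identification of $(\mc{H}_p)_k$ with $\prod_{\ell=1}^k(\abph^{(p)}_{k,\ell})^{\mb{N}}$, and your final inverse-limit step (verifying corner-lifting layer by layer and using compactness) are all sound and match ingredients the paper also uses. The gap is in the inductive step, which is where the whole difficulty of the theorem lives. You fix the domain at level $k$ to be $(\mc{H}_p)_k$ and assert that a fibration $\psi_k:(\mc{H}_p)_k\to\widetilde{\ns}$ over the identity on $(\mc{H}_p)_{k-1}$ is ``controlled, up to a cocycle that can be absorbed via a section-choice argument''. This is precisely the point that needs proof: one must show that the class of the pulled-back degree-$k$ extension $\widetilde{\ns}$ of $(\mc{H}_p)_{k-1}$ lies in the image, under some structure-group surjection $\alpha_k:\ab_k(\mc{H}_p)\to\ab_k(\ns)$, of the class of the extension $(\mc{H}_p)_k\to(\mc{H}_p)_{k-1}$. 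Nothing you cite gives this, and the relevant extensions do not split in general --- for $p=2$, $\abph^{(2)}_{2,1}=\mb{Z}_4$ is already a non-split degree-$2$ extension of $\mc{D}_1(\mb{Z}_2)$ by $\mb{Z}_2$ (non-classical polynomials exist precisely because such classes are nonzero) --- so no ``section-choice'' absorbs the cocycle. Moreover, \cite[Theorem 1.7]{CGSS-p-hom} produces a fibration onto a given finite $p$-homogeneous nilspace from \emph{some} element of $\mc{Q}_{p,k}$, with no control on how it sits over the $(k-1)$-step factor already fixed by your $\varphi_{k-1}$; a badly chosen $\varphi_{k-1}$ could a priori fail to extend, and ruling this out would require building a reservation of unused coordinates into the induction hypothesis, which you do not do. (A minor further slip: the $k$-th structure group of a $p$-homogeneous nilspace is elementary abelian of exponent $p$, not of exponent $p^{\lfloor(k-1)/(p-1)\rfloor+1}$; this does not affect the argument.)

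The paper circumvents exactly this obstruction by \emph{not} fixing the domain: it constructs a growing auxiliary inverse system $\nss_k=\nss_{k-1}^+\times Q_k$ in $\mc{Q}_{p,k}$, where the new finite factor $Q_k$ is manufactured at stage $k$ --- by applying \cite[Theorem 1.7]{CGSS-p-hom} to the subdirect product of $\nss_{k-1}$ and $\ns_k$, and then normalizing the resulting fibration into a coordinate projection using Proposition \ref{prop:spli-fib-q-pk} together with a split-extension argument --- so as to absorb whatever new extension data appears at that level. Only after taking the inverse limit is the resulting nilspace $\prod_\ell(\abph^{(p)}_{\infty,\ell})^{a_\ell}$ recognized as a coordinate-projection image of $\mc{H}_p$. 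Keeping your fixed-domain formulation would essentially force you to re-prove this bookkeeping, so the missing step is not a routine patch.
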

\noindent We shall obtain this theorem by combining infinitely many applications of a previous  structural result obtained in \cite[Theorem 1.7]{CGSS-p-hom}. This will require ``gluing" countably many fibrations in a unified way. To do this, we need the following preparation.

First we record the following fact about how the nilspace factor maps interact with \emph{translations} on a nilspace. Translations are a basic and important kind of transformations on a nilspace, which can be thought of as generalizations of the translations on a nilpotent group that consist in multiplying by  fixed elements. For the basic background concerning the groups of translations $\tran_i(\ns)$ of a nilspace $\ns$, we refer to \cite[\S 3.2.4]{Cand:Notes1}.
\begin{proposition}\label{prop:lift-trans} 
Let $\ns\in \mc{Q}_{p,k}$ and let $\pi_{k-1}:\ns\to \ns_{k-1}$ be the projection to the $(k-1)$-step nilspace factor. Then for any translation $\alpha\in \tran_i(\ns_{k-1})$ there exists $\beta\in \tran_i(\ns)$ such that $\pi_{k-1} \co \beta = \alpha \co \pi_{k-1}$. 
\end{proposition}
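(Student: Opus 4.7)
The plan is to exploit the explicit abelian group-nilspace structure of every $\ns\in\mc{Q}_{p,k}$ to reduce the lifting of translations to the trivial fact that a quotient homomorphism between filtration terms is surjective. Writing $\ns=\prod_{\ell=1}^{k}(\abph^{(p)}_{k,\ell})^{a_\ell}$, the nilspace $\ns$ is the group nilspace associated with the finite abelian $p$-group $G:=\prod_{\ell}G_\ell^{a_\ell}$ (where $G_\ell$ is the underlying cyclic group of $\abph^{(p)}_{k,\ell}$) equipped with the product filtration $G_\bullet$. Correspondingly, by the general description of nilspace factors of group nilspaces (see \cite[\S 2.2.1]{Cand:Notes1}), the $(k-1)$-step factor $\ns_{k-1}$ is the group nilspace associated with $(G/G_{(k)},\,(G_{(i)}/G_{(k)})_{i\ge 0})$, and the factor map $\pi_{k-1}$ is the quotient homomorphism.

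Next, I would invoke the standard description of translations on an abelian group nilspace: for any filtered abelian group $(H,H_\bullet)$, the group of degree-$i$ translations $\tran_i$ of the associated group nilspace contains all additive translations $t_h: x\mapsto x+h$ for $h\in H_{(i)}$ (this is essentially immediate from the Host--Kra cube description, since adding an element of $H_{(i)}$ to an $n$-cube preserves the cube condition; see \cite[\S 3.2.4]{Cand:Notes1}). In particular, whatever the full structure of $\tran_i(\ns_{k-1})$ may be, it is enough to handle the case when $\alpha = t_{\bar g}$ for some $\bar g \in G_{(i)}/G_{(k)}$, and to treat arbitrary $\alpha$ one reduces to this generating family (or, more directly, invokes the fact from \cite[\S 3.2.4]{Cand:Notes1} that on abelian group nilspaces these additive translations exhaust $\tran_i$).

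Given $\alpha = t_{\bar g}$ with $\bar g\in G_{(i)}/G_{(k)}$, the quotient map $G_{(i)}\to G_{(i)}/G_{(k)}$ (which is surjective for $i\le k$, and an equality for $i\ge k$) yields some $g\in G_{(i)}$ with $g+G_{(k)}=\bar g$. Set $\beta:=t_g\in\tran_i(\ns)$. The compatibility is then immediate: for every $x\in\ns$,
\[
\pi_{k-1}(\beta(x))=\pi_{k-1}(x+g)=\pi_{k-1}(x)+\bar g=\alpha(\pi_{k-1}(x)).
\]

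The only delicate point is the identification of $\tran_i$ on these specific abelian group nilspaces with the additive translations by $G_{(i)}$; if the reference used in \cite[\S 3.2.4]{Cand:Notes1} does not give exactly this equality in the form needed, I would prove it by hand for the product $\ns=\prod_\ell (\abph^{(p)}_{k,\ell})^{a_\ell}$, using that each $\abph^{(p)}_{k,\ell}$ is a cyclic group with a linear (polynomial) filtration and that translations of such small explicit nilspaces can be enumerated directly from the cube axioms. This is the main obstacle, but once settled, the rest of the argument is the elementary surjectivity of a quotient map between finite $p$-groups.
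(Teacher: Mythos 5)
There is a genuine gap at the step you yourself flag as the ``only delicate point'': the identification of $\tran_i$ on these abelian group nilspaces with the additive translations by $G_{(i)}$ is false, and it cannot be repaired by a hand computation because the statement you would be trying to verify does not hold. For a filtered abelian group $(H,H_\bullet)$ whose associated group nilspace is $k$-step, $\tran_i$ contains, besides the additive translations $t_h$ with $h\in H_{(i)}$, all maps of the form $x\mapsto x+\phi(x)$ with $\phi$ a non-constant morphism from the nilspace into $\mc{D}_{k-i}$ of the last structure group (and, for products, skew maps feeding one factor into the structure groups of another); see \cite[\S 3.2.4]{Cand:Notes1}. A concrete counterexample already inside $\mc{Q}_{2,2}$: on $\abph^{(2)}_{2,1}$, i.e.\ $\mb{Z}_4$ with filtration $(\mb{Z}_4,\mb{Z}_4,2\mb{Z}_4,\{0\},\ldots)$, the map $x\mapsto x+2(x\bmod 2)$ lies in $\tran_1$ but is not of the form $x\mapsto x+c$; nor is it in the subgroup generated by such maps, since the additive translations already form a subgroup. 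So the reduction ``to this generating family'' does not go through, and the surjectivity of $G_{(i)}\to G_{(i)}/G_{(k)}$ disposes only of the additive part of a general $\alpha\in\tran_i(\ns_{k-1})$.

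Lifting the non-additive translations is precisely where the content of the proposition lies, and it is where the paper has to work: it forms the extension $\mc{T}^*$ of $\ns_{k-1}$ from \cite[\S 3.3.4]{Cand:Notes1}, whose splitting is equivalent to the liftability of $\alpha$, observes that $\mc{T}^*$ is a $p$-homogeneous degree-$(k-i)$ extension, and invokes \cite[Proposition 4.3 and Lemma 4.4]{CGSS-p-hom} to conclude that such extensions split. Note that $p$-homogeneity is used essentially there, whereas your argument never touches it --- a further sign that something is missing. Your treatment of the additive translations is correct as far as it goes (and the case $i=k$ is trivial since $\tran_k(\ns_{k-1})=\{\id\}$), but to complete the proof along your lines you would still need to decompose a general $\alpha$ into an additive part and factors $\id+\phi$ and then lift the latter through $\pi_{k-1}$, which amounts to re-deriving the obstruction/splitting argument the paper uses.
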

\noindent In other words, every translation $\alpha\in \tran_i(\ns_{k-1})$ can be lifted through $\pi_{k-1}$ to a translation $\beta \in \tran_i(\ns)$.
\begin{proof} 
First note that for $i=k$ the result is trivial, since then $\tran_k(\ns_{k-1})=\{\id\}$. For $i<k$, let us use the nilspace $\mc{T}^*$ constructed in \cite[\S 3.3.4]{Cand:Notes1}, whose structure encodes whether or not the translation $\alpha$ can be lifted as desired. By \cite[Lemma 3.3.38]{Cand:Notes1} we know that $\mc{T}^*$ is a degree-$(k-i)$ extension of $\ns_{k-1}\in \mc{Q}_{p,k-1}$. From the definition of $\mc{T}^*$ and the fact that $\ns$ is $p$-homogeneous, it also follows that $\mc{T}^*$ is $p$-homogeneous. As $i<k$, by \cite[Proposition 4.3 and Lemma 4.4]{CGSS-p-hom} we have that $\mc{T}^*$ is a \emph{split extension} of $\ns_{k-1}$. Then \cite[Proposition 3.3.39]{Cand:Notes1} tells us that there exists $\beta\in\tran_i(\ns)$ satisfying  $\pi_{k-1} \co \beta = \alpha \co \pi_{k-1}$ as required.
\end{proof}
\noindent Next, we extend this last result to be able to lift translations through fibrations that are more general than $\pi_{k-1}$.
\begin{proposition}\label{prop:lift-any-tran}
Let  $\ns\in \mc{Q}_{p,k}$, let $\nss$ be a finite $k$-step $p$-homogeneous nilspace, and let  $\varphi:\ns\to \nss$ be a fibration. Then for any $i\in [k]$ and any translation $\alpha\in \tran_i(\nss)$, there exists $\beta\in \tran_i(\ns)$ such that $\varphi \co \beta = \alpha \co \varphi$.
\end{proposition}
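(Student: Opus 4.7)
The plan is to argue by induction on the step $k$ of the nilspaces involved.

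For the base case $k=1$, both $\ns$ and $\nss$ are one-step, hence of the form $\mc{D}_1(A)$ and $\mc{D}_1(B)$ for abelian groups $A,B$. A fibration $\varphi$ between such nilspaces is just a surjective group homomorphism, and translations in $\tran_1$ act by addition by a group element. Lifting $\alpha \in \tran_1(\nss)$ (addition by some $b\in B$) through the surjection gives some $a\in A$ with $\varphi(a)=b$, and addition by $a$ is the required $\beta$.

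For the inductive step, assume the result for step $k-1$. Consider the canonical projections $\pi_{k-1}^{\ns}:\ns\to\ns_{k-1}$ and $\pi_{k-1}^{\nss}:\nss\to\nss_{k-1}$. Direct inspection of the filtered groups $\abph^{(p)}_{k,\ell}$ (quotienting by the last structure group takes $\abph^{(p)}_{k,\ell}$ to $\abph^{(p)}_{k-1,\ell}$, or to a trivial nilspace when $\ell=k$) shows that $\ns_{k-1}\in\mc{Q}_{p,k-1}$, while $\nss_{k-1}$ is a finite $(k-1)$-step $p$-homogeneous nilspace. The fibration $\varphi$ induces a fibration $\varphi_{k-1}:\ns_{k-1}\to\nss_{k-1}$, and $\alpha$ descends by functoriality to a translation $\alpha_{k-1}\in\tran_i(\nss_{k-1})$. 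The induction hypothesis supplies $\beta'\in\tran_i(\ns_{k-1})$ with $\varphi_{k-1}\co\beta' = \alpha_{k-1}\co\varphi_{k-1}$, and then Proposition~\ref{prop:lift-trans} applied to $\pi_{k-1}^{\ns}$ and $\beta'$ yields $\beta''\in\tran_i(\ns)$ with $\pi_{k-1}^{\ns}\co\beta'' = \beta'\co\pi_{k-1}^{\ns}$.

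A direct diagram chase then gives $\pi_{k-1}^{\nss}\co\varphi\co\beta'' = \pi_{k-1}^{\nss}\co\alpha\co\varphi$, so the maps $\varphi\co\beta''$ and $\alpha\co\varphi$ from $\ns$ to $\nss$ agree modulo the top structure group $A_k(\nss)$ of $\nss$. The remaining task, which I expect to be the main obstacle, is to correct $\beta''$ by a translation in $\tran_k(\ns)$ (which acts on $\ns$ through the top structure group $A_k(\ns)$) so that $\varphi\co\beta = \alpha\co\varphi$ holds on the nose. My plan for this correction is to mirror the proof strategy of Proposition~\ref{prop:lift-trans}: build an auxiliary nilspace in the spirit of the $\mc{T}^*$ construction of \cite[\S 3.3.4]{Cand:Notes1} attached to the pair $(\varphi,\alpha)$, whose sections correspond precisely to the sought correction; verify that this nilspace is a degree-$(k-i)$ extension of $\nss_{k-1}$ which inherits $p$-homogeneity from $\ns$ and $\nss$; and then invoke \cite[Proposition 4.3 and Lemma 4.4]{CGSS-p-hom} to deduce that the extension splits. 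The surjectivity of the induced structure-group homomorphism $A_k(\ns)\to A_k(\nss)$ (a consequence of $\varphi$ being a fibration) should ensure that the required translation in $\tran_k(\ns)$ actually lies in $\ns$'s translation group and not only in $\nss$'s.
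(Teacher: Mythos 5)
Your inductive skeleton matches the paper's proof up to the last step: descend $\alpha$ to $\alpha_{k-1}$, apply the induction hypothesis to the induced fibration $\varphi_{k-1}$, lift the resulting translation through $\pi_{k-1}^{\ns}$ via Proposition~\ref{prop:lift-trans}, and observe that the defect lives in the top structure group. The gap is in how you close that defect. You propose to build a new $\mc{T}^*$-type auxiliary nilspace attached to the pair $(\varphi,\alpha)$ and invoke the splitting results of \cite[Proposition 4.3 and Lemma 4.4]{CGSS-p-hom}, but you never carry this out, and it is not clear it can be done as stated: the $\mc{T}^*$ construction of \cite[\S 3.3.4]{Cand:Notes1} (and the splitting machinery it feeds into) is designed for lifting a translation through a degree-$k$ \emph{extension}, i.e.\ through the canonical projection $\ns\to\ns_{k-1}$, whereas here $\varphi$ is an arbitrary fibration between two $k$-step nilspaces. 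Adapting that construction to this situation would require defining a new object and reproving its extension and homogeneity properties, none of which is sketched.

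The paper closes the defect by a much more elementary argument that you should be able to substitute directly. Since $\pi_{k-1,\nss}\co\varphi\co\beta''=\pi_{k-1,\nss}\co\alpha\co\varphi$, the map $f:\ns\to\ab_k(\nss)$, $x\mapsto\alpha(\varphi(x))-\varphi(\beta''(x))$, is well defined and is a morphism into $\mc{D}_k(\ab_k(\nss))$. The structure homomorphism $\phi_k:\ab_k(\ns)\to\ab_k(\nss)$ is surjective because $\varphi$ is a fibration, and --- this is where finiteness and $p$-homogeneity enter --- it is a surjective linear map between finite-dimensional $\mb{F}_p$-vector spaces, hence admits a homomorphic section $s:\ab_k(\nss)\to\ab_k(\ns)$ with $\phi_k\co s=\id$. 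Setting $\beta(x):=\beta''(x)+s(f(x))$ gives $\beta\in\tran_i(\ns)$ with $\varphi\co\beta=\alpha\co\varphi$. Note also that the paper treats the case $i=k$ separately at the outset (lifting directly through $\phi_k$ using $\tran_k\cong\ab_k$), which avoids the degenerate instance of the induction; your write-up does not address this case explicitly. Finally, a small point in your base case: a fibration $\mc{D}_1(A)\to\mc{D}_1(B)$ is an affine surjective homomorphism rather than a homomorphism on the nose, though this does not affect the lifting of translations.
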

\begin{proof}
We argue by induction on $k$. Note that the case $k=0$ is trivial.

If $i=k$ then, since $\tran_k(\ns)\cong \ab_k(\ns)$, $\tran_k(\nss)\cong \ab_k(\nss)$ (by \cite[Lemma 3.2.37]{Cand:Notes1}) and since $\varphi$ is a fibration,  we can easily lift any such translation using the surjective homomorphism $\ab_k(\ns)\to \ab_k(\nss)$ induced as a \emph{structure morphism} by $\varphi$ (see \cite[\S 3.3.2]{Cand:Notes1}). If $i<k$, then given $\alpha\in \tran_i(\nss)$, let $\alpha_{k-1}\in \tran_i(\nss_{k-1})$ be the translation such that $\pi_{k-1,\nss} \co \alpha =\alpha_{k-1}\co \pi_{k-1,\nss}$ where $\pi_{k-1,\nss}:\nss\to\nss_{k-1}$ is the projection to the $(k-1)$-step factor of $\nss$. By induction on $k$ there is $\beta_{k-1}'\in \tran_i(\ns_{k-1})$ such that $\varphi_{k-1} \co \beta_{k-1}' = \alpha_{k-1}\co \varphi_{k-1}$, where $\varphi_{k-1}:\ns_{k-1}\to \nss_{k-1}$ is the fibration induced by $\varphi$ between the $(k-1)$-step factors (see \cite[\S 3.3.2]{Cand:Notes1}). By Proposition \ref{prop:lift-trans} there is $\beta'\in \tran_i(\ns)$ with $\pi_{k-1,\ns} \co \beta' = \beta_{k-1}'\co \pi_{k-1,\ns}$. 

Note that $\pi_{k-1,\nss} \co \varphi\co \beta' = \pi_{k-1,\nss} \co \alpha \co \varphi$, so $\beta'$ is a  translation of the desired kind but only modulo $\pi_{k-1,\nss}$. However, this implies that the function $f:\ns\to  \ab_k(\nss)$, $x\mapsto \alpha(\varphi(x))-\varphi(\beta'(x))$ is a well-defined $\ab_k(\nss)$-valued map, and is in fact a morphism into $\mc{D}_k(\ab_k(\nss))$. Let $\phi_k:\ab_k(\ns)\to \ab_k(\nss)$ be the above-mentioned structure morphism, i.e.\ the surjective homomorphism such that $\varphi(x+z)=\varphi(x)+\phi_k(z)$ for all $x\in \ns$ and $z\in \ab_k(\ns)$. Since $\ns$ and $\nss$ are both finite $p$-homogeneous nilspaces, this map $\phi_k$ is a linear map between two finite-dimensional vector spaces over $\mb{F}_p$. Thus there exists a homomorphism $s:\ab_k(\nss)\to \ab_k(\ns)$ such that $\phi_k\co s$ is the identity map on $\ab_k(\nss)$. We then define $\beta:\ns\to \ns$ by $\beta(x):=\beta'(x)+s(f(x))$. It is easily checked that $\beta\in \tran_i(\ns)$ and that $\varphi \co \beta = \alpha\co \varphi$, as required.
\end{proof}
\noindent We now use Proposition \ref{prop:lift-any-tran} to show that any fibration between nilspaces in the special class $\mc{Q}_{p,k}$ can be re-expressed essentially as a coordinate projection, which will be useful in order to carry out the above-mentioned gluing in an orderly way.
\begin{proposition}\label{prop:spli-fib-q-pk}
Let $\ns,\nss\in \mc{Q}_{p,k}$ and let $\varphi:\ns\to \nss$ be a fibration. Then $\ns$ is isomorphic to the product nilspace $\nss \times Q$ for some $k$-step, $p$-homogeneous finite nilspace $Q$, and there is a nilspace isomorphism $\Phi:\nss\times \,Q\to \ns$ such that $\varphi(\Phi(y,q))= x$ for any $y\in \nss$, $q\in Q$.
\end{proposition}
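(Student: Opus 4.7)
We interpret the conclusion as $\varphi(\Phi(y,q)) = y$, reading the $x$ in the printed statement as a typographical slip for $y$: the claim is that $\Phi$ realizes $\varphi$ as the coordinate projection of $\nss\times Q$ onto its first factor. The plan is to argue by induction on $k$, the case $k=0$ being trivial (the only nilspace in $\mc{Q}_{p,0}$ is the one-point nilspace).

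For the inductive step, I would first pass to the induced fibration $\varphi_{k-1}:\ns_{k-1}\to \nss_{k-1}$ between the $(k-1)$-step factors. These factor nilspaces remain in $\mc{Q}_{p,k-1}$, since each $\abph^{(p)}_{k,\ell}$ reduces to $\abph^{(p)}_{k-1,\ell}$ when $\ell\le k-1$ and to the trivial nilspace when $\ell=k$. The inductive hypothesis then yields an isomorphism $\Phi_{k-1}:\nss_{k-1}\times Q''\to \ns_{k-1}$ under which $\varphi_{k-1}$ becomes the projection onto the first factor.

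To lift this to level $k$, I would invoke Proposition \ref{prop:lift-any-tran} together with the key consequence of $p$-homogeneity that the $k$-th structure groups of $\ns$ and $\nss$ are elementary abelian $p$-groups, since each structure group of $\abph^{(p)}_{k,\ell}$ is of the form $G^{(\ell)}_{(k)}/G^{(\ell)}_{(k+1)}$, which is either trivial or $\mb{Z}_p$. The induced structure homomorphism $\phi_k:\ab_k(\ns)\to \ab_k(\nss)$ is therefore a surjective $\mb{F}_p$-linear map and admits a linear section $s_k:\ab_k(\nss)\to\ab_k(\ns)$, yielding a decomposition $\ab_k(\ns)=s_k(\ab_k(\nss))\oplus R$ with $R=\ker\phi_k$.

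The core construction is then a section $s:\nss\to\ns$ of $\varphi$ in the nilspace category. Taking generators $y_1,\ldots,y_r$ of $\nss$ (one per cyclic factor $\abph^{(p)}_{k,\ell}$, sitting in degrees $\ell_1,\ldots,\ell_r$ of the filtration), I would lift each translation $T_{y_j}\in\tran_{\ell_j}(\nss)$ to a translation $\tilde T_{y_j}\in\tran_{\ell_j}(\ns)$ via Proposition \ref{prop:lift-any-tran}, choosing these lifts coherently using $\Phi_{k-1}$ to control $s$ modulo $\ab_k$ and using $s_k$ to pin down the $\ab_k$-component. Fixing a basepoint $x_0\in\varphi^{-1}(0_\nss)$ provided by the inductive data, this defines $s(y):=(\tilde T_{y_1}^{n_1}\co\cdots\co \tilde T_{y_r}^{n_r})(x_0)$ for $y=\sum_j n_j y_j$, well-defined independently of the expression. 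Setting $Q:=\varphi^{-1}(0_\nss)$ with its induced sub-nilspace structure (which is $k$-step and $p$-homogeneous as a sub-object of $\ns$), the map $\Phi(y,q):=s(y)+q$ (using the group structure on $\ns$) is then the desired nilspace isomorphism; bijectivity follows from $\varphi\co s=\id_\nss$ and the decomposition $\ab_k(\ns)=s_k(\ab_k(\nss))\oplus R$ at each fiber.

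The main obstacle will be ensuring the lifted translations $\tilde T_{y_j}$ can be chosen coherently enough that $s$ is genuinely a nilspace morphism and not merely a set-theoretic section: a priori the lifts need not commute or respect the relations among the $y_j$ in $\nss$. The resolution rests on combining the inductive isomorphism $\Phi_{k-1}$ (which controls the behavior modulo $\ab_k$) with the linear splitting $s_k$ (which controls the behavior along $\ab_k$), absorbing any discrepancy into the free $\mb{F}_p$-linear choice in the lifts. Once $s$ is known to be a morphism, the rest of the argument is formal.
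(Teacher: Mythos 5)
Your overall strategy is close to the paper's: both lift the standard generating translations of $\nss$ to translations of $\ns$ via Proposition \ref{prop:lift-any-tran}, set $Q:=\varphi^{-1}(\mathbf{0})$, and build $\Phi$ from the lifted translations (and you are right that the $x$ in the statement is a slip for $y$). But the step you yourself single out as ``the main obstacle'' --- well-definedness of the section, i.e.\ that $\tilde T_{y_j}^{n_j}$ makes sense when $n_j$ is only a residue mod $p^{r_j}$ --- is left unresolved, and your proposed fix (``absorbing any discrepancy into the free $\mb{F}_p$-linear choice in the lifts,'' coordinated through an inductive isomorphism $\Phi_{k-1}$) is not how it works and is not obviously implementable: the discrepancy $\tilde T_{y_j}^{p^{r_j}}$ is a translation, not an element of $\ab_k(\ns)$ you can cancel by adjusting a linear section. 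The actual mechanism, from \cite[Proposition 3.11]{CGSS-p-hom}, is that for a $p$-homogeneous nilspace the filtration $(\tran_\ell(\ns))_{\ell\ge0}$ is itself $p$-homogeneous; hence \emph{any} lift $\beta\in\tran_\ell(\ns)$ satisfies $\beta^{p^r}\in\tran_{\ell+r(p-1)}(\ns)=\{\id\}$, since $\ell+r(p-1)>k$ for $r=\lfloor\frac{k-\ell}{p-1}\rfloor+1$ and $\ns$ is $k$-step. No coherence adjustment is needed at all, no commutativity of the lifts is needed (one fixes an order of composition once and for all), and the induction on $k$ becomes superfluous: the paper's proof is direct.

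A second, smaller gap: you define $\Phi(y,q):=s(y)+q$ using the ambient group structure of $\ns$. Since $\varphi$ is only a fibration and not a group homomorphism, $\varphi(s(y)+q)$ need not equal $\varphi(s(y))+\varphi(q)=y$, so the required identity $\varphi\co\Phi(y,q)=y$ does not follow from $\varphi\co s=\id_\nss$. The paper sidesteps this by defining $\Phi(y,q):=\bigl(\prod_{\ell,t}\beta_{\ell,t}^{y_{\ell,t}}\bigr)(q)$ directly, so that the intertwining relations $\varphi\co\beta_{\ell,t}=\alpha_{\ell,t}\co\varphi$ give $\varphi(\Phi(y,q))=\bigl(\prod\alpha_{\ell,t}^{y_{\ell,t}}\bigr)(\mathbf{0})=y$ immediately, and bijectivity is checked by exhibiting the explicit inverse $x\mapsto\bigl(\varphi(x),\bigl(\prod\beta_{\ell,t}^{y_{\ell,t}}\bigr)^{-1}(x)\bigr)$ with $y=\varphi(x)$. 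If you replace your additive formula by this one and invoke the $p$-homogeneity of the translation filtration for well-definedness, your argument collapses to the paper's.
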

\begin{proof}
We have $\nss = \prod_{\ell=1}^k (\abph^{(p)}_{k,\ell})^{a_\ell}$, where $a_\ell\ge 0$ for $\ell\in [k]$. For each $\ell \in [k]$ and $t\in [a_\ell]$, let $\alpha_{\ell,t}\in \tran_\ell(\nss)$ be the translation that acts by adding 1 inside the cyclic group which is the $t$-th component of the group $(\abph^{(p)}_{k,\ell})^{a_\ell}$. By Proposition \ref{prop:lift-any-tran} we know that there exists $\beta_{\ell,t}\in \tran_\ell(\ns)$ such that $\varphi \co \beta_{\ell,t} = \alpha_{\ell,t}\co \varphi$. Let $Q:=\varphi^{-1}(\textbf{0})$ where $\textbf{0}\in \nss$ is the element with all coordinates equal to 0. It is straightforward to check that $Q$ is  a $p$-homogeneous, $k$-step, finite nilspace. Let us define
\[
\begin{array}{cccc}
    \Phi: & \nss\times \,Q & \to & \ns \\
    & (y,q) & \to & \left(\prod_{\ell=1}^k \prod_{t=1}^{a_\ell} \beta_{\ell,t}^{y_{\ell,t}}\right)(q) 
\end{array}
\]
where $y = (y_{\ell,t})_{\ell\in [k],\, t\in [a_\ell]}$. Note that this map $\Phi$ is thus well-defined, since the order of $\beta_{\ell,t}$ as an element of the translation group $\tran(\ns)$ is the same as the order of $y_{\ell,t}$ in its corresponding cyclic group, by \cite[Proposition 3.11]{CGSS-p-hom}. More precisely, since  $y_{\ell,t}\in \mb{Z}_{p^{r}}$ for $r=\lfloor\frac{k-\ell}{p-1}\rfloor+1$, we have $\beta_{\ell,t}^{p^r}=\id$ (using that $\beta_{\ell,t}\in \tran_{\ell}(\ns)$ and $(\tran_{\ell}(\ns))_{\ell\ge 0}$ is a $p$-homogeneous filtration by \cite[Proposition 3.11]{CGSS-p-hom}). Thus, it now suffices to prove that $\Phi$ is a nilspace isomorphism. The fact that it is a morphism follows from the definitions. To see that it is bijective, note that the following function is an inverse:
\[
\Phi^{-1}(x):=\Big(y, \Big(\prod_{\ell=1}^k \prod_{t=1}^{a_\ell} \beta_{\ell,t}^{y_{\ell,t}}\Big)^{-1} (y)\Big), \textrm{ where } y = \varphi(x).
\]
Note also that $\Phi^{-1}$ is a morphism, similarly as above (i.e.\ from the definitions).
\end{proof}
\noindent We can now explain how we glue various fibrations to obtain a fibration from $\mc{H}_p$.
\begin{proposition}\label{prop:aux-universal-covering}
Let $\ns$ be a compact nilspace such that $\ns=\varprojlim \ns_i$ where $\ns_i$ is an $i$-step, finite $p$-homogeneous nilspace for each $i$. Then there is a strict inverse system of nilspaces $(\phi_{i,j}:\nss_j\to \nss_i)_{i\le j}$ and a fibration $\varphi:\varprojlim\nss_i\to \ns$, such that for every $i$ we have $\nss_i\in \mc{Q}_{p,i}$, $\nss_{i+1} = \nss_i^+\times\, Q_{i+1}$ for some $Q_{i+1}\in \mc{Q}_{p,i+1}$ and $\phi_{i,i+1}=\pi_i\co p_1$, where $p_1$ is the projection to the first component $\nss_i^+$ and $\pi_i$ is the quotient map $\nss_i^+\to\nss_i$.
\end{proposition}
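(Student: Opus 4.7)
My plan is to build the sequence of nilspaces $(\nss_i)_{i\ge 1}$ and fibrations $\varphi_i:\nss_i\to\ns_i$ by induction on $i$, arranging things so that the transition maps $\phi_{i,i+1}$ take the shape required in the statement. The two main ingredients will be \cite[Theorem 1.7]{CGSS-p-hom}, which ensures that every finite $k$-step $p$-homogeneous nilspace is the image, under a fibration, of some element of $\mc{Q}_{p,k}$, together with Proposition \ref{prop:spli-fib-q-pk}, which expresses any fibration between two members of $\mc{Q}_{p,k}$ as a product projection.

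For the base case $i=1$, the nilspace $\ns_1$ is $1$-step and $p$-homogeneous, so it is a finite elementary abelian $p$-group with the degree-$1$ filtration, hence already of the form $(\abph^{(p)}_{1,1})^{a_1}\in\mc{Q}_{p,1}$; we may thus take $\nss_1:=\ns_1$ and $\varphi_1:=\id$. For the inductive step, suppose $\nss_i\in\mc{Q}_{p,i}$ and a fibration $\varphi_i:\nss_i\to\ns_i$ have been constructed, and let $\rho_i:\ns_{i+1}\to\ns_i$ denote the bonding map of the inverse system for $\ns$. I form the fiber product of nilspaces
\[
\mc{F}_{i+1}:=\{(x,y)\in \nss_i^+\times \ns_{i+1}: \varphi_i(\pi_i(x))=\rho_i(y)\}.
\]
Since $\varphi_i\co\pi_i:\nss_i^+\to\ns_i$ is a composition of fibrations and hence itself a fibration, standard nilspace theory (corner completions can be lifted through a fibration) yields that $\mc{F}_{i+1}$ is a $p$-homogeneous $(i+1)$-step finite nilspace and that the projection $p_2:\mc{F}_{i+1}\to\ns_{i+1}$ is a fibration. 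Applying \cite[Theorem 1.7]{CGSS-p-hom} to $\mc{F}_{i+1}$ yields some $\nss''\in\mc{Q}_{p,i+1}$ together with a fibration $\psi:\nss''\to\mc{F}_{i+1}$. The composition $p_1\co\psi:\nss''\to\nss_i^+$ is then a fibration between elements of $\mc{Q}_{p,i+1}$, so Proposition \ref{prop:spli-fib-q-pk} yields an isomorphism $\nss''\cong\nss_i^+\times Q$ under which $p_1\co\psi$ becomes projection onto the first factor, where $Q$ is some finite $(i+1)$-step $p$-homogeneous nilspace. A final application of \cite[Theorem 1.7]{CGSS-p-hom}, this time to $Q$, produces $Q_{i+1}\in\mc{Q}_{p,i+1}$ with a fibration $Q_{i+1}\to Q$. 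I set $\nss_{i+1}:=\nss_i^+\times Q_{i+1}\in\mc{Q}_{p,i+1}$ (using closure of $\mc{Q}_{p,i+1}$ under products), define $\varphi_{i+1}$ as the composition
\[
\nss_i^+\times Q_{i+1}\;\longrightarrow\;\nss_i^+\times Q\;\cong\;\nss''\;\xrightarrow{\psi}\;\mc{F}_{i+1}\;\xrightarrow{p_2}\;\ns_{i+1},
\]
and take $\phi_{i,i+1}:=\pi_i\co p_1$. Both $\varphi_{i+1}$ and $\phi_{i,i+1}$ are fibrations by construction, and the identity $\varphi_i\co\phi_{i,i+1}=\rho_i\co\varphi_{i+1}$ is automatic from the fiber-product definition of $\mc{F}_{i+1}$.

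Taking the inverse limit gives a compact profinite-step nilspace $\varprojlim\nss_i$, and the compatible family $(\varphi_i)$ assembles into a fibration $\varphi:\varprojlim\nss_i\to\varprojlim\ns_i=\ns$, since a limit of compatible fibrations of compact nilspaces is itself a fibration. The main obstacle is the inductive step: at each stage one has to enlarge $\nss_i^+$ to some $\nss_{i+1}\in\mc{Q}_{p,i+1}$ that fibrates onto $\ns_{i+1}$ \emph{and} splits as a direct product with first factor $\nss_i^+$ compatibly with $\varphi_i$. The fiber product with $\ns_{i+1}$ records the compatibility, \cite[Theorem 1.7]{CGSS-p-hom} supplies a cover within $\mc{Q}_{p,i+1}$, and Proposition \ref{prop:spli-fib-q-pk} turns the covering fibration $\nss''\to\nss_i^+$ into a product projection, which is precisely what forces the transition maps $\phi_{i,i+1}$ to take the required form $\pi_i\co p_1$.
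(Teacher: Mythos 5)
Your proof is correct. It rests on the same two pillars as the paper's argument, namely \cite[Theorem 1.7]{CGSS-p-hom} (to cover a finite $p$-homogeneous nilspace by an element of $\mc{Q}_{p,k}$) and Proposition \ref{prop:spli-fib-q-pk} (to turn a fibration within $\mc{Q}_{p,k}$ into a product projection), but your inductive step is organized more economically. The paper forms the subdirect product of $\nss_{n-1}$ (not $\nss_{n-1}^+$) with $\ns_n$ over $\ns_{n-1}$, applies \cite[Theorem 1.7]{CGSS-p-hom} to get $\nss_n'\in\mc{Q}_{p,n}$, and then must do extra work to make $\nss_{n-1}^+$ appear as a direct factor: it factors the fibration $\nu:\nss_n'\to\nss_{n-1}$ through the $(n-1)$-step factor, splits it there with Proposition \ref{prop:spli-fib-q-pk} at level $n-1$, passes to the extensions $\nss_{n-1}^+\times V^+$, and forms a second subdirect product $T$, which it splits as a degree-$n$ extension using \cite[Propositions A.17 and 4.3]{CGSS-p-hom}. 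By putting $\nss_i^+$ (which already lies in $\mc{Q}_{p,i+1}$) into the fiber product from the outset, you apply Proposition \ref{prop:spli-fib-q-pk} once, at level $i+1$, to the fibration $p_1\co\psi:\nss''\to\nss_i^+$, and the decomposition $\nss_{i+1}=\nss_i^+\times Q_{i+1}$ drops out after one further application of \cite[Theorem 1.7]{CGSS-p-hom} to the complementary factor $Q$. The points that need checking all do check out: $p_1:\mc{F}_{i+1}\to\nss_i^+$ is a fibration because $\rho_i$ is one; the compatibility $\varphi_i\co\phi_{i,i+1}=\rho_i\co\varphi_{i+1}$ follows from the fiber-product relation together with the identity $(p_1\co\psi)\co\Phi(y,q)=y$ supplied by Proposition \ref{prop:spli-fib-q-pk}; and the claim that the subdirect product $\mc{F}_{i+1}$ is a finite $p$-homogeneous nilspace of step at most $i+1$ is asserted at the same level of detail as the corresponding claim for $D$ in the paper. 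Your route buys a shorter induction with one fiber product instead of two; the paper's route keeps the splitting machinery at step $n-1$ before lifting, staying closer to the template of \cite[Proposition 4.9]{CGSS-p-hom} on which it is modeled.
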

\begin{proof}[Proof of Theorem \ref{thm:universal-covering} using Proposition \ref{prop:aux-universal-covering}]
We combine \cite[Theorem 5.71]{GS} and Proposition \ref{prop:aux-universal-covering}. The nilspace obtained is given as an inverse system $\varprojlim \nss_i$ where the factor maps $\phi_{i,i+1}:\nss_{i+1}\to \nss_i$ have the form of a projection to some coordinates and then taking the quotient map $\pi_i$. This means that the inverse system has the form \[\prod_{\ell=1}^1(\abph^{(p)}_{1,\ell})^{a_{1,\ell}}\leftarrow\prod_{\ell=1}^1(\abph^{(p)}_{2,\ell})^{a_{1,\ell}}\times \prod_{\ell=1}^2(\abph^{(p)}_{2,\ell})^{a_{2,\ell}}\leftarrow \prod_{\ell=1}^1(\abph^{(p)}_{3,\ell})^{a_{1,\ell}}\times \prod_{\ell=1}^2(\abph^{(p)}_{3,\ell})^{a_{2,\ell}}\times \prod_{\ell=1}^3(\abph^{(p)}_{3,\ell})^{a_{3,\ell}}\leftarrow\cdots\]
where each map $\phi_{i,i+1}$ deletes the coordinate in the last component $\prod_{\ell=1}^{i+1}(\abph^{(p)}_{i+1,\ell})^{a_{i+1,\ell}}$ and applies the quotient map $\pi_i$ in the remaining coordinates. Thus, the nilspace obtained has the form $\prod_{{\ell}=1}^{\infty} (\abph^{(p)}_{\infty,{\ell}})^{a_{\ell}}$ where $a_{\ell} = \sum_{k=\ell}^{\infty} a_{k,\ell}\in \mb{N}\cup \{\infty\}$. By taking a projection that consists in deleting some coordinates, we see that this nilspace is a fiber-surjective image of $\mc{H}_p = \prod_{{\ell}=1}^{\infty} (\abph^{(p)}_{\infty,{\ell}})^{\mb{N}}$.
\end{proof}

\begin{proof}[Proof of Proposition \ref{prop:aux-universal-covering}]
The first part of the proof is very similar to the proof of \cite[Proposition 4.9]{CGSS-p-hom}. We will construct the sequence $\nss_i$ by induction on $i$, with the case $i=1$ being trivial (as $\ns_1=\mc{D}_1(\mb{Z}_p^n)$).

Suppose that we have already constructed all the factors from $\nss_1$ to $\nss_{n-1}$, thus we have $\nss_j\in \mc{Q}_{p,j}$ for $j\in [n-1]$, fibrations $\psi_j:\nss_j\to \ns_j$ for $j\in [n-1]$ and maps $\phi_{j,{\ell}}:\nss_{\ell}\to \nss_j$ (projections to first components followed by quotient maps) for $1\le j\le {\ell} \le n-1$ such that $\phi_{j,j}=\id$ for all $j\in [n-1]$ and $\phi_{j,{\ell}} \co \phi_{{\ell},d} = \phi_{j,d}$ for $1\le j\le {\ell} \le d\le n-1$. Also, letting $\gamma_{j,{\ell}}:\ns_{\ell} \to \ns_j$ for $1\le j \le {\ell}$ be the fibrations defining the inverse limit of $\ns$, we have $\psi_j \co \phi_{j,{\ell}} = \gamma_{j,{\ell}}\co \psi_{\ell}$ for all $1\le j\le {\ell} \le n-1$. We can represent the situation with the following diagram:
\begin{center}
\begin{tikzpicture}
  \matrix (m) [matrix of math nodes,row sep=3em,column sep=4em,minimum width=2em]
  { \ns_1 & \cdots & \ns_{n-1} & \ns_{n} \\
     \nss_1  & \cdots & \nss_{n-1} & .\\};
  \path[-stealth]
    (m-1-2) edge node [above] {$\gamma_{1,2}$} (m-1-1)
    (m-1-3) edge node [above] {$\gamma_{n-2,n-1}$} (m-1-2)
    (m-1-4) edge node [above] {$\gamma_{n-1,n}$} (m-1-3)
    (m-2-1) edge node [right] {$\psi_1$} (m-1-1)
    (m-2-3) edge node [right] {$\psi_{n-1}$} (m-1-3)
    (m-2-2) edge node [above] {$\phi_{1,2}$} (m-2-1)
    (m-2-3) edge node [above] {$\phi_{n-2,n-1}$} (m-2-2)
    ;
\end{tikzpicture}
\end{center}
Now, the first part of the induction consists in finding a nilspace $\nss_n'\in \mc{Q}_{p,n}$ such that the following diagram commutes:
\begin{equation}\label{diag:induc1}
\begin{tikzpicture}
  \matrix (m) [matrix of math nodes,row sep=3em,column sep=4em,minimum width=2em]
  {\ns_{n-1} & \ns_n  \\
     \nss_{n-1} & \nss_n', \\};
  \path[-stealth]
    (m-1-2) edge node [above] {$\gamma_{n-1,n}$} (m-1-1)
    (m-2-1) edge node [right] {$\psi_{n-1}$} (m-1-1)
    (m-2-2) edge node [right] {$\rho$} (m-1-2)
    (m-2-2) edge node [above] {$\nu$} (m-2-1);
\end{tikzpicture}
\end{equation}
where $\nu$ and $\rho$ are fibrations. To do so, we let $D$ be the subdirect product of $\nss_{n-1}$ and $\ns_n$, i.e., $D:=\{(y,x)\in \nss_{n-1}\times \ns_n: \psi_{n-1}(y)=\gamma_{n-1,n}(x)\}$. Then, we  apply \cite[Theorem 1.7]{CGSS-p-hom} to this nilspace (which is clearly $n$-step) to obtain $\nss_n'$. Note that if $\nss_n'$ and $\nu$ had the desired properties then we could stop here and proceed to the next step of the induction. However, we cannot guarantee this in general, so let us describe how one can define an appropriate nilspace $\nss_n$ with the desired properties. Let us start with the following diagram indicating how the definition of $\nss_n$ will work:
\begin{center}
\begin{tikzpicture}
  \matrix (m) [matrix of math nodes,row sep=3em,column sep=4em,minimum width=2em]
  {
     \ns_{n-1} & \ns_n   & & \\
     \nss_{n-1} & \nss_n' & T & \nss_n \\
     & \pi_{n-1}(\nss_n') &&\\
     & \nss_{n-1}\times Q & \nss_{n-1}^+\times V^+ & \\
     };
  \path[-stealth]
    (m-1-2) edge node [above] {$\gamma_{n-1,n}$} (m-1-1)
    (m-2-1) edge node [right] {$\psi_{n-1}$} (m-1-1)
    (m-2-2) edge node [right] {$\rho$} (m-1-2)
    (m-2-2) edge node [above] {$\nu$} (m-2-1)
    (m-2-2) edge node [right] {$\pi_{n-1}$} (m-3-2)
    (m-3-2) edge node [right] {$\nu_{n-1}$} (m-2-1)
    (m-3-2) edge node [right] {$\phi$} (m-4-2)
    (m-4-3) edge node [above] {$\xi$} (m-4-2)
    (m-2-3) edge node [above] {$p_1$} (m-2-2)
    (m-2-3) edge node [right] {$p_2$} (m-4-3)
    (m-2-4) edge node [above] {$\beta$} (m-2-3)
    ;
\end{tikzpicture}
\end{center}
Now let us detail the construction. First, note that $\nu:\nss_n'\to \nss_{n-1}$ is a fibration from an $n$-step nilspace to an $(n-1)$-step nilspace. Thus $\nu$ factors through the $(n-1)$-step factor $\pi_{n-1}(\nss_n')$, i.e.\ we have $\nu=\nu_{n-1}\co \pi_{n-1}$. Now, as $\pi_{n-1}(\nss_n')\in \mc{Q}_{p,n-1}$,  we apply Proposition  \ref{prop:spli-fib-q-pk} to the fibration $\nu_{n-1}$ and we denote by $\phi:\pi_{n-1}(\nss_n')\to \nss_{n-1}\times Q$ the nilspace isomorphism such that $\nu_{n-1}\co \phi^{-1}(y,q)=y$ for every $y\in \nss_{n-1}$ and $q\in Q$ (so $\nu_{n-1}\co \phi^{-1}$ is simply the projection to the first component $\nss_{n-1}$). Now we apply \cite[Theorem 1.7]{CGSS-p-hom} to $Q$ (which is a $p$-homogeneous  $(n-1)$-step finite  nilspace) to obtain $V\in \mc{Q}_{p,n-1}$ and a fibration $\eta:V\to Q$. We then take the extensions $V^+$ and $\nss_{n-1}^+$, and let $\xi:\nss_{n-1}^+\times V^+\to \nss_{n-1}\times Q$ be the corresponding fibration (thus $\xi$ applies the projection $\wt{\pi}_{n-1}:\nss_{n-1}^+\to\nss_{n-1}$ in the first component, and the fibration $\eta\co \pi_{n-1}: V^+\to Q$ in the second component). Note that $\nss_{n-1}^+\times V^+\in \mc{Q}_{p,n}$.

Next, we define the subdirect product 
\[
T:=\{(y',(y,v))\in\nss_n'\times (\nss_{n-1}^+\times V^+): \phi^{-1}\co \xi (y,v) = \pi_{n-1}(y') \}.
\]
By \cite[Proposition A.17]{CGSS-p-hom} we have that $T$ is a degree-$n$ extension of $\nss_{n-1}^+\times V^+$ by the last structure group $\ab_n(\nss_n')$. By \cite[Proposition 4.3]{CGSS-p-hom} this extension splits. We define $\nss_n=\nss_{n-1}^+\times V^+\times \mc{D}_n(\ab_n(\nss_n'))$ and let $\beta$ be the isomorphism $\nss_n\to T$. Since $T$ is a split extension, we can view $p_2\co \beta$ as projection to the component $(y,v)\in \nss_{n-1}^+\times V^+$ in $\nss_n$. 

We now claim that we can take $\phi_{n-1,n}:\nss_n\to \nss_{n-1}$ to be $\nu \co p_1 \co \beta$ and $\psi_n:=\rho\co p_1\co \beta$. To see this, note first that the commutativity of the diagram in \eqref{diag:induc1} implies a similar commutative diagram with  $\phi_{n-1,n}$ instead of $\nu$ and $\psi_n$ instead of $\rho$. Hence, it only remains to check that $\phi_{n-1,n}(y,v,z)=\wt{\pi}_{n-1}(y)$. To do so, note that the previous diagram commutes, and therefore $
\phi_{n-1,n}(y,v,z) = \nu \co p_1\co \beta (y,v,z)  = \nu_{n-1}\co \pi_{n-1} \co p_1\co \beta (y,v,z) = \nu_{n-1}\co\phi^{-1} \co \xi \co p_2 \co \beta (y,v,z) = \nu_{n-1}\co\phi^{-1} \co \xi (y,v) = \wt{\pi}_{n-1}(y)$.
\end{proof}

\section{Proof of the main result}\label{sec:mainproof}
\noindent In this section we prove Theorem \ref{thm:main}. To this end, a useful observation is that  given a measure $\mu\in \textup{Pr}^{\textup{Aff}(\mb{F}_2^\omega)}(\Bo^{\db{\mb{N}}})$, we can determine whether $\mu$ is of the form $\zeta_{\ns,m}$ by checking whether $\mu$ satisfies an equivalent and often more convenient \emph{independence property}. This observation was already made in the more general context of cubic exchangeability in \cite{CScouplings}, where this independence property was defined. Let us recall the definition here. First we recall the definition of a face of $\db{\mb{N}}$ from \cite[\S 6]{CScouplings}. This involves the notation $\db{S}$ (for a countable set $S$), which denotes the set of sequences in $\{0,1\}^S$ of finite support.
\begin{defn}\label{def:faces}
A set $F\subset \db{\mb{N}}$ is a \emph{face} of $\db{\mb{N}}$ if there is a set $S\subset \mb{N}$ and an element $z\in\db{\mb{N}\setminus S}$ such that for every element $v\in F$ there is a unique $v'\in \db{S}$ such that $v\sbr{i}=v'\sbr{i}$ for $i\in S$ and $v\sbr{i}=z\sbr{i}$ for $i\in \mb{N}\setminus S$.  We call $S$ the set of \emph{free coordinates} of $F$, and we say that $F$ is a \emph{finite} face if $S$ is finite. Note that finite faces of $\db{\mb{N}}$ can be defined equivalently as the images of face maps $\db{n}\to\db{\mb{N}}$, $n\in \mb{N}$. Two faces of $\db{\mb{N}}$ are \emph{independent} if they are disjoint and their sets of free coordinates are disjoint. 
\end{defn}
\noindent Given a $\sigma$-algebra $\mc{B}$ on a set $\Bo$, and countable sets $S\subset T$, we denote by $\mc{B}^{\otimes T}_S$ the sub-$\sigma$-algebra of $\mc{B}^{\otimes T}$ consisting of those sets whose indicator functions are independent of coordinates indexed in $T\setminus S$; equivalently, letting $p_S$ be the coordinate projection $\Bo^T\to \Bo^S$, we have $\mc{B}^{\otimes T}_S=p_S^{-1}(\mc{B}^{\otimes S})$.
\begin{defn}
Let $(\Bo,\mc{B})$ be a standard Borel space. We say that a measure $\mu\in \mc{P}(\Bo^{\db{\mb{N}}})$ has the \emph{independence property} if for all finite independent faces $F_1,F_2\subset \db{\mb{N}}$, the $\sigma$-algebras $\mc{B}^{\otimes \db{\mb{N}}}_{F_1}$, $\mc{B}^{\otimes \db{\mb{N}}}_{F_2}$ are independent according\footnote{As usual, given a probability space $(\Omega,\mc{A},\mu)$, two sub-$\sigma$-algebras $\mc{B}_1,\mc{B}_2$ of $\mc{A}$ are independent according to $\mu$ if for every $A_1\in \mc{B}_1$, $A_2\in \mc{B}_2$ we have $\mu(A_1\cap A_2)=\mu(A_1)\mu(A_2)$.}  to $\mu$. 
\end{defn}
\noindent The next lemma follows from results in \cite{CScouplings}, but was not stated explicitly in that paper and will be used in the sequel, so we record it here. Following the notation from \cite[Ch.\ 11]{Ke}, given a topology $\mc{T}$ on a set $\Bo$, we denote by $\mc{B}(\mc{T})$ the $\sigma$-algebra generated by $\mc{T}$.
\begin{lemma}\label{lem:indepclosed}
Let $(\Bo,\mc{B})$ be a standard Borel space, and let
\begin{equation}\label{eq:indepset}
\mc{I}=\big\{\mu\in \mc{P}(\Bo^{\db{\mb{N}}}): \mu\textrm{ has the independence property}\big\}.
\end{equation}
Then, for any Polish topology $\mc{T}$ on $\Bo$ such that $\mc{B}=\mc{B}(\mc{T})$, the set $\mc{I}$ is a closed subset of $\mc{P}(\Bo^{\db{\mb{N}}})$ in the vague topology induced by $\mc{T}$. In particular, the set $\mc{I}$ is Borel measurable relative to the standard Borel structure on $\mc{P}(\Bo^{\db{\mb{N}}})$.
\end{lemma}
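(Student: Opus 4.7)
The plan is to write $\mc{I}$ as a countable intersection of vague-closed sets $\mc{I}_{F_1,F_2}$, one for each pair of finite independent faces of $\db{\mb{N}}$. First, fix a Polish topology $\mc{T}$ on $\Bo$ with $\mc{B}=\mc{B}(\mc{T})$; then $\Bo^{\db{\mb{N}}}$ is Polish in the product topology, and $\mc{P}(\Bo^{\db{\mb{N}}})$ is Polish in the vague topology (see e.g.\ \cite[Ch.\ 17]{Ke}). The collection of pairs $(F_1,F_2)$ of finite independent faces of $\db{\mb{N}}$ is countable, since each finite face is determined by a finite subset $S\subset\mb{N}$ together with a finitely-supported sequence $z\in\db{\mb{N}\setminus S}$, and there are only countably many such data.

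For a fixed such pair $(F_1,F_2)$, set $F:=F_1\cup F_2$ and denote by $p_F,p_{F_1},p_{F_2}$ the continuous coordinate projections from $\Bo^{\db{\mb{N}}}$ onto $\Bo^F,\Bo^{F_1},\Bo^{F_2}$. By a standard monotone-class / $\pi$-$\lambda$ argument (applied to the $\pi$-system of product cylinder sets in $\Bo^F$), the $\sigma$-algebras $\mc{B}^{\otimes\db{\mb{N}}}_{F_1}$ and $\mc{B}^{\otimes\db{\mb{N}}}_{F_2}$ are independent in $\mu$ if and only if
\[
\mu\co p_F^{-1}=(\mu\co p_{F_1}^{-1})\times(\mu\co p_{F_2}^{-1}),
\]
and I let $\mc{I}_{F_1,F_2}$ denote the set of $\mu$ satisfying this equality. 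Then $\mc{I}=\bigcap_{(F_1,F_2)}\mc{I}_{F_1,F_2}$, a countable intersection.

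The two analytic ingredients are: (a) for any continuous map $p$ between Polish spaces, the pushforward map $\mu\mapsto\mu\co p^{-1}$ is continuous in the vague topologies; and (b) for Polish $X_1,X_2$, the product map $(\nu_1,\nu_2)\mapsto\nu_1\times\nu_2$ from $\mc{P}(X_1)\times\mc{P}(X_2)$ to $\mc{P}(X_1\times X_2)$ is continuous. Combining (a) with (b), the map
\[
\mu\;\longmapsto\;\bigl(\mu\co p_F^{-1},\;(\mu\co p_{F_1}^{-1})\times(\mu\co p_{F_2}^{-1})\bigr)
\]
from $\mc{P}(\Bo^{\db{\mb{N}}})$ to $\mc{P}(\Bo^F)\times\mc{P}(\Bo^F)$ is vague-continuous, and $\mc{I}_{F_1,F_2}$ is the preimage of the (closed) diagonal, so $\mc{I}_{F_1,F_2}$ is closed. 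Hence $\mc{I}$ is closed. Borel measurability relative to the standard Borel structure on $\mc{P}(\Bo^{\db{\mb{N}}})$ then follows at once, since closed subsets of a Polish space are Borel and the standard Borel structure on $\mc{P}(\Bo^{\db{\mb{N}}})$ does not depend on the choice of compatible Polish topology. No step presents a real obstacle; the only mildly non-trivial input is (b), which is a classical fact about weak convergence on Polish spaces and can be verified by approximating bounded continuous functions on $X_1\times X_2$ by finite sums of product functions.
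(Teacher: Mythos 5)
Your proof is correct, but it takes a genuinely different route from the paper's. The paper argues sequentially: given $\mu_n\to\mu$ vaguely with each $\mu_n\in\mc{I}$, it fixes a pair of finite independent faces, uses the Doob property to factor bounded $\mc{B}^{\otimes \db{\mb{N}}}_{F_i}$-measurable functions through $p_{F_i}$, approximates these in $L^1(\mu\co p_{F_i}^{-1})$ by continuous functions, and passes the product identity to the limit with an explicit $\varepsilon$-bookkeeping. You instead reformulate independence of $\mc{B}^{\otimes\db{\mb{N}}}_{F_1}$ and $\mc{B}^{\otimes\db{\mb{N}}}_{F_2}$ as the single measure identity $\mu\co p_F^{-1}=(\mu\co p_{F_1}^{-1})\times(\mu\co p_{F_2}^{-1})$ on $\Bo^F\cong\Bo^{F_1}\times\Bo^{F_2}$ (valid precisely because independent faces are disjoint, so $p_{F_1}^{-1}(A_1)\cap p_{F_2}^{-1}(A_2)=p_F^{-1}(A_1\times A_2)$ and a $\pi$-$\lambda$ argument applies), and then exhibit each $\mc{I}_{F_1,F_2}$ as the preimage of the closed diagonal under a vague-continuous map; countability of the set of pairs of finite faces finishes the job. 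Your approach is softer and cleaner, outsourcing the analytic content to the classical fact that $(\nu_1,\nu_2)\mapsto\nu_1\times\nu_2$ is weakly continuous; the paper's approach is more self-contained, essentially reproving the special case of that fact it needs via $L^1$-density of continuous functions. One small caveat: your suggested verification of the continuity of the product map by approximating bounded continuous functions on $X_1\times X_2$ by sums of products is complete only when the factors are compact (Stone--Weierstrass); for a general Polish $\Bo$ you should instead cite the standard result for separable metric spaces (e.g.\ Billingsley). This does not affect the validity of the argument, since the fact itself is classical and true in the generality you need.
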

\begin{proof}
Let $(\mu_n)_{n\in\mb{N}}$ be a sequence in $\mc{I}$ converging to $\mu \in \mc{P}(\Bo^{\db{\mb{N}}})$ in the vague topology induced by $\mc{T}$. Let $F_1, F_2$ be any finite independent faces in $\db{\mb{N}}$. We need to show that $\mc{B}^{\otimes \db{\mb{N}}}_{F_1}$, $\mc{B}^{\otimes \db{\mb{N}}}_{F_2}$ are independent according to $\mu$, i.e., that for every function $f_1\in L^\infty(\mc{B}^{\otimes \db{\mb{N}}}_{F_1})$ and $f_2\in L^\infty(\mc{B}^{\otimes \db{\mb{N}}}_{F_2})$ we have $\int_{\Bo^{\db{\mb{N}}}} f_1 f_2 \ud\mu=(\int_{\Bo^{\db{\mb{N}}}} f_1 \ud\mu)(\int_{\Bo^{\db{\mb{N}}}} f_2 \ud\mu)$. By the Doob property for Polish spaces \cite[Lemma 2.17]{CScouplings} , for $i=1,2$ there is $\mc{B}^{\otimes F_i}$-measurable function $f_i'$ such that $f_i=f_i'\co p_{F_i}$. Fix any $\varepsilon>0$. For $i=1,2$, by \cite[Appendix E8]{RudinFA} the continuous functions are dense in the $L^1$-norm on the probability space $(\Bo^{F_i},\mc{B}^{\otimes F_i},\mu\co p_{F_i}^{-1})$, so there is a continuous function $\tilde f_i$ on $\Bo^{F_i}$ (relative to the product topology $\mc{T}^{F_i}$) such that
\[
\int_{\Bo^{\db{\mb{N}}}} |f_i-\tilde f_i\co p_{F_i}| \ud\mu = \int_{\Bo^{F_i}} |f_i'-\tilde f_i| \ud(\mu\co p_{F_i}^{-1}) \leq \frac{\varepsilon}{2\max(\|f_1\|_{L^\infty},\|f_2\|_{L^\infty},1)}.
\]
We thus have in particular $|\int_{\Bo^{\db{\mb{N}}}} f_1 f_2 \ud\mu - \int_{\Bo^{\db{\mb{N}}}}  (\tilde f_1\co p_{F_1})  (\tilde f_2\co p_{F_2})\ud\mu|\leq \varepsilon$. By definition of vague convergence and the supposed independence property of each $\mu_n$, we have
\begin{eqnarray*}
&& \int_{\Bo^{\db{\mb{N}}}} (\tilde f_1\co p_{F_1}) (\tilde f_2\co p_{F_2}) \ud\mu  =  \lim_{n\to\infty} \int_{\Bo^{\db{\mb{N}}}}(\tilde f_1\co p_{F_1}) (\tilde f_2\co p_{F_2})\ud\mu_n\\
 & = & \lim_{n\to\infty} \Big(\int_{\Bo^{\db{\mb{N}}}} \tilde f_1\co p_{F_1} \ud\mu_n\Big)\Big(\int_{\Bo^{\db{\mb{N}}}} \tilde f_2\co p_{F_2} \ud\mu_n\Big)  =  \Big(\int_{\Bo^{\db{\mb{N}}}}  \tilde f_1\co p_{F_1} \ud\mu\Big) \Big(\int_{\Bo^{\db{\mb{N}}}} \tilde f_2\co p_{F_2} \ud\mu\Big).
\end{eqnarray*}
Since this last product differs from $(\int_{\Bo^{\db{\mb{N}}}} f_1 \ud\mu)(\int_{\Bo^{\db{\mb{N}}}} f_2 \ud\mu)$ by at most $\varepsilon$, we conclude that $|\int_{\Bo^{\db{\mb{N}}}} f_1 f_2 \ud\mu - (\int_{\Bo^{\db{\mb{N}}}} f_1 \ud\mu)(\int_{\Bo^{\db{\mb{N}}}} f_2 \ud\mu)|\leq 2\varepsilon$. Letting $\varepsilon\to 0$, the result follows.
\end{proof}

We can now obtain the main result of this section.

\begin{theorem}\label{thm:maincharac}
Let $\Bo$ be a standard Borel space and let $\mu\in \mc{P}(\Bo^{\db{\mb{N}}})$. Then the following statements hold:
\begin{enumerate}[leftmargin=0.65cm]
    \item $\mu$ is affine-exchangeable with the independence property if and only if  $\mu = \zeta_{\ns,m}$ for some 2-homogeneous profinite-step compact nilspace $\ns$ and some Borel map $m:\ns\to \mc{P}(\Bo)$.
    \item If $\mu$ is affine-exchangeable then it is a mixture of affine-exchangeable measures with the independence property.
\end{enumerate}
\end{theorem}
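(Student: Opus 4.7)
The plan is to prove both parts by reducing to the theory of 2-homogeneous cubic couplings (Theorem \ref{thm:2-hom-cubic}) and, for part (ii), by an ergodic decomposition. For the forward direction of (i), I would start with an affine-exchangeable $\mu$ having the independence property and form the sequence of finite-dimensional marginals $\mu^{\db{n}}:=\mu\co p_n^{-1}$ on $\Bo^{\db{n}}$, where $p_n$ is the natural projection. The first main step is to verify that $(\mu^{\db{n}})_{n\ge 0}$ is a 2-homogeneous cubic coupling on $(\Bo,\mc{B},\lambda)$, where $\lambda$ is the common one-dimensional marginal (well-defined by affine-exchangeability, which acts transitively on vertices). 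The consistency axiom and the 2-homogeneity property both follow directly from the invariance of $\mu$ under injective cube morphisms and arbitrary injective affine maps, respectively. The ergodicity axiom $\mu^{\db{1}}=\lambda\times\lambda$ follows from the independence property applied to the singleton faces $\{0\},\{1\}\subset \db{\mb{N}}$. The conditional-independence axiom reduces to the independence property applied to a suitable pair of faces within $\db{n}$ (embedded into $\db{\mb{N}}$).

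Applying Theorem \ref{thm:2-hom-cubic} then produces a 2-homogeneous compact profinite-step nilspace $\ns$ and a measure-preserving map $\gamma:\Bo\to\ns$ such that $\gamma^{\db{n}}$ pushes $\mu^{\db{n}}$ forward to $\mu_{\cu^n(\ns)}$, with $\mu^{\db{n}}$ being relatively independent over the factor generated by $\gamma^{\db{n}}$. Disintegrating $\lambda$ along $\gamma$ gives a Borel map $m:\ns\to\mc{P}(\Bo)$ for which $m(\gamma(b))$ is the conditional distribution of $b$ given $\gamma(b)$, and a short computation on finite cylinder sets $S=\prod_{v\in V}A_v\times \Bo^{\db{\mb{N}}\setminus V}$ with $V\subset \db{n}$ yields
\[
\mu(S) \;=\; \int \prod_{v\in V}\mb{E}\bigl[\mathbf{1}_{A_v}\mid\gamma\bigr](b_v)\,\ud\mu^{\db{n}}(b) \;=\; \int_{\cu^n(\ns)}\prod_{v\in V}m(\q(v))(A_v)\,\ud\mu_{\cu^n(\ns)}(\q) \;=\; \zeta_{\ns,m}(S),
\]
giving $\mu=\zeta_{\ns,m}$. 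Conversely, for $\mu=\zeta_{\ns,m}$ with $\ns$ 2-homogeneous, affine-exchangeability follows from Lemma \ref{lem:aff-ech-equals-2-hom} (together with the image structure of $\zeta_{\ns,m}$ under $\mu_{\cu^\omega(\ns)}$), and the independence property follows from the corresponding independence property of $\mu_{\cu^\omega(\ns)}$ on independent faces, which is a feature of the cubic-coupling structure from \cite{CScouplings}.

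For (ii), I would apply an ergodic-decomposition theorem for Borel group actions on standard Borel spaces to the $\textup{Aff}(\mb{F}_2^\omega)$-invariant measure $\mu$, obtaining a Borel probability measure $P$ on $\textup{Pr}^{\textup{Aff}(\mb{F}_2^\omega)}(\Bo^{\db{\mb{N}}})$ concentrated on ergodic measures such that $\mu=\int \mu_\omega\,\ud P(\omega)$. It then suffices to show that every ergodic affine-exchangeable measure $\nu$ has the independence property. Fixing independent faces $F_1,F_2$ with disjoint free-coordinate sets $S_1,S_2$ and bounded measurable functions $f_i$ depending on coordinates in $F_i$, I would exploit the amenable subgroup $\mb{Z}_2^\omega\le \textup{Aff}(\mb{F}_2^\omega)$ of translations: using the von Neumann mean-ergodic theorem along a F\o{}lner sequence of translations supported in coordinates outside $S_1\cup S_2\cup\supp(z_1)\cup\supp(z_2)$, which moves $F_2$ into positions asymptotically decoupled from $F_1$, and combining this with the full $\textup{Aff}$-ergodicity of $\nu$, I conclude that the ergodic averages of $f_2$ along these shifts converge in $L^2(\nu)$ to the constant $\int f_2\,\ud\nu$, yielding $\int f_1 f_2\,\ud\nu=(\int f_1\,\ud\nu)(\int f_2\,\ud\nu)$.

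The principal obstacles I anticipate are, first, the precise verification of the conditional-independence axiom of the cubic coupling---since the pair of faces involved in that axiom shares the origin $0^n$ and is not \emph{independent} in the sense of Definition \ref{def:faces}, requiring one to interpret the axiom correctly and to derive it from a slightly refined form of the independence property---and second, the mean-ergodic step in (ii), where the amenability of only the translation subgroup $\mb{Z}_2^\omega$ must be delicately combined with the full $\textup{Aff}$-ergodicity (which need not imply $\mb{Z}_2^\omega$-ergodicity) to force the asymptotic decoupling of the contributions from the two independent faces.
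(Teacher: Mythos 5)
Your strategy for part (i) --- extract a $2$-homogeneous cubic coupling from the marginals of $\mu$, apply Theorem \ref{thm:2-hom-cubic}, and recover $\mu=\zeta_{\ns,m}$ by disintegrating $\lambda$ along $\gamma$ --- has the same skeleton as the paper's proof, which routes through adaptations of \cite[Proposition 6.10, Theorem 6.17 and Lemma 6.18]{CScouplings}. The gap is exactly where you suspect it, and it is not a formality: the conditional-independence axiom concerns the faces $\{0\}\times\db{n-1}$ and $\db{n-1}\times\{0\}$, which intersect in an $(n-2)$-face and so are not independent faces in the sense of Definition \ref{def:faces}; the independence property says nothing about them directly, and what is required is \emph{relative} independence over the shared subcube. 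Deriving that from unconditional independence of independent faces plus exchangeability is the substantive content of \cite[Theorem 6.17]{CScouplings}; the paper in fact first realizes $\mu$ as a factor of a weak cubic coupling on the larger space $(\Bo^{\db{O}})^{\db{\mb{N}}}$ via an even/odd splitting of coordinates precisely to access that machinery. Your proposal flags the problem but supplies no argument, so part (i) is incomplete at its crucial step (the backward direction and the disintegration computation are fine).

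Part (ii) is where the proposal genuinely fails. You reduce (ii) to the claim that every $\Aff(\mb{F}_2^\omega)$-ergodic measure has the independence property, and propose to prove this using only the translation subgroup $\mb{Z}_2^\omega$, amenable ergodic averaging, and ergodicity of the full group. All of these ingredients are equally available for $\textup{Pr}^{\aut(\db{\mb{N}})}(\Bo^{\db{\mb{N}}})$, since $\aut(\db{\mb{N}})\cong S_\infty\ltimes \mb{Z}_2^\omega$ also contains the translations and is locally finite, hence amenable. But by Austin's theorem that simplex is Poulsen, and since the set $\mc{I}$ of measures with the independence property is vaguely closed (Lemma \ref{lem:indepclosed}) and proper (a nontrivial mixture of two distinct product measures is invariant yet fails independence already for two singleton faces), there must exist ergodic $\aut(\db{\mb{N}})$-invariant measures \emph{without} the independence property. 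Hence no argument of the shape you describe can succeed: one must use the affine maps beyond translations (or at least the cubic-morphism structure) in an essential combinatorial way, which is what the paper's adaptation of \cite[Proposition 6.13]{CScouplings} does. There is also a mechanical defect: a translation $\theta_t$ fixes no vertex of $\db{\mb{N}}$, so invariance gives $\int f_1f_2\,\ud\nu=\int(f_1\co\theta_t)(f_2\co\theta_t)\,\ud\nu$ rather than $\int f_1\,(f_2\co\theta_t)\,\ud\nu$, and even averaging over the pointwise stabiliser of $F_1$ only produces $\int f_1\,\mb{E}[f_2\mid\mc{I}_H]\,\ud\nu$ for the invariant $\sigma$-algebra of that subgroup, whose triviality does not follow from ergodicity of the full group.
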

\noindent In other words, statement $(ii)$ here tells us that there is a Borel probability measure $\kappa$ on $\mc{P}(\Bo^{\db{\mb{N}}})$, concentrated on the measurable set $\mc{I}$ from \eqref{eq:indepset}, such that $\mu = \int_{\mc{P}(\Bo^{\db{\mb{N}}})} \nu \,\ud\kappa(\nu)$.
\begin{proof} 
The argument consists in combining results from \cite{CScouplings}, in particular \cite[Theorem 6.7]{CScouplings}, with the additional information given here by Theorem \ref{thm:2-hom-cubic}, afforded by the assumption that $\mu$ is not just cubic-exchangeable but is affine-exchangeable.

The adaptation of the relevant results from \cite{CScouplings} goes as follows. 

First, by a straightforward use of \cite[Proposition 6.10]{CScouplings} we obtain that $\mu\in \mc{P}(\Bo^{\db{\mb{N}}})$ is affine-exchangeable if and only if it is the factor of a weak cubic coupling $\tilde \mu$ (see \cite[Definition 6.8]{CScouplings}) with the additional property of being \emph{2-homogeneous}, i.e.\ such that for every injective affine map $T:\mb{F}_2^m\to \mb{F}_2^n$ we have $\tilde\mu\co p_{T(\mb{F}_2^m)}^{-1}=\tilde\mu\co p_{\mb{F}_2^m}^{-1}$ (recall Definition \ref{def:2-hom-cou}). Indeed, for the forward implication, since $\mu$ is cubic-exchangeable, by \cite[Proposition 6.10]{CScouplings} there is some measurable map $q:V\to\Bo$ from some standard Borel space $V$ such that $\mu=\tilde\mu\co (q^{\db{\mb{N}}})^{-1}$ for some weak cubic coupling $\tilde\mu$ on $V^{\db{\mb{N}}}$. Furthermore, from the proof of \cite[Proposition 6.10]{CScouplings} if $\mb{N}=E\sqcup O$ where $E$ and $O$ are the sets of even and odd numbers respectively, we have that $V:=\Bo^{\db{O}}$. Then we define the coupling $\nu'$ on $V^{\db{E}}$ as the measure $\mu$, using that $V^{\db{E}}=\Bo^{\db{\mb{N}}}$. Given this, we then define $q:V\to \Bo$ as the projection on the coordinate $0^{\db{O}}$. If the measure $\mu$ is affine-exchangeable, then it follows that $\nu'$ is also affine-exchangeable. The measure $\tilde{\mu}$ is then defined as $\nu'$ composed with doubling the coordinate-indices in $\db{\mb{N}}$ (recall that $\nu'$ is a measure defined on $\Bo^{\db{E}}$ and we need a measure on $\Bo^{\db{\mb{N}}}$). The result then follows similarly as in the proof of \cite[Proposition 6.10]{CScouplings}.

The backward implication is clear because $\mu$ directly inherits from $\tilde \mu$ the additional 2-homogeneity property, and this property is easily seen to imply affine-exchangeability. (Indeed, given a 2-homogeneous cubic coupling $\mu\in \mc{P}(\Bo^{\db{\mb{N}}})$ and an element $T\in \Aff(\mb{F}_2^\omega)$, the map $T$ can be seen as an injective map $T:\mb{F}_2^n\to \mb{F}_2^n$ for some $n\ge 0$ large enough. Thus, by Definition \ref{def:2-hom-cou} we have $\mu^{\db{n}}=\mu^{\db{n}}_T$ where $\mu^{\db{n}}:=\mu\co p_n^{-1}$ and $p_n:\Bo^{\db{\mb{N}}}\to \Bo^{\db{n}}$ is the projection to the first $\db{n}$ coordinates; this is precisely the definition of $\mu$ being affine-exchangeable.)

Next, we use a straightforward adaptation of \cite[Proposition 6.13]{CScouplings}. This adaptation states that a cubic-exchangeable measure $\eta$ on $\Bo^{\db{\mb{N}}}$ that is in addition 2-homogeneous (such as the measure $\tilde \mu$ obtained above) is a mixture of cubic couplings (viewed as measures on $\Bo^{\db{\mb{N}}}$), almost everyone of which is also 2-homogeneous. This adaptation is obtained by going through the proof of \cite[Proposition 6.13]{CScouplings},  replacing the injective cube morphisms $\phi_1,\phi_2:\db{k}\to\db{\mb{N}}$ by injective affine maps $\mb{F}_2^k\to \mb{F}_2^\omega$. 

Thus far we have obtained (similarly as for \cite[Theorem 6.14]{CScouplings}) that every affine-exchangeable probability measure is a mixture of factors of 2-homogeneous cubic couplings (in the sense that a measure $\mu\in\mc{P}(\Bo^{\db{\mb{N}}})$ can be regarded as a cubic coupling when restricted to sets of the form $\db{n}\times \{0^{\mb{N}\setminus[n]}\}\subset \db{\mb{N}}$). As 2-homogeneous cubic couplings are affine exchangeable (as explained above), and cubic couplings have the independence property (by \cite[Lemma 6.15]{CScouplings}), this proves statement $(ii)$.

To prove statement $(i)$ and thus complete the proof, we now obtain an adaptation of  \cite[Theorem 6.17]{CScouplings} where the assumption is strengthened by replacing cubic-exchangeability with affine-exchangeability, and the conclusion is strengthened by adding 2-homogeneity to the resulting cubic coupling. Finally, we apply an adaptation of \cite[Lemma 6.18]{CScouplings} where the assumption is strengthened by adding the 2-homogeneity property, and as a result we may use Theorem \ref{thm:2-hom-cubic} from this paper instead of \cite[Theorem 4.1]{CScouplings}, thus obtaining that the resulting nilspace $\ns$ is 2-homogeneous, as required.
\end{proof}
\noindent We complete the proof of Theorem \ref{thm:main} by explaining how it follows from Theorem \ref{thm:maincharac}. 

\begin{proof}[Proof of Theorem \ref{thm:main}]
Theorem \ref{thm:maincharac} implies that for every $\mu\in  \textup{Pr}^{\textup{Aff}(\mb{F}_2^\omega)}(\Bo^{\db{\mb{N}}})$ there is a Borel probability measure $\kappa$ on $\mc{P}(B^{\db{\mb{N}}})$, concentrated on the set of measures of the form $\nu=\zeta_{\ns,m}$ for $\ns$ a 2-homogeneous profinite-step nilspace $\ns$ and Borel map $m:\ns\to\mc{P}(\Bo)$, such that $\mu$ is the mixture
\begin{equation}\label{eq:mainmixture}
\mu = \int_{\mc{P}(B^{\db{\mb{N}}})} \nu \,\ud\kappa(\nu).
\end{equation}
By Theorem \ref{thm:universal-covering}, for each nilspace $\ns=\ns_\nu$ appearing in this mixture, there exists a fibration $\varphi_\nu:\mc{H}\to \ns_\nu$, and it is then readily seen that the corresponding measure $\zeta_{\ns_\nu,m_\nu}$ is equal to $\zeta_{\mc{H}, m_\nu\co\varphi_\nu}$. Indeed, by standard results it suffices to check this equality for cylinder sets $S=\prod_{v\in\db{\mb{N}}} A_v\subset \Bo^{\db{\mb{N}}}$, and for any such set, using the fact that $\varphi_\nu^{\db{\mb{N}}}$ preserves the Haar measures, we have $\zeta_{\ns_\nu,m_\nu}(S) = \int_{\cu^\omega(\mc{H})} \prod_{v\in \db{\mb{N}}} m_\nu\co\varphi_\nu(\q(v))(A_v)\; \ud\mu_{\cu^{\omega}(\mc{H}_2)}(\q) = \zeta_{\mc{H},m_\nu\co \varphi_\nu}(S)$ as required. As each map $m_\nu\co \varphi_\nu$ is Borel $\mc{H}\to\mc{P}(\Bo)$, we thus deduce that the measure $\kappa$ in \eqref{eq:mainmixture} is concentrated on measures of the form $\nu=\zeta_{\mc{H},m}$ for Borel maps $m: \mc{H}\to\mc{P}(\Bo)$, and this completes the proof.
\end{proof}

\section{On the geometry of the class of affine-exchangeable measures}\label{sec:geom}

\noindent In this section we prove Theorem \ref{thm:BauerProp}, yielding the Bauer property for affine exchangeabi- lity, i.e., that for every compact metric space $\Bo$ the set $\textup{Pr}^{\textup{Aff}(\mb{F}_2^\omega)}(\Bo^{\db{\mb{N}}})$ is a Bauer simplex.

We begin by recording the following fact concerning cubic-exchangeable measures more generally.
\begin{proposition}\label{prop:extremeequivs}
Let $\Bo$ be a standard Borel space, and let $\mu\in \mc{P}(\Bo^{\db{\mb{N}}})$. The following statements are equivalent.
\begin{enumerate}[leftmargin=0.8cm]
\item $\mu$ is an extreme point in the convex set of cubic-exchangeable measures in $\mc{P}(\Bo^{\db{\mb{N}}})$.
\item $\mu$ is cubic-exchangeable and satisfies the independence property.
\item $\mu=\zeta_{\ns,m}$ for some compact profinite-step nilspace $\ns$ and Borel map $m:\ns\to \mc{P}(\Bo)$.
\end{enumerate}
\end{proposition}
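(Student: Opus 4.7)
The plan is to establish two equivalences, $(i) \Leftrightarrow (iii)$ and $(ii) \Leftrightarrow (iii)$, relying primarily on results already in \cite{CScouplings} and paralleling the proof of Theorem \ref{thm:maincharac}.

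For $(i) \Leftrightarrow (iii)$ I would invoke \cite[Theorem 6.7]{CScouplings} (equivalently \cite[Theorem 1.3]{CScouplings}) directly: that theorem already identifies the extreme points of the convex set of cubic-exchangeable measures on $\Bo^{\db{\mb{N}}}$ as precisely the measures $\zeta_{\ns,m}$ for $\ns$ a compact profinite-step nilspace and $m$ a Borel map, so both directions are immediate.

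For $(iii) \Rightarrow (ii)$, both properties would be read off directly from the construction in \eqref{eq:zeta}. Cubic-exchangeability reduces, via Fubini applied to \eqref{eq:zeta}, to showing the invariance of $\mu_{\cu^\omega(\ns)}$ under precomposition by any injective cube morphism $\phi:\db{k}\to\db{\mb{N}}$; this invariance follows from Proposition \ref{prop:mes-pre-inf-cube} combined with the inverse-limit description of $\mu_{\cu^\omega(\ns)}$ given by Corollary \ref{cor:haar-mes-omega-cube}. For the independence property, given two independent finite faces $F_1, F_2 \subset \db{\mb{N}}$ with disjoint free-coordinate sets, a second application of Fubini to \eqref{eq:zeta} reduces the factorization of $\sigma$-algebras $\mc{B}^{\otimes \db{\mb{N}}}_{F_1}$ and $\mc{B}^{\otimes \db{\mb{N}}}_{F_2}$ to showing that the joint distribution of $(\q|_{F_1}, \q|_{F_2})$ under $\mu_{\cu^\omega(\ns)}$ factorizes as a product of its marginals. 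This is the $\omega$-dimensional analogue of the conditional independence axiom of Definition \ref{def:cub-cou}(3), and I would verify it by projecting onto each finite characteristic factor $\ns_k$ via the tower from Corollary \ref{cor:haar-mes-omega-cube}, and iterating the finite-dimensional conditional independence already built into cubic-coupling theory.

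For $(ii) \Rightarrow (iii)$, I would mirror the argument of Theorem \ref{thm:maincharac}(i) with the 2-homogeneity refinement dropped: apply \cite[Proposition 6.10]{CScouplings} to realize $\mu$ as a factor of a (not necessarily 2-homogeneous) cubic coupling $\tilde\mu$, then invoke \cite[Theorem 4.1]{CScouplings} (playing the role here that Theorem \ref{thm:2-hom-cubic} played in the affine-exchangeable setting) to produce a compact profinite-step nilspace $\ns$ together with a measure-preserving map $\gamma$, and finally disintegrate $\mu$ over $\gamma^{\db{\mb{N}}}$ to extract the Borel map $m:\ns\to\mc{P}(\Bo)$ with $\mu=\zeta_{\ns,m}$. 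The principal difficulty lies in this last implication, as it rests on the nontrivial nilspace representation theorem for cubic couplings; however, since that theorem is already available in \cite{CScouplings}, the proof essentially reduces to routing carefully through the cited results rather than performing new technical work.
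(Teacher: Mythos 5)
Your implications $(ii)\Rightarrow(iii)$ and $(iii)\Rightarrow(ii)$ are essentially sound and match the available machinery: the former is exactly the paper's route (it cites \cite[Theorem 6.17 and Lemma 6.18]{CScouplings}, with profinite-step coming from the inverse-limit form of the nilspace in \cite[Theorem 4.1]{CScouplings}), and the latter, though the paper does not spell it out as a separate verification, is a correct direct check --- with the caveat that invariance of $\mu_{\cu^\omega(\ns)}$ under precomposition by an injective cube morphism $\phi:\db{k}\to\db{\mb{N}}$ is not an instance of Proposition \ref{prop:mes-pre-inf-cube} (which concerns fibrations $\nss\to\ns$ acting vertex-wise), but rather of the consistency axiom for the cubic Haar measures established in Appendix \ref{app:cubic-coupling-infinite-step}.

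The genuine gap is in your treatment of statement $(i)$. You dispose of $(i)\Leftrightarrow(iii)$ by asserting that \cite[Theorem 6.7]{CScouplings} ``already identifies the extreme points,'' but that theorem gives the characterization of cubic-exchangeable measures \emph{with the independence property} as the measures $\zeta_{\ns,m}$, together with the mixture representation; the identification of these with the \emph{extreme points} of the convex set is precisely what is not explicit in \cite{CScouplings} and is the actual content of this proposition. That link requires a convexity argument, which runs as follows. For $(ii)\Rightarrow(i)$: if $\mu$ has the independence property but $\mu=t\nu_1+(1-t)\nu_2$ with $\nu_1\neq\nu_2$ cubic-exchangeable and $t\in(0,1)$, then expressing each $\nu_i$ as a mixture over the set $\mc{I}$ via \cite[Theorem 6.7 (ii)]{CScouplings} produces a non-Dirac probability measure $\kappa$ on $\mc{P}(\Bo^{\db{\mb{N}}})$ concentrated on $\mc{I}$ with $\mu=\int\lambda\,\ud\kappa(\lambda)$; but the independence property of $\mu$ and of $\kappa$-a.e.\ $\lambda$ forces $\lambda=\mu$ $\kappa$-a.s.\ (by the uniqueness argument of \cite[Theorem 6.17]{CScouplings}), so $\kappa$ is Dirac --- a contradiction. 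For $(i)\Rightarrow(ii)$: extremality forces the mixture measure $\kappa$ supplied by \cite[Theorem 6.7 (ii)]{CScouplings} to be a Dirac measure (otherwise splitting $\kappa$ over a set of intermediate measure yields a nontrivial convex decomposition of $\mu$), whence $\mu$ itself lies in $\mc{I}$. Without some version of this argument your proof does not actually reach statement $(i)$.
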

\noindent The ingredients for a proof of this result are all essentially contained in \cite{CScouplings}, but the lemma is not explicitly established in that paper. We take the opportunity to do so here.
\begin{proof}

$(ii)\Rightarrow (iii)$: this is given by \cite[Theorem 6.7 (i)]{CScouplings}, more specifically by combining \cite[Theorem 6.17 and Lemma 6.18]{CScouplings}. Note that in the forward implication, the fact that the obtained nilspace $\ns$ can be assumed to be profinite-step follows from the main structure theorem \cite[Theorem 4.1]{CScouplings}, where the compact nilspace $\ns$ is obtained as an inverse limit of its characteristic factors (see \cite[Remark 4.3 and Definition 3.41]{CScouplings}). 

$(ii)\Rightarrow (i)$: suppose that $\mu$ has the independence property, and suppose for a contradiction that $\mu$ is not an extreme point, i.e., that there exist distinct cubic-exchangeable measures $\nu_1,\nu_2\in \mc{P}(\Bo^{\db{\mb{N}}})$ and $t\in (0,1)$ such that $\mu= t\nu_1 + (1-t)\nu_2$. By \cite[Theorem 6.7 (ii)]{CScouplings}, each $\nu_i$ is a mixture of cubic-exchangeable measures with the independence property, i.e.\ there exist Borel probability measures $\kappa_1,\kappa_2$ concentrated on $\mc{I}$ such that $\nu_i=\int_{\mc{P}(\Bo^{\db{\mb{N}}})} \lambda \ud\kappa_i(\lambda)$ for $i=1,2$. Note that $\kappa_1\neq \kappa_2$ (otherwise $\nu_1=\nu_2$). Then $\kappa:= t \kappa_1+(1-t)\kappa_2$ is a Borel probability measure on $\mc{P}(\Bo^{\db{\mb{N}}})$ concentrated on $\mc{I}$ such that $\mu = \int_{\mc{P}(\Bo^{\db{\mb{N}}})} \lambda \ud\kappa(\lambda)$. Note that $\kappa$ is not an extreme point of the space of Borel probability measures on $\mc{P}(\Bo^{\db{\mb{N}}})$, so in particular it is not a Dirac measure. On the other hand, since $\mu$ and $\kappa$-almost-every $\lambda$ have the independence property, arguing as in the proof of \cite[Theorem 6.17]{CScouplings} we deduce that $\kappa$-almost surely we have $\lambda=\mu$, so $\kappa$ is concentrated on a single point in $\mc{P}(\Bo^{\db{\mb{N}}})$, which yields a  contradiction since $\kappa$ is not a Dirac measure.

$(i)\Rightarrow (ii)$: suppose that $\mu$ is an extreme point among cubic-exchangeable measures, and note that by \cite[Theorem 6.7 (ii)]{CScouplings} we have  $\mu = \int_{\mc{P}(\Bo^{\db{\mb{N}}})} \nu \,\ud\kappa(\nu)$ for some Borel probability measure $\kappa$ on $\mc{P}(\Bo^{\db{\mb{N}}})$ concentrated on cubic-exchangeable measures with the independence property. Then $\kappa$ is a Dirac measure (otherwise there is a Borel set $A$ such that $\kappa(A)=:t \in (0,1)$, and then $\mu=t \nu_1 + (1-t)\nu_2$, where the probability measures $\nu_1= t^{-1}\int_A \nu \,\ud\kappa(\nu)$ and $\nu_2= (1-t)^{-1}\int_{\mc{P}(\Bo^{\db{\mb{N}}})\setminus A} \nu \,\ud\kappa(\nu)$ are cubic-exchangeable, contradicting that $\mu$ is an extreme point).  Hence $\mu$ has the independence property.
\end{proof}
\noindent Proposition \ref{prop:extremeequivs} entails the following counterpart for affine-exchangeable measures, involving the compact group nilspace $\mc{H}$ from Definition \ref{def:H}.
\begin{proposition}\label{prop:zeta-iff-extreme}
Let $\Bo$ be a standard Borel space, and let $\mu\in \mc{P}(\Bo^{\db{\mb{N}}})$. The following statements are equivalent.
\begin{enumerate}[leftmargin=0.8cm]
\item $\mu$ is an extreme point in the convex set $\textup{Pr}^{\textup{Aff}(\mb{F}_2^\omega)}(\Bo^{\db{\mb{N}}})$.
\item $\mu$ is in $\textup{Pr}^{\textup{Aff}(\mb{F}_2^\omega)}(\Bo^{\db{\mb{N}}})$ and satisfies the independence property.
\item  $\mu=\zeta_{\mc{H},m}$ for some Borel map $m:\mc{H}\to \mc{P}(\Bo)$.
\end{enumerate}
\end{proposition}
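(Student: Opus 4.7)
\medskip

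\noindent \textbf{Proof proposal.} The plan is to prove the equivalences by re-running the argument used for Proposition \ref{prop:extremeequivs}, but now in the affine-exchangeable category, and then closing the loop with the universality of $\mc{H}$ provided by Theorem \ref{thm:universal-covering}.

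First I would establish $(ii)\Leftrightarrow(iii)$. The implication $(ii)\Rightarrow(iii)$ follows by combining Theorem \ref{thm:maincharac}(i) with the trick from the proof of Theorem \ref{thm:main}: any affine-exchangeable $\mu$ with the independence property equals $\zeta_{\ns,m}$ for some $2$-homogeneous profinite-step compact nilspace $\ns$, and by Theorem \ref{thm:universal-covering} there is a fibration $\varphi:\mc{H}\to\ns$; since $\varphi^{\db{\mb{N}}}$ preserves the Haar measures on the $\omega$-cube sets (Proposition \ref{prop:mes-pre-inf-cube}), evaluating on cylinder sets gives $\zeta_{\ns,m}=\zeta_{\mc{H},m\co\varphi}$. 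The converse $(iii)\Rightarrow(ii)$ requires checking that every $\zeta_{\mc{H},m}$ lies in $\textup{Pr}^{\textup{Aff}(\mb{F}_2^\omega)}(\Bo^{\db{\mb{N}}})$ and has the independence property. Affine-exchangeability is inherited from affine-exchangeability of $\mu_{\cu^\omega(\mc{H})}$, which holds since $\mc{H}$ is $2$-homogeneous (Lemma \ref{lem:aff-ech-equals-2-hom}). The independence property follows because $\mc{H}$ is a compact profinite-step nilspace, so $\zeta_{\mc{H},m}$ is a cubic-exchangeable measure in the sense of \cite{CScouplings}, and cubic-exchangeable measures obtained from a compact nilspace via the $\zeta_{\ns,m}$ construction always satisfy the independence property (by \cite[Lemma 6.15]{CScouplings}, as used in the proof of Proposition \ref{prop:extremeequivs}).

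For $(i)\Rightarrow(ii)$, suppose $\mu$ is extreme in $\textup{Pr}^{\textup{Aff}(\mb{F}_2^\omega)}(\Bo^{\db{\mb{N}}})$. Apply Theorem \ref{thm:maincharac}(ii) to write $\mu=\int_{\mc{P}(\Bo^{\db{\mb{N}}})}\nu\,\ud\kappa(\nu)$ for some Borel probability measure $\kappa$ concentrated on the affine-exchangeable measures with the independence property (a Borel set by Lemma \ref{lem:indepclosed}). If $\kappa$ were not a Dirac measure we could choose a Borel $A$ with $\kappa(A)=:t\in(0,1)$ and split $\mu$ as a nontrivial convex combination of two measures in $\textup{Pr}^{\textup{Aff}(\mb{F}_2^\omega)}(\Bo^{\db{\mb{N}}})$ (the two conditional mixtures), contradicting extremality. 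Hence $\kappa=\delta_\mu$, and so $\mu$ itself has the independence property.

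For the remaining implication $(ii)\Rightarrow(i)$, assume $\mu$ is affine-exchangeable with the independence property and suppose for a contradiction that $\mu=t\nu_1+(1-t)\nu_2$ with $\nu_1\neq\nu_2$ both in $\textup{Pr}^{\textup{Aff}(\mb{F}_2^\omega)}(\Bo^{\db{\mb{N}}})$ and $t\in(0,1)$. Apply Theorem \ref{thm:maincharac}(ii) to each $\nu_i$ to obtain Borel probability measures $\kappa_i$ on $\mc{I}$ with $\nu_i=\int\lambda\,\ud\kappa_i(\lambda)$, and set $\kappa:=t\kappa_1+(1-t)\kappa_2$. Then $\kappa$ represents $\mu$ as a mixture of affine-exchangeable measures with the independence property, but $\kappa$ is not a Dirac measure since $\kappa_1\neq\kappa_2$. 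The main technical point will be to repeat the argument of \cite[Theorem 6.17]{CScouplings} (which is also what is used in the $(ii)\Rightarrow(i)$ step of Proposition \ref{prop:extremeequivs}): one shows that the independence property of $\mu$, combined with the independence property of $\kappa$-a.e.\ $\lambda$ and the fact that each $\lambda$ is determined on cylinder sets by its values on generating finite faces, forces $\kappa$-a.s.\ $\lambda=\mu$, so $\kappa=\delta_\mu$, a contradiction. I expect this last transfer of the \cite[Theorem 6.17]{CScouplings}-style rigidity argument from cubic to affine exchangeability to be the only delicate step; it is essentially formal, since the independence property is a property of the underlying measure on $\Bo^{\db{\mb{N}}}$ regardless of which exchangeability class one is working in.
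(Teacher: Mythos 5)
Your proposal is correct, and all of its ingredients (Theorem \ref{thm:maincharac}, Theorem \ref{thm:universal-covering}, Proposition \ref{prop:mes-pre-inf-cube}, the mixture-plus-extremality-forces-Dirac argument, and the rigidity argument from the proof of \cite[Theorem 6.17]{CScouplings}) are exactly the ones the paper relies on. The difference is organizational: the paper does not re-run any of these arguments in the affine category, but instead routes everything through the already-established cubic-exchangeable version (Proposition \ref{prop:extremeequivs}) together with the elementary observation that $\textup{Pr}^{\textup{Aff}(\mb{F}_2^\omega)}(\Bo^{\db{\mb{N}}})$ is a convex subset of the cubic-exchangeable measures, so an extreme point of the larger simplex lying in the subsimplex is automatically extreme there. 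Concretely, the paper gets $(iii)\Rightarrow(i)$ and $(ii)\Rightarrow(i)$ in one line each from Proposition \ref{prop:extremeequivs}, and obtains $(i)\Rightarrow(iii)$ from Theorem \ref{thm:main} plus the Dirac argument, then deduces $(i)\Rightarrow(ii)$ by composing $(i)\Rightarrow(iii)\Rightarrow(ii)$; your direct re-derivation of $(ii)\Rightarrow(i)$ via the \cite[Theorem 6.17]{CScouplings}-style rigidity and of $(iii)\Rightarrow(ii)$ via Lemma \ref{lem:aff-ech-equals-2-hom} is therefore more work than necessary, though perfectly valid. One small caveat, which applies equally to the paper's own proof of Proposition \ref{prop:extremeequivs}: in the ``$\kappa$ must be Dirac'' step one should note that non-Dirac $\kappa$ yields a Borel set $A$ for which the two conditional barycenters actually differ (e.g.\ by testing against a countable generating family of cylinder sets), since a nontrivial split with $\nu_1=\nu_2$ would not contradict extremality.
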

\begin{proof}
$(i)\Leftrightarrow (iii)$. If $\mu=\zeta_{\mc{H},m}$ then by the implication $(iii)\Rightarrow (i)$ in Proposition \ref{prop:extremeequivs} the measure $\mu$ is an extreme point among cubic-exchangeable measures, and is therefore an extreme point in $\textup{Pr}^{\textup{Aff}(\mb{F}_2^\omega)}(\Bo^{\db{\mb{N}}})$ (since every measure in the latter set is cubic-exchangeable). Conversely, suppose that $\mu$ in an extreme point in $\textup{Pr}^{\textup{Aff}(\mb{F}_2^\omega)}(\Bo^{\db{\mb{N}}})$. By Theorem \ref{thm:main} $\mu$ is a mixture of measures of the form $\zeta_{\mc{H},m}$. Since $\mu$ is an extreme point, arguing as in the proof of Proposition \ref{prop:extremeequivs} we see that the measure $\kappa$ underlying this mixture is a Dirac measure, so $\mu$ itself is equal to $\zeta_{\mc{H},m}$ for some Borel map $m:\mc{H}\to\mc{P}(\Bo)$.

$(i)\Leftrightarrow(ii)$. The backward implication here follows from the implication $(ii)\Rightarrow (i)$ in Proposition \ref{prop:extremeequivs}. Conversely, if $(i)$ holds then by the previous paragraph $(iii)$ holds, and then by the implication $(iii)\Rightarrow (ii)$ from Proposition \ref{prop:extremeequivs} we have that $\mu$ satisfies the independence property.
\end{proof}
\begin{remark}\label{rem:ergmeas}
Via the well-known characterization of ergodic measures as the extreme points of the space of $\Gamma$-invariant measures, Proposition \ref{prop:zeta-iff-extreme} also gives characterizations of the ergodic $\Gamma$-invariant measures in $\mc{P}(\Bo^{\db{\mb{N}}})$ for $\Gamma = \textup{Aff}(\mb{F}_2^\omega)$.
\end{remark}
We can now prove the main result of this section.
\begin{proof}[Proof of Theorem \ref{thm:BauerProp}]
Let $(\mu_n)_{n\in \mb{N}}$ be a sequence of extreme points in $\textup{Pr}^{\textup{Aff}(\mb{F}_2^\omega)}(\Bo^{\db{\mb{N}}})$ converging vaguely to $\mu\in \textup{Pr}^{\textup{Aff}(\mb{F}_2^\omega)}(\Bo^{\db{\mb{N}}})$. By $(i)\Leftrightarrow (ii)$ in Proposition \ref{prop:zeta-iff-extreme} and Lemma \ref{lem:indepclosed}, it follows that $\mu$ is also an extreme point in $\textup{Pr}^{\textup{Aff}(\mb{F}_2^\omega)}(\Bo^{\db{\mb{N}}})$.
\end{proof}

\begin{remark}
An alternative route to establish Theorem \ref{thm:BauerProp} could be to verify that our main representation result (Theorem \ref{thm:main}) implies that affine exchangeability satisfies a property called \emph{representability}, introduced by Austin in \cite{Austin2}, and then apply \cite[Proposition 3.2]{Austin2}, which states that representability implies the Bauer property. The  route followed in this section, using our previous results, was chosen in particular to take the opportunity to record Propositions \ref{prop:extremeequivs} and \ref{prop:zeta-iff-extreme}.
\end{remark}

\section{Correspondence between representations of affine-exchangeable measures and limits for convergent sequences of functions}\label{sec:exch-limits}

\noindent In this section we show that finding an appropriate integral representation for affine-exchangeable measures is equivalent to finding limit objects for convergent sequences of functions, for a concept of convergence that we recall in Definition \ref{def:convergence-notion} below. This concept is an analogue in arithmetic combinatorics of a well-known convergence notion for sequences of graphs studied in many works (see in particular \cite{BCLSV,LS06}).

Throughout this section let $\Gamma$ denote the group $\textup{Aff}(\mb{F}_2^\omega)$ of invertible affine maps $\mb{F}_2^\omega\to \mb{F}_2^\omega$, and recall that $\Gamma\cong \text{GL}(\mb{F}_2^{\omega})\ltimes \mb{Z}_2^{\omega}$. Given any function $f$ on $\mb{F}_2^{\omega}$ and any $T\in \Gamma$, we use the usual ergodic-theory notation $Tf$ to denote the composition $f\co T$.

The main result of this section, Theorem \ref{thm:corresp} below, establishes an equivalence between two properties, the first of which is the following.
\begin{defn}\label{def:repX}
Let $\Bo$ be a standard Borel space and let $\ns$ be a compact profinite-step $2$-homogeneous nilspace. We say that $\ns$ \emph{represents} (or \emph{is a representing nilspace} for) $\textup{Pr}^{\textup{Aff}(\mb{F}_2^\omega)}(\Bo^{\db{\mb{N}}})$ if for every 
 $\mu\in \textup{Pr}^{\textup{Aff}(\mb{F}_2^\omega)}(\Bo^{\db{\mb{N}}})$ with the independence property, there exists a Borel map $m:\ns\to \mc{P}(\Bo)$ such that $\mu=\zeta_{\ns,m}$.
\end{defn}
\noindent The second property involves the above-mentioned notion of convergence of functions. This notion has been studied in previous works, notably in \cite{HHH} in the special case of Boolean functions (taking values in $\{0,1\}$). We shall define this notion for functions taking values in any fixed compact metric space. To this end we shall use the following terminology, much of which is taken from \cite{HHH} (rephrasing some of it using nilspace theory).

Given a finite set $\mc{L}\subset \mb{F}_2^{\omega}$, clearly there exists $k\in \mb{N}$ such that $\mc{L}\subset \mb{F}_2^k\times \{0^{\mb{N}\setminus [k]}\}$. We shall sometimes abuse the notation by identifying $\mb{F}_2^k$ with the set $\mb{F}_2^k\times \{0^{\mb{N}\setminus [k]}\}\subset \mb{F}_2^{\omega}$. Note that the set of affine linear maps $A:\mb{F}_2^k\to\mb{F}_2^n$ with pointwise addition is isomorphic to the abelian group of $k$-cubes $\cu^k(\mb{F}_2^n)$, and we can thereby define a \emph{random} affine-linear map $A$ using the Haar measure $\mu_{\cu^k(\mb{F}_2^n)}$ (which here is just the normalized counting measure); we can thereby also apply an element $A\in \cu^k(\mb{F}_2^n)$ as an affine-linear map to elements $L=(\lambda_1,\ldots,\lambda_k)\in \mb{F}_2^k$. Indeed, for some coefficients $a_0,a_1,\ldots,a_k\in \mb{F}_2^n$ uniquely associated with $A$ we have $A(L)=a_0+\lambda_1a_1+ \cdots + \lambda_k a_k$ for every such element $L$. We can then define the following measures, which underpin the upcoming notion of convergence.
\begin{defn}
Let $\mc{L}$ be a finite subset of $\mb{F}_2^\omega$ and let $k\in\mb{N}$ be such that $\mc{L}\subset \mb{F}_2^k\times \{0^{\mb{N}\setminus [k]}\}$. Let $\Bo$ be a compact metric space, and let  $f:\mb{F}_2^n\to\Bo$. We define the probability measure $\mu_{\mc{L},f}\in\mc{P}(\Bo^{\mc{L}})$ as the pushforward of $\mu_{\cu^k(\mb{F}_2^n)}$ under the map $A\mapsto \big(f\co A(L)\big)_{L\in\mc{L}}$, that is
\begin{equation}\label{eq:samplemeas}
\forall \,\textrm{Borel set }S\subset \Bo^{\mc{L}},\quad \mu_{\mc{L},f}(S):=\frac{\left| \{A\in \cu^k(\mb{F}_2^n): \, (f\co A(L))_{L\in\mc{L}}\in S\} \right|}{|\cu^k(\mb{F}_2^n)|}.
\end{equation}
\end{defn}
\noindent It follows from the consistency axiom for the cubic coupling $(\mu_{\cu^k(\mb{F}_2^n)})_{k\ge 0}$ that, given any such finite set $\mc{L}\subset \mb{F}_2^\omega$, the definition of $\mu_{\mc{L},f}$ is independent of $k$ provided that $k$ is large enough so that $\mc{L}\subset  \mb{F}_2^k\times \{0^{\mb{N}\setminus [k]}\}$. When $S$ is a singleton $\{g\}$ we shall simplify the notation by writing $\mu_{\mc{L},f} (g)$ instead of $\mu_{\mc{L},f}(\{g\})$.
\begin{defn}\label{def:convergence-notion} 
Let  $\Bo$ be a compact metric space. We say that a sequence of functions $(f_n:\mb{F}_2^n\to \Bo)_{n\in \mb{N}}$ is \emph{convergent} if for every finite set $\mc{L}\subset \mb{F}_2^{\omega}$ the measures $\mu_{\mc{L},f_n}$ converge in the vague topology as $n\to\infty$.
\end{defn}
\noindent Note that the special case $\Bo=\{0,1\}$ agrees with the case $d=\infty$ of  \cite[Definition 3.1]{HHH}. 

\begin{remark}
Each point $(\lambda_1,\ldots,\lambda_k)\in \mb{F}_2^k$ can be viewed as a linear form $\mb{F}_2^k\to\mb{F}_2$, $x\mapsto \lambda_1x_1+\cdots+\lambda_kx_k$. Following \cite{HHH} we  say that this is an \emph{affine} linear form if $\lambda_1=1$. For every linear form $L=(\lambda_1,\ldots,\lambda_k)$ we can define the affine linear form $\widetilde L=(1,\lambda_1,\ldots,\lambda_k)\in \mb{F}_2^{k+1}$. 
From this viewpoint, in the case of Boolean functions the above notion of convergence can be seen to be equivalent to the following one, familiar in arithmetic combinatorics, and phrased in terms of averages over systems of affine linear forms: a sequence $(f_n:\mb{F}_2^n\to \{0,1\})_{n\in \mb{N}}$ is convergent if for every $k\in \mb{N}$, for every $\mc{L}=\{L_1,\ldots,L_r\}\subset \mb{F}_2^k$, the following averages converge as $n\to\infty$:
\begin{eqnarray*}
\Lambda_\mc{L}(f_n) & := &\mb{E}_{a_0,a_1,\ldots,a_k\in \mb{F}_2^n}\, f_n(a_0+L_1(a_1,\ldots,a_k))\cdots  f_n(a_0+L_r(a_1,\ldots,a_k))\\
& = &\mb{E}_{a_0,a_1,\ldots,a_k\in \mb{F}_2^n}\, f_n(\widetilde{L}_1(a_0,\ldots,a_k))\cdots  f_n(\widetilde{L}_r(a_0,\ldots,a_k)).
\end{eqnarray*}
One implication in the equivalence follows from the fact that $\Lambda_\mc{L}(f_n)=\sum_{g\in \{0,1\}^\mc{L}} \mu_{\mc{L},f_n}(g)$, and the opposite implication follows from the fact that each $\mu_{\mc{L},f_n}(g)$ is itself a linear combination of averages $\Lambda_{\mc{L}'}(f_n)$, as can be shown using Fourier analysis (see e.g.\ \cite[Observation 4.1]{HHH}).
\end{remark}

\noindent Given any finite set $\mc{L}\subset \mb{F}_2^\omega$, we denote by $p_{\mc{L}}$ the coordinate projection $\Bo^{\mb{F}_2^\omega}\to \Bo^{\mc{L}}$.

The second property involved in the main result of this section is the following.
\begin{defn}\label{def:limdom} 
Let $\ns$ be a compact profinite-step $2$-homogeneous nilspace, and let $\Bo$ be a compact metric space. We say that $\ns$ is a \emph{limit domain} for convergent sequences $(f_n:\mb{F}_2^n \to \Bo)_{n\in\mb{N}}$ if for every such sequence there exists a Borel map $m:\ns\to \mc{P}(\Bo)$ such that for every finite set $\mc{L}\subset \mb{F}_2^{\omega}$ the measures $\mu_{\mc{L},f_n}$ converge vaguely to the pushforward of $\zeta_{\ns,m}$ under $p_{\mc{L}}$, i.e.\ to the measure defined by
\begin{equation}\label{eq:limdom}
\zeta_{\ns,m}\co p_{\mc{L}}^{-1} (S)= \int_{\cu^\omega(\ns)} \Big(\prod_{L\in \mc{L}}m(\q(L))\Big)(S)\,\ud\mu_{\cu^\omega(\ns)}(\q),\;\textrm{ for any Borel set }S\subset \Bo^{\mc{L}}.
\end{equation}
\end{defn}

\begin{remark}
It can be seen that $\ns$ being a limit domain for convergent sequences of \emph{Boolean} functions $(f_n:\mb{F}_2^n \to \{0,1\})_{n\in\mb{N}}$ is equivalent to any such sequence converging to an $\infty$-\emph{limit object} $\Gamma:\ns\to [0,1]$ in the sense of \cite[Definition 3.2]{HHH} for $\ns=\mb{G}_\infty$ (indeed this limit object can be seen to correspond to the above Borel map $m:\ns\to\mc{P}(\{0,1\})$). For more details on this correspondence see Appendix \ref{app:correspondence}.
\end{remark}

We are now ready to state and prove the main result of this section.

\begin{theorem}\label{thm:corresp}
Let $\ns$ be a compact profinite-step $2$-homogeneous nilspace, and let $\Bo$ be a compact metric space. The following statements are equivalent:
\begin{enumerate}[leftmargin=0.8cm]
\item $\ns$ is a representing nilspace for  $\textup{Pr}^{\textup{Aff}(\mb{F}_2^\omega)}(\Bo^{\db{\mb{N}}})$.
\item $\ns$ is a limit domain for convergent sequences of functions $(f_n:\mb{F}_2^n \to \Bo)_{n\in\mb{N}}$.
\end{enumerate}
\end{theorem}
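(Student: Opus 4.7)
Both directions rest on the observation that convergent sequences of functions and affine-exchangeable measures with the independence property are dual encodings of the same data: the former as finite-dimensional sampling statistics, the latter as a measure on $\Bo^{\db{\mb{N}}}$. The implication $(i)\Rightarrow(ii)$ assembles the pointwise limits of these statistics into such a measure; the implication $(ii)\Rightarrow(i)$ samples from the measure to produce such statistics.

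\emph{For $(i)\Rightarrow(ii)$:} given a convergent sequence $(f_n)$, define $\nu_\mc{L}:=\lim_n \mu_{\mc{L},f_n}$ for each finite $\mc{L}\subset \mb{F}_2^\omega$. By the consistency axiom for the cubic coupling $(\mu_{\cu^k(\mb{F}_2^n)})_{k\ge 0}$, the family $(\nu_\mc{L})_\mc{L}$ is consistent, so by Kolmogorov extension it lifts to a unique $\mu\in \mc{P}(\Bo^{\db{\mb{N}}})$ with $\mu\co p_\mc{L}^{-1}=\nu_\mc{L}$. To verify affine-exchangeability of $\mu$: given $T\in \textup{Aff}(\mb{F}_2^\omega)$ and finite $\mc{L}\subset \mb{F}_2^{k_0}$, pick $k$ with $T(\mb{F}_2^{k_0})\subset \mb{F}_2^k$; the pullback $A\mapsto A\co T|_{\mb{F}_2^{k_0}}$ is a surjective group homomorphism $\cu^k(\mb{F}_2^n)\to \cu^{k_0}(\mb{F}_2^n)$ (extending any affine map on $\mb{F}_2^{k_0}$ affinely to $\mb{F}_2^k$) and hence pushes Haar to Haar; after reindexing $L\mapsto T(L)$ this identifies $\mu_{T(\mc{L}),f_n}$ with $\mu_{\mc{L},f_n}$, and the property passes to $\mu$ in the limit. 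The independence property of $\mu$ follows from the analogous property of each pre-limit measure $\mu^{(n)}$ (whose $\mc{L}$-marginals are the $\mu_{\mc{L},f_n}$): for any two independent finite faces $F_1,F_2$, choosing $k$ large enough to contain a coordinate in $\supp(z_1+z_2)\setminus(S_1\cup S_2)$ (which exists since $F_1\cap F_2=\emptyset$), the $\mb{F}_2$-linear map $(a_0,\ldots,a_k)\mapsto (A(z_1),A(z_2),(a_j)_{j\in S_1\cup S_2})$ is injective, giving a bijective change of affine parameters that decouples the $F_1$- and $F_2$-restrictions of $A$. Lemma \ref{lem:indepclosed} then transfers the independence property to $\mu$. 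Applying hypothesis $(i)$ yields $\mu=\zeta_{\ns,m}$ for some Borel $m:\ns\to\mc{P}(\Bo)$, whence $\mu_{\mc{L},f_n}\to \zeta_{\ns,m}\co p_\mc{L}^{-1}$ for every $\mc{L}$.

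\emph{For $(ii)\Rightarrow(i)$:} let $\mu$ be affine-exchangeable with the independence property. By Proposition \ref{prop:zeta-iff-extreme} we have $\mu=\zeta_{\mc{H},m_0}$ for some Borel $m_0:\mc{H}\to \mc{P}(\Bo)$. For each $n$, sample $f_n$ at random from the marginal $\mu\co p_{\mb{F}_2^n}^{-1}$; affine-exchangeability gives $\mb{E}_{f_n}[\mu_{\mc{L},f_n}]=\mu\co p_\mc{L}^{-1}+o_n(1)$ (the error coming from the $o_n(1)$-fraction of non-injective $A\in \cu^k(\mb{F}_2^n)$). For the variance, $\mb{E}_{f_n}[\mu_{\mc{L},f_n}(S)^2]$ is governed by the joint marginal of $\mu$ at $A_1(\mc{L})\cup A_2(\mc{L})$ for two independent uniform $A_1,A_2\in \cu^k(\mb{F}_2^n)$; using $\mu=\zeta_{\mc{H},m_0}$ this reduces to the joint distribution of $(\q\co A_1,\q\co A_2)$ under the Haar measure on $\cu^\omega(\mc{H})$. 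For a fraction $1-o_n(1)$ of pairs $(A_1,A_2)$ (the ``generic'' pairs, whose affine images are in sufficiently general position in $\mb{F}_2^n$), the group homomorphism $\q\mapsto (\q\co A_1,\q\co A_2)$ is surjective onto $\cu^k(\mc{H})^2$, whence its pushforward is the product Haar measure and the variance is $o_n(1)$. A standard diagonal/Borel--Cantelli argument, using countability of finite $\mc{L}\subset \mb{F}_2^\omega$ and separability of $C(\Bo^\mc{L})$, then produces a deterministic convergent sequence $(f_n)$ with $\mu_{\mc{L},f_n}\to \mu\co p_\mc{L}^{-1}$ for every $\mc{L}$; applying $(ii)$ to $(f_n)$ gives $m$ with $\mu_{\mc{L},f_n}\to \zeta_{\ns,m}\co p_\mc{L}^{-1}$, so $\mu=\zeta_{\ns,m}$.

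\emph{Main obstacle.} The variance estimate in the backward direction, specifically the generic surjectivity of the pairing $\q\mapsto (\q\co A_1,\q\co A_2)$ from $\cu^\omega(\mc{H})$ onto $\cu^k(\mc{H})^2$. This reduces to an elementary counting of the degenerate pairs of affine embeddings $\mb{F}_2^k\to \mb{F}_2^n$, combined with the product structure of $\cu^k(\mc{H})$ as a compact abelian Host--Kra cube group over $\mc{H}=\prod_\ell\mf{Z}^{\mb{N}}$; once the generic pair factorizes, the rest of the concentration and diagonal extraction are standard.
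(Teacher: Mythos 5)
Your proposal is correct in outline, and the two directions deserve separate verdicts. The direction $(i)\Rightarrow(ii)$ is essentially the paper's argument: the paper also assembles the limits $\mu_{\mc{L}}$ into a measure on $\Bo^{\mb{F}_2^\omega}$ (via Riesz representation and Stone--Weierstrass rather than Kolmogorov extension, an immaterial difference) and then verifies affine-exchangeability and the independence property by the same kind of change-of-variables in the averages over $\cu^k(\mb{F}_2^n)$; one small imprecision on your side is that the decoupling map $(a_0,\ldots,a_k)\mapsto (A(z_1),A(z_2),(a_j)_{j\in S_1\cup S_2})$ is in general surjective with fibers of equal size rather than injective (a dimension count shows injectivity can only hold when $|S_1\cup S_2|=k-1$), but surjectivity with uniform fibers is exactly what is needed to push the uniform measure to the product measure, and your existence claim for a coordinate $j_0\in\supp(z_1+z_2)\setminus(S_1\cup S_2)$ is what makes it work. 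The direction $(ii)\Rightarrow(i)$ is where you genuinely diverge: the paper fixes a $\mu$-generic point $f'\in \Bo^{\mb{F}_2^\omega}$ via Lindenstrauss's pointwise ergodic theorem for the amenable group $\bigcup_n \Gamma_n$ (using that the independence property forces ergodicity, via Proposition \ref{prop:zeta-iff-extreme}) and takes $f_n=f'|_{\mb{F}_2^n}$, whereas you sample $f_n$ from the marginals and run a second-moment argument. Your route is sound: the ``generic'' pairs $(A_1,A_2)$ are exactly those whose images become independent faces in the sense of Definition \ref{def:faces} after an invertible affine change of coordinates, so the decoupling you need follows either from the surjectivity of $\q\mapsto(\q\co A_1,\q\co A_2)$ onto $\cu^k(\mc{H})^2$ (provable from the completion/extension properties of Host--Kra cube groups) or, more directly, from the independence property of $\mu$ itself --- the latter would let you skip the appeal to $\mu=\zeta_{\mc{H},m_0}$ entirely. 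What each approach buys: yours is quantitative (the non-generic fraction is $O_k(2^{-n})$, so the variances are summable and Borel--Cantelli plus a diagonal over countably many $(\mc{L},t)$ closes the argument) and avoids the amenable ergodic theorem, at the cost of the surjectivity verification and the extraction step; the paper's produces in one stroke a single point $f'$ whose restrictions work simultaneously for all $\mc{L}$, at the cost of importing the pointwise ergodic theorem and the ergodicity input.
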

\begin{proof}
Throughout the proof we use the fact that the group $\Gamma:=\textup{Aff}(\mb{F}_2^\omega)\cong\text{GL}(\mb{F}_2^{\omega})\ltimes \mb{Z}_2^{\omega}$ can be regarded as $\bigcup_{n=1}^{\infty} \Gamma_n$ for $\Gamma_n:=\textup{Aff}(\mb{F}_2^n)\cong \text{GL}(\mb{F}_2^n)\ltimes \mb{Z}_2^{n}$, where $\Gamma_n$ acts on the first $n$ coordinates of any element of $\mb{F}_2^{\omega}$. We split the proof into two parts.

\smallskip

\textbf{Proof of $(ii)\Rightarrow(i)$.} 
Let $\mu\in \textup{Pr}^{\textup{Aff}(\mb{F}_2^\omega)}(\Bo^{\db{\mb{N}}})$ satisfy the independence property. We shall define a convergent sequence $(f_n:\mb{F}_2^n\to\Bo)_{n\in \mb{N}}$ such that for every finite set $\mc{L}\subset \mb{F}_2^\omega$ the measures $\mu_{\mc{L},f_n}$ converge vaguely to $\mu\co p_{\mc{L}}^{-1}$. On the other hand, our assumption of $(ii)$ will give us a Borel map $m:\ns\to\mc{P}(\Bo)$ (dependent on this sequence $(f_n)$), such that  $\mu_{\mc{L},f_n}$ converges vaguely to $\zeta_{\ns,m}\co p_{\mc{L}}^{-1}$. This will imply that $\mu\co p_{\mc{L}}^{-1}=\zeta_{\ns,m}\co p_{\mc{L}}^{-1}$ for every finite such set $\mc{L}$, whence we will deduce that $\mu=\zeta_{\ns,m}$, confirming $(i)$ as required.

To define the desired sequence $(f_n)$ we use the pointwise ergodic theorem for amenable groups \cite[Theorem 1.2]{Lind}. For any function $t:\Bo^{\mb{F}_2^{\omega}}\to \mb{C}$ let us define the following ergodic average for each $n\in \mb{N}$:
\begin{equation}\label{eq:ergavs}
\mc{E}_n(t)\; :\; f\; \mapsto \; \mb{E}_{A\in \Gamma_n} t(A f) =  \frac{1}{|\Gamma_n|}\sum_{A\in \Gamma_n} t(f\co A).
\end{equation}
The space of continuous functions $\Bo^{\mb{F}_2^{\omega}}\to \mb{C}$ equipped with the supremum norm is separable, so we can pick a dense sequence  $(t_i)_{i\in \mb{N}}$ in this space. By the ergodic theorem \cite[Theorem 1.2]{Lind}, and the ergodicity of $\mu$ (which follows from the independence property, by Proposition \ref{prop:zeta-iff-extreme} and Remark \ref{rem:ergmeas}), for every fixed $i\in \mb{N}$ we have $\mc{E}_n(t_i)(f) \to \int t_i\,\ud\mu$ for $\mu$-almost every $f\in \Bo^{\mb{F}_2^{\omega}}$. 
Using this for each $i$, we deduce that there is a set $D\subset \Bo^{\mb{F}_2^{\omega}}$ with $\mu(D)=1$ such that for every $f\in D$ we have $ \lim_{n\to\infty}\mc{E}_n(t_i)(f) = \int t_i\,\ud\mu$ for all $i\in \mb{N}$. From the assumed density of $(t_i)_{i\in \mb{N}}$, it follows that
\begin{equation}\label{eq:ergconv}
\forall\,f\in D, \;\forall\,\textrm{continuous function }t:\Bo^{\mb{F}_2^\omega} \to \mb{C},\quad \lim_{n\to\infty}\mc{E}_n(t)(f)\,=\, \int t\,\ud\mu.
\end{equation} 
Fix any $f'\in D$. For each $n\in \mb{N}$ we define $f_n:\mb{F}_2^n\to \Bo$ by $f_n(v)= f'(v,0^{\mb{N}\setminus[n]})$. We claim that for any finite $\mc{L}\subset \mb{F}_2^{\omega}$ we have $\mu_{\mc{L},f_n}\to \mu\co p_{\mc{L}}^{-1}$ in the vague topology (in particular $(f_n)_{n\in\mb{N}}$ is convergent). To see this, fix any such $\mc{L}$, and any $k$ such that $\mc{L}\subset \mb{F}_2^k$. 

Fix any continuous function $t:\Bo^{\mc{L}}\to \mb{C}$, and observe that for $n\ge k$ we have $\mc{E}_n(t\co p_{\mc{L}})(f')=\frac{1}{|\Gamma_n|}\sum_{A\in \Gamma_n} t\co p_{\mc{L}}(f'\co A)= \frac{1}{|\Gamma_n|}\sum_{A\in \Gamma_n} t(p_{\mc{L}}(f_n\co A))$ where in the last equation we are abusing the notation, treating $p_{\mc{L}}$ as a map $\Bo^{\db{n}}\to\Bo^{\mc{L}}$ and $A$ as an element of $\Aff(\mb{F}_2^n)$. Let $p_k: \Gamma_n\to \{A':\mb{F}_2^k\to\mb{F}_2^n~|~ A'\textrm{ is  affine linear and injective}\}$ be the projection sending $A\in \Gamma_n$ to the map $A'(v):=A(v,0^{n-k})$. Note that $p_k$ is surjective and in fact for each injective affine map $A'\in \cu^k(\mb{F}_2^n)$ the preimage $p_k^{-1}(A')$ has the same cardinality $\prod_{j=k}^{n-1}(2^n-2^j)$. (Indeed if  $A'(v_1,\ldots,v_k)=a_0+a_1v_1+\cdots+a_kv_k$ is injective, the linear span of $\{a_1,\ldots,a_k\}$ has dimension $k$, whence, to extend $A'$ to a map $A\in \Gamma_n$ of the form $A(v_1,\ldots,v_n)=a_0+a_1v_1+\cdots+a_kv_k+a_{k+1}v_{k+1}+\cdots+a_nv_n$, we have $2^n-2^k$ choices for $a_{k+1}$, then $2^n-2^{k+1}$ choices for $a_{k+2}$, etc.). Also, if $p_k(A)=p_k(B)$ then $p_{\mc{L}}(f_n\co A)=p_{\mc{L}}(f_n\co B)$. Hence
\[
\mc{E}_n(t\co p_{\mc{L}})(f')=\frac{1}{|p_k(\Gamma_n)|}\sum_{A'\in p_k(\Gamma_n)} t(p_{\mc{L}}(f_n\co A')).
\]
Now, while $p_k(\Gamma_n)$ is not exactly $\cu^k(\mb{F}_2^n)$ (not every affine map in the latter set is injective), we do have 
\begin{equation}\label{eq:proof-eq-2}
\frac{|p_k(\Gamma_n)|}{|\cu^k(\mb{F}_2^n)|} = \frac{2^n\prod_{i=1}^k(2^n-2^{i-1})}{2^{n(k+1)}}= \prod_{i=1}^k\left(1-\frac{1}{2^{n-i+1}}\right)= 1+o_k(1)_{n\to \infty}.
\end{equation}
Indeed, to choose $A'\in p_k(\Gamma_n)$ we have  $2^n$ choices for $a_0$, $2^n-1$ choices for $a_1$ (all but $0^n$), $2^n-2$ choices for $a_2$ (all but the subspace generated by $a_1$), and so on. Hence
\begin{multline}
\mc{E}_n(t\co p_{\mc{L}})(f') =  \frac{1}{|p_k(\Gamma_n)|}\sum_{A'\in \cu^k(\mb{F}_p^n)} t(p_{\mc{L}}(f_n\co A'))-\frac{1}{|p_k(\Gamma_n)|}\sum_{\substack{A'\in \cu^k(\mb{F}_p^n)\\A' \text{ not injective}}} t(p_{\mc{L}}(f_n\co A'))\\
= (1+o_k(1)_{n\to \infty})\int t \;\ud\mu_{\mc{L},f_n}+\|t\|_{\infty}\,o_k(1)_{n\to \infty}.
\end{multline}
By \eqref{eq:ergconv} the left hand side here converges to $\int t\co p_{\mc{L}}\; \ud\mu$ as $n\to\infty$, whence so does the right hand side. This proves that $\mu_{\mc{L},f_n}$ converges vaguely $\mu\co p_{\mc{L}}^{-1}$, as we claimed. By our assumption that $\ns$ is a limit domain, there exists a Borel map $m:\ns\to \mc{P}(\Bo)$ such that $\mu_{\mc{L},f_n}$ converges vaguely to $\zeta_{\ns,m}\co p_{\mc{L}}^{-1}$, so $\mu\co p_{\mc{L}}^{-1} = \zeta_{\ns,m}\co p_{\mc{L}}^{-1}$. Since the projections $p_{\mc{L}}$ (for finite sets $\mc{L}$) generate the Borel $\sigma$-algebra on $\Bo^{\mb{F}_2^\omega}$, we deduce that $\mu=\zeta_{\ns,m}$. 

\textbf{Proof of $(i)\Rightarrow (ii)$.} Suppose that $\ns$ represents $\textup{Pr}^{\textup{Aff}(\mb{F}_2^\omega)}(\Bo^{\db{\mb{N}}})$, and let $(f_n:\mb{F}_2^n\to\Bo)_{n\in \mb{N}}$ be a convergent sequence. We will show that the limits of the measures $\mu_{\mc{L},f_n}$ for finite sets $\mc{L}\subset \mb{F}_2^\omega$ enable the definition of a measure $\mu\in\textup{Pr}^{\textup{Aff}(\mb{F}_2^\omega)}(\Bo^{\db{\mb{N}}})$ with the independence property and such that $\mu\co p_{\mc{L}}^{-1}$ is the vague limit of  $\mu_{\mc{L},f_n}$ for every such $\mc{L}$, whence $(ii)$ will follow from the representation $\mu=\zeta_{\ns,m}$ given by $(i)$.

By assumption, for every finite set $\mc{L}\subset \mb{F}_2^\omega$, the sequence $(\mu_{\mc{L},f_n})_{n\in\mb{N}}$ converges vaguely to a measure in $\mc{P}(\Bo^{\mc{L}})$, which we shall denote by $\mu_{\mc{L}}$. We claim that there exists a measure $\mu\in \mc{P}(\Bo^{\mb{F}_2^\omega})$ such that for any finite $\mc{L}$ we have $\mu\co p_{\mc{L}}^{-1} = \mu_{\mc{L}}$. To prove this we shall define a continuous linear functional $\wh{\mu}$ on $C(\Bo^{\mb{F}_2^\omega})$,  and then apply the Riesz representation theorem \cite[p.\ 464]{R&F} to identify $\wh{\mu}$ with a measure $\mu\in \mc{P}(\Bo^{\mb{F}_2^\omega})$ as required ($\wh{\mu}$ will be the integral with respect to $\mu$). For any finite $\mc{L}\subset \mb{F}_2^\omega$ and continuous $t:\Bo^{\mc{L}}\to \mb{C}$ we define $\wh{\mu}(t\co p_{\mc{L}}):=\int t \,\ud\mu_{\mc{L}}$. Note that if $t\co p_{\mc{L}}=t'\co p_{\mc{L}'}$ then $\int t\,\ud\mu_{\mc{L}}=\int t'\,\ud\mu_{\mc{L}'}$, so $\wh{\mu}$ is well-defined. For any continuous $t:\Bo^{\mb{F}_2^\omega}\to\mb{C}$, by the Stone-Weierstrass theorem there is a sequence of pairs $(\mc{L}_n,f_n)$ where $\mc{L}_n$ are finite subsets of $\mb{F}_2^\omega$ and $t_n:\Bo^{\mc{L}_n}\to\mb{C}$, such that $\|t-t_n\co p_{\mc{L}_n}\|_{\infty}<1/n$ for every $n$. It follows that the sequence $(\int t_n \,\ud\mu_{\mc{L}_n})_{n\in\mb{N}}$ is Cauchy and thus we can define $\wh{\mu}(t):=\lim_{n\to\infty} \int t_n \,\ud\mu_{\mc{L}_n}$ (this limit is easily seen to be the same for every such sequence $(t_n\co p_{\mc{L}_n})_{n\in \mb{N}}$ converging to $t$). The operator $\wh{\mu}$ is easily seen to be linear and bounded  (in fact it has norm 1), so we conclude that $\wh{\mu}$ is an element of the dual of $C(\Bo^{\mb{F}_2^\omega})$, and we obtain a measure $\mu$ as announced, with the property that $\wh{\mu}(f)=\int f \ud\mu$ for every $f\in C(\Bo^{\mb{F}_2^\omega})$ (in particular $\mu$ is a probability measure).

To prove that $\mu$ is affine-exchangeable, since the functions of the form $t\co p_{\mc{L}}$ (for finite $\mc{L}\subset \mb{F}_2^{\omega}$ and continuous $t:\Bo^{\mc{L}}\to \mb{C}$) are dense in the space of continuous functions $\Bo^{\mb{F}_2^\omega}\to\mb{C}$ by the Stone-Weierstrass theorem, it suffices to prove that for every such function and every $A\in \Aff(\mb{F}_2^\omega)$ we have $\int t\co p_{\mc{L}}\ud\mu = \int t\co p_{\mc{L}}\co \theta_A \,\ud \mu$,  where $\theta_A$ is the coordinate permutation on $\Bo^{\mb{F}_2^\omega}$ induced by $A$. To prove this, we shall use the following notation: given $f:\mb{F}_2^\omega\to \Bo$, and any fixed $k$ such that $\mc{L}\subset \mb{F}_2^k\times 0^{\mb{N}\setminus [k]}$, we define $T_{f,\mc{L},n}:\cu^k(\mb{F}_2^n)\to \Bo^{\mc{L}}$ to be the map $A\mapsto (f\co A)|_{\mc{L}}$, and for $A\in \Aff(\mb{F}_2^k)$ we define the map $\phi_{\mc{L},A}:\Bo^{\mc{L}}\to \Bo^{A\mc{L}}$ by $\phi_{\mc{L},A}(r)(A(L)):=r(L)$ for any $L\in\mc{L}$ (where $A\mc{L}:=\{A(L):L\in \mc{L}\}$).

Now fix any continuous function $t:\Bo^{\mc{L}}\to\mb{C}$ and any $A\in \Aff(\mb{F}_2^\omega)$. First note that $\mu_{\mc{L},f} = \mu_{\cu^k(\mb{F}_2^n)}\co T_{f,\mc{L},n}^{-1}$. If $k$ is large enough so that $\mc{L}\subset \mb{F}_2^k\times 0^{\mb{N}\setminus[k]}$ and $A$ can be viewed as an element of $\Aff(\mb{F}_2^k)$, then $\int t\co p_{\mc{L}}\,\ud\mu = \lim_{n\to\infty} \int t\,\ud\mu_{\mc{L},f_n} = \lim_{n\to\infty} \int t\co T_{f_n,\mc{L},n}\,\ud\mu_{\cu^k(\mb{F}_2^n)}$. The map $\cu^k(\mb{F}_2^n)\to \cu^k(\mb{F}_2^n)$, $\q\mapsto \q\co A$ is a surjective homomorphism, so it preserves the Haar measure. Hence $\int t\co T_{f_n,\mc{L},n}(\q)\,\ud\mu_{\cu^k(\mb{F}_2^n)}(\q)= \int t\co T_{f_n,\mc{L},n}(\q\co A)\,\ud\mu_{\cu^k(\mb{F}_2^n)}(\q)$. By definition $T_{f_n,\mc{L},n}(\q\co A) = (f_n\co \q\co A)|_{\mc{L}} = \phi_{\mc{L},A}^{-1}((f_n\co \q)|_{A\mc{L}})$. Hence we have
$\int t \co p_{\mc{L}} \,\ud\mu = \lim_{n\to\infty} \int t\co \phi_{\mc{L},A}^{-1}((f_n\co \q)|_{A\mc{L}})\,\ud\mu_{\cu^k(\mb{F}_2^n)}(\q)$, and this equals $\lim_{n\to\infty} \int t\co \phi_{\mc{L},A}^{-1}\co p_{A\mc{L}} \; \ud\mu_{A\mc{L},f_n}= \int (t\co \phi_{\mc{L},A}^{-1})\co  p_{A\mc{L}}\,\ud\mu  = \int  t\co p_{\mc{L}}\co \theta_A\ud\mu$ as required.

Finally, we prove that $\mu$ satisfies the independence property. Similarly as in the proof of Lemma \ref{lem:indepclosed}, it suffices to prove that for any pair of finite independent faces $\mc{L}_1,\mc{L}_2\subset \mb{F}_2^\omega$ and continuous functions $t_i:\Bo^{\mc{L}_i}\to \mb{C}$ for $i=1,2$ we have
\[
\int (t_1\co p_{\mc{L}_1})(t_2\co p_{\mc{L}_2})\,\ud\mu = \int (t_1\co p_{\mc{L}_1})\,\ud\mu \int (t_2\co p_{\mc{L}_2})\,\ud\mu.
\]
Fix any $k$ such that $\mc{L}_1\sqcup\mc{L}_2\subset \mb{F}_2^k$. Then
\[
\int (t_1\co p_{\mc{L}_1})(t_2\co p_{\mc{L}_2})\;\ud\mu = \lim_{n\to\infty}\;  \mb{E}_{A\in \cu^k(\mb{F}_2^n)} (t_1\co T_{f_n,\mc{L}_1,n})(t_2\co T_{f_n,\mc{L}_2,n}).
\]
Fix any $n\in \mb{N}$. Then this last average equals
\begin{equation}\label{eq:equi-2}
\mb{E}_{A\in \cu^k(\mb{F}_2^n)}\; t_1(f_n\co A|_{\mc{L}_1}) \;t_2(f_n\co A|_{\mc{L}_2}).
\end{equation}
For $i=1,2$ we have that $(f_n\co A)|_{\mc{L}_i}$ is a vector indexed by $L\in\mc{L}_i$ with values $f_n(A(L))$.

Note that there exists two finite faces $F_1$ and $F_2$ such that $\mc{L}_i\subset F_i$ for $i=1,2$, $F_1\cap F_2=\{K\}$ for some $K\in \mb{F}_2^{\omega}$ and such that $\langle F_1\cup F_2\rangle = \mb{F}_2^k$, $K\in \mc{L}_1$ and $K\not\in \mc{L}_2$. Furthermore, by applying if necessary an element of $\Gamma_k$ (which preserves finiteness and independence of faces), we can assume that $K=0^{\omega}$, $F_1=\{0,1\}^{d_1}\times \{0^{\mb{N}\setminus[d_1]}\}$, $F_2=\{0^{d_1-1}\}\times \{r\} \times \{0,1\}^{d_2}\times \{0^{\mb{N}\setminus [d_1+d_2]}\}$ (for some $r\in \mb{F}_2$). Clearly $k=d_1+d_2$. Using the bijection between $\cu^k(\mb{F}_2^n)$ and $\{(x,a_1,\ldots,a_{d_1+d_2})\in \mb{F}_2^n\}$ yielded by the expression $A(v)=x+a_1v_1+\cdots a_{d_1+d_2}v_{d_1+d_2}$, the average in \eqref{eq:equi-2} can then be written as follows:
\[
\mb{E}_{\substack{x,a_1,\ldots,a_{d_1}\in\mb{F}_2^n\\a_{d_1+1},\ldots,a_{d_1+d_2}\in \mb{F}_2^n}} \;  t_1(f_n\co A|_{\mc{L}_1}) \;t_2(f_n\co A|_{\mc{L}_2}).
\]
Since the elements of $\mc{L}_1$ have coordinates $v_{d_1+1},\ldots,v_{d_2}$ all 0, the first term is independent of the variables $a_{d_1+1},\ldots,a_{d_1+d_2}$, and thus the average equals
\begin{multline}\label{eq:sep-variables}
\mb{E}_{ x,a_1,\ldots,a_{d_1}\in\mb{F}_2^n}\; t_1\big( (f_n (x+a_1v_1^i+\cdots+a_{d_1}v_{d_1}^i))_{i\in[I]}\big) \\
\;\cdot \; \mb{E}_{\substack{a_{d_1+1},\ldots,a_{d_1+d_2}\in \mb{F}_2^n}} \; t_2\big(f_n (x+a_{d_1+1}v_{d_1+1}^j+\cdots+a_{d_1+d_2}v_{d_1+d_2}^j))_{j\in[J]}\big),
\end{multline}
where we assume that $\mc{L}_1 = \{(v_1^i,\ldots,v_{d_1}^i,0^{d_2}):i\in [I]\}$ for some $I\ge 0$ and similarly $\mc{L}_2=\{(0^{d_1},v_{d_1+1}^j,\ldots,v_{d_1+d_2}^j):j\in [J]\}$ for some $J\ge 0$. 

Now note that since $\mc{L}_2\not\ni 0^k$, there must be some $s\in \{d_1+1,\ldots,d_1+d_2\}$ such that for all $j\in [J]$ we have that $v_s^j$ is a non-zero constant $c$ \emph{independent of $j\in [J]$}. Hence, in \eqref{eq:sep-variables} we can eliminate the variable $x$ in the second function $f_n$, by the simple change of variables $a_s\mapsto a_s- c^{-1}x$, thus showing that this average equals
\[
\mb{E}_{ a_{d_1+1},\ldots,a_{d_1+d_2}\in \mb{F}_2^n}(t_2(f_n (a_{d_1+1}v_{d_1+1}^j+\cdots+a_{d_1+d_2}v_{d_1+d_2}^j))_{j\in[J]}).
\]
We can now change again the variable $a_s$ to $a_s+y$ for $y\in \mb{F}_2^n$, and add an averaging over $y\in \mb{F}_2^n$, thus proving that this last average equals $\int t_2 \,\ud \mu_{\mc{L}_2,f_n}$. Taking this constant out of the average in \eqref{eq:sep-variables} we conclude that \eqref{eq:sep-variables} equals $\int t_1 \,\ud \mu_{\mc{L}_1,f_n}\,\int t_2 \,\ud \mu_{\mc{L}_2,f_n}$. Since this holds for each $n$, the result follows by letting $n\to\infty$.
\end{proof}

\begin{remark}\label{rem:finite-complexity-limit}
Theorem \ref{thm:corresp} concerns a notion of convergence involving systems of linear forms which can be of  \emph{arbitrary finite complexity} (in the sense of true complexity from \cite{GW}).  In many situations in higher-order Fourier analysis, we are only interested in working with systems of complexity at most some prescribed finite bound. In this case, it turns out that the correct notion of complexity is the true complexity of $\widetilde{\mc{L}}:=\{\widetilde{L}=(1,L) : L\in \mc{L}\}$. Using the compactness of the set $\mc{P}(\Bo^{\mc{L}})$, a diagonalization argument, Theorem \ref{thm:corresp}, and Theorem \ref{thm:main} the following result can be proved. Fix any $k\in \mb{N}$. Let $(f_n:\mb{F}_2^n\to \Bo)_{n\in \mb{N}}$ be a sequence of functions for some compact metric space $\Bo$ such that $\mu_{\mc{L},f_n}$ converges vaguely for all finite $\mc{L}\subset \mb{F}_2^\omega$ such that the true complexity of $\widetilde{\mc{L}}$ is at most $k$. Then there exists a measurable function $m:\mc{H}\to \mc{P}(\Bo)$ such that the following holds. For any finite $\mc{L}\subset \mb{F}_2^\omega$ such that the true complexity of $\widetilde{\mc{L}}$ at most $k$, the measures $\mu_{\mc{L},f_n}$ converge vaguely to $\zeta_{\mc{H},m}\co p_{\mc{L}}^{-1}$.

This result can be further refined in the following sense. Note that $\mc{H}$ has infinite step, whereas we only required convergence for linear forms \emph{up to some finite complexity} $k$. Using an equidistribution theorem (that we omit in this paper), any average over $\zeta_{\mc{H},m}\co p_{\mc{L}}^{-1}$ can be written as the limit when $n\to\infty$ of some average over $\cu^s(\mb{F}_2^n)$ (where $\mc{L}\subset \mb{F}_2^s\times \{0\}^{\mb{N}\setminus [s]}$ for some $s\in\mb{N}$). Then, using the definition of true complexity it can be proved that if the true complexity of $\widetilde{\mc{L}}$ is at most $k$ (for some $\mc{L}\subset \mb{F}_2^\omega$) then $\zeta_{\mc{H},m}\co p_{\mc{L}}^{-1}$ equals $\zeta_{\mc{H}/\mc{H}_{(k+1)},m_k}\co p_{\mc{L}}^{-1}$ where $m_k:\mc{H}/\mc{H}_{(k+1)}\to \mc{P}(B)$ is given by the formula $m_k(\pi_k(x)):=\int_{\pi_k^{-1}(\pi_k(x))} m(y)\;\ud\mu_{\pi_k^{-1}(\pi_k(x))}$ and $\pi_k:\mc{H}\to \mc{H}/\mc{H}_{(k+1)}$ is the quotient map\footnote{Recall that $\mc{H}$ is a filtered group with filtration $\mc{H}_{(j)}$ for $j\in\mb{N}$ by Definition \ref{def:H}.}. 
\end{remark}

\begin{remark}\label{rem:extensionto-p}
Affine exchangeability (or, more precisely, $\mb{F}_2^\omega$-affine exchangeability) has the following natural analogue for any prime $p>2$: a measure $\mu\in \mc{P}(\Bo^{\mb{F}_p^\omega})$ is $\mb{F}_p^\omega$\emph{-affine-exchangeable} if $\mu$ is invariant under the coordinate permutations induced by the group $\textrm{Aff}(\mb{F}_p^\omega)\cong \textrm{GL}(\mb{F}_p^\omega)\ltimes \mb{Z}_p^\omega$. As mentioned at the end of the introduction, the main results of this paper have counterparts for $\mb{F}_p^\omega$-affine exchangeability, but proving these requires extending previous work. In particular, the main results concerning cubic couplings from \cite{CScouplings} would need to be extended to product spaces indexed by $\mb{F}_p^\omega$, and these would have to be combined with corresponding extensions of the results from Section \ref{sec:2-homCC} in this paper, to obtain structural results involving $p$-homogeneous nilspaces. This is beyond the scope of the present paper.
\end{remark}

\section{On representing affine-exchangeability with nilspaces}\label{sec:repvsnils}
\noindent  Recall from Definition \ref{def:repX} the notion of a nilspace $\ns$ \emph{representing}  $\textup{Pr}^{\textup{Aff}(\mb{F}_2^\omega)}(\Bo^{\db{\mb{N}}})$ for \emph{some} standard Borel space $\Bo$. In this section we shall say that $\ns$ \emph{represents affine-exchangeability} if $\ns$ \emph{represents} $\textup{Pr}^{\textup{Aff}(\mb{F}_2^\omega)}(\Bo^{\db{\mb{N}}})$ for \emph{every} standard Borel space $\Bo$. The main result of this subsection is Theorem \ref{thm:repre-with-2-hom}, which we recall here for convenience, establishing counterparts of this notion of representation that concern nilspace theory and limit domains.

\vspace{0.4cm}
\noindent \textbf{Theorem \ref{thm:repre-with-2-hom}}
\emph{Let $\ns$ be a compact profinite-step 2-homogeneous nilspace. The following statements are equivalent.
\begin{enumerate}[leftmargin=0.8cm]
\item $\ns$ represents affine-exchangeability.
\item For every compact 2-homogeneous profinite-step nilspace $\nss$ there is a \textup{(}continuous\textup{)} fibration $\varphi:\ns\to \nss$.
\item $\ns$ is a limit domain for convergent sequences $(f_n:\mb{F}_2^n\to\Bo)_{n\in \mb{N}}$, for every compact metric space $\Bo$.
\end{enumerate}}

\noindent From previous main results in this paper (specifically Theorems \ref{thm:maincharac} and \ref{thm:universal-covering}) we know that the nilspace $\mc{H}$ from Definition \ref{def:H} is an example of a nilspace $\ns$ satisfying properties $(i)$ and $(ii)$ above.

To prove Theorem \ref{thm:repre-with-2-hom}, we start by studying a class of examples announced at the end of Section \ref{sec:prelims}, which we call \emph{deterministic} cubic-exchangeable measures. In particular,  we show in Proposition \ref{prop:tau} that the mere assumption that $\ns$ represents \emph{this} class of examples already implies a non-trivial property of $\ns$, which can be viewed as a measure-theoretic version of property $(ii)$ above.

To define deterministic cubic-exchangeable measures, we shall use the following basic result on  disintegrations of measures \cite[(17.35)]{Ke}.

\begin{lemma}\label{lem:disint}
Let $R$, $S$ be standard Borel spaces and let $f: R \to S$ be a Borel map. Let $\mu\in \mc{P}(R)$ and $\nu = f_*\mu:=\mu\co f^{-1}\in \mc{P}(S)$. Then there is a Borel map $S\to \mc{P}(R)$, $s\mapsto\mu_s$ such that for $\nu$-almost every $s\in S$ we have $\mu_s(f^{-1}\{s\}) = 1$ and $\mu = \int_S \mu_s\ud\nu(s)$, i.e., for any Borel set $A\subset R$ we have $\mu(A) = \int_S \mu_s(A)\ud\nu(s)$. Moreover, if $s\mapsto \kappa_s$ is another map with these properties, then for $\nu$-almost every $s$ we have $\mu_s = \kappa_s$. 
\end{lemma}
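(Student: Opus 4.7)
The plan is to appeal to the standard construction of regular conditional probabilities; indeed this result is \cite[(17.35)]{Ke}, so the sketch below outlines how I would reproduce it. Since $R$ and $S$ are standard Borel spaces, the Borel isomorphism theorem lets me reduce, without loss of generality, to the case where $R$ is a compact metric space. The essential object is then the sub-$\sigma$-algebra $\mc{F} := f^{-1}(\mc{B}_S) \subset \mc{B}_R$.

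First I would fix a countable $\mb{Q}$-linear subspace $\mc{A} \subset C(R)$ that is dense in the supremum norm and contains the constant function $1$. For each $g \in \mc{A}$, the conditional expectation $\mb{E}_\mu[g \mid \mc{F}]$ is $\mc{F}$-measurable and therefore factors $\mu$-a.e. as $h_g \co f$ for some Borel $h_g : S \to \mb{R}$, unique up to $\nu$-null sets. Removing a single $\nu$-null set $N \subset S$ (the countable union of the null sets on which positivity, $\mb{Q}$-linearity, or the normalization $h_1 = 1$ fails, as $g$ ranges over $\mc{A}$), I obtain that for every $s \in S \setminus N$ the assignment $g \mapsto h_g(s)$ is a positive linear functional of norm $1$ on $\mc{A}$. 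By density and the Riesz representation theorem, this functional extends uniquely to a probability measure $\mu_s \in \mc{P}(R)$; on $N$ one sets $\mu_s$ to an arbitrary fixed measure. Borel measurability of $s \mapsto \mu_s$ then follows since $s \mapsto \mu_s(g) = h_g(s)$ is Borel for each $g \in \mc{A}$.

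Next I would verify the two stated properties. The disintegration identity $\mu = \int_S \mu_s \ud\nu(s)$ holds by construction for every $g \in \mc{A}$ and extends to all bounded Borel functions on $R$ via a monotone class argument. For concentration on fibers, for any Borel $B \subset S$ the indicator $\mathbf{1}_{f^{-1}(B)}$ is $\mc{F}$-measurable, so $\mu_s(f^{-1}(B)) = \mathbf{1}_B(s)$ for $\nu$-a.e.\ $s$; applying this to a countable Borel family separating the points of $S$ (available since $S$ is standard Borel) yields $\mu_s(f^{-1}\{s\}) = 1$ off a single $\nu$-null set. For uniqueness, if $\kappa_s$ is another such disintegration, then for every Borel $B \subset S$ and every $g \in \mc{A}$ one has $\int_B \kappa_s(g) \ud\nu(s) = \int_{f^{-1}(B)} g \ud\mu = \int_B \mu_s(g) \ud\nu(s)$, whence $\kappa_s(g) = \mu_s(g)$ for $\nu$-a.e.\ $s$; intersecting over the countable family $\mc{A}$ gives $\kappa_s = \mu_s$ $\nu$-a.e.

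The only mild obstacle is the simultaneous null-set bookkeeping in the construction of $\mu_s$: ensuring that positivity, $\mb{Q}$-linearity, and normalization all hold off a \emph{single} $\nu$-null set, so that the Riesz extension is legitimate at each surviving $s$. Since only countably many identities and inequalities need to be verified, this is routine and does not require any ingredient beyond the existence of conditional expectations in $L^1(\mu)$.
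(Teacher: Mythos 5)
Your proposal is correct: the paper gives no proof of this lemma, simply citing \cite[(17.35)]{Ke}, and your sketch is a faithful reproduction of the standard regular-conditional-probability construction underlying that reference (conditional expectations on a countable dense $\mb{Q}$-subalgebra of $C(R)$, Riesz representation off a single $\nu$-null set, a countable separating family for fiber concentration, and the usual a.e.\ uniqueness argument). No gaps; the null-set bookkeeping you flag is indeed routine.
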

We call $(\mu_s)_{s\in S}$ the \emph{disintegration} of $\mu$ relative to $f$.
\begin{defn}\label{def:determ}
Let $\mu$ be a cubic-exchangeable measure in $\mc{P}(\Bo^{\db{\mb{N}}})$. We say that $\mu$ is \emph{deterministic} if for every $w\in \db{\mb{N}}$, letting $R=\Bo^{\db{\mb{N}}}$, $S=\Bo^{\db{\mb{N}}\setminus\{w\}}$, letting $f:R\to S$ be the coordinate projection and letting $\nu$ be the pushforward $\mu\co f^{-1}$, we have that $\nu$-almost every measure $\mu_s$ in the disintegration of $\mu$ relative to $f$ is a Dirac measure.
\end{defn}
\noindent Note that in this definition we can replace ``for every $w\in \db{\mb{N}}$" equivalently by ``for \emph{some} $w\in \db{\mb{N}}$", using that the group $\aut(\db{\mb{N}})$ (whose induced coordinate-action leaves $\mu$ invariant, by cubic exchangeability) acts transitively on $\db{\mb{N}}$. Note also that, by Lemma \ref{lem:disint}, each measure $\mu_s$ in Definition \ref{def:determ} is concentrated on $f^{-1}(\{s\})$ (even though it is presented as a measure on $\Bo^{\db{\mb{N}}}$), and $f^{-1}(\{s\})$ is measure-theoretically equivalent to $\Bo$ (indeed $f^{-1}(\{s\})=\Bo\times \{s\}$). Thus, in particular, the assumption that $\mu_s$ is a Dirac measure here implies that $\mu_s$ can be viewed as $\delta_z\in\mc{P}(\Bo)$ for some $z\in \Bo$.

We can now take a first step towards Theorem \ref{thm:repre-with-2-hom}, with the following result.
\begin{lemma}\label{lem:determDirac}
Let $\ns$ be a compact profinite-step nilspace, let $\Bo$ be a standard Borel space, and let $m:\ns\to\mc{P}(\Bo)$ be a Borel map. Then $\zeta_{\ns,m}$ is deterministic if and only if for $\mu_{\ns}$-almost-every $x\in \ns$, the measure $m(x)$ is a Dirac measure on $\Bo$.
\end{lemma}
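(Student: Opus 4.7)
The plan is to reduce the Dirac-disintegration property of $\zeta_{\ns,m}$ to a variance identity on the underlying cube, using unique corner completion (Lemma~\ref{lem:uniqueomegacomp}). Fix any $w\in\db{\mb{N}}$ and let $f:\Bo^{\db{\mb{N}}}\to \Bo^{\db{\mb{N}}\setminus\{w\}}$ be the coordinate projection. From the definition of $\zeta_{\ns,m}$, this measure is realized by first drawing $\q\sim \mu_{\cu^\omega(\ns)}$ and then drawing $y_v\sim m(\q(v))$ independently across $v\in\db{\mb{N}}$. Lemma~\ref{lem:uniqueomegacomp} provides a Borel map $c$ such that $\q(w)=c(\q|_{\db{\mb{N}}\setminus\{w\}})$ almost surely, so conditionally on the restriction $\q|_{\db{\mb{N}}\setminus\{w\}}$ the coordinate $y_w$ has law $m(\q(w))$ and is independent of $(y_v)_{v\neq w}$; moreover $\q(w)$ has marginal law $\mu_\ns$.

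For the forward implication, assume $\zeta_{\ns,m}$ is deterministic, so $y_w$ is a.s.\ $(y_v)_{v\neq w}$-measurable. For any bounded Borel $\phi:\Bo\to\mb{R}$, the conditional law of total variance applied to the inclusion of $\sigma$-algebras $\sigma((y_v)_{v\neq w})\subseteq \sigma(\q|_{\db{\mb{N}}\setminus\{w\}})\vee\sigma((y_v)_{v\neq w})$ yields
\[
0\,=\,\Var\bigl(\phi(y_w)\,\big|\,(y_v)_{v\neq w}\bigr) \;=\; \mb{E}\!\left[\Var_{m(\q(w))}\phi\,\big|\,(y_v)_{v\neq w}\right] \,+\, \Var\!\left(\mb{E}_{m(\q(w))}\phi\,\big|\,(y_v)_{v\neq w}\right),
\]
where the right-hand side uses the conditional independence just noted. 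Both summands are nonnegative, so taking total expectation forces $\mb{E}\,\Var_{m(x)}\phi=0$, hence $\Var_{m(x)}\phi=0$ for $\mu_\ns$-a.e.\ $x$. Specializing $\phi$ to the indicators of a countable family generating the Borel $\sigma$-algebra of $\Bo$ forces $m(x)(A)\in\{0,1\}$ for every such $A$ and $\mu_\ns$-a.e.\ $x$, which is equivalent to $m(x)$ being a Dirac measure $\mu_\ns$-almost surely.

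For the backward implication, suppose $m(x)=\delta_{g(x)}$ for $\mu_\ns$-a.e.\ $x$, with $g:\ns\to\Bo$ Borel. Countability of $\db{\mb{N}}$ allows us to choose a full-measure set of $\q$'s on which $m(\q(v))=\delta_{g(\q(v))}$ for every $v$, so $\prod_v m(\q(v))=\delta_{X(\q)}$ with $X(\q):=(g(\q(v)))_v$, and $\zeta_{\ns,m}$ is the pushforward $X_*\mu_{\cu^\omega(\ns)}$, a mixture of Dirac measures on $\Bo^{\db{\mb{N}}}$. Its disintegration over $f$ is obtained by disintegrating $\mu_{\cu^\omega(\ns)}$ over $Y:\q\mapsto(g(\q(v)))_{v\neq w}$ and pushing forward by $X$; each resulting fiber is a Dirac on $f^{-1}(\{s\})$ provided $g(\q(w))=g(c(\q|_{\db{\mb{N}}\setminus\{w\}}))$ depends only on $Y(\q)$ modulo null sets, which can be checked via Lemma~\ref{lem:uniqueomegacomp} together with the pushforward structure. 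The main obstacle is precisely this last step of the backward direction: matching the Borel completion map $c$ with $g$ so that $g\co c$ descends modulo $\mu_{\cor_w^\omega(\ns)}$-null sets through $Y$. The forward direction, by contrast, is a clean consequence of the variance decomposition together with unique corner completion.
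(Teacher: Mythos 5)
Your forward implication is correct and is, in substance, the same argument as the paper's: both rest on the fact that, conditionally on the corner $\q|_{\db{\mb{N}}\setminus\{w\}}$ (which determines $\q(w)$ by Lemma \ref{lem:uniqueomegacomp}), the coordinate $y_w$ has law $m(\q(w))$ and is independent of the remaining coordinates, so that a conditional law which is already Dirac given the smaller $\sigma$-algebra $\sigma((y_v)_{v\neq w})$ forces $m(\q(w))$ to be Dirac almost surely; your law-of-total-variance computation is a clean quantitative version of this, and the passage from vanishing variance of countably many generating indicators to Dirac-ness is valid on a standard Borel space. The use of the vertex projection $\q\mapsto\q(w)$ being Haar-measure-preserving to transfer the conclusion from $\mu_{\cu^\omega(\ns)}$-a.e.\ $\q$ to $\mu_{\ns}$-a.e.\ $x$ also matches the paper.

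The backward implication, however, is left genuinely open in your write-up, and you have correctly isolated the one nontrivial point. Writing $y_v=g(\q(v))$, the quantity $y_w=g(c(\q|_{\db{\mb{N}}\setminus\{w\}}))$ is manifestly a measurable function of the \emph{corner} $\q|_{\db{\mb{N}}\setminus\{w\}}\in\cor_w^\omega(\ns)$, whereas determinism requires it to be a function (mod null sets) of the \emph{observations} $(g(\q(v)))_{v\neq w}\in\Bo^{\db{\mb{N}}\setminus\{w\}}$, which generate a strictly smaller $\sigma$-algebra whenever $g$ is non-injective. One must rule out that a positive-$\mu_{\cor_w^\omega(\ns)}$-measure set of pairs of corners have identical $g$-images but completions with different $g$-values; this is a real rigidity statement about $g$ and the completion map, not a formality (already for $\ns=\mc{D}_1(\mb{T})$ and $g$ an indicator function it requires an argument), and your proposal asserts that it "can be checked" without checking it. For comparison, the paper disposes of this direction in one sentence by declaring that \eqref{eq:zetadisint}, rewritten as an integral over $\cor_w^\omega(\ns)$ with fiber measure $m(\q_s(w))$ attached to the corner $s$, \emph{is} the disintegration of $\zeta_{\ns,m}$ relative to $f$; but a disintegration relative to $f$ must have its fibers indexed by points of $\Bo^{\db{\mb{N}}\setminus\{w\}}$ rather than by corners in $\ns^{\db{\mb{N}}\setminus\{w\}}$, so that identification is exactly the descent you are worried about. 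You have therefore not found an alternative route around the difficulty, nor filled it in. It is worth noting that in the only place the backward implication is invoked (the proof of Proposition \ref{prop:tau}), one has $\Bo=\nss$ and $m:y\mapsto\delta_y$, i.e.\ $g=\mathrm{id}$; there the observation $(y_v)_{v\neq w}$ \emph{is} the corner and $y_w=c((y_v)_{v\neq w})$ outright, so the implication is immediate. To complete your proof of the lemma as stated you must either supply an argument for the descent of $g\co c$ through $(g(\q(v)))_{v\neq w}$ for arbitrary Borel $g$, or restrict to the tautological case that the paper actually uses.
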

\begin{proof}
Fix any $w\in\db{\mb{N}}$, let $f:\Bo^{\db{\mb{N}}}\to \Bo^{\db{\mb{N}}\setminus\{w\}}$ be the coordinate projection (i.e.\ the map that just deletes the $w$-th coordinate). 

Forward implication: consider the disintegration $\zeta_{\ns,m}= \int_{\Bo^{\db{\mb{N}}\setminus\{w\}}} \mu_s\ud\nu(s)$ relative to $f$ given by Lemma \ref{lem:disint} (in particular $\nu=\zeta_{\ns,m}\co f^{-1}$). We are assuming that $\zeta_{\ns,m}$ is deterministic, so $\mu_s$ is a Dirac measure $\delta_{g(s)}$ for $\nu$-almost every $s$.  From the construction \eqref{eq:zeta} of $\zeta_{\ns,m}$, we have
\begin{eqnarray}\label{eq:zetadisint}
\zeta_{\ns,m} & = & \int_{\cu^{\omega}(\ns)} \Big[\prod_{v\in \db{\mb{N}}\setminus\{w\}} m(\q(v))\Big]\times m(\q(w))\, \ud \mu_{\cu^{\omega}(\ns)}(\q).
\end{eqnarray}
Note that this last expression can be viewed as a disintegration relative to $f$. Indeed, first note that the underlying assumption that $\ns$ is profinite-step implies uniqueness of $\omega$-corner completion (by Lemma \ref{lem:uniqueomegacomp}). By this uniqueness of completion, this last integral can be written as an integral over the space $\cor^\omega(\ns)$ of $\omega$-corners ``rooted" at $w$ (i.e.\ the map $f$ restricts to a bijection $\cu^\omega(\ns)=\hom(\db{\mb{N}},\ns)\to \cor_w^\omega(\ns)=\hom(\db{\mb{N}}\setminus\{w\},\ns)$ that preserves the Haar measures on these two morphism sets; see Remark \ref{rem:co-omega-equals-cor-omega}). But then, denoting these $\omega$-corners by $s$, we can see that $\nu$ agrees with the measure $\int_{\cor^\omega(\ns)}\prod_{v\in \db{\mb{N}}\setminus\{w\}} m(s(v))\ud\mu_{\cor^\omega(\ns)}(s)$, viewed as a measure on $\Bo^{\db{\mb{N}}\setminus\{w\}}$ as usual. Moreover, comparing with the last integral in \eqref{eq:zetadisint}, we see that $\zeta_{\ns,m} = \int_{\Bo^{\db{\mb{N}}\setminus\{w\}}} \mu_s \ud\nu(s)$, where $\mu_s=m(\q_s(w))$, for the unique $\q_s\in \cu^\omega(\ns)$ completing $s$. By the almost-sure uniqueness of the fiber-measures in the  disintegration, and the assumption that each fiber measure $\mu_s$ is a Dirac measure, it follows that for $\nu$-almost every corner $s$ the measure $m(\q_s(w))$ is a Dirac measure. Finally, note that, as $s$ varies $\nu$-uniformly in $\cor^\omega(\ns)$, the point $\q_s(w)$ varies $\mu_{\ns}$-uniformly in $\ns$ (just recall that $\cor^\omega(\ns)=\cu^\omega(\ns)$ and that the projection $p_w:\cu^\omega(\ns)\to \ns$, $\q\mapsto\q(w)$ is a continuous totally-surjective bundle morphism and by Lemma \ref{lem:mes-pre-1} it preserves the Haar measure). Thus we are covering almost every point $x\in \ns$ as a value $\q_s(w)$, and we can therefore deduce that $m(x)$ is a Dirac measure for $\mu_{\ns}$-almost every $x$ as required.

Backward implication: if almost every $m(x)$ is a Dirac measure, then using the view of \eqref{eq:zetadisint} as a disintegration of $\zeta_{\ns,m}$ relative to $f$, we see that this is a disintegration with almost every fiber-measure being a Dirac measure, so $\zeta_{\ns,m}$ is deterministic.
\end{proof}
\noindent We deduce the following result, which yields a strong measure-theoretic relation between nilspaces $\ns,\nss$ assuming that $\zeta_{\ns,m}$, $\zeta_{\nss,m'}$ are the same measure on $\ns^{\db{\mb{N}}}$. 

\begin{proposition}\label{prop:tau}
Let $\ns,\nss$ be compact profinite-step nilspaces, let $m$ be the Borel map $\nss\to\mc{P}(\nss)$ sending $y$ to $\delta_y$, and suppose that $\zeta_{\ns,m'}=\zeta_{\nss,m}$ for some Borel map $m':\ns\to \mc{P}(\nss)$. Then there is a Borel map  $\tau:\ns\to\nss$ such that $m'(x)=\delta_{\tau(x)}$ for $\mu_{\ns}$-almost-every $x\in\ns$, and for every integer $n\geq 0$ we have $\mu_{\cu^n(\ns)}\co(\tau^{\db{n}})^{-1}=\mu_{\cu^n(\nss)}$.
\end{proposition}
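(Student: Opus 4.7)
The plan is to reduce the first conclusion to Lemma \ref{lem:determDirac} and then verify the measure-preservation property by computing the $\db{n}$-dimensional marginals of both $\zeta_{\ns, m'}$ and $\zeta_{\nss, m}$ directly from the defining formula \eqref{eq:zeta}.

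First I would show that $m'$ must take Dirac values $\mu_\ns$-almost surely. Since $m(y) = \delta_y$ is a Dirac measure for \emph{every} $y \in \nss$, the backward direction of Lemma \ref{lem:determDirac} applied to $(\nss, m)$ gives that $\zeta_{\nss, m}$ is deterministic. The hypothesis $\zeta_{\ns, m'} = \zeta_{\nss, m}$ then forces $\zeta_{\ns, m'}$ to be deterministic as well, and the forward direction of the same lemma applied to $(\ns, m')$ yields that $m'(x)$ is a Dirac measure for $\mu_\ns$-almost every $x \in \ns$. Since the map $y \mapsto \delta_y$ is a continuous injection $\nss \to \mc{P}(\nss)$ whose image is a Borel subset of $\mc{P}(\nss)$ and whose inverse on this image is Borel (a standard fact for Polish spaces), I can define a Borel map $\tau : \ns \to \nss$ by inverting $y \mapsto \delta_y$ on the Borel set $\{x \in \ns : m'(x) \in \{\delta_y : y \in \nss\}\}$ and setting $\tau$ to any fixed constant on the $\mu_\ns$-null complement.

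Next I would verify the measure-preservation identity by computing the $\db{n}$-marginals. Fix $n \ge 0$ and let $p_n : \nss^{\db{\mb{N}}} \to \nss^{\db{n}}$ denote the coordinate projection. On the $\nss$-side, the Dirac factors $\delta_{\q(v)}$ in \eqref{eq:zeta} collapse the evaluation of the product measure on a cylinder set supported in $\db{n}$ to an indicator of $\q|_{\db{n}} \in A$, so that, combined with Corollary \ref{cor:haar-mes-omega-cube},
\[
\zeta_{\nss, m}(p_n^{-1}(A)) = \mu_{\cu^\omega(\nss)}\big(\{\q : \q|_{\db{n}} \in A\}\big) = \mu_{\cu^n(\nss)}(A).
\]
On the $\ns$-side, let $E \subset \ns$ be the full-measure Borel set on which $m'(x) = \delta_{\tau(x)}$. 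Since for each $v \in \db{\mb{N}}$ the coordinate-projection $\cu^\omega(\ns) \to \ns$, $\q \mapsto \q(v)$, pushes $\mu_{\cu^\omega(\ns)}$ forward to $\mu_\ns$, and since $\db{\mb{N}}$ is countable, countable subadditivity implies that for $\mu_{\cu^\omega(\ns)}$-almost every $\q$ we have $\q(v) \in E$ for every $v \in \db{\mb{N}}$ simultaneously, and hence $\prod_{v \in \db{n}} m'(\q(v)) = \delta_{\tau^{\db{n}}(\q|_{\db{n}})}$. Substituting into \eqref{eq:zeta} and using Corollary \ref{cor:haar-mes-omega-cube} again,
\[
\zeta_{\ns, m'}(p_n^{-1}(A)) = \mu_{\cu^\omega(\ns)}\big(\{\q : \tau^{\db{n}}(\q|_{\db{n}}) \in A\}\big) = \mu_{\cu^n(\ns)}\big((\tau^{\db{n}})^{-1}(A)\big).
\]
Equating the two displayed expressions then gives $\mu_{\cu^n(\ns)} \co (\tau^{\db{n}})^{-1} = \mu_{\cu^n(\nss)}$, as required.

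The main subtleties are standard measure-theoretic bookkeeping: ensuring Borel-measurability of $\tau$ via the Borel isomorphism $y \mapsto \delta_y$, and the uniform-in-$v$ null-set argument that allows us to assert $m'(\q(v)) = \delta_{\tau(\q(v))}$ simultaneously for all coordinates $v$ and almost every $\q$. Neither step poses a deep obstacle; once Lemma \ref{lem:determDirac} has converted the measure-theoretic hypothesis into the pointwise identity $m'(x) = \delta_{\tau(x)}$, the proof is essentially a careful unwinding of the definitions of $\zeta$ and of the Haar measure $\mu_{\cu^\omega(\ns)}$.
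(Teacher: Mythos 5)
Your proposal is correct and follows essentially the same route as the paper: the first conclusion is obtained exactly as in the paper by combining the backward and forward implications of Lemma \ref{lem:determDirac}, and the measure-preservation identity is obtained by comparing the $\db{n}$-dimensional marginals of $\zeta_{\nss,m}$ and $\zeta_{\ns,m'}$ computed from formula \eqref{eq:zeta}. You merely spell out two details the paper leaves implicit (the Borel measurability of $\tau$ via the embedding $y\mapsto\delta_y$, and the countable union of null sets ensuring $m'(\q(v))=\delta_{\tau(\q(v))}$ for all $v$ simultaneously for almost every $\q$), both of which are handled correctly.
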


\begin{proof}
Since by assumption $m$ is Dirac-measure valued, by the backward implication in Lemma \ref{lem:determDirac} we have that $\zeta_{\nss,m}$ is deterministic. Hence, the assumption $\zeta_{\ns,m'}=\zeta_{\nss,m}$ implies that $\zeta_{\ns,m'}$ is deterministic, so by the forward implication in Lemma \ref{lem:determDirac}, we deduce that $m'(x)$ is a Dirac measure for $\mu_{\ns}$-almost-every $x\in \ns$. We can therefore define a measurable map $\tau:\ns\to\nss$ such that $m'(x)=\delta_{\tau(x)}$ for $\mu_{\ns}$-almost-every $x\in \ns$.

To show that $\tau$ preserves the Haar measure on every cube set, we use the cubic exchangeability of the measures $\zeta_{\ns,m'}$, $\zeta_{\nss,m}$. Take any injective morphism $\phi:\db{n}\to \db{\mb{N}}$; for simplicity we can take the morphism embedding $\db{n}$ as $\db{n}\times \{0^{\mb{N}\setminus[n]}\}$. Then from the formula \eqref{eq:zeta} defining $\zeta_{\nss,m}$ it is seen directly that the push-forward of $\zeta_{\nss,m}$ induced by $\phi$ on $\nss^{\db{n}}$ is $\mu_{\cu^n(\nss)}$. More precisely, identifying $\phi(\db{n})\subset \db{\mb{N}}$ with $\db{n}$, we have an induced identification of $\nss^{\db{n}}$ with $\nss^{\phi(\db{n})}$, whereby the pushforward of $\zeta_{\nss,m}$ under the coordinate projection $\nss^{\db{\mb{N}}}\to \nss^{\phi(\db{n})}$ can be identified with $\mu_{\cu^n(\nss)}$ (viewed as a measure on $\nss^{\db{n}}$ concentrated on $\cu^n(\nss)$ the usual way). On the other hand, the same formula \eqref{eq:zeta} applied to $\zeta_{\ns,m'}$ shows that this same push-forward is $\mu_{\cu^n(\ns)}\co(\tau^{\db{n}})^{-1}$. Since by assumption $\zeta_{\ns,m'}=\zeta_{\nss,m}$, the result follows.
\end{proof}
\noindent In the proof of Theorem \ref{thm:repre-with-2-hom} we shall use a strengthened version of the above result, in which the map $\tau$ is modified on a null set to obtain a continuous cube-surjective map, which is then shown to be a fibration. This technical upgrade using nilspace-theoretic tools is deferred to Appendix \ref{app:proof-tau-cont}.

We are now ready to prove the theorem of this subsection.
\begin{proof}[Proof of Theorem \ref{thm:repre-with-2-hom}]
$(i)\Rightarrow (ii)$. It suffices to prove $(ii)$ for $\nss=\mc{H}$. Indeed, if we have this, then for any other 2-homogeneous compact profinite-step nilspace $\nss$, by Theorem \ref{thm:universal-covering} there is a fibration $\varphi':\mc{H}\to\nss$, so with the fibration $\varphi:\ns\to\mc{H}$ that we already have we obtain a fibration  $\varphi'\co\varphi:\ns\to\nss$ which proves $(ii)$. Let $m:\mc{H}\to \mc{P}(\mc{H})$ be the map $x\mapsto \delta_x$. Then $\zeta_{\mc{H},m}=\mu_{\cu^\omega(\mc{H})}$ is an affine-exchangeable measure on $\mc{H}^{\db{\mb{N}}}$ with the independence property, so by $(i)$ there is a Borel map $m':\ns\to\mc{P}(\mc{H})$ such that $\zeta_{\mc{H}, m}=\zeta_{\ns,m'}$. By Proposition \ref{prop:tau}, there exists a Borel map $\tau:\ns\to \mc{H}$ that preserves the $n$-cubic Haar measures for all $n\ge 0$. By Theorem \ref{thm:cube-surjective-app} the map $\tau$ agrees $\mu_{\ns}$-almost-everywhere with a continuous cube-surjective morphism $\varphi:\ns\to\mc{H}$, and by Theorem \ref{thm:cube-sur-implies-fib} the map $\varphi$
is a continuous fibration.

$(ii)\Rightarrow (i)$. If $\ns$ satisfies $(ii)$ then in particular there is a fibration $\varphi:\ns\to\mc{H}$. By Theorem \ref{thm:main} the nilspace $\mc{H}$ represents $\textup{Pr}^{\textup{Aff}(\mb{F}_2^\omega)}(\Bo^{\db{\mb{N}}})$ for every standard Borel space $\Bo$.  By Proposition \ref{prop:mes-pre-inf-cube} the map $\varphi^{\db{\mb{N}}}$ preserves the Haar measures on $\cu^{\omega}(\mc{\nss})$, $\cu^{\omega}(\mc{H})$, whence for any Borel map $m:\mc{H}\to \mc{P}(\Bo)$ we have $\zeta_{\mc{H},m}=\zeta_{\ns,m\co \varphi}$, so $\ns$ also represents affine-exchangeability.

$(iii)\Rightarrow (ii)$. The particular case $\Bo=\mc{H}$ of the assumption tells us that $\ns$ is a limit domain for convergent sequences of $\mc{H}$-valued functions. By Theorem \ref{thm:corresp}, the nilspace  $\ns$ represents $\textup{Pr}^{\textup{Aff}(\mb{F}_2^\omega)}(\mc{H}^{\db{\mb{N}}})$. Then, arguing similarly as in the proof of $(i)\Rightarrow (ii)$ above, using the particular measure $\mu_{\cu^\omega(\mc{H})}$, we deduce that there is a fibration $\ns\to\mc{H}$, whence $(ii)$ follows by Theorem \ref{thm:universal-covering}.

$(i)\Rightarrow (iii)$. This implication follows immediately from Theorem \ref{thm:corresp}.
\end{proof}

\subsection{Some restrictions on the structure of limit domains}\label{subsec:restrictions-on-lim-dom}\hfill\\
\noindent Let us record the following consequence of Theorem \ref{thm:repre-with-2-hom} $(ii)$.
\begin{corollary}\label{prop:limdomconst}
Let $(\ab,\ab_\bullet)$ be a filtered compact abelian group such that the associated group nilspace $\ns$ is a limit domain for convergent sequences $(f_n:\mb{F}_2^n\to\Bo)_{n\in \mb{N}}$ for every compact metric space $\Bo$. Then there is a fibration $\varphi:\ns\to\mc{H}$.
\end{corollary}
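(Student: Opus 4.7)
The plan is to deduce this directly from Theorem \ref{thm:repre-with-2-hom}. First I would observe that the hypothesis of the corollary already presupposes, via Definition \ref{def:limdom}, that $\ns$ is a compact profinite-step $2$-homogeneous nilspace: being a limit domain in the sense of that definition is only meaningful for such nilspaces. Thus $\ns$ qualifies as input for Theorem \ref{thm:repre-with-2-hom}.

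Next I would verify that the target nilspace $\mc{H}$ itself lies in the class quantified over in statement $(ii)$ of Theorem \ref{thm:repre-with-2-hom}. This follows from Definition \ref{def:H} together with the discussion after Definition \ref{def:del-p-adic-objects}: the compact abelian group $\prod_{\ell\ge 1}\mf{Z}^{\mb{N}}$ equipped with the product filtration $\prod_{\ell\ge 1}(G_\bullet^{(\ell)})^{\mb{N}}$ is $2$-homogeneous (by \cite[Theorem 1.4]{CGSS-p-hom}), and the filtration is clearly non-degenerate, so the associated group nilspace $\mc{H}$ is compact, profinite-step, and $2$-homogeneous, as needed.

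The central step is then the implication $(iii)\Rightarrow (ii)$ of Theorem \ref{thm:repre-with-2-hom}. Applied to $\ns$ (which satisfies $(iii)$ by hypothesis) and with $\nss=\mc{H}$, this immediately produces the desired continuous fibration $\varphi\colon\ns\to\mc{H}$, completing the proof.

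There is essentially no obstacle here beyond bookkeeping: the corollary is a direct specialization of Theorem \ref{thm:repre-with-2-hom}. The only point meriting care is confirming that $\mc{H}$ falls within the family of nilspaces over which statement $(ii)$ quantifies; this is handled in the preceding paragraph. One could alternatively bypass the theorem and argue from scratch via Theorem \ref{thm:corresp}, Proposition \ref{prop:tau}, and the continuity/fibration upgrade from Appendix~\ref{app:proof-tau-cont} applied to the deterministic measure $\zeta_{\mc{H},m}$ with $m(x)=\delta_x$, but this merely retraces the proof of $(iii)\Rightarrow(ii)$ in Theorem \ref{thm:repre-with-2-hom} and gains nothing new.
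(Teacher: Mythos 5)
Your proposal is correct and matches the paper exactly: the corollary is stated there as a direct consequence of Theorem \ref{thm:repre-with-2-hom}, obtained by applying the implication $(iii)\Rightarrow(ii)$ with $\nss=\mc{H}$, just as you do. Your additional checks (that the limit-domain hypothesis already forces $\ns$ to be compact, profinite-step and $2$-homogeneous, and that $\mc{H}$ lies in the class quantified over in $(ii)$) are the right bookkeeping and are consistent with the paper's definitions.
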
 
\noindent This result yields non-trivial restrictions on the objects that could be used as such general limit domains. In this subsection we illustrate these restrictions with a specific example that has appeared in previous work. 

Throughout this section let $G$ be the compact abelian group $\prod_{k=1}^{\infty}\prod_{\ell=1}^k (\mb{Z}/2^{k-\ell+1}\mb{Z})^{\mb{N}}$.
This group appeared in \cite{HHH} (denoted therein by $\mb{G}_{\infty}$) in connection with convergent sequences of Boolean functions $(f_n:\mb{F}_2^n\to\{0,1\})_{n\in \mb{N}}$. The results in  \cite{HHH}  (specifically \cite[Theorem 3.5]{HHH} with $d=\infty$) would suggest that, if we equip $G$ with the filtration  $G_\bullet=\prod_{k=1}^{\infty}\prod_{\ell=1}^k \abph_{k,\ell}^{\mb{N}}$, then the resulting group nilspace can be used as a representing nilspace for affine exchangeability, or equivalently as a limit domain for convergent sequences of $\Bo$-valued functions, for every compact metric space $\Bo$ (see Appendix \ref{app:correspondence} for more details). However, we will show below that this is not true. More precisely, if $(G,G_\bullet)$ were usable as such a general limit domain then by Corollary \ref{prop:limdomconst} there would exist a continuous fibration from the associated nilspace onto $\mc{H}$, and we shall prove below that such a fibration cannot exist. To prove this, we first note that if there existed such a fibration, then composing it with a projection to a single component $\abph_{\infty,1}$, we would obtain a fibration from $(G,G_\bullet)$ to $\abph_{\infty,1}$. Before proving that such a fibration does not exist, let us prove an analogous fact in the category of abelian groups, namely, that there is no homomorphism from $G$ to the group $\mf{Z}$ of 2-adic integers.

To begin with, we recall the following basic fact.

\begin{lemma}
The group of $2$-adic integers is torsion-free.
\end{lemma}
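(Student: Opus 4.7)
The plan is to exploit the fact that $\mf{Z}$ is not merely an abelian group but a commutative integral domain of characteristic zero, which makes the torsion-free property automatic. Recall that $\mf{Z}=\varprojlim \mb{Z}/2^n\mb{Z}$ embeds canonically as a subring of the field of $2$-adic numbers $\mb{Q}_2$, and in particular $\mf{Z}$ inherits characteristic zero. Then if $x\in \mf{Z}$ and $m\in\mb{Z}_{>0}$ satisfy $mx=0$, we may view this as a product of two elements of $\mf{Z}$, one of which (namely $m$) is nonzero; since $\mf{Z}$ has no zero divisors, we conclude $x=0$.

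If one prefers a self-contained argument not relying on the embedding into $\mb{Q}_2$, the plan is instead to use the $2$-adic valuation directly. Every nonzero $x\in \mf{Z}$ has a unique representation $x=2^k u$ with $k\ge 0$ and $u\in\mf{Z}^{\times}$ a $2$-adic unit (i.e.\ $u\notin 2\mf{Z}$, equivalently the image of $u$ in $\mb{Z}/2\mb{Z}$ is nonzero). Writing a positive integer $m$ in the same form $m=2^j n$ with $n$ odd, we see that $n$ is itself a unit of $\mf{Z}$, so $mx=2^{j+k}(nu)$ where $nu$ is a unit. If this equals $0$, multiplying by $(nu)^{-1}\in \mf{Z}^{\times}$ forces $2^{j+k}=0$ in $\mf{Z}$; but the element $2^{j+k}$ maps to the nonzero class of $2^{j+k}$ in $\mb{Z}/2^{j+k+1}\mb{Z}$, contradicting injectivity of this projection from $\mf{Z}$.

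Either route makes this lemma essentially a one-line verification, and there is no serious obstacle. The only mild subtlety is to make sure the product $mx$ is interpreted consistently, i.e.\ that the image of $m\in\mb{Z}$ inside $\mf{Z}$ is itself nonzero for any $m\ne 0$; this is clear since $\mb{Z}\hookrightarrow \mf{Z}$ is injective (already $m$ projects to a nonzero element of $\mb{Z}/2^n\mb{Z}$ for $n$ large enough).
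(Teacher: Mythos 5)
Your proof is correct. Note, however, that the paper itself offers no proof of this lemma at all: it is simply recalled as a standard fact, with no argument given. So there is nothing to compare against; both of your routes are valid and standard. The first (viewing $\mf{Z}$ as a characteristic-zero integral domain inside $\mb{Q}_2$, so that $mx=0$ with $m\neq 0$ forces $x=0$) is the quickest. The second (the valuation argument, writing a nonzero $x$ as $2^k u$ with $u$ a unit and observing that $2^{j+k}$ survives in $\mb{Z}/2^{j+k+1}\mb{Z}$) has the advantage of being self-contained, using only the inverse-limit description $\mf{Z}=\varprojlim \mb{Z}/2^n\mb{Z}$ that the paper actually works with. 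Your closing remark about the injectivity of $\mb{Z}\hookrightarrow\mf{Z}$ correctly addresses the only point where one could be sloppy. Either version would serve as a complete proof of the lemma.
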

\noindent From this we can immediately deduce the following.

\begin{lemma}
There is no \emph{non-constant} continuous affine homomorphism $G\to \mf{Z}$.
\end{lemma}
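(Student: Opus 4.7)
The plan is to exploit the just-stated torsion-freeness of $\mf{Z}$ together with the fact that every \emph{factor} in the product defining $G$ is a finite cyclic $2$-group, so the algebraic subgroup they topologically generate consists entirely of torsion elements, and torsion must be killed by any homomorphism to $\mf{Z}$.

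First, I would reduce to the linear case: any continuous affine homomorphism $\phi\colon G\to \mf{Z}$ has the form $\phi(g)=\phi_0(g)+c$ for a constant $c\in\mf{Z}$ and a continuous group homomorphism $\phi_0\colon G\to \mf{Z}$, so it suffices to show that the only such $\phi_0$ is the zero map. Next, for each triple $(k,\ell,i)$ with $1\le \ell\le k$ and $i\in\mb{N}$, let $e_{k,\ell,i}\in G$ be the generator of the corresponding factor $\mb{Z}/2^{k-\ell+1}\mb{Z}$ sitting inside $G$ (i.e.\ $1$ in that coordinate, $0$ elsewhere). Since $e_{k,\ell,i}$ has order dividing $2^{k-\ell+1}$, the element $\phi_0(e_{k,\ell,i})\in\mf{Z}$ is torsion, and by the preceding lemma ($\mf{Z}$ is torsion-free) we conclude $\phi_0(e_{k,\ell,i})=0$ for every $(k,\ell,i)$.

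Finally, I would invoke density: the algebraic subgroup generated by all $e_{k,\ell,i}$ is exactly the restricted direct sum $\bigoplus_{k,\ell,i}\mb{Z}/2^{k-\ell+1}\mb{Z}$, which is dense in the product $G=\prod_{k,\ell,i}\mb{Z}/2^{k-\ell+1}\mb{Z}$ in the product topology (any basic open cylinder constrains only finitely many coordinates and therefore meets the direct sum). Since $\phi_0$ is continuous and vanishes on this dense subgroup, $\phi_0\equiv 0$ on $G$, so $\phi=c$ is constant, as claimed.

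The argument is straightforward once the right decomposition is in place, so I do not anticipate a serious obstacle; the only point requiring minor care is checking that the restricted direct sum really is dense in the full product, which is immediate from the definition of the product topology on a countable product of discrete groups.
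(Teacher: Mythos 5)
Your proof is correct and follows essentially the same route as the paper: reduce to the linear case, observe that elements of finite support in $G$ are torsion and hence killed by any homomorphism to the torsion-free group $\mf{Z}$, and conclude by density of the finitely-supported elements together with continuity. The only cosmetic difference is that you phrase the density step via the generators $e_{k,\ell,i}$ and the restricted direct sum, while the paper speaks directly of elements with finitely many non-zero coordinates.
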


\begin{proof}
Using translations it clearly suffices to prove that there is no \emph{non-zero continuous} homomorphism $G\to \mf{Z}$. Suppose that we have a homomorphism $\varphi:G\to \mf{Z}$. For any $x\in G$ with only finitely many non-zero coordinates, we must have $\varphi(x)=0$ (otherwise, since $x$ has finite order, then so does $\varphi(x)$, contradicting the previous lemma). As every element of $G$ is the limit of elements of finite order in $G$, we then deduce by continuity that $\varphi$ is the zero homomorphism, a contradiction.
\end{proof}

\noindent We can generalize this argument to the case of fibrations between nilspaces. Let us start by proving the following lemma.

\begin{lemma}\label{lem:finite-degree-1-factor} 
Let $H_{\bullet}$ be a 2-homogeneous filtration of degree $k$ on an abelian group $H$, and let  $\ns$ be the group nilspace associated with $(H,H_\bullet)$. Then for every morphism $\varphi:\ns\to \abph_{\infty,1}$, there exists a morphism $\varphi_1: \pi_1(\ns)\to \abph_{\infty,1}$ such that $\varphi = \varphi_1\co \pi_1$. 
\end{lemma}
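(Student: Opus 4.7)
The plan is to establish that $\varphi$ is constant on cosets of $H_{(2)}$; the existence of $\varphi_1$ will then be automatic. I shall fix $x\in H$ and $h\in H_{(2)}$ and aim to show $\varphi(x+h)=\varphi(x)$.

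The first step is to extract a strong 2-adic divisibility condition on the iterated discrete derivatives $\partial_h^n \varphi(x):=\sum_{k=0}^n(-1)^{n-k}\binom{n}{k}\varphi(x+kh)$. The key observation is that the ``doubling'' Host--Kra cube
\[
c_n(v_1,\ldots,v_{2n}):=x+h\sum_{i=1}^n v_{2i-1}v_{2i}
\]
is a bona fide $2n$-cube on $\ns$, because its only non-constant coefficients arise at the degree-2 monomials $v_{2i-1}v_{2i}$, where we use exactly $h\in H_{(2)}$. Marginalizing the sign $(-1)^{|v|}$ separately over each pair $(v_{2i-1},v_{2i})$ (it contributes $-1$ when $v_{2i-1}v_{2i}=0$ and $+1$ otherwise) yields $\sigma_{2n}(\varphi\co c_n)=\partial_h^n\varphi(x)$. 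The morphism property of $\varphi$, combined with the filtration $\mf{Z}_{(i)}=2^{i-1}\mf{Z}$ on $\abph_{\infty,1}$, then gives
\[
\partial_h^n\varphi(x)\in\mf{Z}_{(2n)}=2^{2n-1}\mf{Z}\qquad\text{for every }n\geq 1.
\]

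Next, I will exploit 2-homogeneity: iterating $2H_{(i)}\subseteq H_{(i+1)}$ yields $2^{j-1}h\in H_{(j+1)}$ for all $j\geq 1$, so $2^{k-1}h=0$ and hence $h$ has order $2^s$ for some $s\leq k-1$. The sequence $\psi(j):=\varphi(x+jh)$ is therefore $2^s$-periodic on $\mb{Z}$, with values in $\mf{Z}$, and satisfies $\partial^n\psi(j)\in 2^{2n-1}\mf{Z}$ for all $n\geq 1$ and $j\in\mb{Z}$. The crux of the argument is then the following computational lemma, which I will prove by induction on $s$: any $2^s$-periodic $\psi:\mb{Z}\to\mf{Z}$ with $\partial^n\psi\in 2^{2n-1}\mf{Z}$ for every $n\geq 1$ must be constant. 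For the inductive step I will consider the two half-period subsequences $\tilde\psi(j):=\psi(2j)$ and $\hat\psi(j):=\psi(2j+1)$ (both of period $2^{s-1}$); using the operator identity $\partial_2=\partial^2+2\partial$ and its binomial expansion $\partial_2^n=\sum_k\binom{n}{k}2^{n-k}\partial^{n+k}$, a term-by-term $2$-adic estimate gives $\partial^n\tilde\psi,\,\partial^n\hat\psi\in 2^{3n-1}\mf{Z}\subseteq 2^{2n-1}\mf{Z}$. The induction hypothesis then forces $\tilde\psi\equiv C_0$ and $\hat\psi\equiv C_1$, so $\psi$ alternates between these two values. A direct binomial calculation yields $\partial^n\psi(0)=(-1)^n2^{n-1}(C_0-C_1)$, and the constraint $\partial^n\psi(0)\in 2^{2n-1}\mf{Z}$ for all $n$ forces $C_0-C_1\in\bigcap_n 2^n\mf{Z}=\{0\}$, completing the induction.

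Applying this lemma to $\psi$ yields $\varphi(x+h)=\psi(1)=\psi(0)=\varphi(x)$, so $\varphi$ descends to a well-defined map $\varphi_1:H/H_{(2)}=\pi_1(\ns)\to\abph_{\infty,1}$. This map is a morphism because any cube on $\mc{D}_1(H/H_{(2)})$ is affine-linear and admits an affine-linear lift to a cube on $\ns$, whose image under $\varphi$ is a cube on $\abph_{\infty,1}$. The main obstacle is the computational lemma; the delicate point is the simultaneous use of $2^s$-periodicity and the $2$-adic decay $2^{2n-1}$, bridged by the $\partial\leftrightarrow\partial_2$ change of discretization.
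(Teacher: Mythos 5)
Your proof is correct, but it takes a genuinely different route from the paper's. The paper argues by induction on the degree $k$ of the filtration: it fixes $g_k\in H_{(k)}$, notes that $2$-homogeneity forces $g_k$ to have order $2$, so that $j\mapsto\varphi(g+jg_k)$ alternates between two values $a_0,a_1$ and hence $\partial_{g_k}^n\varphi(g)=(-1)^n2^{n-1}(a_0-a_1)$; comparing this linear growth of the $2$-adic valuation with the bound $\partial_{g_k}^n\varphi(g)\in(\abph_{\infty,1})_{(nk)}=2^{nk-1}\mf{Z}$ (from the standard derivative property of polynomial maps) forces $a_0=a_1$ once $k\geq 2$, and the outer induction then descends all the way to $\pi_1$. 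You instead kill $H_{(2)}$ directly: since an element $h\in H_{(2)}$ may have order $2^s$ with $s$ up to $k-1$, you cannot use the simple alternating-sequence identity, and you compensate with an inner induction on $s$ via the half-period subsequences and the operator identity $\partial_2=\partial^2+2\partial$, reducing in the end to the same two-valued computation $(-1)^n2^{n-1}(C_0-C_1)$. Both arguments share the same engine --- a quadratic-versus-linear race of $2$-adic valuations of iterated differences --- but yours trades the paper's outer induction on the degree for a self-contained lemma on $2^s$-periodic $\mf{Z}$-valued sequences, and derives the input bound $\partial_h^n\varphi(x)\in 2^{2n-1}\mf{Z}$ from an explicit Host--Kra cube (the doubling cube and $\sigma_{2n}$) rather than by citing the polynomial-map derivative property; the cube computation is correct and arguably more transparent. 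Two minor remarks: the bound $\partial_h^n\varphi(x)\in 2^{2n-1}\mf{Z}$ also follows in one line from \cite[Theorem 2.2.14]{Cand:Notes1}, as in the paper, so the cube construction is optional; and in the final descent step the relevant fact is simply that $\cu^n(\pi_1(\ns))$ is by definition the image of $\cu^n(\ns)$ under $\pi_1$, so every cube on the quotient lifts --- no affine-linearity of the lift is needed.
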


\begin{proof} 
We argue by induction on $k$. The case $k=1$ is trivial. Now assume that the result holds for step less than $k$ and let $H_{\bullet}$ be a degree-$k$ filtration. We want to prove that for any $g\in H$ and any $g_k\in H_{(k)}$ we have $\varphi(g+g_k)=\varphi(g)$. 

Fix any $g\in H$ and $g_k\in H_{(k)}$. Let $a_0:=\varphi(g)$ and $a_1:=\varphi(g+g_k)$. Recall that for any pair of abelian groups $Z,Z'$ and any function $f:Z\to Z'$, the forward difference of $f$ with step $h\in Z$ is the function $\partial_{h}f(x):=f(x+h)-f(x)$, $x\in H$. We now claim that $\partial_{g_k}^n\varphi(g) = 2^{n-1}(-1)^n(a_0-a_1)$ for every $n\in \mb{N}$. Indeed this is clear for $n=1$, and for $n>1$ we have by induction
\begin{eqnarray*}
\partial_{g_k}^n\varphi(g)  & = & \partial_{g_k}(2^{n-2}(-1)^{n-1}(\varphi(g)-\varphi(g+g_k))) 
 =  2^{n-2}(-1)^{n-1}(\partial_{g_k}\varphi(g)-\partial_{g_k}\varphi(g+g_k)) \\
& = & 2^{n-2}(-1)^{n-1}(a_1-a_0-(a_0-a_1)) = 2^{n-1}(-1)^n(a_0-a_1)
\end{eqnarray*}
where we have used that $g_k$ has order 2 (since $H_\bullet$ is 2-homogeneous).

Now, by standard properties of polynomial maps \cite[Theorem 2.2.14]{Cand:Notes1}, since $g_k\in H_{(k)}$ we have $\partial_{g_k}^n\varphi(g)\in (\abph_{\infty,1})_{kn}$ where $(\abph_{\infty,1})_{j}$ is the $j$-th term defining the filtration on $\mf{Z}$ associated with $\abph_{\infty,1}$ by Definition \ref{def:del-p-adic-objects}. From this definition it follows that this group is precisely $2^{nk-1}\mf{Z}$. Now since $a_0\not=a_1$, we have for some $r\in\mb{N}$ that $a_0-a_1=\sum_{i=r}^{\infty }\alpha_i 2^i$ where $\alpha_i\in\{0,1\}$ for all $i\ge r$ and $\alpha_r=1$. In particular the 2-adic order of $a_0-a_1$ is at most $r \in \mb{N}$, and then by the previous paragraph the 2-adic order of $\partial_{g_k}^n\varphi(g)$ is at most $r+n-1$. But in the paragraph we have also proved that this order is at least $nk-1$. This yields a contradiction for $n$ large enough.

Thus we have proved that $\varphi:\ns\to \abph_{\infty,1}$ factors through $\pi_{k-1}$. By induction on $k$, we deduce that $\varphi$ factors though $\pi_1$.
\end{proof}
\noindent Let us now adapt the above arguments to the category of nilspaces, to apply it to the filtered group $(G,G_\bullet)$ defined above.
\begin{lemma}\label{lem:not-many-morphisms} 
Let $\ns$ be the group nilspace associated with $(G,G_\bullet)$, and let $\varphi:\ns\to \abph_{\infty,1}$ be a morphism. Then there is a morphism $\varphi_1:\pi_1(\ns)\to \abph_{\infty,1}$ such that $\varphi = \varphi_1 \co \pi_1$.
\end{lemma}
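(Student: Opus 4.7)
The plan is to reduce to the finite-degree case handled by Lemma \ref{lem:finite-degree-1-factor} via a density and continuity argument. Let $G_{\mathrm{fin}} \subset G$ denote the dense subgroup of elements with only finitely many nonzero coordinates. For any finite set $F$ of coordinate indices $(k,\ell,i)$ with $1 \le \ell \le k$ and $i \in \mb{N}$, the corresponding sub-product $H_F := \prod_{(k,\ell,i) \in F} \abph_{k,\ell}$, equipped with the induced filtration $H_{F,\bullet}$, is a finite filtered abelian $2$-group whose filtration is $2$-homogeneous of finite degree $k_F := \max\{k : (k,\ell,i) \in F\}$, since each factor $\abph_{k,\ell}$ carries such a filtration by construction. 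The associated finite group nilspace embeds as a sub-nilspace of $\ns$, so $\varphi|_{H_F} : H_F \to \abph_{\infty,1}$ is a morphism to which Lemma \ref{lem:finite-degree-1-factor} directly applies.

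Consequently, for any $x, y \in G_{\mathrm{fin}}$ with $x - y \in G_{(2)}$, choosing $F$ to contain the union of their supports gives $x - y \in H_F \cap G_{(2)} = H_{F,(2)}$ (since the filtration is coordinate-wise), so Lemma \ref{lem:finite-degree-1-factor} yields $\varphi(x) = \varphi(y)$. To promote this to arbitrary $x, y \in G$ with $x - y \in G_{(2)}$, I would exploit the fact that $G_{(2)}$ is itself a product of closed subgroups $(\abph_{k,\ell})_{(2)}^{\mb{N}}$, so $G_{(2)} \cap G_{\mathrm{fin}}$ is dense in $G_{(2)}$. Choosing $y_n \in G_{\mathrm{fin}}$ with $y_n \to y$ and $z_n \in G_{(2)} \cap G_{\mathrm{fin}}$ with $z_n \to x - y$, the sequence $x_n := y_n + z_n \in G_{\mathrm{fin}}$ satisfies $x_n \to x$ and $x_n - y_n \in G_{(2)}$, so the previous step gives $\varphi(x_n) = \varphi(y_n)$; continuity of $\varphi$ then forces $\varphi(x) = \varphi(y)$.

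Thus $\varphi$ is constant on fibers of $\pi_1 : G \to G / G_{(2)}$, and we may define $\varphi_1 : \pi_1(\ns) \to \abph_{\infty,1}$ by $\varphi_1(\pi_1(x)) := \varphi(x)$, giving $\varphi = \varphi_1 \co \pi_1$. To verify that $\varphi_1$ is a nilspace morphism: since $\pi_1$ is a fibration (the quotient map of a group nilspace), every $n$-cube $\q'$ on $\pi_1(\ns)$ lifts to a cube $\q \in \cu^n(\ns)$ with $\pi_1 \co \q = \q'$, and then $\varphi_1 \co \q' = \varphi \co \q \in \cu^n(\abph_{\infty,1})$. The main obstacle I anticipate is the continuity step: the finite-degree reduction itself is immediate from Lemma \ref{lem:finite-degree-1-factor}, but transferring the factorization from the dense subgroup $G_{\mathrm{fin}}$ to all of $G$ relies on both the density of $G_{(2)} \cap G_{\mathrm{fin}}$ inside $G_{(2)}$ (an elementary but non-automatic product-topology statement) and the careful use of continuity of $\varphi$, so that is where the argument requires explicit care.
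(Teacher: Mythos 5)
Your proof is correct and follows essentially the same route as the paper: reduce to finite coordinate sub-products (where Lemma \ref{lem:finite-degree-1-factor} applies), then pass to the whole group by density and continuity. The only cosmetic difference is that the paper approximates $g,g'$ by coordinate truncations $p_n(g),p_n(g')$, which automatically preserve the relation $\pi_1(g)=\pi_1(g')$ since $p_n$ commutes with $\pi_1$, whereas you approximate $y$ and $x-y$ separately to keep $x_n-y_n\in G_{(2)}$; both work, and your explicit check that $\varphi_1$ is a morphism via cube-lifting through the fibration $\pi_1$ is a welcome addition the paper leaves implicit.
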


\begin{proof} 
Take $g,g'\in \ns$ such that $\pi_1(g)=\pi_1(g')$. For any $n\in\mb{N}$ let $p_n:G \to G$ be the homomorphism that projects onto the subgroup $\big(\prod_{k=1}^n\prod_{\ell=1}^k \mb{Z}_{2^{k-\ell+1}}^n\times \{0\}^{\mb{N}\setminus [n]}\big)\times (\prod_{k=n+1}^{\infty}\prod_{\ell=1}^k \{0\}^{\mb{N}})$ (by switching the relevant coordinates to 0). Clearly we have $p_n(g)\to g$ and $p_n(g')\to g'$ as $n\to \infty$. Furthermore $\pi_1(p_n(g))=\pi_1(p_n(g'))$ for all $n$ (since $p_n$ commutes with the projections to the 1-factors). 

By Lemma \ref{lem:finite-degree-1-factor} we have $\varphi(g_n)=\varphi(g'_n)$ for all $n$. Hence, taking the limit as $n\to \infty$ and using the continuity of $\varphi$ we conclude that $\varphi(g)=\varphi(g')$.
\end{proof}

\begin{theorem}\label{thm:morphinex}
Let $\ns$ be the group nilspace associated with $(G,G_\bullet)$. There exists no cube-surjective morphism $\ns\to \mc{H}$.
\end{theorem}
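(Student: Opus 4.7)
The plan is to assume such a cube-surjective morphism $\varphi: \ns \to \mc{H}$ exists and derive a contradiction by composing with a coordinate projection from $\mc{H}$ to a single copy of $\abph_{\infty, 1}$, invoking Lemma \ref{lem:not-many-morphisms} to factor the result through $\pi_1(\ns)$, and then using a structure-group obstruction.

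First, I would compose $\varphi$ with a coordinate projection $p: \mc{H} \to \abph_{\infty, 1}$ onto any one of the copies of $\abph_{\infty, 1}$ in the product $\mc{H} = \prod_{\ell \geq 1}(\abph_{\infty, \ell})^{\mb{N}}$. Any such coordinate projection from a product nilspace is a fibration, hence cube-surjective, so $p \co \varphi: \ns \to \abph_{\infty, 1}$ is cube-surjective. By Lemma \ref{lem:not-many-morphisms}, this composition factors as $\psi \co \pi_1$ for some continuous morphism $\psi: \pi_1(\ns) \to \abph_{\infty, 1}$; cube-surjectivity transfers to $\psi$ because any $\q \in \cu^n(\abph_{\infty, 1})$ pulls back via $p \co \varphi$ to some $\q' \in \cu^n(\ns)$, and then $\pi_1 \co \q'$ lies in $\cu^n(\pi_1(\ns))$ and is mapped to $\q$ by $\psi$.

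A direct computation of $G_{(2)}$ using the filtrations defining the $\abph_{k,\ell}$ identifies $\pi_1(\ns) = G/G_{(2)}$ with $\mc{D}_1(A)$ for $A \cong \prod_{k \geq 1}\mb{F}_2^{\mb{N}}$, a compact 2-torsion group; in particular all its structure groups $\ab_k(\pi_1(\ns))$ vanish for $k \geq 2$. On the other hand, Definition \ref{def:del-p-adic-objects} yields $\ab_k(\abph_{\infty, 1}) = \mf{Z}_{(k)}/\mf{Z}_{(k+1)} \cong \mb{F}_2$ for every $k \geq 1$, which is non-trivial. The contradiction now follows: by Theorem \ref{thm:cube-sur-implies-fib}, the cube-surjective morphism $\psi$ between compact profinite-step nilspaces must be a continuous fibration, and so induces surjective structure homomorphisms $\alpha_k : \ab_k(\pi_1(\ns)) \twoheadrightarrow \ab_k(\abph_{\infty, 1})$ for every $k$ (see \cite[\S 3.3.2]{Cand:Notes1}); taking $k \geq 2$ gives a surjection $0 \twoheadrightarrow \mb{F}_2$, which is impossible.

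The main delicate point is transferring cube-surjectivity to $\psi$ through the factorization by $\pi_1$, and then upgrading $\psi$ to a fibration via Theorem \ref{thm:cube-sur-implies-fib}; once these are in place, the structure-group obstruction is immediate.
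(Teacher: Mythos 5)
Your proposal is correct and follows essentially the same route as the paper: project $\mc{H}$ onto a copy of $\abph_{\infty,1}$, factor the resulting cube-surjective morphism through $\pi_1$ via Lemma \ref{lem:not-many-morphisms}, upgrade to a fibration via Theorem \ref{thm:cube-sur-implies-fib}, and contradict the fact that a fibration cannot raise the step. The only cosmetic differences are that you apply Theorem \ref{thm:cube-sur-implies-fib} to the factored map $\psi$ rather than to $p\co\varphi$ itself, and you spell out the final step obstruction explicitly through the vanishing of $\ab_k(\pi_1(\ns))$ for $k\ge 2$ versus $\ab_k(\abph_{\infty,1})\cong\mb{F}_2$.
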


\begin{proof} 
If there existed such a morphism $\varphi:\ns\to \mc{H}$, then there would exist a cube-surjective morphism $\varphi':\ns\to \abph_{\infty,1}$. By Theorem \ref{thm:cube-sur-implies-fib} we know that $\varphi'$ would then be a fibration. But by Lemma \ref{lem:not-many-morphisms} we also know that $\varphi'$ would factor through $\pi_1$, i.e., there would exist a morphism $\varphi_1:\pi_1(\ns)\to \mc{H}$ such that $\varphi'=\varphi_1\co \pi_1$. Then, since $\varphi'$ and $\pi_1$ would be fibrations, so would $\varphi_1$. Thus we would have a fibration $\varphi_1:\pi_1(\ns)\to \abph_{\infty,1}$, and this yields a contradiction because $\pi_1(\ns)$ is a 1-step nilspace whereas $\abph_{\infty,1}$ has infinite step (but the step of a fibration's image cannot be larger than the step of its domain).
\end{proof}

\appendix

\section{\texorpdfstring{$\infty$}{infinite}-fold compact abelian bundles}\label{app:cubic-coupling-infinite-step}

\noindent The concept of Haar measure on a compact nilspace of finite step is crucial in nilspace theory. A key fact enabling the definition of this measure is that any such nilspace can be expressed as an abelian bundle construction iterated finitely many times (see \cite[\S 2.2.2]{Cand:Notes2}). In this paper we need an analogous Haar measure for profinite-step compact nilspaces, and to this end, similarly, we shall use the following notion of abelian bundle.

\begin{defn}[$\infty$-fold abelian bundle]\label{def:infty-fold-ab-bundle}
A set $\Bo$ is an $\infty$-fold abelian bundle if there exists a sequence of sets $(\Bo_i)_{i\ge 0}$ and a sequence of maps $\pi_{j,i}:\Bo_i\to\Bo_j$ for $0\le j\le i$ such that the following properties hold. For every $i\ge 1$ there exists an abelian group $\ab_i$ such that $\Bo_i$ is a $\ab_i$-bundle over $\Bo_{i-1}$ with projection $\pi_{i-1,i}$ (in the sense of \cite[Definition 3.2.17]{Cand:Notes1}). Then for all $0\le j\le i$ we have $\pi_{j,i}= \pi_{j,j+1}\co\cdots\co \pi_{i-1,i}$. Finally, we have that $\Bo_0$ is a  singleton and $\Bo = \varprojlim \Bo_i =\{(b_k)_{k=0}^\infty\in \prod_{k=0}^\infty \Bo_k:\forall\; 0\le j\le i, \pi_{j,i}(b_i)=b_j\}$. For each $i\geq 0$ we denote by $\pi_i$ the limit map $\Bo\to\Bo_i$, $(b_k)_{k=0}^\infty\mapsto b_i$. 
\end{defn}
\noindent Thus $\infty$-fold abelian bundles are inverse limits of $k$-fold abelian bundles. When we carry out this construction in the category of \emph{compact} abelian bundles, we obtain the following.
\begin{defn}[$\infty$-fold compact abelian bundles]\label{def:infty-fold-compact-ab-bundle}
An \emph{$\infty$-fold compact abelian bundle} is an $\infty$-fold abelian bundle $\Bo=\varprojlim \Bo_i$ such that $\Bo_i$ is a compact abelian $\ab_i$-bundle over $\Bo_{i-1}$ (in the sense of \cite[Definition 2.2.6]{Cand:Notes2}) and $\Bo$ is equipped with the compact topology generated by the limit maps $\pi_i$.
\end{defn}

\begin{remark} 
Note that in particular this implies that $\Bo$ is a compact space and the maps  $\pi_i:\Bo\to\Bo_i$ are continuous and open for all $i\ge 0$.
\end{remark}

We can now define the desired notion of Haar measure.

\begin{lemma}[Haar measure on $\infty$-fold compact abelian bundles]\label{lem:haar-mes-infty-fold} 
Let $\Bo=\varprojlim \Bo_i$ be an $\infty$-fold compact abelian bundle. Then there exists a unique measure $\mu_{\Bo}$ on $\Bo$ such that for every $i\ge 0$ we have $\mu_{\Bo}\co \pi_i^{-1} = \mu_{\Bo_i}$ where $\mu_{\Bo_i}$ is the Haar measure on the $i$-fold compact abelian bundle $\Bo_i$ \textup{(}constructed in \cite[Proposition 2.2.5]{Cand:Notes2}\textup{)}.
\end{lemma}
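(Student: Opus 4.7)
The plan is to prove this by the standard Kolmogorov-type inverse-limit construction: assemble the Borel probability measures $\mu_{\Bo_i}$ on the factors $\Bo_i$ into a single measure on the limit, using compactness of $\Bo$ to get $\sigma$-additivity, and use a $\pi$-system argument for uniqueness. Two intermediate facts are needed: (a) compatibility of the $\mu_{\Bo_i}$ under the bundle projections $\pi_{j,i}$, and (b) that the cylinder sets $\pi_i^{-1}(A)$, $A\subset \Bo_i$ Borel, generate the Borel $\sigma$-algebra on $\Bo$.

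First I would establish compatibility: for $0\le j\le i$, $\mu_{\Bo_i}\co \pi_{j,i}^{-1} = \mu_{\Bo_j}$. Since $\pi_{j,i}=\pi_{j,j+1}\co\cdots\co\pi_{i-1,i}$, it suffices to check $\mu_{\Bo_k}\co \pi_{k-1,k}^{-1} = \mu_{\Bo_{k-1}}$ for every $k\ge 1$. This is immediate from the inductive construction of the Haar measure on a finite-fold compact abelian bundle (see \cite[Proposition 2.2.5]{Cand:Notes2}), where $\mu_{\Bo_k}$ is defined precisely so that its pushforward under $\pi_{k-1,k}$ is $\mu_{\Bo_{k-1}}$ (the fiberwise Haar measure of $\ab_k$ integrates out). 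Fact (b) follows from the definition of the topology on $\Bo$ (it is the initial topology generated by the $\pi_i$), so a countable base for $\Bo$ is obtained from countable bases for the $\Bo_i$ pulled back through the $\pi_i$, whence the Borel $\sigma$-algebra $\mc{B}(\Bo)$ is generated by $\bigcup_{i\ge 0}\pi_i^{-1}(\mc{B}(\Bo_i))$.

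Next I would construct $\mu_\Bo$ on the algebra $\mc{A}:=\bigcup_{i\ge 0}\pi_i^{-1}(\mc{B}(\Bo_i))$ (this is an algebra because the family $(\pi_i^{-1}(\mc{B}(\Bo_i)))_{i\ge 0}$ is increasing, using that $\pi_j=\pi_{j,i}\co \pi_i$ for $j\le i$). Define $\mu_\Bo(\pi_i^{-1}(A)):=\mu_{\Bo_i}(A)$; this is well-defined by the compatibility of step 1. To obtain a Borel probability measure on $\Bo$ via Carathéodory extension, I must verify $\sigma$-additivity on $\mc{A}$. For this, given a decreasing sequence $(C_n)_{n\ge 1}$ in $\mc{A}$ with $\lim_n \mu_\Bo(C_n)>0$, I need $\bigcap_n C_n\neq \emptyset$. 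By inner regularity of each $\mu_{\Bo_i}$ on the compact metric space $\Bo_i$, one can approximate each $C_n$ from within by a compact set $K_n\subset C_n$ with $\mu_\Bo(C_n\setminus K_n)<\varepsilon 2^{-n}$. The $K_n$ are compact in $\Bo$ (preimages of compacts under continuous $\pi_i$ intersected with the compact $\Bo$ are closed hence compact in $\Bo$), and $\bigcap_{n\le N} K_n$ has positive $\mu_\Bo$-measure for all $N$ hence is non-empty; by the finite intersection property, $\bigcap_n K_n\neq \emptyset$, giving $\bigcap_n C_n\neq \emptyset$.

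Finally, uniqueness follows from a standard $\pi$-system argument: two Borel probability measures on $\Bo$ that agree on the algebra $\mc{A}$ (which is a $\pi$-system generating $\mc{B}(\Bo)$ by fact (b)) must coincide on $\mc{B}(\Bo)$. I do not anticipate a serious obstacle; the one place to be careful is the $\sigma$-additivity on $\mc{A}$, where the compactness of $\Bo$ (as an inverse limit of compact spaces) and the inner regularity of each $\mu_{\Bo_i}$ are both essential. If one prefers to bypass Carathéodory, one can alternatively identify $\Bo$ as a closed subset of the compact metric space $\prod_{i\ge 0}\Bo_i$ and apply the Kolmogorov consistency theorem directly to the compatible family $(\mu_{\Bo_i})_{i\ge 0}$, obtaining the measure supported on $\Bo$.
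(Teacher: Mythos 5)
Your proposal is correct and follows essentially the same route as the paper: both define $\mu_{\Bo}$ on the cylinder sets $\pi_i^{-1}(A)$ via $\mu_{\Bo_i}(A)$, check well-definedness using that the factor maps $\pi_{j,i}$ preserve the Haar measures, establish $\sigma$-additivity by a compactness argument using regularity of the measures $\mu_{\Bo_i}$, and conclude with Carath\'eodory's extension theorem. The only cosmetic difference is that you phrase $\sigma$-additivity as continuity at $\emptyset$ along decreasing sequences with inner approximation by compacts, while the paper verifies countable additivity directly with an inner-compact/outer-open approximation; these are interchangeable.
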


\begin{proof} 
Let $\mc{S}:=\{\pi_i^{-1}(A):i\ge 0, A\subset \mc{A}(\Bo_i)\}$ where $\mc{A}(\Bo_i)$ is the Borel $\sigma$-algebra on $\Bo_i$. Note that $\mc{S}$ is a semi-ring of sets. We define $\mu_{\Bo}((\pi_i)^{-1}(A)):=\mu_{\Bo_i}(A)$. As the functions $\pi_{j,i}:\Bo_i\to\Bo_j$ for $i\ge j$ form an inverse system, note that if $\pi_i^{-1}(A)=\pi_j^{-1}(C)$ for some $i\ge j$, then $\pi_i^{-1}(A)=\pi_i^{-1}(\pi_{j,i}^{-1}(C))$. Hence $A=\pi_{j,i}^{-1}(C)$ and by \cite[Lemma 2.2.6]{Cand:Notes2} we know that $\pi_{j,i}$ preserves the Haar measure. Thus $\mu_{\Bo_i}(A)=\mu_{\Bo_j}(C)$.

We now check that with this definition the measure $\mu_{\Bo}$ is $\sigma$-additive on $\mc{S}$. Suppose that $A\in \mc{S}$ is of the form $A = \sqcup_{j=1}^{\infty} A_j$ for some $A_j\in \mc{S}$. In particular $\cup_{j=1}^k A_j \subset A$ for any $k\ge 1$. We also have $A=\pi_r^{-1}(C)$ and $A_j = \pi_{r_j}^{-1}(C_j)$ for all $j\ge 0$ (since  $A,A_j\in \mc{S}$). Letting $r'\ge \max(r,r_1,\ldots,r_k)$, we have that these $A$ and these $A_j$ can all be seen as subsets of $\Bo_{r'}$. Hence, by additivity of the measure $\mu_{\Bo_{r'}}$ we have $\mu_{\Bo}(A) \ge \sum_{j=1}^k \mu_{\Bo}(A_j)$. Letting $k\to \infty$ we obtain the inequality $\mu_{\Bo}(A) \ge \sum_{j=1}^{\infty} \mu_{\Bo}(A_j)$.

To prove the reversed inequality, let us fix any $\epsilon>0$. By \cite[Proposition 8.1.12]{Cohn} we know that for all $n\ge 0$ and $k\ge 1$, the measures $\mu_{\Bo_k}$ are regular. Thus, there exists a compact set $D\subset C$ such that $\mu_{\Bo_r}(C\setminus D)<\epsilon$ (recall that $A=\pi_r^{-1}(C)$). Similarly, for every $j\ge 1$ there exists an open set $E_j\supset C_j$ such that $\mu_{\Bo_{r_j}}(E_j\setminus C_j) < \epsilon/2^j$. As $\pi_r^{-1}(D)\subset \cup_{j=1}^{\infty}\pi_{r_j}^{-1}(E_j)$, by compactness there is a finite set $I\subset \mb{N}$ such that $\pi_r^{-1}(D)\subset \cup_{j\in I}\pi_{r_j}^{-1}(E_j)$. Now, taking $r''\ge \max(r,(r_j)_{j\in I})$, arguing as in the previous paragraph we obtain $\mu_{\Bo}(A)-\epsilon \le \sum_{j\in I} (\mu_{\Bo}(A_j)+\epsilon/2^j)\le \sum_{j=1}^{\infty} (\mu_{\Bo}(A_j)+\epsilon/2^j)$. Hence $\mu_{\Bo}(A)\le 2\epsilon+ \sum_{j=1}^{\infty} \mu_{\Bo}(A_j)$. Letting $\epsilon\to 0$, the desired inequality follows.

We complete the proof by applying Carath\'eodory's extension theorem.
\end{proof}

Next, we need to generalize the notion of bundle morphism from \cite[Definition 3.3.1]{Cand:Notes1}.

\begin{defn}\label{def:infty-fold-bundle-morphism}
Let $\Bo,\Bo'$ be $\infty$-fold abelian bundles. Let $\Bo_i,\Bo_i'$ be the corresponding terms of the inverse limit and $\ab_i,\ab_i'$ the structure groups. A bundle morphism from $\Bo$ to $\Bo'$ is a map $\varphi:\Bo\to\Bo'$ satisfying the following properties.
\begin{enumerate}[leftmargin=0.8cm]
    \item For every $i\ge 0$, if $\pi_i(x)=\pi_i(y)$ then $\pi_i(\varphi(x))=\pi_i(\varphi(y))$. Thus there is an induced well-defined map $\varphi_i:\Bo_i\to\Bo_i'$.
    \item For every $i\ge 1$ there is a homomorphism $\alpha_i:\ab_i\to\ab_i'$ such that for every $x\in \Bo_i$ and $z\in \ab_i$ we have $\varphi_i(x+z)=\varphi_i(x)+\alpha_i(z)$.
\end{enumerate}
The bundle morphism is \emph{totally surjective} if the maps $\alpha_i$ are surjective for all $i\ge 1$.
\end{defn}
\noindent Totally surjective bundle morphisms for $\infty$-fold compact abelian bundles generalize the concept of surjective homomorphisms for compact abelian groups. In particular, these maps preserve the Haar measure.

\begin{lemma}\label{lem:mes-pre-1} 
Let $\Bo,\Bo'$ be two $\infty$-fold compact abelian bundles and let $\varphi:\Bo\to\Bo'$ be a continuous totally surjective bundle morphism. Then $\mu_{\Bo'} = \mu_{\Bo}\co \varphi^{-1}$.
\end{lemma}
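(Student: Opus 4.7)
The plan is to reduce the statement to the finite-step analogue, which is part of the standard theory of compact nilspaces (in particular the fact cited in the proof of Lemma \ref{lem:haar-mes-infty-fold} that a totally surjective continuous bundle morphism between finite-fold compact abelian bundles preserves the Haar measures, see \cite[Lemma 2.2.6]{Cand:Notes2} and \cite[Proposition 2.2.5]{Cand:Notes2}). First I would unpack the inverse-limit data, writing $\Bo=\varprojlim \Bo_i$, $\Bo'=\varprojlim \Bo_i'$, with projections $\pi_i, \pi_i'$, structure groups $\ab_i,\ab_i'$ and induced well-defined maps $\varphi_i:\Bo_i\to \Bo_i'$ given by Definition \ref{def:infty-fold-bundle-morphism}(i). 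By construction $\pi_i'\co \varphi=\varphi_i\co \pi_i$ for every $i\ge 0$.

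Next I would verify that each $\varphi_i$ is a totally surjective continuous bundle morphism between the $i$-fold compact abelian bundles $\Bo_i$ and $\Bo_i'$. The structure-group compatibility at each level $j\le i$ is given directly by Definition \ref{def:infty-fold-bundle-morphism}(ii), with structure homomorphisms $\alpha_j$ which are surjective by assumption; these are exactly the conditions of \cite[Definition 3.3.1]{Cand:Notes1} in the finite-fold setting. Continuity of $\varphi_i$ follows from the continuity of $\varphi$ together with the fact that the limit maps $\pi_i, \pi_i'$ are continuous and open (this is the point where openness of the $\pi_i'$ is used to transfer continuity from $\varphi$ to the quotient map $\varphi_i$). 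Applying the finite-step result to each $\varphi_i$, we get
\[
\mu_{\Bo_i'} = \mu_{\Bo_i}\co \varphi_i^{-1} \quad \text{for every } i\ge 0.
\]

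Finally I would invoke the uniqueness part of Lemma \ref{lem:haar-mes-infty-fold}: the Haar measure $\mu_{\Bo'}$ is the unique Borel probability measure on $\Bo'$ whose pushforward under each $\pi_i'$ equals $\mu_{\Bo_i'}$. It therefore suffices to check that $\nu:=\mu_{\Bo}\co\varphi^{-1}$ satisfies $\nu\co (\pi_i')^{-1}=\mu_{\Bo_i'}$ for every $i$. Using $\pi_i'\co \varphi=\varphi_i\co \pi_i$ and the measure-preserving property of $\pi_i$ from Lemma \ref{lem:haar-mes-infty-fold}, we compute
\[
\nu\co (\pi_i')^{-1} \;=\; \mu_{\Bo}\co(\pi_i'\co\varphi)^{-1} \;=\; \mu_{\Bo}\co(\varphi_i\co \pi_i)^{-1} \;=\; (\mu_{\Bo}\co \pi_i^{-1})\co \varphi_i^{-1} \;=\; \mu_{\Bo_i}\co \varphi_i^{-1} \;=\; \mu_{\Bo_i'},
\]
which gives the desired equality $\mu_{\Bo'}=\mu_{\Bo}\co \varphi^{-1}$.

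The only slightly delicate point, and the step I would take most care over, is checking that the induced $\varphi_i$ fit into the existing finite-step framework so that \cite[Lemma 2.2.6]{Cand:Notes2} is genuinely applicable: namely that Definition \ref{def:infty-fold-bundle-morphism} specialized to truncations matches \cite[Definition 3.3.1]{Cand:Notes1}, that total surjectivity is inherited level by level, and that continuity of $\varphi_i$ really does follow from the topological properties of inverse limits asserted in the remark after Definition \ref{def:infty-fold-compact-ab-bundle}. Once this bookkeeping is in place, the rest of the argument is purely a uniqueness-plus-pushforward computation.
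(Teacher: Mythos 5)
Your argument is correct, and its core is the same as the paper's: both proofs reduce to the finite-fold statement \cite[Lemma 2.2.6]{Cand:Notes2} applied to the induced maps $\varphi_i$, via the commutation relation $\pi_i'\co\varphi=\varphi_i\co\pi_i$. The only difference is in how the finite-level identities are assembled into the equality of measures on $\Bo'$: the paper tests both measures against continuous functions, using the Riesz representation theorem and the Stone--Weierstrass density of functions of the form $f'\co\pi_i'$, whereas you invoke the uniqueness clause of Lemma \ref{lem:haar-mes-infty-fold} directly, checking that $\mu_{\Bo}\co\varphi^{-1}$ has the correct pushforwards under every $\pi_i'$. These are two packagings of the same fact (a Borel probability measure on $\Bo'$ is determined by its marginals on the $\Bo_i'$, since the $\pi_i'$ generate the topology and hence the $\sigma$-algebra), and your route is if anything slightly more economical; you are also more explicit than the paper in verifying that each $\varphi_i$ is itself a continuous totally surjective bundle morphism. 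One small correction: in deducing continuity of $\varphi_i$ from $\varphi_i\co\pi_i=\pi_i'\co\varphi$, the relevant property is that $\pi_i$ (the projection on the \emph{domain} side) is a continuous open surjection, hence a quotient map; openness of $\pi_i'$ plays no role in that step.
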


\begin{proof} 
By the Riesz representation theorem it suffices to check that for any continuous function $f:\Bo'\to \mb{C}$ we have $\int f\;\ud\mu_{\Bo'} = \int f \co \varphi\;\ud\mu_{\Bo}$. By the Stone-Weierstrass theorem, the continuous functions of the form $f'\co\pi_i$, for $i\ge 0$ and $f':\Bo_i'\to\mb{C}$ continuous, are dense in the space of continuous functions $\Bo\to\mb{C}$. Therefore it suffices to check that $\int f'\co\pi_i\;\ud\mu_{\Bo'} = \int f'\co\pi_i \co \varphi\;\ud\mu_{\Bo}$ for any such function. This is indeed the case since $\int f'\co\pi_i \co \varphi\;\ud\mu_{\Bo}=\int f' \co\varphi_i\co\pi_i\;\ud\mu_{\Bo'} = \int f'\co\varphi_i\;\ud\mu_{\Bo_i'}$, and then by \cite[Lemma 2.2.6]{Cand:Notes2} this equals $\int f'\;\ud\mu_{\Bo_i}=\int f'\co\pi_i \;\ud\mu_{\Bo}$.
\end{proof}
\noindent Given two $\infty$-fold compact abelian bundles $\Bo,\Bo'$ and $\varphi:\Bo\to\Bo'$ a continuous totally surjective bundle morphism it can be checked that for every $t\in \Bo'$ the set $\varphi^{-1}(t)$ is an $\infty$-fold compact abelian sub-bundle of $\Bo$ (the proof is a generalization of \cite[Lemma 3.3.6]{Cand:Notes1}). This means in particular that the $i$-th limit map on $\varphi^{-1}(t)$  is the restriction of $\Bo$'s $i$-th limit map $\pi_i$ to $\varphi^{-1}(t)$. It is also straightforward to check that the $i$-th structure group of $\varphi^{-1}(t)$ is $\ker(\alpha_i)$, for every $i\ge 1$. We can now generalize the quotient integral formula \cite[Lemma 2.2.10]{Cand:Notes2}.

\begin{lemma}\label{lem:mes-pre-2} 
Let $\Bo,\Bo'$ be $\infty$-fold compact abelian bundles and let $\varphi:\Bo\to\Bo'$ be a continuous totally surjective bundle morphism. Then for any Borel set $E\subset \Bo$ we have
\[
\int_{\Bo} 1_E\; \ud\mu_{\Bo} = \int_{\Bo'} \int_{\varphi^{-1}(t)} 1_E(x)\;\ud\mu_{\varphi^{-1}(t)}(x) \ud\mu_{\Bo'}(t)
\]
where $1_E$ is the indicator function of $E$.
\end{lemma}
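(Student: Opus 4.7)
The plan is to verify the identity on a dense subalgebra of $C(\Bo)$ via reduction to the finite-step quotient-integral formula \cite[Lemma 2.2.10]{Cand:Notes2}, and then conclude via Stone--Weierstrass density and Riesz representation, in the same spirit as the proof of Lemma \ref{lem:mes-pre-1}. For $f\in C(\Bo)$ one sets
\[
\Lambda(f) \;:=\; \int_{\Bo'} \Big(\int_{\varphi^{-1}(t)} f|_{\varphi^{-1}(t)} \,\ud\mu_{\varphi^{-1}(t)}\Big)\,\ud\mu_{\Bo'}(t),
\]
and the goal is to show that $\Lambda$ coincides with the functional $f\mapsto \int_{\Bo} f\,\ud\mu_{\Bo}$.

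First I would verify that $\Lambda$ is well-defined, i.e.\ that the inner integrand is a measurable (in fact continuous) function of $t\in \Bo'$. By Stone--Weierstrass this reduces to the case of cylinder functions $f'\co\pi_i$ with $f'\in C(\Bo_i)$, where the continuity in $t$ follows from the corresponding statement for the finite-step quotient morphism $\varphi_i:\Bo_i\to\Bo'_i$, which is part of the proof of \cite[Lemma 2.2.10]{Cand:Notes2}. Granting this, $\Lambda$ is a bounded positive linear functional on $C(\Bo)$ with $\Lambda(1)=1$.

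Next I would evaluate $\Lambda$ on cylinder functions. The key structural input, noted by the authors immediately before the lemma, is that each fiber $\varphi^{-1}(t)$ is itself an $\infty$-fold compact abelian sub-bundle of $\Bo$ whose $i$-th limit map is $\pi_i|_{\varphi^{-1}(t)}$, whose image in $\Bo_i$ is $\varphi_i^{-1}(\pi'_i(t))$ (writing $\pi'_i$ for the $i$-th limit map of $\Bo'$), and whose $i$-th structure group is $\ker(\alpha_i)$. Applying Lemma \ref{lem:haar-mes-infty-fold} to $\varphi^{-1}(t)$ then yields the identity $\mu_{\varphi^{-1}(t)}\co(\pi_i|_{\varphi^{-1}(t)})^{-1}=\mu_{\varphi_i^{-1}(\pi'_i(t))}$, whence for $f'\in C(\Bo_i)$ one has
\[
\Lambda(f'\co\pi_i) \;=\; \int_{\Bo'} \int_{\varphi_i^{-1}(\pi'_i(t))} f' \,\ud\mu_{\varphi_i^{-1}(\pi'_i(t))}\,\ud\mu_{\Bo'}(t) \;=\; \int_{\Bo'_i} \int_{\varphi_i^{-1}(s)} f' \,\ud\mu_{\varphi_i^{-1}(s)}\,\ud\mu_{\Bo'_i}(s),
\]
using $\mu_{\Bo'}\co (\pi'_i)^{-1}=\mu_{\Bo'_i}$ from Lemma \ref{lem:haar-mes-infty-fold}. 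The finite-step quotient-integral formula \cite[Lemma 2.2.10]{Cand:Notes2} applied to the continuous totally-surjective bundle morphism $\varphi_i$ then identifies this double integral with $\int_{\Bo_i} f'\,\ud\mu_{\Bo_i}=\int_{\Bo} f'\co\pi_i\,\ud\mu_{\Bo}$, as desired.

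Since cylinder functions are dense in $C(\Bo)$ by Stone--Weierstrass, we obtain $\Lambda(f)=\int_{\Bo} f\,\ud\mu_{\Bo}$ for every $f\in C(\Bo)$, and Riesz representation identifies the two positive Borel measures that represent these functionals; a standard monotone-class extension from continuous functions to Borel indicator functions gives the claimed identity on $1_E$ for all Borel $E\subset \Bo$. The main technical obstacle I anticipate is the inheritance of the $\infty$-fold compact abelian bundle structure by each fiber $\varphi^{-1}(t)$, together with the continuous dependence of $\mu_{\varphi^{-1}(t)}$ on $t$; both follow from their finite-step analogues by passing to the inverse limit, but require careful checking that the limit maps, structure groups, and Haar measures of the fibers of the finite-step quotients $\varphi_i$ assemble into the structure on $\varphi^{-1}(t)$ provided by Lemma \ref{lem:haar-mes-infty-fold}.
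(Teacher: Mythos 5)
Your proposal is correct and follows essentially the same route as the paper's proof: reduce via Riesz representation and Stone--Weierstrass to cylinder functions $f'\co\pi_i$, identify the fiber integral over $\varphi^{-1}(t)$ with the integral over the finite-step fiber $\varphi_i^{-1}(\pi_i'(t))$ using the pushforward property of the fiber Haar measures, push the outer integral down to $\Bo_i'$, and invoke the finite-step quotient-integral formula \cite[Lemma 2.2.10]{Cand:Notes2}. The extra care you take with measurability of the inner integrand and the monotone-class extension to Borel indicators fills in details the paper leaves implicit, but does not change the argument.
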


\begin{proof} 
Similarly as before, by the Riesz representation and Stone-Weierstrass theorems it suffices to check that $\int_{\Bo} f\co\pi_i\; \ud\mu_{\Bo} = \int_{\Bo'} \int_{\varphi^{-1}(t)} f\co\pi_i(x)\;\ud\mu_{\varphi^{-1}(t)}(x)\ud\mu_{\Bo'}(t)$ for any continuous function $f:\Bo_i\to\mb{C}$ and $i\ge 1$. Note that for any $t\in \Bo'$, $\int_{\varphi^{-1}(t)} f\co\pi_i(x)\;\ud\mu_{\varphi^{-1}(t)}(x) = \int_{\varphi_i^{-1}(\pi_i(t))} f(x')\;\ud\mu_{\varphi_i^{-1}(\pi_i(t))}(x') =\Big[ b\in \Bo_i  \mapsto  \int_{\varphi_i^{-1}(b)} f(x')\;\ud\mu_{\varphi_i^{-1}(b)}(x')\Big] \co \pi_i$. Hence
\[
\int_{\Bo'} \int_{\varphi^{-1}(t)} f\co\pi_i(x)\;\ud\mu_{\varphi^{-1}(t)}(x)\ud\mu_{\Bo'}(t) = \int_{\Bo_i'} \int_{\varphi_i^{-1}(t)} f\co\pi_i(x')\;\ud\mu_{\varphi_i^{-1}(t')}(x')\ud\mu_{\Bo_i'}(t'),
\]
and the latter equals $\int_{\Bo_i} f\;\ud\mu_{\Bo_i} = \int_{\Bo} f\co\pi_i\;\ud\mu_{\Bo}$ by \cite[Lemma 2.2.10]{Cand:Notes2}.
\end{proof}
\begin{remark} 
Note that by standard arguments using approximations by simple functions, we can deduce from  Lemma \ref{lem:mes-pre-2} that for every $f\in L^1(\Bo)$ we have $\int_{\Bo} f\; \ud\mu_{\Bo} = \int_{\Bo'} \int_{\varphi^{-1}(t)} f(x)\;\ud\mu_{\varphi^{-1}(t)}(x)\ud\mu_{\Bo'}(t)$.
\end{remark}

\subsection{The cubic coupling property for profinite-step compact nilspaces}\hfill\\ 
The goal of this section is to prove that the $n$-cubic Haar measures on a compact profinite-step nilspace form a cubic coupling (see Definition \ref{def:cub-cou}). Let us start proving the ergodicity and consistency axioms.

\begin{proposition}
Let $\ns$ be a compact profinite-step nilspace. Then its $n$-cubic Haar measures satisfy the ergodicity and consistency axioms.
\end{proposition}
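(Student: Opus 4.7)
The plan is to reduce both axioms to the corresponding statements for the characteristic factors $\ns_i$, which are finite-step compact nilspaces for which the ergodicity and consistency of the cubic Haar measures are already known (for finite-step compact nilspaces the cubic coupling property is established, e.g., via \cite[Proposition 3.6]{CScouplings} together with the results on the finite-step case in \cite[\S 2.2.2]{Cand:Notes2}). The bridge from the finite-step factors to $\ns$ itself is the uniqueness clause of Proposition \ref{prop:haar-mes}: any Borel probability measure on $\cu^n(\ns)$ whose pushforward under $\pi_i^{\db{n}}$ is $\mu_{\cu^n(\ns_i)}$ for every $i\ge 0$ must coincide with $\mu_{\cu^n(\ns)}$.

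For ergodicity, I would identify $\cu^1(\ns)=\ns^{\db{1}}\cong\ns\times\ns$ and show that $\mu_{\cu^1(\ns)}$ equals the product measure $\mu_\ns\otimes\mu_\ns$, where $\mu_\ns$ is the Haar measure on the $\infty$-fold compact abelian bundle $\ns$ (given by Lemma \ref{lem:haar-mes-infty-fold}, noting that $\mu_\ns=\mu_{\cu^0(\ns)}$ under $\cu^0(\ns)\cong\ns$). By Lemma \ref{prop:char-inv-nil} the projection $\pi_i$ preserves Haar measures, so $(\pi_i\times\pi_i)_*(\mu_\ns\otimes\mu_\ns)=\mu_{\ns_i}\otimes\mu_{\ns_i}$. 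Since each $\ns_i$ is of finite step, we have $\mu_{\cu^1(\ns_i)}=\mu_{\ns_i}\otimes\mu_{\ns_i}$. Hence $\mu_\ns\otimes\mu_\ns$ has pushforward $\mu_{\cu^1(\ns_i)}$ under $\pi_i^{\db{1}}=\pi_i\times\pi_i$ for every $i$, and the uniqueness part of Proposition \ref{prop:haar-mes} forces $\mu_{\cu^1(\ns)}=\mu_\ns\otimes\mu_\ns$, which is the desired ergodicity.

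For consistency, given an injective discrete-cube morphism $\phi:\db{m}\to\db{n}$, I consider the map $\phi^*:\cu^n(\ns)\to\cu^m(\ns)$, $\q\mapsto\q\co\phi$ (this lands in $\cu^m(\ns)$ by the composition axiom). I need to verify that $\mu_{\cu^n(\ns)}\co(\phi^*)^{-1}=\mu_{\cu^m(\ns)}$. By the uniqueness clause of Proposition \ref{prop:haar-mes}, it suffices to check that for each $i\ge 0$ the pushforward $\mu_{\cu^n(\ns)}\co(\phi^*)^{-1}\co(\pi_i^{\db{m}})^{-1}$ equals $\mu_{\cu^m(\ns_i)}$. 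The key observation is that the diagram commutes: $\pi_i^{\db{m}}\co\phi^*=\phi^*_i\co\pi_i^{\db{n}}$, where $\phi^*_i:\cu^n(\ns_i)\to\cu^m(\ns_i)$ is the analogous composition map at level $i$. Then the pushforward in question becomes $\mu_{\cu^n(\ns_i)}\co(\phi^*_i)^{-1}$, which equals $\mu_{\cu^m(\ns_i)}$ by the already-established consistency property for the finite-step compact nilspace $\ns_i$.

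The only real obstacle is the bookkeeping: checking that everything commutes with the projections $\pi_i$, and that the uniqueness clause of Proposition \ref{prop:haar-mes} applies as expected. These are routine verifications, given that $\pi_i$ is a fibration and hence preserves all cubic Haar measures by Lemma \ref{prop:char-inv-nil}; the essential content of the proof is the reduction to the finite-step case via inverse-limit uniqueness.
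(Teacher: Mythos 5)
Your proposal is correct and follows essentially the same route as the paper: both arguments reduce the two axioms to the known finite-step case on the characteristic factors $\ns_i$ (via \cite[Proposition 3.6]{CScouplings}), using the commutation of the relevant maps with the projections $\pi_i^{\db{n}}$ and the fact that a measure on $\cu^n(\ns)$ is determined by its pushforwards to all $\cu^n(\ns_i)$. The only cosmetic difference is that the paper verifies the measure equalities directly on the generating semi-ring $\{(\pi_i^{\db{n}})^{-1}(A)\}$, whereas you package the same fact as the uniqueness clause of Proposition \ref{prop:haar-mes}.
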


\begin{proof} 
To prove that the ergodicity axiom holds we have to prove that $\mu_{\cu^1(\ns)}=\mu_{\ns}\times\mu_{\ns}$. Suppose that $\ns=\varprojlim \ns_i$ where $\ns_i$ is the $i$-th characteristic factor of $\ns$. Let $\mc{S}(\ns^{\db{1}}):=\{(\pi_i^{\db{1}})^{-1}(A):i\ge 0, A\in \mc{A}(\ns_i)^{\db{1}}\}$ be the semi-ring that generates the $\sigma$-algebra on $\ns^{\db{1}}$, where $\mc{A}(\ns_i)$ is the Borel $\sigma$-algebra on $\ns_i$. It suffices to check that the equality $\mu_{\cu^1(\ns)}=\mu_{\ns}\times\mu_{\ns}$  holds on $\mc{S}(\ns^{\db{1}})$. But note that $\mu_{\cu^1(\ns)}((\pi_i^{\db{1}})^{-1}(A)) = \mu_{\cu^1(\ns_i)}(A)=(\mu_{\ns_i}\times \mu_{\ns_i})(A) = (\mu_{\ns}\times \mu_{\ns})((\pi_i^{\db{1}})^{-1}(A))$ where in the second equality we have used the ergodicity axiom for $\ns_i$ given by \cite[Proposition 3.6]{CScouplings}.

To establish the consistency axiom, let $\phi:\db{n}\to \db{m}$ be an injective discrete-cube morphism ($n\le m$) and $p_{\phi}:\ns^{\db{m}}\to \ns^{\db{n}}$ the projection $(\q(v))_{v\in\db{m}}\mapsto (\q(\phi(v)))_{v\in\db{n}}$. We have to prove that $\mu_{\cu^m(\ns)} \co p_{\phi}^{-1} = \mu_{\cu^n(\ns)}$. Similarly as before it is enough to check this for the semi-ring $\mc{S}(\ns^{\db{n}})$. We clearly have $\pi_i^{\db{n}} \co p_{\phi} = p_{\phi} \co \pi_i^{\db{m}}$. Thus $\mu_{\cu^m(\ns)} \co p_{\phi}^{-1} \co (\pi_i^{\db{n}})^{-1} = \mu_{\cu^n(\ns)} \co (\pi_i^{\db{m}})^{-1}\co p_{\phi}^{-1} = \mu_{\cu^m(\ns_i)} \co p_{\phi}^{-1}$. 
By the consistency axiom for $\ns_i$ (\cite[Proposition 3.6]{CScouplings}), we have $\mu_{\cu^m(\ns_i)} \co p_{\phi}^{-1}=\mu_{\cu^{n}(\ns_i)}$, and by construction this is $ \mu_{\cu^n(\ns)} \co (\pi_i^{\db{n}})^{-1}$.
\end{proof}
\noindent To verify the conditional independence axiom, we shall need some preparation. The following result generalizes \cite[Lemma 2.1.10]{Cand:Notes2}. Recall from \cite[Definition 3.1.3]{Cand:Notes1} the notion of a subset of $\db{n}$ having the \emph{extension property}.
\begin{lemma}\label{lem:hom-spaces-infty-fold-ab-bundles}
Let $\ns$ be a compact profinite-step nilspace, and let $n\ge 0$. Let $P\subset\db{n}$ be a set with the extension property in $\db{n}$ and $S\subset P$ be a set with the extension property in $P$. Let $f:S\to \ns$ be a morphism. Then the set $\hom_f(P,\ns):=\{\q\in \hom(P,\ns):\q|_S=f\}$ is an $\infty$-fold compact abelian sub-bundle of $\ns^P$ with factors $\hom_{\pi_i\co f}(P,\ns_i)$ and structure groups $\hom_{S\to0}(P,\mc{D}_i(\ab_i))$ for all $i\ge 1$, where $\ab_i$ is the $i$-th structure group of $\ns$.
\end{lemma}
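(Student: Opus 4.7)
The plan is to obtain the result by combining the known finite-step analogue (namely \cite[Lemma 2.1.10]{Cand:Notes2}, which handles each characteristic factor $\ns_i$) with the inverse-limit construction of $\infty$-fold compact abelian bundles from Definitions \ref{def:infty-fold-ab-bundle} and \ref{def:infty-fold-compact-ab-bundle}. Thus the first step is: for each integer $i\ge 0$, apply \cite[Lemma 2.1.10]{Cand:Notes2} to the $i$-step compact nilspace $\ns_i$, the morphism $\pi_i\co f : S\to\ns_i$, and the same sets $S\subset P\subset \db{n}$, to conclude that $\hom_{\pi_i\co f}(P,\ns_i)$ is an $i$-fold compact abelian bundle over $\hom_{\pi_{i-1}\co f}(P,\ns_{i-1})$ whose $j$-th structure group (for $j\in[i]$) is the set of morphisms $\hom_{S\to 0}(P,\mc{D}_j(\ab_j))$ (this uses crucially the hypothesis that $P,S$ have the extension property in $\db{n}$ and $P$ respectively).

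Next, I would assemble these bundles into an inverse system. The nilspace fibration $\pi_{i-1,i}:\ns_i\to\ns_{i-1}$ induces a postcomposition map $\pi_{i-1,i}^P:\ns_i^P\to\ns_{i-1}^P$, which restricts to a continuous map $\hom_{\pi_i\co f}(P,\ns_i)\to \hom_{\pi_{i-1}\co f}(P,\ns_{i-1})$. Since $\pi_{i-1,i}$ is a fibration, this restriction is itself a surjective bundle morphism (one verifies surjectivity using the lifting property of fibrations applied to each extension step in the construction underlying \cite[Lemma 2.1.10]{Cand:Notes2}), and the structure-group map it induces at level $j$ is the identity on $\hom_{S\to 0}(P,\mc{D}_j(\ab_j))$ for $j<i$ and is the zero-to-structure-group inclusion at $j=i$. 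Together with the transitivity $\pi_{j,i}^P=\pi_{j,j+1}^P\co\cdots\co \pi_{i-1,i}^P$ inherited from the corresponding identities in $\ns$, this yields a genuine inverse system of compact $i$-fold abelian bundles.

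The main identification to verify is that the inverse limit of this system equals $\hom_f(P,\ns)$ itself, with the correct sequence of factor maps. For this, I would argue pointwise: a coherent sequence $(\q_i)_{i\ge 0}$ with $\q_i\in\hom_{\pi_i\co f}(P,\ns_i)$ and $\pi_{i-1,i}\co\q_i=\q_{i-1}$ defines, for each $v\in P$, an element $\q(v)=(\q_i(v))_{i\ge 0}\in\varprojlim \ns_i=\ns$; the resulting function $\q:P\to\ns$ restricts to $f$ on $S$ (by the compatibility with $\pi_i\co f$) and satisfies $\q\in\hom(P,\ns)$ because for any cube $c:\db{k}\to P$ we have $\pi_i\co(\q\co c)=\q_i\co c\in\cu^k(\ns_i)$ for every $i$, and this characterizes membership in $\cu^k(\ns)$ by the profinite-step assumption. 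Conversely, any $\q\in\hom_f(P,\ns)$ produces such a compatible sequence via $\q_i:=\pi_i\co\q$. Checking that the product topology on $\varprojlim$ agrees with the subspace topology on $\hom_f(P,\ns)\subset \ns^P$ is routine from continuity and openness of the $\pi_i$. Compactness is automatic as $\hom_f(P,\ns)$ is a closed subset of the compact space $\ns^P$.

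The step I expect to need the most care is verifying that the induced map $\hom_{\pi_i\co f}(P,\ns_i)\to \hom_{\pi_{i-1}\co f}(P,\ns_{i-1})$ is totally surjective as a bundle morphism (not just surjective on underlying sets); this requires lifting a morphism on $P$ to $\ns_i$ that agrees with $\pi_i\co f$ on $S$, given a lift on $\ns_{i-1}$. The standard approach is to apply the extension property of $S$ in $P$ together with the fact that $\pi_{i-1,i}$ is a nilspace fibration, via the same corner-completion/extension argument used in \cite[Lemma 2.1.10]{Cand:Notes2} and its underlying results (\cite[Lemma 3.1.5]{Cand:Notes1}). Once this lifting is in place, an application of Lemma \ref{lem:haar-mes-infty-fold} will endow $\hom_f(P,\ns)$ with its Haar measure, completing the identification as an $\infty$-fold compact abelian bundle with the claimed factors and structure groups.
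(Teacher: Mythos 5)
Your proposal is correct and follows essentially the same route as the paper, whose entire proof is the single line ``This follows from \cite[Lemma 2.1.10]{Cand:Notes2}'': you apply that finite-step lemma to each characteristic factor $\ns_i$ and then assemble the resulting $i$-fold bundles into the inverse system required by Definitions \ref{def:infty-fold-ab-bundle} and \ref{def:infty-fold-compact-ab-bundle}, identifying the limit with $\hom_f(P,\ns)$ via the profinite-step characterization of $\cu^k(\ns)$. Your write-up simply makes explicit the gluing and total-surjectivity checks that the paper leaves implicit.
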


\begin{proof} 
This follows from \cite[Lemma 2.1.10]{Cand:Notes2}.
\end{proof}

\noindent Recall from \cite[Definition 2.2.13]{Cand:Notes2} that two subsets $P_1,P_2$ of a discrete cube $\db{n}$ are said to form a \emph{good pair} if the following conditions are satisfied: both $P_1$ and $P_1\cap P_2$ have the extension property in $\db{n}$ (see \cite[Definition 3.1.3]{Cand:Notes1}),  and for any $k\ge 1$, any abelian group $\ab$, and every morphism $f':P_2\to \mc{D}_k(\ab)$ (see \cite[Definition 2.2.30]{Cand:Notes1}) satisfying $f'|_{P_1\cap P_2}=0$, there exists a morphism $f:\db{n}\to \mc{D}_k(\ab)$ extending $f'$ such that $f|_{P_1}=0$. The next result extends \cite[Lemma 2.2.14]{Cand:Notes2}.

\begin{proposition}\label{prop:rest-tot-sur} 
Let $\ns$ be a compact profinite-step nilspace, let $P_1,P_2\subset \db{n}$ be a good pair in $\db{n}$, and let $f:P_1\to\ns$ be a morphism. Then the restriction map
\[
\Psi:\hom_f(\db{n},\ns)\to \hom_{f|_{P_1\cap P_2}}(P_2,\ns)
\]
is a totally surjective continuous bundle morphism.
\end{proposition}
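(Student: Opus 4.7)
The plan is to reduce the statement to the already-known finite-step version \cite[Lemma 2.2.14]{Cand:Notes2} by exploiting the profinite-step structure of $\ns$, together with the identification of the relevant hom-spaces as $\infty$-fold compact abelian bundles provided by Lemma \ref{lem:hom-spaces-infty-fold-ab-bundles}.

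First, I would write $\ns = \varprojlim \ns_i$, where each $\ns_i$ is a finite-step compact nilspace with structure groups $\ab_1,\ldots,\ab_i$. By Lemma \ref{lem:hom-spaces-infty-fold-ab-bundles}, applied to both $\db{n}$ and $P_2$ (both viewed as having the extension property in $\db{n}$, using that the pair $P_1,P_2$ is good), the sets $\hom_f(\db{n},\ns)$ and $\hom_{f|_{P_1\cap P_2}}(P_2,\ns)$ are $\infty$-fold compact abelian bundles whose $i$-th factors are $\hom_{\pi_i\co f}(\db{n},\ns_i)$ and $\hom_{\pi_i\co f|_{P_1\cap P_2}}(P_2,\ns_i)$, with structure groups $\hom_0(\db{n},\mc{D}_k(\ab_k))$ and $\hom_{P_1\cap P_2\to 0}(P_2,\mc{D}_k(\ab_k))$ respectively. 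The restriction map $\Psi$ clearly commutes with the inverse-limit projections $\pi_i^{\db{n}}$ and $\pi_i^{P_2}$, so condition $(i)$ in Definition \ref{def:infty-fold-bundle-morphism} holds and $\Psi$ induces, for each $i$, a restriction map $\Psi_i:\hom_{\pi_i\co f}(\db{n},\ns_i)\to \hom_{\pi_i\co f|_{P_1\cap P_2}}(P_2,\ns_i)$.

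Next, I would apply \cite[Lemma 2.2.14]{Cand:Notes2} to the finite-step compact nilspace $\ns_i$ (noting that $P_1,P_2$ remain a good pair, since this is a purely combinatorial property of subsets of $\db{n}$): this gives that each $\Psi_i$ is a totally surjective continuous bundle morphism in the finite-$i$-fold sense. In particular, for each $k\in[i]$ there is an induced surjective structure homomorphism $\alpha_{i,k}:\hom_0(\db{n},\mc{D}_k(\ab_k))\to \hom_{P_1\cap P_2\to 0}(P_2,\mc{D}_k(\ab_k))$, and by the compatibility between the $\Psi_i$ as $i$ varies, the homomorphism $\alpha_{i,k}$ does not in fact depend on $i\ge k$. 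This yields a surjective homomorphism $\alpha_k$ between the $k$-th structure groups of the two $\infty$-fold compact abelian bundles, and verifies condition $(ii)$ in Definition \ref{def:infty-fold-bundle-morphism} together with total surjectivity.

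Finally, continuity of $\Psi$ is immediate: since the domain and codomain carry the topologies generated by the projections $\pi_i^{\db{n}}$ and $\pi_i^{P_2}$, continuity of $\Psi$ reduces to continuity of each $\Psi_i$, which is part of the conclusion from \cite[Lemma 2.2.14]{Cand:Notes2}. The only substantive point requiring attention is the compatibility of the structure homomorphisms $\alpha_{i,k}$ with the inverse-limit structure on the hom-bundles, which I expect to be the main (though routine) technical hurdle: one needs to check that the surjections produced on each finite level assemble consistently into a single surjection $\alpha_k$ on the structure groups of the limit bundles. This is a direct consequence of the fact that the structure groups $\cu^n(\mc{D}_k(\ab_k))$ and $\hom_{P_1\cap P_2\to 0}(P_2,\mc{D}_k(\ab_k))$ depend only on the single group $\ab_k$ and not on the step $i$, so the structure homomorphisms at level $i\ge k$ are literally the same map.
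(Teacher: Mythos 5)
Your proposal is correct, and it fills in the details that the paper leaves implicit: the paper's own proof of this proposition is a one-line remark that the argument is ``very similar to that of \cite[Lemma 2.2.14]{Cand:Notes2}'', i.e.\ it implicitly redoes the finite-step argument, whereas you instead invoke the finite-step lemma as a black box on each factor $\ns_i$ of the inverse limit and then assemble the data through the $\infty$-fold bundle framework of Appendix A. The two routes are essentially equivalent in content; yours has the advantage of isolating exactly which new checks the profinite-step setting requires (compatibility of $\Psi$ with the limit projections, level-independence of the structure homomorphisms, continuity via the projectively generated topology), and your observation that the structure homomorphism at level $k$ is literally the same restriction map for all $i\ge k$ is the right reason the gluing is trivial. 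Two small points worth tightening. First, the surjectivity of each structure homomorphism $\alpha_k:\hom_{P_1\to 0}(\db{n},\mc{D}_k(\ab_k))\to \hom_{P_1\cap P_2\to 0}(P_2,\mc{D}_k(\ab_k))$ is \emph{exactly} the defining extension property of a good pair, so you do not even need the finite-step lemma for total surjectivity — only for the bundle identifications. Second, your parenthetical justification for applying Lemma \ref{lem:hom-spaces-infty-fold-ab-bundles} to the codomain is slightly off: the good-pair definition guarantees the extension property for $P_1$ and $P_1\cap P_2$ in $\db{n}$, not for $P_2$ in $\db{n}$ nor for $P_1\cap P_2$ in $P_2$; this hypothesis is inherited from (and already needed in) the finite-step source lemma, and is harmless in the paper's application where $P_1,P_2$ are faces, but it should be stated as an assumption rather than derived from goodness of the pair. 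Also note the structure group of the domain bundle is $\hom_{P_1\to 0}(\db{n},\mc{D}_k(\ab_k))$ (morphisms vanishing on $P_1$), which is what your $\hom_0(\db{n},\mc{D}_k(\ab_k))$ should denote.
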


\begin{proof}
The proof is very similar to that of \cite[Lemma 2.2.14]{Cand:Notes2}.
\end{proof}

\noindent Now let us state the main result that will enable us to prove the conditional independence axiom for compact profinite-step nilspaces.

\begin{lemma}\label{lem:goo-pai-ind}
Let $P_0,P_1\subset \db{n}$ be a good pair. Then for every compact profinite-step nilspace $\ns$, the sets $P_0,P_1$ are conditionally independent with respect to $\mu_{\cu^n(\ns)}$.
\end{lemma}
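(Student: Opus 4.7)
The natural approach is to reduce to the finite-step case via the inverse limit structure $\ns=\varprojlim\ns_i$, using the fact that the analogous statement for finite-step compact nilspaces is part of the existing theory (it is the profinite-step version of the conditional independence in \cite[Proposition 2.2.16]{Cand:Notes2}, on which the axiom for the cubic couplings in \cite[Proposition 3.6]{CScouplings} relies). Concretely, conditional independence of $P_0,P_1$ relative to $\mu_{\cu^n(\ns)}$ means that for every $f_0\in L^\infty(\mc{A}(\ns)^{\otimes \db{n}}_{P_0})$ and $f_1\in L^\infty(\mc{A}(\ns)^{\otimes \db{n}}_{P_1})$ we have
\[
\int f_0f_1\,\ud\mu_{\cu^n(\ns)} \;=\; \int \mb{E}[f_0\mid \mc{A}(\ns)^{\otimes \db{n}}_{P_0\cap P_1}]\cdot \mb{E}[f_1\mid \mc{A}(\ns)^{\otimes \db{n}}_{P_0\cap P_1}]\,\ud\mu_{\cu^n(\ns)}.
\]
The plan is to establish this identity first for continuous functions of the form $g_j\co (\pi_i^{P_j})$ (for $j=0,1$ and some $i\geq 0$ and continuous $g_j:\ns_i^{P_j}\to\mb{C}$), and then to deduce the general case by a density argument.

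First, I would combine the Stone--Weierstrass theorem with the fact that $\ns_i^{\db{n}}$ generates the Borel $\sigma$-algebra on $\ns^{\db{n}}$ (as $\ns=\varprojlim\ns_i$) to see that such continuous functions factoring through some $\pi_i^{\db{n}}$ are dense in $L^2(\mu_{\cu^n(\ns)})$ inside $L^2(\mc{A}(\ns)^{\otimes \db{n}}_{P_j})$. Given this, the proof of the identity above will reduce to proving the identity when each $f_j=g_j\co \pi_i^{P_j}$ for the same $i$. At that point, by construction of $\mu_{\cu^n(\ns)}$ (cf.\ Proposition \ref{prop:haar-mes}) we have $\mu_{\cu^n(\ns)}\co (\pi_i^{\db{n}})^{-1}=\mu_{\cu^n(\ns_i)}$, so the integral on the left becomes $\int g_0g_1\,\ud\mu_{\cu^n(\ns_i)}$, to which we can apply the finite-step version of the lemma.

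The second main step is to argue that conditional expectations also behave well under the projection $\pi_i^{\db{n}}$: for any continuous $g:\ns_i^{P_0}\to \mb{C}$ (resp.\ on $P_1$), the conditional expectation $\mb{E}[g\co\pi_i^{P_0}\mid \mc{A}(\ns)^{\otimes \db{n}}_{P_0\cap P_1}]$ computed in $(\ns^{\db{n}},\mu_{\cu^n(\ns)})$ is the pullback via $\pi_i^{P_0\cap P_1}$ of the conditional expectation $\mb{E}[g\mid\mc{A}(\ns_i)^{\otimes\db{n}}_{P_0\cap P_1}]$ computed in $(\ns_i^{\db{n}},\mu_{\cu^n(\ns_i)})$. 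This is a standard fact about conditional expectations and measure-preserving factor maps. Combining this with the finite-step identity yields the identity for $\ns$ on the dense class of functions, and a routine $L^2$-approximation extends it to all $f_0,f_1$.

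The main technical obstacle is the finite-step conditional independence for a good pair, but this is essentially contained in \cite[\S 2.2.2]{Cand:Notes2}, where it is proved using exactly the fact that the restriction map $\hom_f(\db{n},\ns_i)\to\hom_{f|_{P_0\cap P_1}}(P_0,\ns_i)$ is a totally surjective continuous bundle morphism (our Proposition \ref{prop:rest-tot-sur} is the profinite-step extension of this). Indeed, an alternative self-contained route would be to use Proposition \ref{prop:rest-tot-sur} and the quotient integral formula in Lemma \ref{lem:mes-pre-2} directly on the $\infty$-fold compact abelian bundle $\hom(\db{n},\ns)=\cu^n(\ns)$: applying the formula with $\varphi$ the restriction to $P_0\cap P_1$, and then using that the fiber over any point factorizes into independent contributions from $P_0$ and $P_1$ (this factorization being the content of the good-pair condition together with a further application of Proposition \ref{prop:rest-tot-sur}), one obtains the conditional independence directly. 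Either route suffices; the first is shorter and relies on prior work, while the second is more self-contained.
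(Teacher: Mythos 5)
Your second (``self-contained'') route is essentially the paper's proof. The paper establishes the lemma by following the argument of \cite[Lemma 5.108]{GS}, with Proposition \ref{prop:rest-tot-sur} supplying the totally surjective bundle morphism given by restriction, Lemma \ref{lem:mes-pre-1} giving measure preservation, and Lemma \ref{lem:mes-pre-2} giving the disintegration over the fibers of the restriction map; the good-pair hypothesis enters exactly where you place it, namely to show that the restriction of a fiber $\hom_f(\db{n},\ns)$ to the $P_1$-coordinates is again totally surjective, so that the conditional distribution of the $P_1$-coordinates given the $P_0$-coordinates depends only on $f|_{P_0\cap P_1}$. So that route is correct and is in fact the intended one.

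Your first route, which you present as the primary one, has a gap at the step you describe as ``a standard fact about conditional expectations and measure-preserving factor maps.'' The standard fact is that for a measure-preserving map $T:(X,\mu)\to(Y,\nu)$ and a sub-$\sigma$-algebra $\mc{C}$ on $Y$ one has $\mb{E}[g\co T\mid T^{-1}\mc{C}]=\mb{E}[g\mid\mc{C}]\co T$. What you actually need is the stronger assertion that $\mb{E}\big[g\co\pi_i^{P_0}\mid \mc{A}(\ns)^{\otimes\db{n}}_{P_0\cap P_1}\big]$ coincides with $\mb{E}\big[g\co\pi_i^{P_0}\mid (\pi_i^{\db{n}})^{-1}\big(\mc{A}(\ns_i)^{\otimes\db{n}}_{P_0\cap P_1}\big)\big]$, i.e.\ that conditioning on the \emph{full} coordinates indexed by $P_0\cap P_1$ gives no more information about a level-$i$ observable than conditioning on their level-$i$ projections. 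Since $\mc{A}(\ns)^{\otimes\db{n}}_{P_0\cap P_1}$ strictly contains $(\pi_i^{\db{n}})^{-1}(\mc{A}(\ns_i)^{\otimes\db{n}}_{P_0\cap P_1})$, this is a relative-independence statement about the cubic Haar measure (between the factor generated by $\pi_i^{\db{n}}$ and the $\sigma$-algebra of the $P_0\cap P_1$ coordinates), not a formal property of factor maps, and it requires its own proof --- which, once written out, again goes through the bundle and disintegration machinery of Appendix \ref{app:cubic-coupling-infinite-step}. The reduction to the finite-step case therefore does not save work; I would write up the second route.
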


\begin{proof}
The proof is very similar to the proof of \cite[Lemma 5.108]{GS}. We replace Lemma A.28, Proposition 5.57 and Proposition 5.60 from \cite{GS} respectively by Proposition \ref{prop:rest-tot-sur}, Lemma \ref{lem:mes-pre-1} and Lemma \ref{lem:mes-pre-2} from this paper.
\end{proof}

\begin{corollary} 
Let $\ns$ be a compact profinite-step nilspace. Then its cubic Haar measures satisfy the conditional independence axiom.
\end{corollary}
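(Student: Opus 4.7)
The plan is to reduce the statement directly to Lemma \ref{lem:goo-pai-ind}. Concretely, the conditional independence axiom asks us to verify that the pair of subcubes
\[
P_0 \,:=\, \{0\}\times \db{n-1} \quad\text{and}\quad P_1 \,:=\, \db{n-1}\times \{0\}
\]
is orthogonal in $\mu_{\cu^n(\ns)}$ in the sense of the footnote to Definition \ref{def:cub-cou}. Since Lemma \ref{lem:goo-pai-ind} already delivers exactly this conclusion for any \emph{good pair} of subsets of $\db{n}$, the entire task comes down to checking that $(P_0,P_1)$ is a good pair in the sense of \cite[Definition 2.2.13]{Cand:Notes2}.

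The first step is therefore to verify the two defining properties of a good pair for this specific $(P_0,P_1)$. Both $P_0$ and $P_0\cap P_1 = \{0^n\}$ are upper-closed (equivalently, are faces closed under the coordinate-wise minimum) and hence have the extension property in $\db{n}$, so the first condition holds. The second condition is that for every abelian group $\ab$, every $k\ge 1$, and every morphism $f':P_1\to\mc{D}_k(\ab)$ with $f'|_{P_0\cap P_1}=0$, there is an extension $f:\db{n}\to\mc{D}_k(\ab)$ with $f|_{P_0}=0$. This is a standard fact about the abelian cubes $\mc{D}_k(\ab)$ (it is, for instance, precisely the type of extension statement underlying the definition of cubic couplings on group nilspaces); it can be checked by expanding any such $f'$ in the basis of multilinear monomials $v\mapsto v_{i_1}\cdots v_{i_s}$ with $s\le k$, using the fact that $P_1 = \db{n-1}\times\{0\}$ consists of vertices whose last coordinate vanishes, and observing that every such monomial admits a natural extension to $\db{n}$ which also vanishes on $P_0=\{0\}\times\db{n-1}$ (namely, multiply the unique monomial extension by the appropriate indicator of the first coordinate being $1$, expressed as $v_1$ itself).

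Once the good pair property is established, the conclusion is immediate from Lemma \ref{lem:goo-pai-ind}: we obtain the orthogonality $P_0 \perp P_1$ in $\mu_{\cu^n(\ns)}$ for every compact profinite-step nilspace $\ns$, which is exactly axiom (3) of Definition \ref{def:cub-cou}. The only real content is the good pair verification; this is a purely combinatorial statement about the discrete cube $\db{n}$ and the abelian nilspaces $\mc{D}_k(\ab)$, independent of $\ns$, so it is expected to be routine (and in fact appears already as a basic example in the literature on good pairs; cf.\ \cite[\S 2.2]{Cand:Notes2}, \cite{GS}). The genuine technical work has already been carried out in Lemma \ref{lem:goo-pai-ind}, whose proof invokes the quotient integral formula and measure-preservation properties of totally surjective bundle morphisms established in Lemmas \ref{lem:mes-pre-1} and \ref{lem:mes-pre-2} for $\infty$-fold compact abelian bundles.
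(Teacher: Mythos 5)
Your overall strategy is exactly the paper's: the paper's proof of this corollary is the one-line observation that two $(n-1)$-dimensional faces of $\db{n}$ with nonempty intersection form a good pair, followed by an appeal to Lemma \ref{lem:goo-pai-ind}. So the reduction is right and the only content is the good-pair verification, which you correctly identify as the task.

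However, your verification contains a concrete error: you assert $P_0\cap P_1=\{0^n\}$, whereas for $n\ge 2$ the intersection of $P_0=\{0\}\times\db{n-1}$ and $P_1=\db{n-1}\times\{0\}$ is the $(n-2)$-dimensional face $\{v\in\db{n}: v\sbr{1}=v\sbr{n}=0\}$. This matters: if the hypothesis on $f':P_1\to\mc{D}_k(\ab)$ were only $f'(0^n)=0$, the second good-pair condition would be false (e.g.\ $f'(v)=z\,v\sbr{2}$ vanishes at $0^n$ but cannot be extended to vanish on all of $P_0$, since it is already nonzero at points of the true intersection). With the correct intersection the argument not only survives but becomes cleaner than what you wrote: expanding $f'=\sum_S z_S\prod_{i\in S}v\sbr{i}$ and imposing that $f'$ vanish on the whole face $\{v\sbr{1}=0\}\cap P_1$ forces $z_S=0$ for every $S$ not containing $1$ (by linear independence of the monomials), so every surviving monomial already contains the factor $v\sbr{1}$ and the \emph{same} polynomial, read as a function on $\db{n}$, is the required extension vanishing on $P_0$. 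In particular your suggested fix of ``multiplying by $v\sbr{1}$'' is unnecessary and would in general be illegitimate, since it can raise the degree above $k$ and take you outside $\hom(\db{n},\mc{D}_k(\ab))$. With the intersection corrected, the proof matches the paper's.
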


\begin{proof}
It is not hard to check that if $P_0,P_1\subset \db{n}$ are $(n-1)$-dimensional faces with $P_0\cap P_1\neq\emptyset$, then they form a good pair. Then the result follows from Lemma \ref{lem:goo-pai-ind}.
\end{proof}

\section{Affine-exchangeable measures on nilspaces and 2-homogeneity}\label{app:aff-exch-and-2-hom}

In this appendix we prove Lemma \ref{lem:aff-ech-equals-2-hom}, which we restate here for convenience.
\begin{lemma}\label{lem:aff-ech-equals-2-hom-app}
Let $\ns$ be a compact profinite-step nilspace. Then $\mu_{\cu^\omega(\ns)}$ \textup{(}viewed as a measure on $\ns^{\db{\mb{N}}}$\textup{)} is affine-exchangeable if and only if $\ns$ is 2-homogeneous.
\end{lemma}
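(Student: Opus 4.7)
The plan is to exploit Lemma \ref{lem:2-hom-equals-c-omega-poly}, which characterizes 2-homogeneity of $\ns$ by the equality $\cu^\omega(\ns)=\hom(\mc{D}_1(\mb{F}_2^\omega),\ns)$, and to translate between the measure-theoretic condition of affine-exchangeability of $\mu_{\cu^\omega(\ns)}$ and a set-theoretic $\Aff(\mb{F}_2^\omega)$-invariance of $\cu^\omega(\ns)$ under precomposition. The bridge between these two formulations will be the full support of $\mu_{\cu^\omega(\ns)}$ on $\cu^\omega(\ns)$.

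For the forward direction, suppose $\ns$ is 2-homogeneous. Any $\gamma\in\Aff(\mb{F}_2^\omega)$ is an isomorphism of $\mc{D}_1(\mb{F}_2^\omega)$, so precomposition $\Psi_\gamma\colon\q\mapsto\q\co\gamma$ is a continuous bijection of $\cu^\omega(\ns)=\hom(\mc{D}_1(\mb{F}_2^\omega),\ns)$. The crucial step is to confirm that $\Psi_\gamma$ is a totally surjective bundle automorphism in the sense of Definition \ref{def:infty-fold-bundle-morphism}: commuting with each projection $\pi_k^{\db{\mb{N}}}$ is immediate, and the structure-group requirement amounts to showing that $\cu^\omega(\mc{D}_k(\ab_k))$ is closed under $z\mapsto z\co\gamma$. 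This follows once we know that the structure groups $\ab_k$ of a 2-homogeneous nilspace are 2-torsion (a standard consequence of results in \cite{CGSS-p-hom}), since this makes each $\mc{D}_k(\ab_k)$ itself 2-homogeneous, so that Lemma \ref{lem:2-hom-equals-c-omega-poly} applies to $\mc{D}_k(\ab_k)$. Invoking Lemma \ref{lem:mes-pre-1} then yields $\Psi_\gamma$-invariance of $\mu_{\cu^\omega(\ns)}$, which is exactly affine-exchangeability.

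For the reverse direction, suppose $\mu_{\cu^\omega(\ns)}$ is affine-exchangeable. The inverse-limit construction of the Haar measure from Lemma \ref{lem:haar-mes-infty-fold} and Corollary \ref{cor:haar-mes-omega-cube} shows that $\mu_{\cu^\omega(\ns)}$ has full support on $\cu^\omega(\ns)$, since any nonempty open subset of $\cu^\omega(\ns)$ contains a basic cylinder set of positive measure coming from the finite-step factors. Together with continuity of $\Psi_\gamma$ and closedness of $\cu^\omega(\ns)$ in $\ns^{\db{\mb{N}}}$, affine invariance of $\mu$ upgrades to set-wise invariance: $\Psi_\gamma(\cu^\omega(\ns))\subseteq\cu^\omega(\ns)$ for every $\gamma\in\Aff(\mb{F}_2^\omega)$. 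To verify 2-homogeneity via Lemma \ref{lem:2-hom-equals-c-omega-poly}, fix $\q\in\cu^\omega(\ns)$ and an affine map $g\colon\mb{F}_2^k\to\mb{F}_2^\omega$; we must show $\q\co g\in\cu^k(\ns)$. Since $g$ has finite image we view it as a map $\mb{F}_2^k\to\mb{F}_2^n$ and invoke the factorization from the proof of Theorem \ref{thm:2-hom-cubic}: $g=\phi\co p\co f^{-1}$ where $\phi$ is injective affine, $p$ is a discrete-cube morphism, and $f^{-1}$ is invertible affine on $\mb{F}_2^k$. The morphism $p$ preserves cube-sets by the composition axiom and the definition of $\cu^\omega(\ns)$, while both $\phi$ and $f^{-1}$ extend to full invertible affine maps on $\mb{F}_2^\omega$ (acting as the identity on unused coordinates), so that the set-wise invariance of $\cu^\omega(\ns)$ under $\Psi_\gamma$ handles them after passing back to the corresponding coordinate restrictions. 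Composing the three preservation properties yields $\q\co g\in\cu^k(\ns)$ as required.

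The main obstacle lies in the forward direction: verifying the bundle-automorphism property of $\Psi_\gamma$ depends on the 2-torsion of $\ab_k$ for 2-homogeneous $\ns$, which is a structural input drawn from \cite{CGSS-p-hom}. Once that is granted, both directions reduce to routine diagram chases using the uniqueness of Haar measure on $\infty$-fold compact abelian bundles and the standard affine factorization from Theorem \ref{thm:2-hom-cubic}.
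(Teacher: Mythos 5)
Your proposal is correct and follows essentially the same route as the paper's proof in Appendix B: the forward direction verifies that precomposition by an affine automorphism is a totally surjective continuous bundle morphism of $\cu^\omega(\ns)$ and invokes the Haar-measure preservation lemma, while the reverse direction upgrades measure invariance to set-wise invariance of the cube sets via full support plus compactness, and then extends from invertible affine maps to arbitrary affine maps by the same injective/surjective factorization. The only presentational differences are that you work directly with $\cu^\omega(\ns)$ where the paper descends to the finite-dimensional cube sets $\cu^n(\ns_k)$ of the characteristic factors (where strict positivity of the Haar measure is directly available from the literature), and that you make explicit, via the $2$-torsion of the structure groups, the ``straightforward computation'' that the paper leaves to the reader.
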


\begin{proof}
Suppose that $\ns$ is 2-homogeneous. By Lemma \ref{lem:2-hom-equals-c-omega-poly} this is equivalent to having $\cu^\omega(\ns)=\hom(\mc{D}_1(\mb{F}_2^{\omega}),\ns)$. Given any $T\in \textrm{GL}(\mb{F}_2^\omega)\ltimes \mb{Z}_2^\omega$, let us define the map $T':\cu^{\omega}(\ns)\to\cu^{\omega}(\ns)$ by $T'(\q) = \q\co T$. To see that $T'(\q)$ is indeed  in $\cu^{\omega}(\ns)$ for every $\q\in\cu^{\omega}(\ns)=\hom(\mb{F}_2^{\omega},\ns)$, we have to check that for every $q\in\cu^n(\mb{F}_2^\omega)$ we have $\q\co T\co q\in \cu^n(\ns)$. But $T\co q$ is clearly in $\cu^n(\mb{F}_2^\omega)$, so $\q\co T\in \cu^{\omega}(\ns)$ as required.

Moreover, it is seen by a straightforward computation that $T'$ defines a totally surjective continuous bundle morphism from the $\infty$-fold compact abelian bundle $\cu^\omega(\ns)$ to itself. Thus, by Lemma \ref{lem:mes-pre-1}, $T'$ preserves the Haar measure, i.e.\ $\mu_{\cu^\omega(\ns)} \co {T'}^{-1} = \mu_{\cu^\omega(\ns)}$. Hence $\mu_{\cu^\omega(\ns)}$ is affine-exchangeable.

For the converse, suppose that $\mu_{\cu^{\omega}(\ns)}$ (seen as a measure on $\ns^{\db{\mb{N}}}$) is affine-exchangeable, and for any $n\in\mb{N}$ let $T\in \textrm{GL}(\mb{F}_2^n)\ltimes \mb{Z}_2^n$.  Considering $\textrm{GL}(\mb{F}_2^n)\ltimes \mb{Z}_2^n\subset \textrm{GL}(\mb{F}_2^\omega)\ltimes \mb{Z}_2^\omega$ in the usual way, we can view $T$ as a transformation in $\textrm{GL}(\mb{F}_2^\omega)\ltimes \mb{Z}_2^\omega$ (abusing the notation). The map $T':\cu^{\omega}(\ns)\to\cu^{\omega}(\ns)$, $\q\mapsto \q\co T$ preserves the measure $\mu_{\cu^\omega(\ns)}$ by assumption. Let $\iota:\db{n}\to {\db{\mb{N}}}$ be the inclusion map defined by $\iota(v):=(v, 0^{\mb{N}\setminus[n]})$, and let $p_n:\ns^{\db{\mb{N}}}\to \ns^{\db{n}}$ be the projection map $(b_v)_{v\in \db{\mb{N}}}\mapsto (b_{\iota(v)})_{v\in\db{n}}$. We then have that $\mu_{\cu^n(\ns)}=\mu_{\cu^{\omega}(\ns)}\co p_n^{-1}$ is invariant under the action of $T'$, viewing the latter as a map $\ns^{\db{n}}\to \ns^{\db{n}}$.

For each $k\ge 0$ we know that $\pi_k^{\db{n}}:\ns^{\db{n}}\to \ns_k^{\db{n}}$ preserves the $n$-cubic Haar measures. We abuse the notation to see $T'$ as a function on $\ns^{\db{n}}$ and $\ns_k^{\db{n}}$, and thus $T'\co \pi_k^{\db{n}} = \pi_k^{\db{n}}\co T'$. We obtain that for every $k\ge 0$ the measure $\mu_{\cu^n(\ns_k)}$ is invariant under the action of $T\in \textrm{GL}(\mb{F}_2^n)\ltimes \mb{Z}_2^n$. 

Note that $\mu_{\cu^n(\ns_k)}(T'(\cu^n(\ns_k))=1$. Since $T'$ is continuous, $T'(\cu^n(\ns_k)$ is a compact subset of $\ns_k^{\db{n}}$ of measure 1. In particular $T'(\cu^n(\ns_k))\cap \cu^n(\ns_k)$ is a closed set of measure 1 (because $\mu_{\cu^n(\ns_k)}$ is supported on $\cu^n(\ns_k)$). Since $\mu_{\cu^n(\ns_k)}$ is a strictly positive Borel measure \cite[Proposition 2.2.11]{Cand:Notes2}, a simple argument deduces that $\cu^n(\ns_k)\subset T'(\cu^n(\ns_k))$ (indeed, otherwise there would be a non-empty open subset of $\cu^n(\ns_k)$ of zero $\mu_{\cu^n(\ns_k)}$-measure). Repeating this argument with ${T}^{-1}$ we obtain the opposite inclusion, and thus conclude that  $T'(\cu^n(\ns_k))=\cu^n(\ns_k)$. We have thus obtained that for any $n,k\ge 0$, $\q\in\cu^n(\ns_k)$, and any $T\in \textrm{GL}(\mb{F}_2^n)\ltimes \mb{Z}_2^n$ we have $\q\co T\in \cu^n(\ns_k)$.

Now to complete the proof we need to establish the claim that given any $f\in\cu^{\ell}(\mb{F}_2^m)$ and any $\q\in \cu^m(\ns_k)$ we have $\q\co f\in \cu^{\ell}(\ns_k)$ for any ${\ell},m,k\ge 0$. This will show that every element of $\cu^m(\ns_k)$ can be regarded as an element of $\hom(\mc{D}_1(\mb{F}_2^m),\ns_k)$ (identifying $\db{m}$ with $\mb{F}_2^m$). Note that earlier we proved the above claim only for $f=T$ an invertible affine map. We now need the result for any $f\in\cu^{\ell}(\mb{F}_2^m)$, which can be viewed as an affine map (not necessarily injective or surjective) $\mb{F}_2^{\ell}\to \mb{F}_2^m$ (recall that $\mc{D}_1(\mb{F}_2^m)$ is 2-homogeneous). To prove the claim, first we reduce the proof to the case where $f$ is surjective. If $f:\mb{F}_2^{\ell}\to \mb{F}_2^m$ is not surjective, there exists an invertible transformation $T_1\in \Aff(\mb{F}_2^m)$ such that $T_1\co f(\mb{F}_2^{\ell})=\mb{F}_2^{\ell'}\times 0^{m-\ell'}$. Thus, if $p:\mb{F}_2^m\to \mb{F}_2^{\ell'}$, $p(v_1,\ldots,v_m):=(v_1,\ldots,v_{\ell'})$ and $i:\mb{F}_2^{\ell'}\to \mb{F}_2^m$, $i(v):=(v,0^{m-\ell'})$ we have that $f = T_1^{-1}\co i\co p\co T_1 \co f$. Now suppose that $\q\in \cu^m(\ns_k)$ and we have already proved the result for the case of surjective affine maps. Then $\q \co f = \q \co T_1^{-1}\co i\co p\co T_1 \co f$. But note that $\q\co T_1^{-1}\in \cu^m(\ns_k)$ by hypothesis. Then, since $i$ is a discrete-cube morphism, we have $(\q\co T_1^{-1})\co i\in \cu^{\ell'}(\ns_k)$. Finally, note that $p\co T_1 \co f$ is an affine surjective morphism and thus by hypothesis $(\q\co T_1^{-1}\co i) \co (p\co T_1 \co f)\in \cu^{\ell}(\ns_k)$. An analogous argument shows that we can further reduce the proof to the case where $f$ is also injective and then we are done (since we have then reduced to the case of a bijective affine map, which was addressed above). This proves the claim.

We can now conclude that $\cu^n(\ns_k)=\hom(\mb{F}_2^n,\ns_k)$ and thus $\ns_k$ is 2-homogeneous. As $\ns$ is the inverse limit of $\ns_k$, we have that $\ns$ is also 2-homogeneous.
\end{proof}

\begin{corollary}\label{cor:2-hom-aff-exch}
Let $(\ab,\ab_\bullet)$ be a compact filtered abelian group and let $\ns$ be the associated compact group nilspace. Then $\mu_{\cu^{\omega}(\ns)}$ \textup{(}seen as a measure on $\ns^{\db{\mb{N}}}$\textup{)} is affine-exchangeable if and only if the filtration $\ab_\bullet$ is 2-homogeneous.
\end{corollary}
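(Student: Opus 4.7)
The strategy is to combine the just-proved Lemma \ref{lem:aff-ech-equals-2-hom-app} with a structural result from \cite{CGSS-p-hom} applied at $p=2$. More precisely, Lemma \ref{lem:aff-ech-equals-2-hom-app} already equates affine-exchangeability of $\mu_{\cu^\omega(\ns)}$ with $\ns$ being a $2$-homogeneous nilspace in the sense of Definition \ref{def:p-hom}, so the remaining task is to match ``$\ns$ is $2$-homogeneous as a nilspace'' with ``$\ab_\bullet$ is a $2$-homogeneous filtration''.

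First I would verify that the profinite-step hypothesis of Lemma \ref{lem:aff-ech-equals-2-hom-app} is satisfied. For the group nilspace of $(\ab,\ab_\bullet)$, the $k$-th characteristic factor is canonically identified with the group nilspace of $(\ab/\ab_{(k+1)},\ab_\bullet/\ab_{(k+1)})$, and $\ns$ is the inverse limit of these factors exactly when $\bigcap_{i\ge 0}\ab_{(i)}=\{0_\ab\}$, i.e.\ when $\ab_\bullet$ is non-degenerate in the sense of Definition \ref{def:non-deg-filt}. If the filtration is degenerate one may harmlessly replace $\ab$ by $\ab/\bigcap_i \ab_{(i)}$ equipped with the quotient filtration, which produces the same group nilspace $\ns$ and does not affect whether the filtration is $2$-homogeneous. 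Hence we may assume $\ns$ is profinite-step without loss of generality.

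Next I would invoke \cite[Theorem 1.4]{CGSS-p-hom} at $p=2$: the group nilspace of a filtered compact abelian group is $2$-homogeneous as a nilspace if and only if its filtration is $2$-homogeneous, i.e.\ satisfies $2g\in \ab_{(i+1)}$ for every $g\in \ab_{(i)}$ and every $i\ge 0$. Chaining this equivalence with the one supplied by Lemma \ref{lem:aff-ech-equals-2-hom-app} yields the corollary.

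There is no real obstacle beyond correctly matching the two notions of ``$2$-homogeneity'' (the nilspace one from Definition \ref{def:p-hom} and the filtration-level one recalled in Example \ref{ex:2}); once this is done via \cite[Theorem 1.4]{CGSS-p-hom}, the argument is purely a bookkeeping composition of the two equivalences.
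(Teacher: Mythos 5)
Your proposal is correct and takes essentially the same route as the paper: Lemma \ref{lem:aff-ech-equals-2-hom} reduces the claim to $2$-homogeneity of the nilspace $\ns$, and then one quotes the equivalence, for group nilspaces, between nilspace $2$-homogeneity and filtration $2$-homogeneity from \cite{CGSS-p-hom} (the paper cites Theorem 3.8 there rather than Theorem 1.4). Your extra care about the profinite-step hypothesis addresses a point the paper passes over silently, but note that quotienting by $\bigcap_{i}\ab_{(i)}$ does \emph{not} produce the same group nilspace (the underlying set changes); to make that reduction rigorous you would instead argue that $\mu_{\cu^{\omega}(\ns)}$ is the lift of the quotient's $\omega$-cube Haar measure by independent uniform fibers over $H^{\db{\mb{N}}}$ with $H=\bigcap_i\ab_{(i)}$, so that affine-exchangeability, like $2$-homogeneity of the filtration, is unaffected by the quotient.
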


\begin{proof}
By Lemma \ref{lem:aff-ech-equals-2-hom} the measure $\mu_{\cu^\omega(\ns)}$ is affine-exchangeable if and only if $\ns$ is 2-homogeneous. But for group nilspaces we know that being 2-homogeneous is equivalent to the associated filtration being 2-homogeneous, by \cite[Theorem 3.8]{CGSS-p-hom}.
\end{proof}

\section{On cube-surjective continuous morphisms}\label{app:cube-sur-implies-fibration}

\noindent Recall that a map $\varphi$ between two nilspaces $\ns,\nss$ is \emph{cube-surjective} if for every $n\geq 0$ the map $\varphi^{\db{n}}$ is surjective from $\cu^n(\ns)$ onto $\cu^n(\nss)$. 
The goal of this section is to prove the following result.

\begin{theorem}\label{thm:cube-sur-implies-fib}
Let $\ns$, $\nss$ be compact profinite-step $p$-homogeneous nilspaces and suppose that $\varphi:\ns\to\nss$ is a continuous cube-surjective morphism. Then $\varphi$ is a fibration.
\end{theorem}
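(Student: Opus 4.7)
The approach is a two-step reduction: first transfer the problem to the finite-step characteristic factors via the profinite-step structure, solve the finite-step case by induction on the step, and then glue the factor-level solutions via a compactness-based inverse limit argument.

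First I would set up the reduction to finite-step. Writing $\ns = \varprojlim \ns_k$ and $\nss = \varprojlim \nss_k$ for the characteristic-factor inverse systems, with projections denoted $\pi_k$ in each case, the morphism $\varphi$ induces continuous morphisms $\varphi_k : \ns_k \to \nss_k$ compatible with the $\pi_k$ (a morphism clearly respects each relation $\sim_k$). Each $\ns_k$ and $\nss_k$ is a compact $k$-step $p$-homogeneous nilspace. Cube-surjectivity of $\varphi$ transfers to each $\varphi_k$: given $\q \in \cu^n(\nss_k)$, lift it to $\tilde{\q} \in \cu^n(\nss)$ using that $\pi_k : \nss \to \nss_k$ is a fibration, take any $\hat{\q}\in\cu^n(\ns)$ with $\varphi\co\hat{\q}=\tilde{\q}$, and observe that $\pi_k\co\hat{\q}$ is a preimage of $\q$ under $\varphi_k^{\db{n}}$.

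Next I would handle the finite-step case by induction on the step $k$: any continuous cube-surjective morphism $\psi:\mathsf{X}\to\mathsf{Y}$ between compact finite-step $p$-homogeneous nilspaces is a fibration. The base case $k=1$ is immediate, since $1$-step compact nilspaces are compact abelian groups (with cubes determined by $\sigma_2$-relations) and a cube-surjective continuous homomorphism between compact abelian groups is surjective, yielding the fibration property trivially. For the inductive step, the morphism $\psi_{k-1}$ induced on $(k-1)$-step factors is cube-surjective, and so a fibration by the inductive hypothesis. The top structure morphism $\alpha_k:\ab_k(\mathsf{X})\to\ab_k(\mathsf{Y})$ is then shown to be surjective by applying cube-surjectivity to $(k+1)$-cubes that differ from a reference cube only at the vertex $1^{k+1}$ and using that such differences record precisely the action of the top structure groups. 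Given a corner $\q'\in\cor^n(\mathsf{X})$ and a target completion $\q\in\cu^n(\mathsf{Y})$ of $\psi\co\q'$, one completes $\pi_{k-1}\co\q'$ inside $\mathsf{X}_{k-1}$ compatibly with $\pi_{k-1}\co\q$ via $\psi_{k-1}$, lifts the completion through the $\ab_k(\mathsf{X})$-bundle structure $\mathsf{X}\to \mathsf{X}_{k-1}$, and adjusts the lift by an element of $\cu^n(\mc{D}_k(\ab_k(\mathsf{X})))$ mapping under $\alpha_k^{\db{n}}$ to the required correction in $\cu^n(\mc{D}_k(\ab_k(\mathsf{Y})))$; this is possible by surjectivity of $\alpha_k$ and the free action of $\cu^n(\mc{D}_k(\ab_k(\mathsf{X})))$ on each fiber of $\cu^n(\mathsf{X})\to\cu^n(\mathsf{X}_{k-1})$.

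Finally I would glue via an inverse limit. For any corner $\q'\in\cor^n(\ns)$ and any completion $\q\in\cu^n(\nss)$ of $\varphi\co\q'$, define
\[
C_k := \{\sigma\in\cu^n(\ns_k) : \sigma|_{\db{n}\setminus\{1^n\}} = \pi_k\co\q'|_{\db{n}\setminus\{1^n\}},\ \varphi_k\co\sigma = \pi_k\co\q\}.
\]
Each $C_k$ is non-empty by the finite-step case applied to $\varphi_k$, and closed in $\cu^n(\ns_k)$, hence compact; moreover $\pi_{k-1,k}^{\db{n}}$ maps $C_k$ into $C_{k-1}$. The set $D$ of coherent sequences in $\prod_k C_k$ is closed in this compact product, and a level-by-level inductive construction using the finite-step fibration result at each level shows that every initial projection of $D$ is non-empty, so $D$ itself is non-empty by finite intersection. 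Any element of $D$ assembles into an element $\tilde{\q}\in\cu^n(\ns)$ completing $\q'$ with $\varphi\co\tilde{\q}=\q$, proving that $\varphi$ is a fibration. The main obstacle is the inductive step of the finite-step argument, specifically extracting surjectivity of $\alpha_k$ from cube-surjectivity and then coordinating the two-stage lift along the bundle so that the final adjustment lands in the correct fiber; the $p$-homogeneity supports this via clean translation-lifting results such as Proposition \ref{prop:lift-any-tran}, which keep the needed corrections inside the cube structure.
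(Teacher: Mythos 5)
Your overall architecture --- pass to the characteristic factors, prove the finite-step case by induction on the step, and glue by a compactness argument --- matches the paper's, and both the transfer of cube-surjectivity to the induced maps $\varphi_k$ and the final inverse-limit gluing are sound. The gap is in the inductive step of your finite-step argument, at the point where you claim that surjectivity of the top structure homomorphism $\alpha_k:\ab_k(\ns)\to\ab_k(\nss)$ follows by applying cube-surjectivity to cubes differing from a reference cube only at the top vertex. (Note first that in a $k$-step nilspace two $(k+1)$-cubes agreeing off $1^{k+1}$ coincide, so the relevant modification is of $k$-cubes at $1^k$.) Given $z\in\ab_k(\nss)$, a cube $\q\in\cu^k(\nss)$ with a preimage $\hat\q$, and the modified cube agreeing with $\q$ off $1^k$ and equal to $\q(1^k)+z$ there, cube-surjectivity hands you \emph{some} preimage of the modified cube, but nothing forces that preimage to agree with $\hat\q$ away from $1^k$; so you cannot read off an element of $\ab_k(\ns)$ mapping to $z$. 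Put differently, $\varphi^{\db{k}}$ sends each fiber of $\cu^k(\ns)\to\cu^k(\ns_{k-1})$ into a single coset of $\alpha_k(\ab_k(\ns))^{\db{k}}$ inside the corresponding fiber downstairs, and cube-surjectivity only says that the union of these cosets, taken over all fibers upstairs, covers the target fiber --- which carries no information when there are infinitely many fibers upstairs.

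This is exactly where the paper has to work hardest: it proves surjectivity of the top structure homomorphism by forming the quotient nilspace $\tnss=\nss/\alpha_k(\ab_k(\ns))$ (Proposition \ref{prop:H-nil}), exhibiting a cube-surjective morphism $\ns_{k-1}\to\tnss$, and invoking a counting argument (Proposition \ref{prop:diff-steps-impossible}, comparing $|\cu^n(\ns)|$ with $|\cu^n(\nss)|$ for large $n$) to force $\tnss$ to have step at most $k-1$. That argument requires the nilspaces to be \emph{finite}, not merely compact of finite step, and this is precisely where $p$-homogeneity enters: $p$-homogeneous \textsc{cfr} nilspaces are finite, and the factorization theorem of \cite{CGSS} lets one pass from the possibly infinite compact factors to finite ones before counting. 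Your proposal works directly with the compact factors $\ns_k,\nss_k$ (which need not be finite) and never uses $p$-homogeneity in an essential way; since the implication from cube-surjectivity to the fibration property is not expected to hold for general compact finite-step nilspaces, that is itself a warning that the key step is missing.
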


\noindent To prove this we shall need the following result about finite nilspaces.

\begin{theorem}\label{thm:cube-sur-implies-fib-2} 
Let $\ns$ be a $k$-step finite nilspace, let $\nss$ be a $d$-step nilspace, and let $\varphi:\ns\to \nss$ be a cube-surjective morphism. Then $\varphi$ is a fibration. In particular $d\le k$.
\end{theorem}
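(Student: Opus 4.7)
The plan is to prove the theorem by induction on $k$, the step of $\ns$. The base case $k=0$ is immediate: $\ns$ is then a singleton, and cube-surjectivity forces $\nss$ to be a singleton too. Before the inductive step, I would establish the step bound $d\le k$ by a counting argument: for a finite $s$-step nilspace $\mathcal{Z}$ with nontrivial structure groups $\ab_i(\mathcal{Z})$, the cardinality $|\cu^n(\mathcal{Z})|$ admits a bundle decomposition $\prod_i |\ab_i(\mathcal{Z})|^{c_i(n)}$ with $c_i(n)$ polynomial in $n$ of exact degree $i$, and cube-surjectivity forces $|\cu^n(\ns)|\ge|\cu^n(\nss)|$ for every $n$; comparing the leading exponents $|\ab_k(\ns)|^{\Theta(n^k)}$ and $|\ab_d(\nss)|^{\Theta(n^d)}$ as $n\to\infty$ then rules out $d>k$.

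For the inductive step, I would consider the induced morphism $\varphi_{k-1}:\ns_{k-1}\to\nss_{k-1}$ between the $(k-1)$-step factors. Since the canonical projection $\pi_{k-1}^{\nss}:\nss\to\nss_{k-1}$ is always a fibration, cube-surjectivity of $\varphi$ readily implies cube-surjectivity of $\varphi_{k-1}$ (given $\bar{\q}\in\cu^n(\nss_{k-1})$, lift through $\pi_{k-1}^{\nss}$, then through $\varphi$, then project by $\pi_{k-1}^{\ns}$), so the inductive hypothesis yields that $\varphi_{k-1}$ is a fibration. To verify corner lifting for $\varphi$, fix $\q'\in\cor^n(\ns)$ and a completion $\q\in\cu^n(\nss)$ of $\varphi\co\q'$. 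Combining the fibration property of $\varphi_{k-1}$ with that of $\pi_{k-1}^{\ns}$ produces a cube $\tilde{\q}_1\in\cu^n(\ns)$ completing $\q'$ such that $\varphi_{k-1}\co\pi_{k-1}^{\ns}\co\tilde{\q}_1=\pi_{k-1}^{\nss}\co\q$. Then $\varphi\co\tilde{\q}_1$ and $\q$ agree on $\db{n}\setminus\{1^n\}$ and share the same projection to $\nss_{k-1}$, so they differ at $1^n$ only by some element $z\in\ab_k(\nss)$ (trivially $z=0$ if $d<k$, since then $\ab_k(\nss)$ is trivial). For $n\ge k+1$, the bound $d\le k$ yields uniqueness of $n$-cube completion from the $(n-1)$-dimensional corner in $\nss$, which forces $z=0$, and $\tilde{\q}_1$ is the desired lift; for $n\le k$, one can adjust $\tilde{\q}_1(1^n)$ by any $w\in\ab_k(\ns)$ while remaining in $\cu^n(\ns)$ (since in dimensions $n\le k$ every map $\db{n}\to\ab_k(\ns)$ is a cube on $\mc{D}_k(\ab_k(\ns))$), reducing the task to solving $\alpha_k(w)=z$, where $\alpha_k:\ab_k(\ns)\to\ab_k(\nss)$ is the induced top structure morphism.

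The hard part will therefore be the surjectivity of $\alpha_k$, needed only in the case $d=k$. My plan for this is as follows: given any $z\in\ab_k(\nss)$ and any $y\in\nss$, the map on $\db{k}$ equal to $y$ on $\db{k}\setminus\{1^k\}$ and $y+z$ at $1^k$ is a $k$-cube on $\nss$ (in dimension $k$, any map into the top structure group is a cube on $\mc{D}_k$), so by cube-surjectivity it lifts to some $\tilde{\q}\in\cu^k(\ns)$ whose values at $v\ne 1^k$ all lie in $\varphi^{-1}(y)$. The technical heart is then to use the fibration of $\pi_{k-1}^{\ns}$ together with the (inductively established) fibration of $\varphi_{k-1}$ to modify the lift so that its projection to $\ns_{k-1}$ becomes constant, producing two points of $\ns$ in a common $\pi_{k-1}^{\ns}$-fiber whose $\varphi$-images differ by $z$ in the $\ab_k(\nss)$-torsor structure, which exhibits the desired preimage $w\in\ab_k(\ns)$ with $\alpha_k(w)=z$.
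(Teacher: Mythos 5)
Your overall architecture matches the paper's: the counting comparison of $|\cu^n(\ns)|$ with $|\cu^n(\nss)|$ to rule out $d>k$ is exactly Proposition \ref{prop:diff-steps-impossible}, the induction on $k$ via the induced cube-surjective map $\varphi_{k-1}$ is the paper's inductive step, and your corner-lifting analysis (splitting $n\ge k+1$ from $n\le k$) correctly and soundly reduces the whole problem, in the case $d=k$, to the surjectivity of the top structure homomorphism $\alpha_k:\ab_k(\ns)\to\ab_k(\nss)$.

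The gap is in that last step. Write $H:=\alpha_k(\ab_k(\ns))$. If $\tilde{\q}'\in\cu^k(\ns)$ has constant projection to $\ns_{k-1}$, then all its values lie in one fiber of $\pi_{k-1}^{\ns}$, so $\tilde{\q}'(v)=\tilde{\q}'(0^k)+g(v)$ with $g(v)\in\ab_k(\ns)$ and $\varphi\co\tilde{\q}'(v)=y+\alpha_k(g(v))$; hence a lift of $(y,\dots,y,y+z)$ with constant projection exists \emph{if and only if} $z\in H$. Your ``technical heart'' --- modifying the lift so that its projection to $\ns_{k-1}$ becomes constant while its $\varphi$-image stays equal to $(y,\dots,y,y+z)$ --- is therefore a restatement of the conclusion, not an argument for it, and the tools you invoke (the fibration properties of $\pi_{k-1}^{\ns}$ and of $\varphi_{k-1}$) act only on the factors below the top one, where the obstruction does not live. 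Cube-surjectivity in dimension $k$ only guarantees that each $k$-cube of $\nss$ over the constant cube $\bar{y}$ lifts over \emph{some} $c'\in\cu^k(\ns_{k-1})$; the image of the set of $k$-cubes over a fixed $c'$ is a single $H^{\db{k}}$-coset, and a torsor over $\ab_k(\nss)^{\db{k}}$ decomposes into $H^{\db{k}}$-cosets for any subgroup $H$, so dimension $k$ alone yields no contradiction when $H\neq\ab_k(\nss)$. The paper closes this with a different, global idea: it forms the quotient nilspace $\nss/H$ (Proposition \ref{prop:H-nil}), checks that $\pi_{k-1}(x)\mapsto\pi_{\sim}(\varphi(x))$ is a well-defined cube-surjective morphism from the $(k-1)$-step nilspace $\ns_{k-1}$ onto $\nss/H$, and applies the counting lemma a second time to force $\nss/H$ to have step at most $k-1$, i.e.\ $\ab_k(\nss)/H=\{0\}$. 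This exploits cube-surjectivity in all sufficiently large dimensions; some input of that kind is needed to finish your argument.
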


Indeed, we first need to prove the following particular case:

\begin{proposition}\label{prop:diff-steps-impossible}
Let $\ns$ be a $k$-step finite nilspace, let $\nss$ be a $d$-step nilspace with $d>k$, and suppose that $\nss$ is not $(d-1)$-step. Then no morphism $\varphi:\ns\to \nss$ can be cube-surjective.
\end{proposition}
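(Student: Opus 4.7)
The plan is to proceed by contradiction, extracting a strict inequality $|\cu^n(\nss)|>|\cu^n(\ns)|$ for some sufficiently large $n$, which would preclude the supposed cube-surjectivity of $\varphi$ at level $n$.

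First I would use the iterated abelian bundle structure of cube sets on finite-step compact nilspaces (from \cite[\S 3.2]{Cand:Notes1}, as recalled in Section \ref{sec:prelims}). Since $\ns$ is a $k$-step finite nilspace with structure groups $\ab_1(\ns),\ldots,\ab_k(\ns)$, the set $\cu^n(\ns)$ is a $k$-fold abelian bundle with successive structure groups $\cu^n(\mc{D}_i(\ab_i(\ns)))$, yielding $|\cu^n(\ns)|=\prod_{i=1}^k|\cu^n(\mc{D}_i(\ab_i(\ns)))|$. Identifying cubes on $\mc{D}_i(\ab)$ with polynomial maps $\db{n}\to\ab$ of degree at most $i$ (cf.\ \cite[Lemma 2.2.5]{Cand:Notes1}), we obtain $|\cu^n(\mc{D}_i(\ab))|=|\ab|^{N_i(n)}$ where $N_i(n):=\sum_{j=0}^i\binom{n}{j}=O(n^i)$. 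Setting $A:=\max_{i\in[k]}|\ab_i(\ns)|$, which is finite since $\ns$ is, this gives the upper bound $|\cu^n(\ns)|\le A^{k\,N_k(n)}$.

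Next I apply the same bundle decomposition to $\nss$. Because $\nss$ is $d$-step but not $(d-1)$-step, its $d$-th structure group $\ab_d(\nss)$ is non-trivial, so $|\ab_d(\nss)|\ge 2$. Retaining only the $i=d$ factor in the bundle decomposition of $\cu^n(\nss)$ gives the lower bound $|\cu^n(\nss)|\ge |\cu^n(\mc{D}_d(\ab_d(\nss)))|=|\ab_d(\nss)|^{N_d(n)}\ge 2^{N_d(n)}$. (If $\ab_d(\nss)$ happens to be infinite, this bound is $\infty$ and the desired contradiction is immediate, since $|\cu^n(\ns)|$ is finite.)

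To conclude, I compare the two bounds as $n\to\infty$. Since $d>k$ by hypothesis, $N_d(n)=\Theta(n^d)$ strictly dominates $k\,N_k(n)=O(n^k)$, so for all sufficiently large $n$ one has $N_d(n)\log 2 > k\,N_k(n)\log A$, which gives $|\cu^n(\nss)|>|\cu^n(\ns)|$. This contradicts cube-surjectivity of $\varphi$, which would force $\varphi^{\db{n}}:\cu^n(\ns)\to\cu^n(\nss)$ to be surjective and hence $|\cu^n(\ns)|\ge |\cu^n(\nss)|$ for every $n$. The essential ingredients to verify are the bundle-product formula and the polynomial identification for $\cu^n(\mc{D}_i(\ab))$; both are standard in nilspace theory. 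The argument is essentially a growth-rate comparison, with no real obstacle once these structural facts are in hand: the finiteness hypothesis on $\ns$ is used crucially to bound $A<\infty$, so that the $n^k$-order growth of $|\cu^n(\ns)|$ is dwarfed by the $n^d$-order growth coming from the non-trivial top structure group of $\nss$.
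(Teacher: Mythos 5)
Your proof is correct and is essentially the paper's argument: both rest on comparing the growth in $n$ of $|\cu^n(\ns)|$ and $|\cu^n(\nss)|$ via the structure-group decomposition, which yields $|\cu^n(\ns)|=\prod_{i=1}^k|\ab_i(\ns)|^{\sum_{j=0}^i\binom{n}{j}}$ (degree $O(n^k)$) against a factor $|\ab_d(\nss)|^{\Theta(n^d)}$ with $|\ab_d(\nss)|\ge 2$, contradicting surjectivity of $\varphi^{\db{n}}$ for large $n$. The only cosmetic difference is that the paper obtains the exact count by completing corners level by level in $\db{n}$, whereas you invoke the abelian-bundle product formula together with the polynomial parametrization of $\cu^n(\mc{D}_i(\ab))$ and then relax to upper/lower bounds.
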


\begin{proof} 
First note that $\nss$ must be finite as otherwise the result is trivial (since cube-surjective maps are in particular surjective). For $i\in [k]$ and $j\in[d]$ let $\ab_i(\ns)$, $\ab_j(\nss)$ be the structure groups of $\ns$ and $\nss$ respectively. We want to compute the size of $\cu^n(\ns)$. Assume that $n>k$.

A possible way of doing this consists in first assigning values to the vertices $v\in\db{n}$ in an order given by $|v|:=v\sbr{1}+\cdots+v\sbr{n}$. That is, note that the possible values for $\q(0^n)$ are all possible elements of $\ns$. Thus we have $|\ns|=|\ab_1(\ns)|\cdots|\ab_k(\ns)|$ possibilities for $\q(0^n)$. Next, for any $v\in\db{n}$ with $|v|=1$ note that again we have $\ns$ possibilities for each such $\q(v)$. By the ergodicity axiom all these maps are morphisms from $\{v\in\db{n}:|v|\le1\}$ to $\ns$ (see \cite[Definition 3.1.4]{Cand:Notes1}). Hence, in order to assign a value to all of these vertices we have $|\ns|^n=(|\ab_1(\ns)|\cdots|\ab_k(\ns)|)^n$ possibilities.

Take now any element $w\in \db{n}$ with $|w|=2$. Let us say that $v\le w$ for $v\in\db{n}$ if for all $i\in[n]$, $v\sbr{i}\le w\sbr{i}$. The map $f:\{v\in\db{n}:v\le w\}\setminus\{w\}\to \ns$ defined by $v\mapsto \q(v)$ is then a corner in $\cor^2(\ns)$ (identifying $\{v\in\db{n}:v\le w\}$ with $\db{|w|}$). We are interested in how many $x\in\ns$ there are such that the map $\{v\in\db{n}:v\le w\}\to\ns$ defined as $v\mapsto \q(v)$ if $v\not=w$ and $w\mapsto x$ is a cube. Clearly if we have two such  values $x_1,x_2\in \ns$ then by unique completion in $\ns_1$ we have $\pi_1(x_1)=\pi_1(x_2)$. But the converse is also true by \cite[Lemma 3.2.7]{Cand:Notes1} and thus the number of possible $x\in\ns$ that complete $f$ equals the size of the fiber $\pi_1^{-1}(\pi_1(x_1))$ for some possible $x_1\in\ns$ that completes $f$. Hence, for each $w\in\db{n}$ with $|w|=2$ we have $|\pi_1^{-1}(\pi_1(x_1))|=|\ab_2(\ns)|\cdots|\ab_k(\ns)|$ possibilities (this follows from \cite[Lemma 3.3.6]{Cand:Notes1} and \cite[Theorem 3.2.19]{Cand:Notes1}). As we have $\binom{n}{2}$ such possible $w\in\db{n}$ we have a total of $(|\ab_2(\ns)|\cdots|\ab_k(\ns)|)^{\binom{n}{2}}$ possibilities once we have fixed the values for $|v|\le 1$.

Repeating this process, as $n>k$ we have
\[
|\cu^n(\ns)|=\underbrace{|\ns|}_{\#\{\text{pos. for } 0^n\}}\underbrace{|\ns|^{n}}_{\#\{\text{pos. for } |v|=1\}}\underbrace{(|\ab_2(\ns)|\cdots|\ab_k(\ns)|)^{\binom{n}{2}}}_{\#\{\text{pos. for } |v|=2\}}\cdots \underbrace{|\ab_k(\ns)|^{\binom{n}{k}}}_{\#\{\text{pos. for } |v|=k\}}.
\]
Similarly, for $n>d$ we have $|\cu^n(\nss)|=|\nss|^{n+1}(|\ab_2(\nss)|\cdots|\ab_d(\nss)|)^{\binom{n}{2}}\cdots |\ab_d(\nss)|^{\binom{n}{d}}$. But it is clear that for $n$ large enough, as $\ab_d(\nss)\not=\{\id\}$ we have $|\cu^n(\ns)|<|\cu^n(\nss)|$, whence $\varphi^{\db{n}}$ cannot be surjective. 
\end{proof}

\noindent Let us recall the following construction of a nilspace modulo a subgroup of the last structure group.
\begin{proposition}\label{prop:H-nil}\cite[Proposition A.20]{CGSS-p-hom} 
Let $\nss$ be a $k$-step nilspace and let $H<\ab_k(\nss)$ be any subgroup. Let us define the following relation on $\nss$: for $y_1,y_2\in\nss$, we write $y_1\sim y_2$ if and only if $y_1=y_2+h$ for some $h\in H$. Then the following statements hold:
\begin{enumerate}[leftmargin=0.8cm]
    \item The relation $\sim$ is an equivalence relation.
    \item The set $\tilde{\nss}:=\nss/\sim$ together with the sets $\cu^n(\tilde{\nss}):=\{\pi_{\sim}\co \q:\q\in\cu^n(\nss)\}$ is a nilspace.
    \item $\tilde{\nss}$ is $k$-step, with last structure group $\ab_k(\tilde{\nss})=\ab_k(\nss)/H$, and $\tilde{\nss}_{k-1}\simeq \nss_{k-1}$.
\end{enumerate}
\end{proposition}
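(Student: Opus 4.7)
The plan is to identify $\tilde{\nss}$ as the quotient of $\nss$ by the free action of $H\leq \ab_k(\nss)$ on the top factor $\nss_k=\nss$, and to transfer the nilspace structure from $\nss$ to $\tilde{\nss}$ along the projection $\pi_\sim:\nss\to\tilde{\nss}$. For Part (i), I would first note that by the abelian-bundle structure of $\nss$ over $\nss_{k-1}$, the group $\ab_k(\nss)$ acts freely on $\nss$, so $H$ also acts freely and $\sim$ is the orbit equivalence of this action. Reflexivity, symmetry and transitivity then reduce to the group axioms of $H$ (take $h=0$; if $y_1=y_2+h$ then $y_2=y_1+(-h)$; and $y_1=y_2+h,\,y_2=y_3+h'$ give $y_1=y_3+(h+h')$).

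For Part (ii), the composition axiom is immediate, since cube composition commutes with $\pi_\sim$: if $\q=\pi_\sim\co\tilde{\q}$ with $\tilde{\q}\in\cu^n(\nss)$ and $\phi:\db{m}\to\db{n}$ is a discrete-cube morphism, then $\q\co\phi=\pi_\sim\co(\tilde{\q}\co\phi)\in\cu^m(\tilde{\nss})$. Ergodicity follows by pointwise lifting: any $\q:\db{1}\to\tilde{\nss}$ lifts to $\tilde{\q}:\db{1}\to\nss$ which lies in $\cu^1(\nss)$ by the ergodicity axiom for $\nss$. The main task is corner completion. Given $\q'\in\cor^n(\tilde{\nss})$, since $H\subset\ab_k(\nss)$ acts trivially on $\nss_{k-1}$, the map $\pi_{k-1}\co\q'$ descends to a well-defined $n$-corner on $\nss_{k-1}$. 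Using that $\pi_{k-1}:\nss\to\nss_{k-1}$ is a fibration, I would lift this to some $\tilde{\q}'\in\cor^n(\nss)$ over $\pi_{k-1}\co\q'$, and then adjust $\tilde{\q}'$ by a suitable $H$-valued cocycle (exploiting the freeness of the $\ab_k(\nss)$-action and the fact that each face of $\q'$ already lifts to a cube in $\nss$ by definition of $\cu^{n-1}(\tilde{\nss})$) so that $\pi_\sim\co\tilde{\q}'=\q'$ pointwise. Completing $\tilde{\q}'$ in $\cu^n(\nss)$ and projecting by $\pi_\sim$ then yields the desired completion.

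For Part (iii), the projection $\pi_{k-1}:\nss\to\nss_{k-1}$ factors through $\pi_\sim$ to a morphism $\tilde{\pi}_{k-1}:\tilde{\nss}\to\nss_{k-1}$. I would identify $\tilde{\nss}_{k-1}$ with $\nss_{k-1}$ via $\tilde{\pi}_{k-1}$, by checking that $\tilde{x}\sim_{k-1}\tilde{y}$ in $\tilde{\nss}$ precisely when $\tilde{\pi}_{k-1}(\tilde{x})=\tilde{\pi}_{k-1}(\tilde{y})$; the forward direction is automatic, and the reverse uses that the $\sim_{k-1}$-relation in $\nss$ is captured by the action of $\ab_k(\nss)$, and this descends to an action of $\ab_k(\nss)/H$ that is free on $\tilde{\nss}$ with orbits equal to the fibers of $\tilde{\pi}_{k-1}$. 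This simultaneously realizes $\tilde{\nss}$ as an $(\ab_k(\nss)/H)$-bundle over $\tilde{\nss}_{k-1}$, identifying the last structure group. Finally, unique $(k+1)$-corner completion in $\tilde{\nss}$ follows by lifting two hypothetical completions $\q_1,\q_2$ to cubes $\tilde{\q}_1,\tilde{\q}_2\in\cu^{k+1}(\nss)$ and adjusting $\tilde{\q}_2$ by an $H$-valued $(k+1)$-cube so that $\tilde{\q}_1=\tilde{\q}_2$ on $\db{k+1}\setminus\{1^{k+1}\}$, whence unique completion in $\nss$ forces them to agree modulo $H$ at $1^{k+1}$, giving $\q_1=\q_2$.

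The main obstacle is the corner completion axiom in Part (ii): constructing a lift of an arbitrary corner from $\tilde{\nss}$ to $\nss$ requires carefully combining the fibration property of $\pi_{k-1}$ with the freedom provided by the $H$-action to glue face-wise lifts into a single coherent lift. Once this is done, Parts (i) and (iii) proceed by routine bookkeeping with the quotient action.
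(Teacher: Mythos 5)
This proposition is not proved in the paper: it is quoted, with proof, from \cite[Proposition A.20]{CGSS-p-hom}, so there is no internal argument to compare yours against. Judged on its own terms, your strategy is sound: part (i) is immediate as you say, part (iii) is correct bookkeeping once (ii) is available (your uniqueness argument for $(k+1)$-corners is fine: the discrepancy of two lifted completions on $\db{k+1}\setminus\{1^{k+1}\}$ is $H$-valued and face-wise lies in $\cu^{k}(\mc{D}_k(\ab_k(\nss)))$, hence is a corner on $\mc{D}_k(H)$ and completes there), and the only load-bearing step is the one you flag yourself: lifting an arbitrary corner of $\tilde{\nss}$ to a corner of $\nss$. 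As written, ``adjust by a suitable $H$-valued cocycle'' is not yet an argument, and this step is essentially the whole proposition.

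The route you indicate through $\pi_{k-1}$ does close the gap, and here is the missing execution. Given $\q'\in\cor^n(\tilde{\nss})$, complete the induced corner on $\nss_{k-1}$ to a cube, lift that cube through the fibration $\pi_{k-1}$ to some $\q_0\in\cu^n(\nss)$, and compare $\pi_{\sim}\co\q_0$ with $\q'$ on $P:=\db{n}\setminus\{1^n\}$: they agree after projecting to $\nss_{k-1}$, so they differ by a function $c:P\to\ab_k(\nss)/H$. For each $(n-1)$-face $F\subset P$, writing $\q'|_F=\pi_{\sim}\co\q_F$ with $\q_F\in\cu^{n-1}(\nss)$ and noting that $\q_F$ and $\q_0|_F$ have the same projection to $\nss_{k-1}$, they differ by an element of $\cu^{n-1}(\mc{D}_k(\ab_k(\nss)))$, whence $c|_F\in\cu^{n-1}(\mc{D}_k(\ab_k(\nss)/H))$. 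Thus $c$ is a corner on $\mc{D}_k(\ab_k(\nss)/H)$; complete it there, lift the completion through the surjection $\cu^n(\mc{D}_k(\ab_k(\nss)))\to\cu^n(\mc{D}_k(\ab_k(\nss)/H))$, add the result to $\q_0$, and restrict to $P$ to obtain a corner of $\nss$ lying over $\q'$, which you complete in $\nss$ and push down by $\pi_{\sim}$. The three facts you must quote are: corner completion in $\mc{D}_k(A)$; surjectivity of $A\mapsto\cu^n(\mc{D}_k(A))$ on surjective homomorphisms; and that two $n$-cubes of $\nss$ with equal projection to $\nss_{k-1}$ differ by an element of $\cu^n(\mc{D}_k(\ab_k(\nss)))$. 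A cleaner packaging, and the one standard in the nilspace literature, is to present $\nss$ as a degree-$k$ extension of $\nss_{k-1}$ by $\ab_k(\nss)$ via a cocycle and to observe that $\tilde{\nss}$ is exactly the extension by the pushforward of that cocycle along $\ab_k(\nss)\to\ab_k(\nss)/H$; all three statements then follow at once from the general theory of such extensions.
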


\begin{proof}[Proof of Theorem \ref{thm:cube-sur-implies-fib-2}] 
We argue by induction on $k$. The case $k=0$ is trivial. To prove the inductive step, by Proposition \ref{prop:diff-steps-impossible} we know that $d\le k$. If $d\le k-1$ then $\varphi$ factors through $\pi_{k-1}$, i.e.\ $\varphi = \varphi_{k-1}\co \pi_{k-1}$. As $\pi_{k-1}$ is a fibration it follows that $\varphi_{k-1}:\ns_{k-1}\to \nss$ is cube-surjective and thus by induction we have that $\varphi_{k-1}$ is a fibration. Hence $\varphi$ is a fibration (as a composition of two fibrations).

If $d=k$, then the morphism $\varphi_{k-1}$ satisfying $\pi_{k-1}\co \varphi = \varphi_{k-1}\co \pi_{k-1}$ is cube-surjective (similarly as in the previous paragraph), so by induction it is a fibration. Therefore it suffices to check that the last structure homomorphism $\phi_k:\ab_k(\ns)\to \ab_k(\nss)$ is surjective. Let $H:=\phi_k(\ab_k(\ns))<\ab_k(\nss)$ and define the nilspace $\tnss$ as in Proposition \ref{prop:H-nil}. Now let $\psi:\ns_{k-1}\to \tnss$ be defined as $\pi_{k-1}(x)\mapsto\pi_{\sim}(\varphi(x))$. We claim that this is a cube-surjective morphism. To see this, we first check that $\psi$ is well-defined: for any $z\in \ab_k(\ns)$ and $x\in \ns$ we have $\psi(\pi_{k-1}(x+z))=\pi_{\sim}(\varphi(x+z))=\pi_{\sim}(\varphi(x)+\phi_k(z))=\pi_{\sim}(\varphi(x))=\psi(\pi_{k-1}(x))$. To see that $\psi$ is a morphism note that for every $\q\in \cu^n(\ns)$ we have $\psi\co\pi_{k-1}\co \q = \pi_{\sim}\co \varphi\co \q\in \cu^n(\tnss)$. Now, to see that $\psi$ is cube-surjective, let $\pi_{\sim}\co \q^*$ be any element of $\cu^n(\tnss)$ (where $\q^*\in \cu^n(\nss)$). As $\varphi$ is cube-surjective, there is $\q'\in \cu^n(\ns)$ such that $\varphi\co \q'=\q^*$. Thus $\psi\co\pi_{k-1}\co \q' = \pi_{\sim}\co \varphi \co \q' = \pi_{\sim} \co \q^*$.

We thus have a cube-surjective morphism $\psi$ from a $(k-1)$-step nilspace $\ns_{k-1}$ to a $k$-step nilspace $\tnss$. By Proposition \ref{prop:diff-steps-impossible}, the step of $\tnss$ is at most $k-1$, so $\ab_k(\tnss)=\{0\}$. But by part $(iii)$ of Proposition \ref{prop:H-nil}, we know that $\{0\}=\ab_k(\tnss) = \ab_k(\nss)/H = \ab_k(\nss)/\phi_k(\ab_k(\ns))$. This implies that $\phi_k(\ab_k(\ns))=\ab_k(\nss)$, whence $\phi_k$ is surjective as required. 
\end{proof}

\begin{proof}[Proof of Theorem \ref{thm:cube-sur-implies-fib}] 
Note that by \cite[Proposition 1.5]{CGSS-p-hom} we know that all $p$-homogeneous, \textsc{cfr} (i.e.\ compact and finite-rank) nilspaces are finite. Hence we can apply Theorem \ref{thm:cube-sur-implies-fib-2} to any cube-surjecive morphism between \textsc{cfr} $p$-homogeneous nilspaces of finite step.

For the general case of the theorem, suppose $\varphi:\ns\to\nss$ is continuous and cube-surjective. Let $\nss = \varprojlim  \nss_i$ be an inverse limit decomposition of $\nss$ with each $\nss_i$ of finite rank and $i$-step, and limit maps $\psi_{i,\nss}$ \cite[Theorem 5.71]{GS}. Fix any $k\in \mb{N}$ and note that $\psi_{k,\nss}\co\varphi$ is a cube-surjective continuous morphism $\ns\to \nss_k$. In particular, this map factors through the $k$-th characteristic factor of $\ns$, i.e.\ $\psi_{k,\nss}\co\varphi =(\psi_{k,\nss}\co\varphi)_k\co \pi_k$, and $(\psi_{k,\nss}\co\varphi)_k$ is cube-surjective as well. Now let $\pi_k(\ns)=\varprojlim (\pi_k(\ns))_i$\footnote{In this proof we denote the $k$-characteristic factor of $\ns$ by $\pi_k(\ns)$ to avoid confusion with the sub-indices used for the inverse limit.} be an inverse limit decomposition of $\pi_k(\ns)$ with each $(\pi_k(\ns))_i$ of finite rank and $k$-step and factor maps $\psi_{k,\ns_k,i}:\pi_k(\ns)\to (\pi_k(\ns))_i$. By \cite[Theorem 1.7]{CGSS}, there exists $j\in \mb{N}$ such that $(\psi_{k,\nss}\co\varphi)_k = \varphi'\co\psi_{k,\ns,j}$ for some morphism $\varphi':(\pi_k(\ns))_j\to\nss_k$. It follows from the cube surjectivity of $(\psi_{k,\nss}\co\varphi)_k$ that $\varphi'$ is also cube-surjective. But now, since $(\pi_k(\ns))_j$ and $\nss_k$ are both finite-rank, $k$-step, and $p$-homogeneous it follows from \cite[Proposition 1.5]{CGSS} that these nilspaces are finite. Hence $\varphi'$ is a fibration by Theorem \ref{thm:cube-sur-implies-fib-2}, and so (since $\psi_{j,\ns}$ is also a fibration), we conclude that $(\psi_{k,\nss}\co\varphi)_k = \varphi'\co\psi_{j,\ns}$ is a fibration. Finally, note that in particular $\psi_{k,\nss}\co\varphi=(\psi_{k,\nss}\co\varphi)_k\co\pi_k$ is a fibration as well.

Now from the previous paragraph we have to deduce that $\varphi$ itself is a fibration. We know that $\psi_{k,\nss}\co\varphi$ is a fibration for every $k$. Let $\q'$ be an $n$-corner on $\ns$ and suppose $\tilde \q\in \cu^n(\nss)$ completes the corner $\varphi\co\q'\in\cor^n(\nss)$. For every $k$, let $Q_k$ denote the set of $n$-cubes on $\ns$ completing $\q'$ and whose image under $\psi_{k,\nss}\co\varphi$ equals $\psi_{k,\nss}\co\tilde \q$. Note that $Q_k$ is non-empty for every $k$ because $\psi_{k,\nss}\co\varphi$ is a fibration. Note also that $Q_k$ is closed (hence compact) by standard facts (it is the intersection of the preimage of the cube $\psi_{k,\nss}\co\tilde \q$ under $\psi_{k,\nss}\co\varphi^{\db{n}}$ with the set of cubes completing $\q'$, so it is the intersection of two closed sets). Finally, note that $Q_k\supset Q_{k+1}$ for all $k\ge 0$. It then follows from the finite intersection property that $\cap_{k=1}^\infty Q_k$ is non-empty. Then any cube $\q$ in this intersection completes $\q'$ (by the closeness of $\cu^n(\ns)$), and $\varphi\co \q=\tilde \q$ because by construction we have this equality modulo $\psi_{k,\nss}$ for every $k$. This proves that $\varphi$ is a fibration.
\end{proof}

\section{Maps preserving the cubic Haar measures are essentially cube-surjective morphisms}\label{app:proof-tau-cont}

\noindent In this section we prove the following result used in the proof of Theorem \ref{thm:repre-with-2-hom}.
\begin{theorem}\label{thm:cube-surjective-app}
Let $\ns,\nss$ be compact profinite-step nilspaces, and suppose that $\ns$ is the group nilspace associated with a compact abelian filtered group $(\ab,\ab_\bullet)$. Let $\tau:\nss\to\ns$ be a Borel map such that $\tau^{\db{n}}$ preserves the $n$-cubic Haar measures for all $n\ge 0$. Then there is a continuous cube-surjective morphism $\varphi:\nss\to\ns$ such that $\tau=\varphi$  $\mu_{\nss}$-a.e.
\end{theorem}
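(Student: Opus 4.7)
The plan is to reduce to finite-step nilspaces via the profinite-step decomposition, then induct on the step of $\ns$, with the inductive step using the extension structure of group nilspaces to upgrade measure-theoretic cube preservation to a continuous polynomial representative in the last structure group. Cube-surjectivity of the resulting morphism will be automatic from measure preservation.

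First, I would reduce to the case where $\ns$ is of finite step. Writing $\ns=\varprojlim \ns_k$ as the inverse limit of its $k$-step characteristic factors (each still a group nilspace, arising from the quotient filtration on $\ab/\ab_{(k+1)}$), the composition $\tau_k:=\pi_k^{\ns}\co\tau$ inherits the measure-preservation hypothesis by Lemma \ref{prop:char-inv-nil}. A continuous cube-surjective morphism $\varphi_k:\nss\to\ns_k$ representing $\tau_k$ must, by continuity combined with the full support of $\mu_\nss$ on $\nss$ (which follows from the $\infty$-fold compact abelian bundle structure provided by Lemma \ref{lem:haar-mes-infty-fold} together with strict positivity of Haar measures on compact nilspaces), satisfy $\pi_{k-1,k}^{\ns}\co\varphi_k=\varphi_{k-1}$ pointwise. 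The resulting compatible family then yields $\varphi:\nss\to\ns$ as the inverse limit, continuous, cube-surjective (since each $\varphi_k$ is so), and equal to $\tau$ almost surely.

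Second, in the finite-step case, I would induct on the step $k$ of $\ns$. The base case $k=0$ is immediate. For the inductive step, applying the hypothesis to $\pi_{k-1}^\ns\co\tau$ produces a continuous cube-surjective morphism $\psi:\nss\to\ns_{k-1}$ with $\psi=\pi_{k-1}^\ns\co\tau$ almost everywhere. The remaining task is to lift $\psi$ to a continuous morphism $\varphi:\nss\to\ns$ that agrees almost everywhere with $\tau$. Here I would exploit that the fibration $\pi_{k-1}^\ns:\ns\to\ns_{k-1}$ is an abelian extension with fiber $\mc{D}_k(\ab_{(k)}/\ab_{(k+1)})=\mc{D}_k(\ab_{(k)})$ (using $\ab_{(k+1)}=0$). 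The ``vertical'' component of $\tau$ relative to any Borel section of $\pi_{k-1}^\ns$ lands in $\ab_{(k)}$ a.e.; I would then argue that this Borel $\ab_{(k)}$-valued function is essentially a continuous degree-$k$ polynomial, by using that the hypothesis on $\tau^{\db{k+1}}$ forces the almost-sure identity $\sigma_{k+1}(\tau\co q)=0$ for $q\in\cu^{k+1}(\nss)$, and then extracting a continuous polynomial representative either via character-wise Fourier analysis on $\ab_{(k)}$, or by invoking the cubic-coupling representation machinery of \cite[Theorem 4.1]{CScouplings} applied to the natural coupling of $\mu_\nss$ and $\mu_\ns$ given by $(\id,\tau)_*\mu_\nss$.

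Once $\varphi$ is shown to be continuous and $\varphi^{\db{n}}$ preserves the $n$-cubic Haar measures (a property transferred from $\tau$ by the a.e.\ agreement and continuity), cube-surjectivity is automatic: $\varphi^{\db{n}}(\cu^n(\nss))$ is compact, hence closed, in $\cu^n(\ns)$, and has full $\mu_{\cu^n(\ns)}$-measure since the pushforward of $\mu_{\cu^n(\nss)}$ under $\varphi^{\db{n}}$ equals $\mu_{\cu^n(\ns)}$; strict positivity of $\mu_{\cu^n(\ns)}$ on non-empty open subsets of $\cu^n(\ns)$ (by its $\infty$-fold compact abelian bundle structure) then forces $\varphi^{\db{n}}(\cu^n(\nss))=\cu^n(\ns)$. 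The main obstacle is the inductive step, namely the descent from measure-theoretic cube preservation of a Borel map to an actual continuous polynomial representative on $\nss$ with values in the last structure group; this is a cocycle-extraction argument and constitutes the technical core of the proof.
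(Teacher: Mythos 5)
Your outer skeleton matches the paper's: reduce to an inverse limit of finite-step (in the paper, finite-rank) factors, produce a continuous representative on each factor, glue using continuity plus strict positivity of the Haar measure, and obtain cube-surjectivity at the end from measure preservation together with compactness of the image and strict positivity of $\mu_{\cu^n(\ns)}$ on non-empty open sets. That endgame is exactly the paper's argument and is fine.

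The genuine gap is in the step you yourself flag as the technical core: upgrading the Borel, cube-measure-preserving map to a continuous morphism almost everywhere. Your proposed route --- choose a Borel section of $\pi_{k-1}^{\ns}$, isolate a ``vertical'' $\ab_{(k)}$-valued component of $\tau$, and extract a continuous polynomial representative from the a.e.\ identity $\sigma_{k+1}(\tau\co\q)=0$ --- does not go through as stated, for two reasons. First, the identity $\sigma_{k+1}(\tau\co\q)=0$ does not translate into $\sigma_{k+1}(g\co\q)=0$ for the vertical component $g$, because the alternating sum of the sectioned part $s\co\psi\co\q$ is a nonzero cocycle-type term; what you actually face is a measurable-cocycle trivialization problem, not a polynomiality statement, and neither ``character-wise Fourier analysis'' nor a citation of \cite[Theorem 4.1]{CScouplings} is an argument for it. Second, even granting a continuous polynomial vertical part, the assembled lift $s\co\psi+g$ is only as continuous as the section $s$, and continuous sections of quotient maps of compact abelian groups need not exist, so continuity of $\varphi$ would not follow. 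The paper sidesteps all of this: since $\tau^{\db{k}}$ pushes $\mu_{\cu^k(\nss)}$ onto $\mu_{\cu^k(\ns_j)}$, which is concentrated on $\cu^k(\ns_j)$, the map $\psi_j\co\tau$ sends almost every cube to an exact cube and is therefore a $(\delta,1)$-quasimorphism for every $\delta>0$; the stability theorem \cite[Theorem 4.2]{CSinverse} then supplies continuous morphisms at $L^1$-distance $\varepsilon(\delta)\to 0$, and the rigidity statement of Lemma \ref{lem:l1-small-implies-constant} (continuous morphisms into a finite-step group nilspace that are $L^1$-close to $0$ are constant) converts this family into a single continuous morphism equal to $\psi_j\co\tau$ a.e. Without an input of this quasimorphism-stability type, your inductive step does not close.
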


\noindent The proof uses the following stability result, whose main ideas are adapted from \cite[\S 4]{CSinverse}.

\begin{lemma}\label{lem:l1-small-implies-constant}
Let $\nss$ be a compact profinite-step nilspace and let $\ns$ be a $k$-step compact group nilspace associated with some compact abelian filtered group $(\ab,\ab_\bullet)$. Let $d$ be any compatible\footnote{Meaning that the metric generates the compact second-countable topology on $\ab$.} metric on $\ab$. Then there exists $\delta=\delta(\ns,d)>0$ such that if $\psi:\nss\to\ns$ is a continuous morphism with $d_1(\psi,0):=\int d(\psi(y),0)\,\ud\mu_{\nss}(y)<\delta$ then $\psi$ is constant.
\end{lemma}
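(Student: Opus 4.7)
The plan is to induct on the step $k$ of $\ns$; the case $k=0$ is vacuous since $\ab=\{0\}$. For $k\ge 1$, first fix a translation-invariant compatible metric $d$ on $\ab$ (one always exists on a compact metrizable abelian group, and passing to an equivalent metric only rescales $\delta$). This induces a translation-invariant quotient metric $\bar d$ on $\ab/\ab_{(k)}$ with $\bar d(\pi_{k-1}(a),0)\le d(a,0)$. The composition $\bar\psi:=\pi_{k-1}\co\psi:\nss\to\ns_{k-1}$ is then a continuous morphism into the $(k-1)$-step compact group nilspace associated with $(\ab/\ab_{(k)},\bar\ab_\bullet)$, satisfying $d_1(\bar\psi,0)<\delta$. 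Choosing $\delta\le\delta_{k-1}(\ns_{k-1},\bar d)$ given by the inductive hypothesis forces $\bar\psi\equiv\bar c$ for some $\bar c\in\ab/\ab_{(k)}$. A Markov-type lift then produces $c\in\ab$ with $\pi_{k-1}(c)=\bar c$ and $d(c,0)\le 2\delta$: since at most half the points $y\in\nss$ satisfy $d(\psi(y),0)\ge 2\delta$, picking any $y_0$ outside this set and setting $c:=\psi(y_0)$ works.

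Then $\psi':=\psi-c$ is a continuous morphism $\nss\to\ns$ taking values in $\ab_{(k)}$. Since $\ns$ is $k$-step (so $\ab_{(k+1)}=\{0\}$), a direct check with the Host--Kra cube definition identifies the sub-nilspace of $\ns$ supported on $\ab_{(k)}$ with the group nilspace $\mc{D}_k(\ab_{(k)})$; hence $\psi'$ is a continuous morphism $\nss\to\mc{D}_k(\ab_{(k)})$ with $d_1(\psi',0)\le 3\delta$. So it suffices to establish the following auxiliary rigidity: for any compact abelian group $Z$ with translation-invariant compatible metric $d_Z$, there exists $\delta^*=\delta^*(Z,d_Z,\nss)>0$ such that every continuous morphism $\psi^*:\nss\to\mc{D}_k(Z)$ with $\int d_Z(\psi^*(y),0)\,\ud\mu_\nss(y)<\delta^*$ is constant. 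Taking $\delta:=\min(\delta_{k-1},\delta^*/3)$ then closes the induction.

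For this auxiliary rigidity, note first that by unique completion of $(k+1)$-corners in the $k$-step nilspace $\mc{D}_k(Z)$, any such $\psi^*$ factors through $\nss_k$, so I may assume $\nss$ is itself $k$-step, hence a $k$-fold compact abelian bundle over a point. Dualizing via Pontryagin, each $\chi\in\widehat{Z}$ yields a continuous morphism $\chi\co\psi^*:\nss\to\mc{D}_k(\mb T)$, i.e., a degree-$k$ phase polynomial. Arguing as in \cite[\S4]{CSinverse}, a descent along the $k$ structure groups of $\nss$ produces a uniform positive $L^1$-mass lower bound for every non-constant such polynomial; the base case $k=1$ is handled by observing that morphisms $\nss\to\mc{D}_1(Z)$ factor through the first structure group $W_1$ as constants plus continuous homomorphisms $W_1\to Z$, and then invoking the translation-invariant estimate $\int_H d_Z(c+h,0)\,\ud\mu_H(h)\ge\tfrac12\int_H d_Z(h,0)\,\ud\mu_H(h)$, valid for every closed subgroup $H\le Z$ (a symmetrization by the triangle inequality). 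Uniformity across $\chi$ is then obtained by a compactness argument on the Chabauty space of closed subgroups of $Z$, yielding $\delta^*$. The main obstacle I anticipate is exactly this auxiliary rigidity for $k\ge 2$: while the $k=1$ case reduces cleanly to standard Pontryagin duality, the morphism group is not a priori discrete in higher step, and organizing the descent along the bundle structure of $\nss_k$ so that the $L^1$-mass lower bounds accumulate multiplicatively rather than cancel is the delicate part of the argument.
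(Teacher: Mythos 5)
The core of your argument is missing. Everything up to the reduction to a morphism $\psi':\nss\to\mc{D}_k(\ab_{(k)})$ with $d_1(\psi',0)\le 3\delta$ is sound (the quotient to $\ns_{k-1}$, the Markov choice of the lift $c$, and the identification of the $\ab_{(k)}$-fibre with $\mc{D}_k(\ab_{(k)})$ all check out), but the ``auxiliary rigidity'' to which you reduce the lemma is exactly the lemma in its hardest incarnation, and you do not prove it: the decisive step --- that a descent along the structure groups yields ``a uniform positive $L^1$-mass lower bound for every non-constant such polynomial'' --- is asserted rather than established, and you yourself flag the $k\ge2$ case as the unresolved obstacle. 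The claim is also genuinely delicate as you have phrased it: when the target has arbitrarily small closed subgroups (e.g.\ $Z=\prod_n\mb{Z}_2$ with a product metric), there are non-constant continuous morphisms $\mc{D}_1(Z)\to\mc{D}_1(Z)$ (projections onto deep coordinates) whose $L^1$-distance to $0$ tends to $0$, so no argument quantifying over \emph{all} non-constant morphisms into a general compact abelian target can produce a uniform bound; any correct proof must route through a rigidity statement with more structure than ``non-constant''. (A secondary issue: your $\delta^*$ depends on $\nss$, whereas the lemma asserts $\delta=\delta(\ns,d)$ only.)

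The paper's proof uses a different mechanism, which is the idea your plan lacks: a bootstrap from $L^1$-small to \emph{uniformly} small. By Markov the bad set $\{y:d(\psi(y),0)>\delta_0^{1/2}\}$ has measure at most $\delta_0^{1/2}$; since each coordinate projection from the set $\cu^{k+1}_{y_0}(\nss)$ of $(k+1)$-cubes rooted at an arbitrary $y_0$ preserves the Haar measure, a union bound produces a cube $\q_{y_0}$ with $\q_{y_0}(0^{k+1})=y_0$ and all other vertices in the good set. Because $\ns$ is $k$-step, $\psi(y_0)$ is the unique completion of the corner $\psi\co\q_{y_0}|_{\db{k+1}\setminus\{0^{k+1}\}}$, and uniform continuity of the completion map $K:\cor^{k+1}(\ns)\to\ns$ forces $d(\psi(y_0),0)<\epsilon_0$ for \emph{every} $y_0$. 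Only then is a known rigidity result invoked (\cite[Lemma 3.4]{CGSS}): a morphism whose image lies in a sufficiently small ball is constant. If you wish to keep your inductive scheme you would still need a small-image rigidity statement for $\mc{D}_k(\ab_{(k)})$, so the corner-completion bootstrap is not optional --- it is the step that converts the $L^1$ hypothesis into something a rigidity lemma can use.
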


\begin{proof}
First note that since $\ns$ is $k$-step we have $d_1(\psi,0)=\int d(\psi_k(y),0)\,\ud\mu_{\nss_k}(y)$ where $\psi_k:\nss_k\to\ns$ is such that $\psi=\psi_k\co\pi_k$. Hence it suffices to prove the result assuming that $\nss$ is $k$-step as well.

Let $\delta_0>0$ be a constant to be fixed later. Note that by Markov's inequality, the set of points $y\in\nss$ such that $d(\psi(y),0)>\delta_0^{1/2}$ has Haar measure at most $\delta_0^{1/2}$. Fix some ${y_0}\in\nss$ and consider the set $\cu_{y_0}^{k+1}(\nss):=\{ \q\in \cu^{k+1}(\nss) : \q(0^{k+1})= y_0\}$. For every $v\in \db{k+1}\setminus\{0^{k+1}\}$, the coordinate projection $p_v:\cu_{y_0}^{k+1}(\nss)\to\nss$, $\q\mapsto \q(v)$ preserves the Haar measure (see \cite[Lemma 2.2.17]{Cand:Notes2} for the definition of the Haar measure on $\cu^{k+1}_{y_0}(\nss)$, and for the measure-preserving property of the projection apply \cite[Lemma 2.2.14]{Cand:Notes2} with the good pair $P_1=\{0^{k+1}\}$, $P_2=\{v\}$ inside $P=\db{k+1}$). Thus $\delta_0^{1/2}>\mu_{\nss}(\{y\in\ns:d(\psi(y),0)>\delta_0^{1/2}\}) = \mu_{\cu_{y_0}^{k+1}(\nss)}\co p_v^{-1}(\{y\in\nss:d(\psi(y),0)>\delta_0^{1/2}\}) = \mu_{\cu_{y_0}^{k+1}(\nss)}(\{\q\in\cu_{y_0}^{k+1}(\nss):d(\psi(\q(v)),0)>\delta_0^{1/2}\})$. Hence 
\begin{eqnarray*}
&& \mu_{\cu_{y_0}^{k+1}(\nss)}(\{\q\in \cu_{y_0}^{k+1}(\nss):\forall \;v\in\db{k+1}\setminus\{0^{k+1}\},d(\psi(\q(v)),0)\le \delta_0^{1/2}\}\\
& = & 1-\mu_{\cu_{y_0}^{k+1}(\nss)}(\{\q\in \cu_{y_0}^{k+1}(\nss):\exists \;v\in\db{k+1}\setminus\{0^{k+1}\},d(\psi(\q(v)),0)> \delta_0^{1/2}\}\\
&\ge & 1-\sum_{v\in\db{k+1}\setminus\{0^{k+1}\}} \mu_{\cu_{y_0}^{k+1}(\nss)}(\{\q\in \cu_{y_0}^{k+1}(\nss):d(\psi(\q(v)),0)> \delta_0^{1/2}\} \;\; \ge \;\; 1-2^{k+1}\delta_0^{1/2}.
\end{eqnarray*}
\noindent In particular if $\delta_0$ is small enough (depending only on $k$) there exists $\q_{y_0}\in \cu_{y_0}^{k+1}(\nss)$ such that $d(\psi(\q_{y_0}(v)),0)\le \delta_0^{1/2}$ for all $v\in \db{k+1}\setminus\{0^{k+1}\}$.

Now we recall that by \cite[Lemma 2.1.12]{Cand:Notes2} the map $K:\cor^{k+1}(\ns)\to \ns$ that sends a corner to its unique completion is continuous. As both $\cor^{k+1}(\ns)$ and $\ns$ are compact spaces, this function is uniformly continuous and thus for every $\epsilon>0$ there exists $\tau>0$ such that for every $\q'\in\cor^{k+1}(\ns)$ if $d(\q'(v),0)<\tau$ for all $v\in \db{k+1}\setminus\{0^{k+1}\}$ we have $d(K(\q'),0)<\epsilon$. Then, for some $\epsilon_0>0$ to be fixed later, let  $\delta_0>0$ be small enough (depending only on $\ns$ and $k$) so that $\delta_0^{1/2}<\tau_0(\epsilon_0)$. Then, by the above facts applied to $\psi\co \q_{y_0}|_{\db{k+1}\setminus\{0^{k+1}\}}\in \cor^{k+1}(\ns)$, we have $d(K(\psi\co|_{\db{k+1}\setminus\{0^{k+1}\}}),0)<\epsilon_0$. But $\ns$ is $k$-step and clearly $\psi\co\q_{y_0}$ is a cube completing $\psi\co \q_{y_0}|_{\db{k+1}\setminus\{0^{k+1}\}}$. Hence $K(\psi\co\q_{y_0}|_{\db{k+1}\setminus\{0^{k+1}\}})=\psi(y_0)$ and therefore $d(\psi(y_0),0)<\epsilon_0$. Let us emphasize that for any $\epsilon_0$ we can choose an appropriate $\delta_0=\delta_0(\ns,k)$ such that the previous bound holds \emph{for every} $y_0\in\nss$.

We now choose $\epsilon_0$ small enough (depending only on $\ns$ and $k$) in such a way that  \cite[Lemma 3.4]{CGSS} is satisfied. Hence $\psi$ is constant and the result follows with $\delta=\delta_0$.
\end{proof}

\begin{proof}[Proof of Theorem \ref{thm:cube-surjective-app}]
We start with the assumption that there is a Borel map $\tau:\nss\to \ns$ such that for every integer $n\ge 0$ we have $\mu_{\cu^n(\nss)}\co (\tau^{\db{n}})^{-1}=\mu_{\cu^n(\ns)}$, and our aim is to prove that there is a continuous cube-surjective morphism $\nss\to \ns$ that is equal to $\tau$ almost surely relative to $\mu_{\nss}$.

Recall that the nilspace $\ns$ is the abelian group nilspace generated by some filtered abelian group $(\ab,\ab_\bullet=(\ab_{\sbr{k}})_{k=0}^{\infty})$. By standard results $\ab$ is the inverse limit (in the category of abelian groups) of compact abelian Lie groups, $\ab = \varprojlim \ab^i$. Let $p_i:\ab\to\ab^i$ be the projection maps. It can be shown that then the nilspace $\ns$ is the inverse limit of compact nilspaces that we shall denote by $\ns_i$ ($i\ge 1$), where $\ns_i$ is the abelian group nilspace associated with the filtered group $(\ab^i/p_i(\ab_{\sbr{i}}),(p_i(\ab_{\sbr{j}})/p_i(\ab_{\sbr{i}}))_{j=0}^\infty)$. Note that each $\ns_i$ is an $i$-step \textsc{cfr} nilspace, and let us denote by $\psi_i$ the limit map $\ns\to\ns_i$. Hence $\ns=\varprojlim\ns_i$ (as nilspaces).

Note that $\psi_j\co\tau:\nss\to\ns_j$ is Borel measurable and $\mu_{\cu^n(\ns_j)} = \mu_{\cu^n(\ns)}\co (\psi_j^{\db{n}})^{-1} = \mu_{\cu^n(\nss)}\co (\tau^{\db{n}})^{-1}\co (\psi_j^{\db{n}})^{-1} = \mu_{\cu^n(\nss)} \co ((\psi_j\co\tau)^{\db{n}})^{-1}$ for all $n\ge 0$. We shall now prove that $\psi_j\co\tau$ agrees $\mu_{\nss}$-almost-surely with some cube-surjective continuous morphism.

Following \cite[\S 4]{CSinverse}, for compact profinite-step nilspaces $\ns,\nss$ with a fixed metric $d_{\ns}$ compatible with the topology on $\ns$, let us say that a map $\phi':\nss\to\ns$ is a \emph{$(\delta,1)$-quasimorphism} of \emph{degree $k-1$} if it is a Borel map with the following property:
\begin{equation}\label{eq:quasimorph}
\mu_{\cu^k(\nss)}\big(\big\{\q\in\cu^k(\nss): \;\exists\, \q'\in \cu^k(\ns), \,\forall\, v\in \db{k},\, d_{\ns}\big(\phi'\co\q(v),\q'(v)\big)\leq \delta\;\big\}\big)\geq 1-\delta.
\end{equation}
For Borel maps $\phi,\psi:\nss\to \ns$ we define $d_1(\phi,\psi):=\int_{\nss} d_{\ns}\big(\phi(x),\psi(x)\big)\ud\mu_{\nss}(x)$.

Fix any $j\ge 0$. For any $\delta>0$ note that $\psi_j\co \tau:\nss\to\ns_j$ is a $(\delta,1)$-quasimorphism of degree $j$. As $\ns_j$ are abelian group nilspaces, we fix a metric $d_{\ns_j}$ on $\ns_j$ that is invariant under addition. By \cite[Theorem 4.2]{CSinverse} there is a continuous morphism $\phi_{\delta,j}:\nss\to \ns_j$ such that $d_1(\psi_j\co\tau,\phi_{\delta,j})\leq \varepsilon$, where $\varepsilon(\delta)\to 0$ as $\delta\to 0$. Applying this for each $\delta_n:=1/n$, $n\in \mb{N}$, we obtain a sequence of continuous morphisms $\phi_{n,j}:\nss\to \ns_j$ such that $d_1(\psi_j\co\tau,\phi_{n,j})\to 0$ as $n\to\infty$. By the triangle inequality this implies that $d_1(\phi_{m,j},\phi_{n,j})=d_1(\phi_{m,j}-\phi_{n,j},0)\to 0$ as $m,n\to\infty$. 

By Lemma \ref{lem:l1-small-implies-constant} we have that $\phi_{m,j}-\phi_{n,j}$ is constant for $n,m\ge N_0$ for some large $N_0=N_0(\ns_j)$. In particular, letting $z_{j,n,m}:=\phi_{m,j}-\phi_{n,j}$ for $n,m\ge N_0$, by compactness of $\ns_j$ we can assume (passing to a subsequence if necessary and relabeling it as $(z_{j,n,m})$), that $z_{j,N_0,m}\to z^*_{j,N_0}$ as $m\to\infty$. Then $d_1(\psi_j\co\tau,\phi_{m,j})=d_1(\psi_j\co\tau,\phi_{N_0,j}+(\phi_{m,j}-\phi_{N_0,j}))$, where the left hand side converges to 0 as $m\to\infty$ by the previous paragraph, and the right hand side converges to $d_1(\psi_j\co\tau,\phi_{N_0,j}+z^*_{j,N_0})$ by construction (and the dominated convergence theorem). Thus $\psi_j\co\tau = \phi_{N_0,j}+z^*_{j,N_0}$ $\mu_{\nss}$-a.e.. Let $\gamma_j:=\phi_{N_0,j}+z^*_{j,N_0}$ and note that this is a continuous morphism $\nss\to\ns_j$ by construction.

We now \emph{glue} all these maps $\gamma_j$ into a single continuous cube-surjective morphism $\gamma:\nss\to\ns$. The reduces to proving that the maps $\gamma_j$ are consistent with the given inverse limit expression $\ns=\varprojlim\ns_j$, that is, given the corresponding factor maps $\psi_{i,j}:\ns_j\to\ns_i$ in this inverse limit, we want to prove that for every $j\ge i\ge 0$ we have $\gamma_i =\psi_{i,j}\co\gamma_j$. Since $\psi_j:\ns\to\ns_j$ are the limit maps, we have $\psi_i=\psi_{i,j}\co\psi_j$. Thus $\psi_i\co\tau = \psi_{i,j}\co\psi_j\co \tau$ but $\psi_i\co\tau=\gamma_i$ and $\psi_j\co\tau=\gamma_j$ $\mu_{\nss}$-a.e.. Hence $\gamma_i = \psi_{i,j}\co\gamma_j$ $\mu_Y$-a.e.. As $\ns_j$ is $j$-step, note that $\{y\in\nss:\gamma_i(y) = \psi_{i,j}\co\gamma_j(y)\}=\pi_j^{-1}(\{\tilde{y}\in\pi_j(\nss):(\gamma_i)_j(\tilde{y}) = \psi_{i,j}\co(\gamma_j)_j(\tilde{y})\})$ where $\pi_j(\nss)$ is the $j$-th characteristic factor\footnote{Here we denote by $\pi_j(\nss)$ the $j$-th characteristic factor of $\nss$ to avoid the confusion with $\ns_j$, which in this argument is \emph{not necessarily} the $j$-th characteristic nilspace factor of $\ns$.} of $\nss$, $(\gamma_i)_j:\pi_j(\nss)\to\ns_i$ and $(\gamma_j)_j:\pi_j(\nss)\to\ns_j$.\footnote{As $\gamma_i:\nss\to \ns_i$ is a morphism and $\ns_i$ is $i$-step, this map factors through the $j$-th characteristic factor of $\nss$ for $j\ge i$. We denote this map by $(\gamma_i)_j:\pi_{j}(\nss)\to\ns_i$. Similarly for $(\gamma_j)_j$.} As $\pi_j$ preserves the Haar measure we have that $(\gamma_i)_j = \psi_{i,j}\co(\gamma_j)_j$ $\mu_{\pi_j(\nss)}$-a.e.. Therefore $(\gamma_i)_j$ and $\psi_{i,j}\co(\gamma_j)_j$ are continuous functions that agree $\mu_{\pi_j(\nss)}$-almost-surely. If there was a point $\tilde{y}_0\in\pi_j(\nss)$ such that $(\gamma_i)_j(\tilde{y}_0) \not= \psi_{i,j}\co(\gamma_j)_j(\tilde{y}_0)$ by continuity we would have an open set $U\ni \tilde{y}_0$ such that $(\gamma_i)_j(\tilde{y}) \not= \psi_{i,j}\co(\gamma_j)_j(\tilde{y})$ for all $\tilde{y}\in U$. This would contradict the almost-sure equality just established, because $\mu_{\pi_j(\nss)}(U)>0$ by \cite[Proposition 2.2.11]{Cand:Notes2}. Hence $\gamma_i= \psi_{i,j}\co\gamma_j$, and we can now define $\gamma:\nss\to\ns=\varprojlim\ns_i$ as $y\mapsto (\gamma_j(y))_{j=1}^\infty$. Clearly this is a continuous morphism.

It remains to check that $\gamma$ is cube-surjective. First we claim that $\gamma=\tau$ $\mu_{\nss}$-a.e.. In order to prove this note that $\{y\in\nss:\tau(y)\not= \gamma(y)\}=\cup_{j=1}^\infty\{y\in\nss:\psi_j\co\tau(y)\not=\gamma_j(y)\}$. As the latter set has measure 0 for every $j$, the claim follows. Next we claim that for all $n\ge 0$, $\mu_{\cu^n(\nss)}\co (\gamma^{\db{n}})^{-1} = \mu_{\cu^n(\ns)}$. Indeed note that by the consistency axiom for cubic couplings it follows that $\gamma^{\db{n}}=\tau^{\db{n}}$ $\mu_{\cu^n(\nss)}$-a.e., and thus we have the desired measure preserving property. Finally we want to prove that for every $n\ge 0$ we have $\gamma^{\db{n}}(\cu^n(\nss))=\cu^n(\ns)$. Suppose for a contradiction that there exists $\q\in \cu^n(\ns)$ which is not in the image of $\gamma^{\db{n}}$. This readily implies that for some $j\ge 1$ we have $\pi_j\co \q\notin (\gamma)_j^{\db{n}}(\cu^n(\pi_j(\nss)))$ where $(\gamma)_j:\pi_j(\nss)\to \pi_j(\ns)$ is the map such that $\pi_j\co\gamma = (\gamma)_j\co\pi_j$ (this map $(\gamma)_j$ is \emph{not to be confused} with the map $\gamma_j:\nss\to\ns_j$ from the previous paragraph). As $(\gamma)_j^{\db{n}}$ is continuous and $\cu^n(\pi_j(\nss))$ is compact, we have that $(\gamma)_j^{\db{n}}(\cu^n(\pi_j(\nss)))$ is a compact and hence closed set. If $\pi_j\co\q\notin (\gamma)_j^{\db{n}}(\cu^n(\pi_j(\nss)))$ then there must exists an open set $ V\ni \pi_j\co\q$ in $\cu^n(\pi_j(\ns))$ such that $V\cap (\gamma)_j^{\db{n}}(\cu^n(\pi_j(\nss)))=\emptyset$. But then $\mu_{\cu^n(\pi_j(\ns))}(V)>0$ by (an argument similar to) \cite[Lemma 2.2.11]{Cand:Notes2} and $\mu_{\cu^n(\pi_j(\nss))}\co ((\gamma)_j^{\db{n}})^{-1}(V) = \mu_{\cu^n(\pi_j(\nss))}(\emptyset)=0$. Since for every $j$ we have that $\mu_{\cu^n(\pi_j(\ns))}=\mu_{\cu^n(\pi_j(\nss))}\co ((\gamma)_j^{\db{n}})^{-1}$ (which follows by composing $\mu_{\cu^n(\nss)}\co (\gamma^{\db{n}})^{-1} = \mu_{\cu^n(\ns)}$ with $(\pi_j^{\db{n}})^{-1}$ on both sides), we have a contradiction.
\end{proof}

\section{Translation between polynomial and nilspace viewpoints}\label{app:correspondence}

\noindent In this appendix our final goal is to explain in more detail how the notion of limit object for convergent sequences from \cite{HHH} relates to the definition of limit domain in Section \ref{sec:exch-limits}. The former definition is phrased using polynomials (see e.g.\ Definition 3.2 in \cite{HHH}), and thereby pertains to an approach using polynomials and related tools used previously in \cite{BTZ, BTZ2, TZ-High, T&Z-Low}, whereas the definitions in Section \ref{sec:exch-limits} involve the nilspace approach, which in the characteristic-$p$ setting was deployed in \cite{CGSS-p-hom}. Thus, towards our final goal in this appendix we also take the opportunity to explain how certain key concepts from these two approaches can be related to each other.

First it is worth recalling an important relation between the notion of polynomial map and that of a nilspace morphism, which is that the two concepts are equivalent when the nilspaces involved are \emph{group} nilspaces (see \cite[\S 2.2.2]{Cand:Notes1}). In the more specific characteristic-$p$ setting that occupies us here, let us begin by recalling from \cite[Definition 1.1]{T&Z-Low} the concept of non-classical polynomials on vector spaces over  $\mb{F}_p$.

\begin{defn}[Non-classical polynomials over $\mb{F}_p$] Let $p$ be a prime, let $n,d\ge 0$ be integers, and let $G$ be an abelian group. A function $P:\mb{F}_p^n\to G$ is a \emph{non-classical polynomial} of degree $\le d$ if $\Delta_{h_1}\cdots\Delta_{h_{d+1}} P(x)=0$, where $\Delta_h f(x):=f(x+h)-f(x)$ for any $f:\mb{F}_p^n\to G$ and any $h,x\in \mb{F}_p^n$.
\end{defn}
\noindent This notion is of particular interest when $G=\mb{T}$. The image of any such polynomial $P:\mb{F}_p^n\to \mb{T}$ is included in a coset of $(\frac{1}{p^r}\cdot \mb{Z})/\mb{Z}\subset \mb{T}$. The least $r\ge 0$ with such property is called the \emph{depth} of the polynomial $P$ (see \cite[Lemma 2.5]{HHH}).

Our first observation relating the polynomial and nilspace languages here is the following lemma.

\begin{lemma}\label{lem:poly-morph-corresp}
A function $P:\mb{F}_p^n\to\mb{T}$ is a polynomial of degree $\leq k$ and depth $r$ with $P(0)=0$ if and only if $P$ is a morphism from $\mc{D}_1(\mb{F}_p^n)$ to the group nilspace $\abph_{k,\ell}$ \textup{(}embedded in $\mb{T}$\textup{)} where $\ell=k-(r+1)(p-1)$.
\end{lemma}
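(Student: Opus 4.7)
The plan is to reduce the statement to a verification about iterated differences, using the standard description of morphisms from $\mc{D}_1(\mb{F}_p^n)$ into a group nilspace. Specifically, for any filtered compact abelian group $(G,G_\bullet)$, it is classical (see \cite[\S 2.2.2]{Cand:Notes1}) that $P:\mb{F}_p^n\to G$ is a nilspace morphism $\mc{D}_1(\mb{F}_p^n)\to (G,G_\bullet)$ if and only if $\Delta_{h_1}\cdots\Delta_{h_i} P(x)\in G_{(i)}$ for every $i\ge 0$ and every $h_1,\ldots,h_i,x\in\mb{F}_p^n$, where $\Delta_h f(x):=f(x+h)-f(x)$. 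The goal is therefore to match these derivative constraints, for the specific filtration defining $\abph_{k,\ell}$, with the combined conditions ``degree $\le k$, depth $r$, and $P(0)=0$''.

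Applying the characterization to $\abph_{k,\ell}$ (with $G$ embedded in $\mb{T}$ as $\tfrac{1}{p^N}\mb{Z}/\mb{Z}$, where $N=\lfloor(k-\ell)/(p-1)\rfloor+1$, so that $p^jG$ becomes the subgroup $\tfrac{1}{p^{N-j}}\mb{Z}/\mb{Z}$) produces three regimes. The case $i=k+1$ gives $G_{(k+1)}=\{0\}$ and hence $\Delta^{k+1}P=0$, i.e.\ $\deg P\le k$. For $i\le \ell$ the constraint $\Delta^iP\in G_{(i)}=G$ is automatic from $P$ being $G$-valued. For $\ell<i\le k$ the constraint becomes $\Delta^iP\in p^{\lfloor(i-\ell-1)/(p-1)\rfloor+1}G$, which (under the embedding) is precisely the assertion that $\Delta^iP$ takes values in a subgroup of $\tfrac{1}{p^N}\mb{Z}/\mb{Z}$ of the appropriate shrinking depth. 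The forward direction then amounts to: (a) the $i=k+1$ condition recovers $\deg P\le k$; (b) $P$ being $G$-valued with $P(0)=0$ is encoded by choosing the embedding of $G$ as the specific subgroup of $\mb{T}$ passing through $0$ (rather than a coset); (c) the shrinking of the subgroups containing $\Delta^iP$ as $i$ grows corresponds to the classical ``depth drop'' for iterated differences of non-classical polynomials. For the converse, starting from a morphism one reads off the same three conditions, using the canonical form for non-classical polynomials of degree $\le k$ and depth $r$ from \cite{T&Z-Low}.

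The main bookkeeping obstacle is matching the exponents: the jumps of the filtration $G_\bullet$ occur at $i=\ell+1+j(p-1)$, each increasing the $p$-adic valuation of $G_{(i)}$ by one, and these must be aligned with the fact that applying $p-1$ extra differences to a non-classical polynomial decreases its effective depth by one. Checking that these two ``step functions'' match precisely yields the relation $\ell=k-(r+1)(p-1)$ stated in the lemma; this is essentially an arithmetic verification once the canonical form for non-classical polynomials is in hand, and I do not expect conceptual difficulties beyond careful off-by-one accounting.
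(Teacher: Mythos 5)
Your outline is sound and reaches the right conclusion, but it routes the argument differently from the paper. The paper's proof of this lemma is itself only two lines: it invokes the general polynomial--morphism correspondence to place $P$ in $\hom(\mc{D}_1(\mb{F}_p^n),\mc{D}_k(\mb{T}))$ and then applies Proposition \ref{prop:k-poly-char}, which does the real work by induction on $k$: one differentiates once, applies the inductive hypothesis to $\partial_{h}P$, and uses the $p$-homogeneity relation $p\cdot(\abph_{k-1,\ell'})_{(i-1)}\subset(\abph_{k-1,\ell'})_{(i+p-2)}$ to pull the filtration conditions back up a degree, with the canonical form of \cite{T&Z-Low} entering only at the end to refine $\ell$ according to the depth. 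You instead propose to verify the derivative conditions $\Delta_{h_1}\cdots\Delta_{h_i}P\in G_{(i)}$ for the filtration of $\abph_{k,\ell}$ directly, importing the canonical/Taylor form and the ``depth drop under differencing'' from \cite{T&Z-Low} wholesale. Both routes are legitimate; what you should be aware of is that the entire content of the forward implication is concentrated in the step you label (c): for $i$ beyond roughly $\ell+2(p-1)$ the inclusion $\Delta^{i}P\in G_{(i)}$ is \emph{not} a formal consequence of ``degree $\le k$ and image in a fixed finite subgroup'' --- it is exactly the statement that iterated differences lose one unit of depth for every $p-1$ differences, and this must be proved (from the Taylor expansion) rather than merely aligned. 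The paper's inductive argument buys a slightly stronger standalone fact, used elsewhere: any morphism into $\mc{D}_k(\mb{T})$ hitting $0$ is \emph{automatically} a morphism into $\abph_{k,\ell_{k,p}}$, with no depth hypothesis at all. Your direct computation is shorter if one takes \cite[Lemma 1.6]{T&Z-Low} as a black box, but as written it defers rather than supplies the key inclusion; also note that the ``only if'' direction gives the depth only as an upper bound (the zero map is a morphism into $\abph_{k,\ell}$ of depth $0$), a looseness present in the paper's statement as well and not something your argument can repair.
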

\noindent To prove this we use the following description of morphisms from $\mb{Z}_p^n$ into the circle group.

\begin{proposition}\label{prop:k-poly-char}
Let $\phi\in\hom(\mc{D}_1(\mb{Z}_p^n),\mc{D}_k(\mb{T}))$ with $\phi(\mb{Z}_p^n)\ni 0$. Then, letting $\ell=\ell_{k,p}$ be the residue mod $p$ of $k$ in $[p-1]$ \textup{(}i.e.\ the integer $\ell\in [p-1]$ such that $k-\ell$ is a multiple of $p-1$\textup{)}, and viewing $\abph_{k,\ell}$ as embedded in $\mb{T}$ the natural way, we have $\phi\in\hom(\mc{D}_1(\mb{Z}_p^n),\abph_{k,\ell})$. Furthermore, if $\phi(\mb{Z}_p^n)\subset (\frac{1}{p^r}\cdot \mb{Z})/\mb{Z}$ then $\ell$ can be taken to be $\ell=k-(r+1)(p-1)$ \textup{(}so, in particular, not in $[p-1]$\textup{)}.
\end{proposition}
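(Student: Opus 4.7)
The plan is to work through the correspondence between morphisms of group nilspaces and polynomial maps, and then match the explicit Tao--Ziegler description of non-classical polynomials to the filtration defining $\abph_{k,\ell}$. First, I would use the standard equivalence (see \cite[\S 2.2.2]{Cand:Notes1}) identifying $\phi \in \hom(\mc{D}_1(\mb{Z}_p^n), \mc{D}_k(\mb{T}))$ with a non-classical polynomial of degree $\le k$ on $\mb{F}_p^n$ (i.e., a map $P$ such that $\Delta_{h_1}\cdots\Delta_{h_{k+1}} P \equiv 0$). Since $0 \in \phi(\mb{F}_p^n)$, by translating I may and do assume $\phi(0) = 0$.

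Next, I would invoke the Tao--Ziegler structure theorem \cite[Lemma~1.7]{T&Z-Low} to write
\[ \phi(x) \,=\, \sum_{(\mathbf{e}, s)} \frac{c_{\mathbf{e}, s}\, \prod_i x_i^{e_i}}{p^{s+1}} \pmod{1}, \]
where $0 \le e_i \le p-1$, $s \ge 0$, $c_{\mathbf{e}, s} \in \{1,\dots,p-1\}$, and every term satisfies $1 \le \sum_i e_i + s(p-1) \le k$. In particular, under the depth-$r$ hypothesis $\phi(\mb{F}_p^n) \subset (\frac{1}{p^r}\mb{Z})/\mb{Z}$, every term in this decomposition satisfies $s + 1 \le r$.

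The core of the proof is then to verify the morphism property into $\abph_{k,\ell}$ monomial by monomial, showing that for each $j \ge 0$ and all $h_1, \ldots, h_j, x \in \mb{F}_p^n$ the iterated discrete derivative $\Delta_{h_1}\cdots\Delta_{h_j} M_{\mathbf{e}, s}(x)$ lies in the $j$-th filtration term $G_{(j)}$ of the cyclic group $G = \mb{Z}_{p^{\lfloor(k-\ell)/(p-1)\rfloor+1}}$ underlying $\abph_{k,\ell}$. Each differentiation either lowers $\sum_i e_i$ by one, or, via the Frobenius identity $x^p = x$ in $\mb{F}_p$, collapses a factor of $p$ out of the denominator (raising the $p$-valuation of the result by one). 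A careful bookkeeping of these two effects, balanced against the weight bound $\sum_i e_i + s(p-1) \le k$, matches the $p$-homogeneous growth of $G_{(j)}$ exactly, and yields the first conclusion for the default $\ell = \ell_{k,p}$. The refinement $\ell = k - (r+1)(p-1)$ then follows by repeating the same computation using the additional slack that the depth constraint $s + 1 \le r$ provides in the weight inequality, which shrinks the target group while preserving the filtration match.

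The main obstacle will be precisely this combinatorial bookkeeping in the third step: tracking exactly which derivatives trigger $p$-valuation shifts and where in the iteration they occur, and showing that the cumulative effect aligns with the $p$-homogeneous filtration $G_{(i)} = p^{\lfloor(i-\ell-1)/(p-1)\rfloor+1}G$ for $i > \ell$. This matching is not a numerical accident but reflects the design of the filtered groups $\abph_{k,\ell}$ as the canonical ``$p$-homogeneous polynomial target spaces'' of the indicated parameters, and it is essentially the content of the proposition.
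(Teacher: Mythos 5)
Your approach is viable but genuinely different from the paper's. The paper proves the main assertion by an induction on $k$ that never writes $\phi$ explicitly: by the inductive hypothesis each derivative $\partial_{h}\phi$ is a morphism into $\abph_{k-1,\ell'}$ with $\ell'=\ell_{k-1,p}$, from which it deduces that $p\phi$ is a morphism into the shifted filtration $\abph_{k-1,\ell'}^{+(p-2)}$, and then identifies $\tfrac{1}{p}\cdot\abph_{k-1,\ell'}^{+(p-2)}$ (truncated at level $k$) with $\abph_{k,\ell_{k,p}}$; the ``one factor of $p$ traded against $p-1$ degrees'' phenomenon you describe is thus absorbed into a single multiplication-by-$p$ step per level, with no monomial computations. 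Your route via the Tao--Ziegler canonical form is legitimate: morphisms into a filtered abelian group form a group under pointwise addition, so a term-by-term verification suffices, and since the canonical form is unique with the depth of $\phi$ equal to the maximum of $s+1$ over its nonzero terms, the constraint $s+1\le r$ on each monomial really does follow from the depth hypothesis (no cancellation issue). What your route buys is explicitness and no induction; what it costs is that the entire content of the proposition is concentrated in the carry/valuation bookkeeping of your third step, which you describe but do not carry out --- to complete it you must show that for each monomial of weight $w=\sum_i e_i+s(p-1)\le k$ the $j$-fold derivative lies in $G_{(j)}$ for every $j$ (e.g.\ by combining the degree bound $w-j$ for the derivative with the depth bound for non-classical polynomials and checking the resulting floor-function inequality against $G_{(j)}=p^{\lfloor (j-\ell-1)/(p-1)\rfloor+1}G$), and that inequality chase is precisely where all the work lies. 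For the ``furthermore'' clause the paper takes a cleaner path than re-running the bookkeeping with extra slack: it factors $\phi=p^{s}\phi^{*}$ using the explicit representation and applies the first part to $\phi^{*}$, which lands in $\abph_{k,\ell+s(p-1)}$ directly.
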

\begin{proof}
Suppose that $\phi(x_0)=0$. Since translation by $x_0$ is a nilspace automorphism on $\mc{D}_1(\mb{Z}_p^n)$, we can suppose without loss of generality that $x_0=0$.

We argue by induction on $k$. For $k=1$, the assumptions imply that $\phi$ is a group homomorphism $\mb{Z}_p^n\to \mb{T}$, so its image is a subset of $H_{(0)}:=(\frac{1}{p}\cdot \mb{Z})/\mb{Z}$ and $\phi$ is a morphism from $\mc{D}_1(\mb{Z}_p^n)$ to the nilspace determined by the standard degree-1 filtration $H_\bullet^{(1)}=(H_{(0)},H_{(1)},\{0\},\ldots)$, which is indeed $\abph_{1,1}$.

For $k>1$, fix any $\phi\in\hom(\mc{D}_1(\mb{Z}_p^n),\mc{D}_k(\mb{T}))$ with $\phi(0)=0$. 

Fix any elements $h_1,\ldots,h_i\in \mb{Z}_p^n$, $i\geq 1$. Note that $\partial_{h_1}\phi\in\hom(\mc{D}_1(\mb{Z}_p^n),\mc{D}_{k-1}(\mb{T}))$, so by induction $\partial_{h_1}\phi\in \hom(\mc{D}_1(\mb{Z}_p^n),\abph_{k-1,\ell'})$ where $\ell'=\ell_{k-1,p}$. By equality of this morphism set with $\poly(\mc{D}_1(\mb{Z}_p^n),\abph_{k-1,\ell'})$, we have that  $\partial_{h_i}\cdots\partial_{h_1}\phi$ takes values in the $(i-1)$-th term of the filtration $\abph_{k-1,\ell'}$, so 
\begin{equation}\label{eq:induc-p-poly}
\forall x\in \mb{Z}_p^n,\quad \partial_{h_i}\cdots\partial_{h_1}(p\phi)(x)= p \partial_{h_i}\cdots\partial_{h_1}\phi(x)\in p\cdot (\abph_{k-1,\ell'})_{(i-1)} \subset (\abph_{k-1,\ell'})_{i+(p-2)}.
\end{equation}
We have thus shown that $p\phi\in\hom(\mc{D}_1(\mb{Z}_p^n),\abph_{k-1,\ell'}^{+(p-2)})$, where $\abph_{k-1,\ell'}^{+(p-2)}$ denotes the filtration $\abph_{k-1,\ell'}$ shifted forward by $p-2$.
Note that $\abph_{k-1,\ell'}^{+(p-2)}$ is clearly a $p$-homogeneous filtration, and it has degree $k-(p-1)$ (indeed its term with index $k-(p-1)+1 =  k-(p-2)$ is $(\abph_{k-1,\ell'}^{+(p-2)})_{(k-(p-2))}=(\abph_{k-1,\ell'})_{(k)}=\{0\}$). 

We now define the following $p$-homogeneous filtration $H_\bullet$ of degree at most $k$ in $\mb{T}$:
\begin{equation}
H^{(k)}_{(i)}= \left\{\begin{array}{cc}
\frac{1}{p}\cdot (\abph_{k-1,\ell'}^{+(p-2)})_{(i)},& i\in [1,k]\\
\{0\}, & i\geq k+1,
\end{array}\right.
\end{equation}
where note that since $(\abph_{k-1,\ell'}^{+(p-2)})_{(i)}$ is $\{0\}$ for $i> k-(p-1)$, the above definition sets $H^{(k)}_{(i)}$ to be $\frac{1}{p}\cdot\{0\}=(\frac{1}{p}\cdot \mb{Z})/\mb{Z}$ for $i\in [k-(p-1)+1,k]$. Also, note that $H_\bullet$ is $\ell$-fold ergodic for the same $\ell$ for which $\abph_{k-1,\ell'}^{+(p-2)}$ is $\ell$-fold ergodic. But the latter filtration's structure implies that its ergodicity index $\ell$ satisfies $k-(p-1)-\ell = r(p-1)$ for some integer $r$, which is equivalent to $k-\ell= (r+1)(p-1)$, which means that $\ell=\ell_{k,p}$ as required. It follows that $H_\bullet=\abph_{k,\ell}$.

Thus, from \eqref{eq:induc-p-poly} we have that the derivative $\partial_{h_i}\cdots \partial_{h_1} \phi$ takes values in $H^{(k)}_{(i)}$ for each $i\in [1,k]$ and any elements $h_1,\ldots,h_i\in \mb{Z}_p^n$, and for $i>k$ this derivative vanishes since $\phi$ is a morphism into $\mc{D}_k(\mb{T})$. Hence $\phi \in \hom(\mc{D}_1(\mb{Z}_p^n),\abph_{k,\ell})$ as required.

In order to prove the last part of the proposition, note that by \cite[Theorem 3.8]{CGSS-p-hom} or \cite[Lemma 1.6]{T&Z-Low} we can write $\phi(v_1,\ldots,v_n)=\sum_{\underline{t}\in \{0,\ldots,p-1\}^n}w_{|\underline{t}|}\binom{|v_1|_p}{t_1}\cdots \binom{|v_n|_p}{t_n}$ where $\underline{t}=(t_1,\ldots,t_n)$, $|\underline{t}|:=t_1+\cdots+t_n$, $|x|_p\in \{0,\ldots,p-1\}$ is the residue class of $x\in \mb{Z}$ modulo $p$, and some coefficients $w_{|\underline{t}|}\in (\abph_{k,\ell})_{|\underline{t}|}$. If $\phi(\mb{Z}_p^n)\subset p^s \abph_{k,\ell}$ (where here we regard $\abph_{k,\ell}$ as the group $(\mb{Z}/p^{\lfloor\frac{k-\ell}{p-1}\rfloor+1}\mb{Z})$), let $w_{|\underline{t}|}^*\in \abph_{k,\ell+s(p-1)}$ be such that $p^sw_{|\underline{t}|}^* = w_{|\underline{t}|}$ for each $\underline{t}\in \{0,\ldots,p-1\}^n$ (where note that multiplying by $p^s$ is a group homomorphism from $\abph_{k,\ell+s(p-1)}$ to $\abph_{k,\ell}$, seen as abelian groups). Then the map $\phi^*(v_1,\ldots,v_n):=\sum_{\underline{t}\in \{0,\ldots,p-1\}^n}w^*_{|\underline{t}|}\binom{|v_1|_p}{t_1}\cdots \binom{|v_n|_p}{t_n}$ can be proved to be in $\hom(\mc{D}_1(\mb{Z}_p^n),\abph_{k,\ell+s(p-1)})$ and satisfies $p^s\phi^* = \phi$. The result follows.
\end{proof}

\begin{proof}[Proof of Lemma \ref{lem:poly-morph-corresp}]
For the forward implication, from the general correspondence between polynomials and morphisms \cite[\S 2.2.2]{Cand:Notes1} we deduce that $P\in \hom(\mc{D}_1(\mb{F}_p^n), \mc{D}_k(\mb{T}))$, and then Proposition \ref{prop:k-poly-char} gives us that $P\in \hom(\mc{D}_1(\mb{F}_p^n), \abph_{k,\ell})$ (and the claim about the depth $r$ is clear from its definition). The backward implication is clear by the aforementioned correspondence between morphisms and polynomials.
\end{proof} 

\noindent Non-classical polynomials were used in \cite{HHH} to define a notion of \emph{consistency} that underpins the notion of convergence for sequences of functions $(f_n:\mb{F}_p^n\to \{0,1\})_{n\in \mb{N}}$ used in that paper.\footnote{Originally in \cite{HHH} $f_n$ would take values in $\mb{F}_p^{m_n}$ for some sequence $m_n\to\infty$ as $n\to \infty$. For simplicity we have restricted ourselves to the case $m_n=n$.} For $p=2$ this notion will be shown below to coincide with the one given by Definition \ref{def:convergence-notion} (for $\Bo=\{0,1\}$). In order to explain this we first need to set up some notation. From now on, we focus on the case $p=2$. Furthermore, we assume that all non-classical polynomials $P:\mb{F}_2^n\to \mb{T}$ satisfy $0\in P(\mb{F}_2^n)$ and thus $P:\mb{F}_2^n\to (\frac{1}{2^r}\cdot \mb{Z})/\mb{Z}$ for some $r\ge 0$. This can be done without affecting the main results of this section.

We denote by $\mb{F}_2^\omega$ the set of linear forms and by $1\times \mb{F}_2^\omega:=\{v\in \mb{F}_2^\omega:v\sbr{1}=1\}\subset \mb{F}_2^\omega$ the subset of \emph{affine} linear forms (those whose first coordinate equals 1). As done previously in this paper, we abuse the notation and view $\mb{F}_2^k$ as the subset $\mb{F}_2^k\times \{0\}^{\mb{N}\setminus[k]}$ of $\mb{F}_2^\omega$. Note that any $L\in \mb{F}_2^k$ can be viewed as a linear map $L:V^k\to V$ for any $\mb{F}_2$-vector space $V$. Indeed, if $L=(\lambda_1,\ldots,\lambda_k)$ then $L(x):=\lambda_1x_1+\cdots+\lambda_kx_k$ for any $x\in V^k$.

Let us now recall the above-mentioned notion of consistency from \cite[Definition 3.3]{HHH}.

\begin{defn}[Consistency]\label{consistent}
Let $\mc{L} = \{L_1, \dots, L_m\}$ be a system of linear forms.
A sequence of elements $b_1, \dots, b_m \in \mb{T}$ is said to be {\em
  $(k,r)$-consistent with $\mc{L}$} if there exists a $(k,r)$-polynomial $P$ (i.e.\ of degree $\leq k$ and depth $r$) and a point $x$ such that $P(L_i(x))=b_i$ for every $i \in [m]$. 
\end{defn}

The corresponding version of this definition for nilspaces is then the following.
\begin{defn}[Consistency, nilspace version]\label{def:consis-nil}
Let $\mc{L} = \{L_1, \dots, L_m\}$ be a finite subset of $\mb{F}_2^\omega$.
A sequence of elements $b_1, \dots, b_m \in \mb{T}$ is $(k,r)$-consistent with $\mc{L}$ if there exists a morphism $P\in \hom(\mc{D}_1(\mb{F}_2^n),\abph_{k,\ell})$ for some $\ell=\ell_{k,r}$ taking values in $(\frac{1}{2^r}\cdot\mb{Z})/\mb{Z}$, and a point $x\in (\mb{F}_2^n)^s$ such that $P\co L_i(x)=b_i$ for every $i \in [m]$, where $s$ is such that $\mc{L}\subset \mb{F}_2^s$.
\end{defn}

\begin{corollary}\label{cor:equiva-consis}
Definitions \ref{consistent} and \ref{def:consis-nil} are equivalent.
\end{corollary}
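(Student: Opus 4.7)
The plan is to reduce the equivalence directly to Lemma \ref{lem:poly-morph-corresp}, noting that both Definitions \ref{consistent} and \ref{def:consis-nil} quantify over the same underlying data—a map $P:\mb{F}_2^n\to\mb{T}$ and a point $x\in(\mb{F}_2^n)^s$ with $P\co L_i(x)=b_i$ for $i\in[m]$—and differ only in how the map $P$ is described. The same witnessing pair $(P,x)$ should therefore serve on both sides; all that remains is to check that the two descriptions of $P$ align up, which is the content of Lemma \ref{lem:poly-morph-corresp} (really of its underlying Proposition \ref{prop:k-poly-char}).

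For the forward implication, I would start from a $(k,r)$-polynomial $P$ witnessing polynomial-consistency, and invoke Proposition \ref{prop:k-poly-char} in its depth-refined form to conclude that $P\in\hom(\mc{D}_1(\mb{F}_2^n),\abph_{k,\ell_{k,r}})$ with $\ell_{k,r}=k-(r+1)$, i.e.\ $\ell=k-(r+1)(p-1)$ specialized to $p=2$. The remaining requirement in Definition \ref{def:consis-nil}, namely that $P$ take values in $(\frac{1}{2^r}\mb{Z})/\mb{Z}$, is immediate from the definition of depth. For the backward implication, I would start from a morphism $P\in\hom(\mc{D}_1(\mb{F}_2^n),\abph_{k,\ell_{k,r}})$ with image in $(\frac{1}{2^r}\mb{Z})/\mb{Z}$ and apply the standard correspondence between morphisms from $\mc{D}_1(\mb{F}_2^n)$ to abelian group nilspaces and polynomial maps (the backward direction of Lemma \ref{lem:poly-morph-corresp}) to regard $P$ as a polynomial of degree $\leq k$; the image condition then forces depth $\leq r$, so $P$ is a $(k,r)$-polynomial and $(P,x)$ also witnesses polynomial-consistency.

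I do not expect any real obstacle, since the structural work is already contained in Proposition \ref{prop:k-poly-char} and in the general polynomial/morphism correspondence for abelian group nilspaces. The only subtle point to verify is the reconciliation of the ``$0$ in the image'' conventions on the two sides: on the polynomial side this is supplied automatically by the standing assumption on non-classical polynomials $P:\mb{F}_2^n\to\mb{T}$, which is precisely the hypothesis required by Proposition \ref{prop:k-poly-char} (that asks for $0\in P(\mb{F}_2^n)$, not for the stronger $P(0)=0$). Consequently no translation of $P$ or of the point $x$ is needed, and the same witnessing pair does the job on both sides.
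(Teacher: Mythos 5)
Your proposal is correct and follows essentially the same route as the paper, which proves the corollary by direct appeal to Lemma \ref{lem:poly-morph-corresp}; you simply spell out the two implications and the (harmless) reconciliation of the $0\in P(\mb{F}_2^n)$ convention, which the paper leaves implicit.
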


\begin{proof}
This follows from Lemma \ref{lem:poly-morph-corresp}.
\end{proof}
\noindent Note that there is a natural extension of this definition for groups of the form $\prod_{j=1}^\infty (\frac{1}{2^{n_j}}\cdot\mb{Z})/\mb{Z}$ for any sequence $(n_j\in \mb{Z}_{\ge 0})_{j\ge 1}$.
\begin{defn}[Consistency, general groups]
Let $\mc{L} = \{L_1, \dots, L_m\}$ be a system of linear forms, let $\textbf{d}\in \mb{Z}_{>0}^{\infty}$, $\textbf{k}\in \mb{Z}_{\ge 0}^\infty$ and $G=\prod_{j=1}^\infty (\frac{1}{2^{n_j}}\cdot\mb{Z})/\mb{Z}$ for some $(n_j\in \mb{Z}_{\ge 0})_{j\ge 1}$. We say that a sequence of elements $b_1,\ldots,b_m\in G$ is consistent with $\mc{L}$ if for every $j\in \mb{N}$, $b_1\sbr{j},\ldots,b_m\sbr{j}\in  (\frac{1}{2^{n_j}}\cdot\mb{Z})/\mb{Z}\subset \mb{T}$ is $(\textbf{d}\sbr{j},\textbf{k}\sbr{j})$-consistent with $\mc{L}$.
\end{defn}

\begin{lemma}\label{lem:eq-consis}
Let $\mc{L}=\{L_1,\ldots,L_m\}\subset  \mb{F}_2^s\times \{0\}^{\mb{N}\setminus [s]}$ be a finite set of affine linear forms where $L_i=(1,L_i')$ for all $i\in [m]$ and $s\ge 1$. Let $G=\prod_{j=1}^\infty (\frac{1}{2^{n_j}}\cdot\mb{Z})/\mb{Z}$ for some $(n_j\in \mb{Z}_{\ge 0})_{j\ge 1}$, and let $\textbf{d}\in \mb{Z}_{>0}^\infty$, $\textbf{k}\in \mb{Z}_{\ge 0}^\infty$. Then the set of points $H\subset G^{\mc{L}}$ that are consistent with $\mc{L}$ forms a closed subgroup. Furthermore, there exists a 2-homogeneous filtration $G_\bullet$ on $G$ such that the group nilspace generated by $(G,G_\bullet)$ equals $\prod_{j=1}^\infty \abph_{\textbf{k}\sbr{j},\ell\sbr{j}}$ for some $\ell=\ell_{\textbf{k},\textbf{d}}\in \mb{Z}_{>0}^\infty$ and the map $T:\hom(\mc{D}_1(\mb{F}_2^{s-1}),G)\to G^{\mc{L}}$, $f\mapsto(f(L_1'),\ldots,f(L_m'))$ is a surjective homomorphism onto $H$ and thus the Haar measure on $H$ equals the pushforward of the Haar measure on $\hom(\mc{D}_1(\mb{F}_2^{s-1}),G)$ under the map $T$.
\end{lemma}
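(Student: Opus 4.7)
The plan is to identify $H$ with the image of the continuous homomorphism $T$, after which all four assertions follow rapidly. The substance lies in this identification, which will rely on the polynomial-to-morphism dictionary provided by Lemma \ref{lem:poly-morph-corresp} and Corollary \ref{cor:equiva-consis}.

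First I would equip $G$ with the product filtration $G_\bullet$ whose $j$-th factor is the 2-homogeneous filtration on $(\frac{1}{2^{n_j}}\cdot\mb{Z})/\mb{Z}$ that makes it the group nilspace $\abph_{\textbf{k}\sbr{j},\ell\sbr{j}}$, where $\ell\sbr{j}$ is the parameter determined from $(\textbf{k}\sbr{j},\textbf{d}\sbr{j})$ via Lemma \ref{lem:poly-morph-corresp}. The product filtration is 2-homogeneous since each factor is, and the associated group nilspace is exactly $\prod_{j=1}^\infty \abph_{\textbf{k}\sbr{j},\ell\sbr{j}}$. The map $T$ is continuous by its pointwise definition and is a group homomorphism, because $\hom(\mc{D}_1(\mb{F}_2^{s-1}),G)$ is an abelian group under pointwise addition (pointwise sums of morphisms into a filtered abelian group are morphisms) and $T$ is a coordinate projection.

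The heart of the argument is the set-theoretic identity $T(\hom(\mc{D}_1(\mb{F}_2^{s-1}),G))=H$. For the inclusion $T(\hom)\subseteq H$, given $f\in\hom(\mc{D}_1(\mb{F}_2^{s-1}),G)$, each coordinate $f\sbr{j}$ is by Lemma \ref{lem:poly-morph-corresp} (modulo an additive-constant normalization) a non-classical polynomial of degree at most $\textbf{k}\sbr{j}$ and depth at most $\textbf{d}\sbr{j}$; choosing $n=s-1$ and $x=(0,e_1,\ldots,e_{s-1})\in(\mb{F}_2^{s-1})^s$ with $e_k$ the $k$-th standard basis vector, a direct computation gives $L_i(x)=L_i'$, so $f\sbr{j}$ together with $x$ witnesses $(\textbf{k}\sbr{j},\textbf{d}\sbr{j})$-consistency of $T(f)\sbr{j}$, hence $T(f)\in H$. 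For the reverse inclusion, a consistent tuple $(b_1,\ldots,b_m)\in H$ provides, for each $j$, a polynomial $P_j:\mb{F}_2^{N_j}\to(\frac{1}{2^{\textbf{d}\sbr{j}}}\cdot\mb{Z})/\mb{Z}$ of degree $\le\textbf{k}\sbr{j}$ and a point $x_j=(x_{j,0},\ldots,x_{j,s-1})\in(\mb{F}_2^{N_j})^s$ with $P_j(L_i(x_j))=b_i\sbr{j}$; then the map $f\sbr{j}:\mb{F}_2^{s-1}\to G\sbr{j}$ defined by $y\mapsto P_j\big(x_{j,0}+\sum_{k=1}^{s-1} y_k x_{j,k}\big)$ is the composition of $P_j$ with an affine-linear (hence morphism) map $\mb{F}_2^{s-1}\to\mb{F}_2^{N_j}$, so $f\sbr{j}\in\hom(\mc{D}_1(\mb{F}_2^{s-1}),\abph_{\textbf{k}\sbr{j},\ell\sbr{j}})$ by Lemma \ref{lem:poly-morph-corresp}, and by construction $T(f)=(b_1,\ldots,b_m)$.

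Finally, the topological and measure-theoretic claims follow formally: $\hom(\mc{D}_1(\mb{F}_2^{s-1}),G)$ is a closed, hence compact, subgroup of the compact abelian group $G^{\mb{F}_2^{s-1}}$, so its continuous image $H=T(\hom(\mc{D}_1(\mb{F}_2^{s-1}),G))$ is a compact (hence closed) subgroup of $G^{\mc{L}}$; uniqueness of normalized Haar measure on compact abelian groups then gives that the pushforward under the continuous surjective homomorphism $T:\hom(\mc{D}_1(\mb{F}_2^{s-1}),G)\to H$ is the Haar measure on $H$. The main technical obstacle will be the careful bookkeeping of $\ell\sbr{j}$ in terms of $(\textbf{k}\sbr{j},\textbf{d}\sbr{j},n_j)$ so that Lemma \ref{lem:poly-morph-corresp} applies cleanly in both directions (including the harmless additive-constant normalization $P(0)=0$ and any degenerate parameter regimes where $H$ is forced to be trivial); the rest of the argument is routine.
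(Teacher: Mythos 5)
Your proposal is correct and follows essentially the same route as the paper's proof: the key step in both is to convert a consistency witness $P_j(L_i(x))=b_i\sbr{j}$ into a morphism on $\mb{F}_2^{s-1}$ by pre-composing $P_j$ with the affine map $y\mapsto x_0+\sum_k y_k x_k$ determined by $x$, using that $L_i=(1,L_i')$, and then to deduce closedness and the Haar-measure statement from compactness and surjectivity of the continuous homomorphism $T$. You additionally spell out the inclusion $T(\hom(\mc{D}_1(\mb{F}_2^{s-1}),G))\subseteq H$, which the paper leaves implicit; this is a harmless (indeed welcome) addition.
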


\begin{remark}
Note that this result applies in particular to the group $\mb{G}_\infty$ used in \cite[Definition 3.2]{HHH}.
\end{remark}

\begin{proof}[Proof of Lemma \ref{lem:eq-consis}]
Let $b_1,\ldots,b_m\in G$ be a sequence of elements consistent with $\mc{L}$. Fix any $j\in \mb{N}$. By Corollary \ref{cor:equiva-consis} this means that there exists $P_j\in \hom(\mc{D}_1(\mb{F}_2^n),\abph_{k\sbr{j},\ell\sbr{j}})$ and a point $x\in (\mb{F}_2^n)^s$ such that $P_j(L_i(x))=b_i\sbr{j}$ for all $i\in [m]$. Note that since the $L_i$ are affine forms, the values $L_i(x)$ are the same thing as images $A(L_i')$ where $L_i'\subset \mb{F}_2^{s-1}$ and $A\in \cu^{s-1}(\mb{F}_2^n)$ (recall that $L_i=(1,L_i')$). Hence in this case $P_j(L_i(x))=P_j(A(L_i'))$. But as $P_j\co A\in \hom(\mc{D}_1(\mb{F}_2^{s-1}),G)$, this condition reduces to saying that there exists $P_j':=P_j\co A\in \hom(\mc{D}_1(\mb{F}_2^{s-1}),G)$ such that $P_j'(L_i')=b_i\sbr{j}$ for all $i\in [m]$. As this holds for every $j\in \mb{N}$ we have proved that the map $T:\hom(\mc{D}_1(\mb{F}_2^{s-1}),G)\to G^{\mc{L}}$, $f\mapsto(f(L_1'),\ldots,f(L_m'))$ is surjective onto $H$.

Moreover, as $T$ is continuous and $\hom(\mc{D}_1(\mb{F}_2^{s-1}),G)$ is compact, we have that $H$ is a closed group as well. And the fact that $T$ is a surjective homomorphism onto $H$ implies that the Haar measure on $H$ is the pushforward of the Haar measure of the group $\hom(\mc{D}_1(\mb{F}_2^{s-1}),G)$.
\end{proof}

\end{document}